\numberwithin{equation}{section}
\numberwithin{figure}{section}
\newtheorem{Thm}{Theorem}[section]
\theoremstyle{plain}
\newtheorem{Lem}[Thm]{Lemma}
\theoremstyle{plain}
\newtheorem{Prop}[Thm]{Proposition}
\theoremstyle{plain}
\newtheorem{Cor}[Thm]{Corollary}
\theoremstyle{plain}
\newtheorem{Conj}[Thm]{Conjecture}
\theoremstyle{plain}
\newtheorem*{Thm*}{Theorem}
\theoremstyle{plain}
\newtheorem*{Conj*}{Conjecture}
\theoremstyle{plain}
\theoremstyle{plain}
\newtheorem*{Prob*}{Problem}
\theoremstyle{plain}
\theoremstyle{plain}
\theoremstyle{definition}
\newtheorem{Def}[Thm]{Definition}
\theoremstyle{definition}
\newtheorem*{Def*}{Definition}
\theoremstyle{definition}
\newtheorem{Eg}[Thm]{Example}
\theoremstyle{definition}
\theoremstyle{definition}
\newtheorem{Assumption}{Assumption}
\theoremstyle{definition}
\theoremstyle{definition}
\theoremstyle{definition}
\theoremstyle{remark}
\newtheorem{Rem}[Thm]{Remark}
\newtheoremstyle{custom}
  {}
  {}
  {\itshape}
  {}
  {\bfseries}
  {}
  { }
  {Condition \thmnote{#3}}
\theoremstyle{custom}
\newtheorem*{Cond*}{}
\newcommand{\kk}{\Bbbk}
\newcommand{\Z}{\mathbb{Z}}
\newcommand{\N}{\mathbb{N}}
\newcommand{\Q}{\mathbb{Q}}
\newcommand{\C}{\mathbb{C}}
\newcommand{\F}{\mathbb{F}}
\renewcommand{\hat}[1]{\widehat{#1}}
\renewcommand{\tilde}[1]{\widetilde{#1}}
\newcommand{\opname}[1]{\operatorname{\mathsf{#1}}}
\newcommand{\inj}{\opname{inj}}
\newcommand{\op}{{^{\opname{op}}}}
\newcommand{\sign}{\opname{sign}}
\newcommand{\ext}{\opname{ext}}
\newcommand{\supp}{\opname{supp}}
\renewcommand{\deg}{\opname{deg}}
\newcommand{\K}{\operatorname{\mathbb{K}}\nolimits}
\newcommand{\Hf}{{\frac{1}{2}}}
\newcommand{\Rm}[1]{{\longmapsto}}
\newcommand{\Lm}[1]{{\longmapsfrom}}
\newcommand{\cA}{{\mathcal A}}
\newcommand{\cC}{{\mathcal C}}
\newcommand{\cF}{{\mathcal F}}
\newcommand{\cN}{{\mathcal N}}
\newcommand{\cS}{{\mathcal S}}
\newcommand{\cT}{{\mathcal T}}
\newcommand{\cZ}{{\mathcal Z}}
\newcommand{\bL}{{\mathbf L}}
\newcommand{\bi}{{\mathbf i}}
\newcommand{\bj}{{\mathbf j}}
\newcommand{\uB}{{\underline{B}}}
\newcommand{\uc}{{\underline{c}}}
\newcommand{\ui}{{\underline{i}}}
\newcommand{\uk}{{\underline{k}}}
\newcommand{\tB}{{\widetilde{B}}}
\newcommand{\tC}{{\widetilde{C}}}
\newcommand{\tE}{{\widetilde{E}}}
\newcommand{\tP}{{\widetilde{P}}}
\newcommand{\stdMod}{\mbf{M}}
\newcommand{\can}{L}
\newcommand{\clAlg}{{\cA}}
\newcommand{\qClAlg}{\cA_q}
\newcommand{\diag}{{\delta}}
\newcommand{\rev}{\opname{rev}}
\newcommand{\res}{{\mathrm{Res}}}
\newcommand{\mbf}[1]{{\mathbf{#1}}}
\tikzstyle{none}=[inner sep=0pt]
\tikzstyle{black box}=[draw=black, fill=black!25]
\tikzstyle{white box}=[draw=black, fill=white]
\tikzstyle{black circle}=[circle,draw=black!50, fill=black!25]
\tikzstyle{red circle}=[circle,draw=red!50, fill=red!25]
\tikzstyle{blue circle}=[circle,draw=blue!50, fill=blue!25]
\tikzstyle{green circle}=[circle,draw=green!50, fill=green!25]
\tikzstyle{yellow circle}=[circle,draw=yellow!50, fill=yellow!25]
\newcommand{\thistheoremname}{}
\newtheorem*{genericthm*}{\thistheoremname}
\newenvironment{namedthm*}[1]
  {\renewcommand{\thistheoremname}{#1}%
   \begin{genericthm*}}
  {\end{genericthm*}}
\renewcommand{\diag}{{d}}
\renewcommand{\inj}{{\bI}}
\renewcommand{\can}{{\bL}}
\newcommand{\fv}{\opname{f}}
\newcommand{\ufv}{\opname{uf}}
\subjclass[2020]{Primary 13F60; Secondary 17B37, 17B10, 18N25.}
\begin{document}

\newcommand{\hbeta}{\hat{\beta}}
\newcommand{\utB}{\underline{\tilde{B}}}

\newcommand{\tz}{\tilde{z}}
\newcommand{\W}{\mathbb{W}}
\newcommand{\V}{\mathbb{V}}
\newcommand{\txi}{\tilde{\xi}}

\newcommand{\GHL}{\mathrm{GHL}}

\newcommand{\tTheta}{\widetilde{\Theta}}

\newcommand{\Kq}{\mathfrak{K}_{q}}
\newcommand{\hAq}{\hat{\mathcal{A}}_{q}(\mathfrak{n})}

\newcommand{\ee}{\mathbf e}
\newcommand{\ff}{\mathbf f}

\newcommand{\unitObj}{\mathbf 1}
\newcommand{\wtSp}{\opname Q}
\renewcommand{\op}{^{\opname{op}}}

\newcommand{\simpObj}{\opname{Simp}}

\newcommand{\Span}{\opname{Span}}

\renewcommand{\res}{\opname{res}}
\newcommand{\frR}{\mathfrak{R}}
\newcommand{\frI}{\mathfrak{I}}
\newcommand{\frz}{\mathfrak{f}}
\renewcommand{\diag}{\delta}
\newcommand{\symm}{\mathsf{D}}
\newcommand{\ubi}{\underline{\bi}}
\newcommand{\ubj}{\underline{\bj}}

\newcommand{\Br}{\mathsf{Br}}
\newcommand{\cB}{\mathcal{B}}
\newcommand{\xar}[1]{\xymatrix{\ar[r]^{#1}&}}
\newcommand{\xline}[1]{\xymatrix{\ar@{-}[r]^{#1}&}}
\newcommand{\ow}{\overrightarrow{w}}
\newcommand{\Conf}{\mathrm{Conf}}
\newcommand{\leftweave}{\overleftarrow{\mathfrak{m}}}

\newcommand{\ueta}{\underline{\eta}}
\newcommand{\uzeta}{\underline{\zeta}}
\newcommand{\uxi}{\underline{\xi}}

\newcommand{\qO}{\mathcal{O}_q}

\newcommand{\up}{\opname{up}}
\newcommand{\dCan}{\opname{B}^{*}}
\newcommand{\cdCan}{\opname{\mathring{B}}^{*}}
\newcommand{\hdCan}{\hat{\opname{B}^{*}}}

\renewcommand{\qClAlg}{{\clAlg_q}}

\newcommand{\cone}{{M^\circ}}
\newcommand{\uCone}{\underline{\cone}}
\newcommand{\yCone}{N_{\ufv}}
\newcommand{\tropSet}{{\mathcal{M}^\circ}}
\newcommand{\domTropSet}{{\overline{\mathcal{M}}^\circ}}

\newcommand{\sol}{\mathrm{TI}}
\newcommand{\intv}{{\mathrm{BI}}}

\newcommand{\Perm}{\mathrm{P}}

\newcommand{\tui}{\widetilde{\ui}}
\newcommand{\tuk}{\widetilde{\uk}}

\newcommand{\bideg}{\opname{bideg}}

\newcommand{\ubeta}{\underline{\beta}}
\newcommand{\udelta}{\underline{\delta}}
\newcommand{\ugamma}{\underline{\gamma}}
\newcommand{\ualpha}{\underline{\alpha}}

\newcommand{\LP}{{\mathcal{LP}}}
\newcommand{\hLP}{{\widehat{\mathcal{LP}}}}

\newcommand{\bLP}{{\overline{\mathcal{LP}}}}
\newcommand{\bClAlg}{{\overline{\clAlg}}}
\newcommand{\bUpClAlg}{{\overline{\upClAlg}}}
\newcommand{\base}{\mathbb{B}}
\newcommand{\alg}{\mathbf{A}}
\newcommand{\algfr}{\alg_{\mathrm{f}}}

\newcommand{\midClAlg}{{\clAlg^{\mathrm{mid}}}}
\newcommand{\upClAlg}{\mathcal{U}}

\newcommand{\bQClAlg}{{\bClAlg_q}}

\newcommand{\qUpClAlg}{{\upClAlg_q}}

\newcommand{\bQUpClAlg}{{\bUpClAlg_q}}

\newcommand{\canClAlg}{{\clAlg^{\mathrm{can}}}}

\newcommand{\AVar}{\mathscr{V}}
\newcommand{\XVar}{\mathbb{X}}

\newcommand{\Jac}{\hat{\mathop{J}}}

\newcommand{\wt}{\mathrm{wt}}
\newcommand{\cl}{\mathrm{cl}}

\newcommand{\domCone}{{\overline{M}^\circ}}

\newcommand{\tree}{{\mathbb{T}}}

\newcommand{\img}{{\mathrm{Im}}}

\newcommand{\Id}{{\mathrm{Id}}}
\newcommand{\prin}{{\mathrm{prin}}}

\newcommand{\mm}{{\mathbf{m}}}

\newcommand{\cPtSet}{{\mathcal{CPT}}}
\newcommand{\bPtSet}{{\mathcal{BPT}}}
\newcommand{\tCPtSet}{{\widetilde{\mathcal{CPT}}}}

\newcommand{\frn}{{\mathfrak{n}}}
\newcommand{\frsl}{{\mathfrak{sl}}}

\newcommand{\frg}{\mathfrak{g}}
\newcommand{\hfrg}{\hat{\mathfrak{g}}}
\newcommand{\hfrh}{\hat{\mathfrak{h}}}

\newcommand{\frh}{\mathfrak{h}}
\newcommand{\frp}{\mathfrak{p}}
\newcommand{\frd}{\mathfrak{d}}
\newcommand{\frj}{\mathfrak{j}}
\newcommand{\frD}{\mathfrak{D}}
\newcommand{\frS}{\mathfrak{S}}
\newcommand{\frC}{\mathfrak{C}}
\newcommand{\frM}{\mathfrak{M}}

\newcommand{\Int}{\mathrm{Int}}
\newcommand{\ess}{\mathrm{ess}}
\newcommand{\Mono}{\mathrm{Mono}}

\newcommand{\bfm}{{\mathbf{m}}}
\newcommand{\bfI}{{\mathbf{I}}}

\newcommand{\Pot}{\mathrm{Pot}}
\newcommand{\kGp}{\opname{K}}

\newcommand{\s}{\mathrm{s}}
\newcommand{\fd}{\mathrm{fd}}

\renewcommand{\sc}{\mathrm{sc}}
\newcommand{\Hall}{\mathrm{Hall}}
\newcommand{\income}{\mathrm{in}}

\newcommand{\tn}{\tilde{n}}

\newcommand{\sing}{\opname{sing}}

\renewcommand{\inj}{\opname{inj}}

\renewcommand{\ext}{\mathrm{ext}}

\newcommand{\cJac}{\Jac}

\newcommand{\stilt}{\mathrm{s}\tau\mathrm{-tilt}}
\newcommand{\Fac}{\mathrm{Fac}}
\newcommand{\Sub}{\mathrm{Sub}}

\newcommand{\rigid}{\mathrm{rigid}}
\newcommand{\tauRigid}{\tau\mathrm{-rigid}}
\newcommand{\spTilt}{\mathrm{s}\tau\mathrm{-tilt}}
\newcommand{\clTilt}{\mathrm{c-tilt}}
\newcommand{\maxRigid}{\mathrm{m-rigid}}

\newcommand{\Li}{\mathrm{Li}}

\newcommand{\Trop}{\opname{Trop}}

\newcommand{\seq}{\boldsymbol{\mu}}

\newcommand{\bseq}{\mu_{\bullet}}

\newcommand{\val}{\mathbf{v}}
\newcommand{\hookuparrow}{\mathrel{\rotatebox[origin=c]{90}{$\hookrightarrow$}}} 
\newcommand{\hookdownarrow}{\mathrel{\rotatebox[origin=c]{-90}{$\hookrightarrow$}}}
\newcommand{\twoheaddownarrow}{\mathrel{\rotatebox[origin=c]{-90}{$\twoheadrightarrow$}}}

\newcommand{\rd}{{\opname{red}}} 
\newcommand{\bCan}{\overline{\can}}

\newcommand{\im}{\opname{Im}} 

\newcommand{\lex}{\opname{lex}} 

\newcommand{\vu}{\mathbf{u}}

\newcommand{\dsd}{\ddot{\sd}} 
\newcommand{\rsd}{\dot{\sd}}
\newcommand{\sd}{{\bf t}}
\newcommand{\ssd}{{\bf s}}
\newcommand{\ddB}{\ddot{B}}
\newcommand{\ddI}{\ddot{I}}
\newcommand{\dB}{\dot{B}}
\newcommand{\ddH}{\ddot{H}}
\newcommand{\ddLambda}{\ddot{\Lambda}}

\newcommand{\ddbi}{\ddot{\bi}}
\newcommand{\uddbi}{\underline{\ddbi}}

\newcommand{\nsd}{{\bf r}}
\newcommand{\usd}{{\bf u}}

\newcommand{\bti}{{\mathbf{\tilde{i}}}}
\newcommand{\ubti}{{\underline{\bti}}}

\newcommand{\col}{\opname{col}}

\newcommand{\frRing}{\mathcal{R}}
\newcommand{\bFrRing}{\overline{\frRing}}
\newcommand{\frGroup}{\mathcal{P}}
\newcommand{\frMonoid}{\overline{\frGroup}}

\newcommand{\uBase}{\underline{\base}}
\newcommand{\bBase}{\overline{\base}}

\newcommand{\bUBase}{\overline{\uBase}}

\newcommand{\ddBS}{\ddot{X}}
\newcommand{\dBS}{\dot{X}}

\newcommand{\udim}{\mathrm{dim}}

\newcommand{\ev}{\mathrm{ev}}
\newcommand{\CC}{\mathrm{CC}}

\newcommand{\envAlg}{\mathsf{U}_q}
\newcommand{\qAff}{\mathsf{U}_{\varepsilon}(\hat{\mathfrak{g}})}

\newcommand{\BZ}{\mathrm{BZ}}

\newcommand{\HL}{\mathrm{HL}}
\DeclarePairedDelimiter\floor{\lfloor}{\rfloor}
\newcommand{\simeqd}{\mathrel{\rotatebox[origin=c]{-90}{$\xrightarrow{\sim}$}}}
\newcommand{\simequ}{\mathrel{\rotatebox[origin=c]{90}{$\xrightarrow{\sim}$}}}

\newcommand{\ucN}{\underline{\mathcal{N}}}
\newcommand{\sqbinom}[2]{\genfrac{[}{]}{0pt}{}{#1}{#2}}
\title{Based cluster algebras of infinite rank}
\author{Fan Qin}
\address{School of Mathematical Sciences, Beijing Normal University, No. 19, XinJieKouWai St., Beijing 100875, China}
\email{qin.fan.math@gmail.com}
\begin{abstract}
We introduce a systematic extension method for based (upper) cluster
algebras that allows the passage from finite rank to infinite rank,
under which fundamental structures and properties extend naturally.
For example, by extending (quantum) cluster algebras whose initial
seeds are associated with signed words (arising from double Bott--Samelson
cells), we recover infinite rank cluster algebras arising from representations
of (shifted) quantum affine algebras. 

As the first main application, we show that the fundamental variables
of the cluster algebras arising from double Bott--Samelson cells
can be computed via a braid group action when the Cartan matrix is
of finite type. We also obtain the equality $A=U$ for the associated
infinite rank (quantum) cluster algebras. Additionally, several conjectures
regarding quantum virtual Grothendieck rings due to Jang--Lee--Oh
\cite{jang2023quantization} and Oh--Park \cite{oh2024pbw} follow
as consequences. As the second main application, we show that the
cluster algebras arising from representations of shifted quantum affine
algebras, discovered by Geiss--Hernandez--Leclerc \cite{geiss2024representations},
admit natural quantizations.
\end{abstract}

\maketitle
\tableofcontents{}

\section{Introduction}

\subsection{Background}

Cluster algebras were introduced by Fomin and Zelevinsky in order
to study total positivity \cite{Lusztig96} and the dual canonical
bases of quantum groups \cite{Lusztig90,Lusztig91}\cite{Kas:crystal}.
They admit natural quantizations due to Berenstein and Zelevinsky
\cite{BerensteinZelevinsky05}. They include the (quantized) coordinate
rings of many finite-dimensional varieties arising from Lie theory.
Likewise, most previous literature focuses on cluster algebras of
finite rank.

In the previous work \cite{qin2023analogs}, the author introduced
based cluster algebras. These are cluster algebras endowed with well-behaved
bases. The author further showed that many cluster algebras arising
from Lie theory possess the common triangular bases \cite{qin2017triangular},
which are analogues of the dual canonical bases (see \cite[Section 7]{qin2023analogs}).
Moreover, for cluster algebras arising from double Bott--Samelson
cells, we further obtain their \emph{standard bases}, whose elements
are ordered products of distinguished cluster algebra elements called
the\emph{ fundamental variables} \cite[Section 8.3]{qin2023analogs}.
Recall that the (dual) PBW bases of quantum groups consist of ordered
products of the \emph{root vectors} (\cite{Lus:intro}). These standard
bases are analogous to the dual PBW bases, and the fundamental variables
play a role similar to the root vectors in the construction of bases. 

On the other hand, it is well known that the root vectors are obtained
by a braid group action on the quantum group \cite{Lus:intro}. This
naturally leads to the following question:

\emph{Can one compute the fundamental variables in cluster algebras
arising from double Bott--Samelson cells via a suitable braid group
action?}

Fortunately, braid group actions have recently been constructed on
certain cluster algebras of infinite rank by Jang, Lee and Oh \cite{jang2023braid}.
So if we consider the infinite rank setting, we already have nice
braid group actions to work with.

In recent years, there has been growing interest in cluster algebras
of infinite rank. Examples include Grothendieck rings of monoidal
categories by Hernandez and Leclerc \cite{hernandez2013cluster},
categorification of cluster algebras by Kashiwara, Kim, Oh, and Park
\cite{kashiwara2021monoidal}, general infinite rank cluster algebras
by Gratz \cite{gratz2015cluster}, braid group actions on (virtual)
Grothendieck rings by Jang, Lee, and Oh \cite{jang2023quantization}
and, more recently, cluster algebras arising from representations
of shifted quantum affine algebras discovered by Geiss, Hernandez,
and Leclerc \cite{geiss2024representations}. It is very interesting
that the cluster algebras in \cite{geiss2024representations} encode
infinite-dimensional representations, which are usually not encoded
in finite rank cluster algebras \cite{HernandezLeclerc09}. In addition,
the shifted quantum affine algebras are intriguing objects arising
from quantized K-theoretic Coulomb branches of 3d N = 4 SUSY quiver
gauge theories \cite{finkelberg2019multiplicative}, providing further
motivations to understand the corresponding cluster structures.

The aim of this paper is twofold. First, we introduce a systematic
extension method for based (upper) cluster algebras developed in \cite{qin2023analogs}
that allows the passage from finite rank to infinite rank. Second,
we apply this extension method to interesting cluster algebras of
infinite rank in the examples above. In particular, we give an affirmative
answer to the natural question above by computing the fundamental
variables by means of the braid group actions (introduced by Jang--Lee--Oh
in \cite{jang2023braid}) when the Cartan matrix is of finite type.
We also obtain natural quantizations for the Grothendieck rings of
representations of shifted quantum affine algebras in \cite{geiss2024representations}.

\subsection{Main results}

We briefly explain our setup before stating the main results. Throughout
the paper, we choose the base ring $\kk$ to be $\Z$ for the classical
cases and $\Z[q^{\pm\Hf}]$ for the quantum cases. 

Let $I$ denote a countable set with a partition $I=I_{\ufv}\sqcup I_{\fv}$
into the unfrozen and the frozen subsets. Let $\sd$ denote any given
seed. It consists of indeterminates $x_{i}$, $\forall i\in I$, called
cluster variables, and a locally finite $I\times I_{\ufv}$-matrix
$\tB$. We may also quantize $\sd$ by associating with it a quantization
matrix $\Lambda$. 

We denote by $\bClAlg=\bClAlg(\sd)$ the (partially compactified)
ordinary cluster algebra associated with $\sd$, and by $\bUpClAlg=\bUpClAlg(\sd)$
the (partially compactified) upper cluster algebra in the sense of
Berenstein--Fomin--Zelevinsky \cite{BerensteinFominZelevinsky05},
extended to infinite rank; see Theorem \ref{thm:start-fish-inf} for
basic properties (an analogue of the Starfish Lemma \cite{BerensteinFominZelevinsky05}\cite{BerensteinZelevinsky05}).
We write $\clAlg$ (resp. $\upClAlg$) for the localization of $\bClAlg$
(resp. $\bUpClAlg$) at the frozen variables (i.e., $x_{j}$, for
$j\in I_{\fv}$). Let $\alg$ denote the cluster algebra $\bClAlg$
or $\bUpClAlg$. Let $\base$ denote a well-behaved $\kk$-basis of
$\alg$ subject to certain natural conditions, for example, containing
all the cluster variables. The pair $(\alg,\base)$ is called a based
cluster algebra in the sense of \cite{qin2023analogs}. We refer the
reader to Section \ref{subsec:Based-cluster-algebras} for details.

Our first result shows that based cluster algebras behave well under
suitable colimits. More precisely, let there be given a chain of based
cluster algebras $(\alg_{i},\base_{i})_{i\in\N}$ such that $\alg_{i}\subset\alg_{i+1}$
and $\base_{i}=\base_{i+1}\cap\alg_{i}$. We prove that the colimit
of the chain is a based cluster algebra $(\alg,\base)$, called an
extension of $(\alg_{i},\base_{i})$ (under a mild additional hypothesis
in the upper cluster algebra case). This allows us to construct based
cluster algebras of infinite rank as colimits of finite rank ones
(Section \ref{sec:Based-cluster-algebras-infinite-ranks}).

We then apply this general construction to the cluster algebras arising
from double Bott--Samelson cells studied in \cite{qin2023analogs};
see Section \ref{sec:Reviews-on-cluster-words} for a review of their
definitions and properties. Their seeds are denoted by $\rsd=\rsd(\ubi)$
and are associated with finite signed words $\ubi$. They are of finite
rank and satisfy $\bClAlg(\rsd)=\bUpClAlg(\rsd)$. Moreover, they
possess the common triangular bases, denoted $\can(\rsd)$, in the
sense of \cite{qin2017triangular}. They also have distinguished elements
called the fundamental variables. Note that the based cluster algebra
$(\bClAlg(\rsd),\can(\rsd))$ is categorified by a monoidal category
$\cC(\ubi)$ when the corresponding Lie algebra is simply-laced (see
\cite[Theorem 8.19]{qin2023analogs}), which is invariant under braid
moves and shuffles on the signed word $\ubi$.

Let $\rsd_{i}$ denote the finite rank seeds associated with a sequence
of signed words and $(\bClAlg(\rsd_{i}),\can(\rsd_{i}))_{i\in\N}$
be the chain of the corresponding based cluster algebras. Passing
to the colimit, we obtain infinite rank seeds $\sd_{\infty}$, $\ssd_{\infty}$,
$\usd_{\infty}$, and the associated infinite rank cluster algebras
$\bClAlg(\sd_{\infty})$, $\bClAlg(\ssd_{\infty})$, $\bClAlg(\usd_{\infty})$,
where we have $\bClAlg(\ssd_{\infty})=\bClAlg(\usd_{\infty})$; see
Sections \ref{subsec:Limits-of-Coxeter} \ref{subsec:Limits-of-signed}
\ref{subsec:Cluster-algebras-identify-Kq} for detailed definitions
respectively. The bases $\can(\rsd_{i})$ extend to bases of these
infinite rank cluster algebras. 

Note that, when the corresponding Lie algebra is simply-laced, the
cluster algebras $\bClAlg(\sd_{\infty})$ and $\bClAlg(\ssd_{\infty})=\bClAlg(\usd_{\infty})$
are already known to be isomorphic to the Grothendieck rings of the
categories $\cC^{-}$ or $\cC_{\Z}$, which consist of modules of
quantum affine algebras, by Hernandez--Leclerc \cite{HernandezLeclerc09,hernandez2013cluster};
when the Lie algebra is not simply-laced, they are known to be isomorphic
to the virtual quantum Grothendieck rings (also known as Bosonic extensions)
by Jang--Lee--Oh \cite{jang2023quantization}. Using the extension
technique, we prove the following consequences.

\begin{Thm}\label{thm:intro-KR-poly-is-KL-basis}

\cite[Conjecture 2]{jang2023quantization} is true: the KR-polynomials
in the virtual quantum Grothendieck ring $\mathfrak{K}_{q}$ are contained
in the Kazhdan--Lusztig basis with respect to the standard basis. 

\end{Thm}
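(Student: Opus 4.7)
The plan is to identify the virtual quantum Grothendieck ring $\mathfrak{K}_q$ with the infinite rank based cluster algebra $(\bClAlg(\sd_\infty), \can(\sd_\infty))$ produced by the chain-extension construction of the paper applied to the double Bott-Samelson seeds $\rsd_i$, and then observe that under this identification the KR-polynomials are precisely the fundamental variables, which belong to the common triangular basis $\can(\sd_\infty)$ by construction.

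First, I would recall from \cite{qin2023analogs} that for each finite rank seed $\rsd_i$, the fundamental variables are distinguished cluster variables of $\bClAlg(\rsd_i)$, the standard basis of $\bClAlg(\rsd_i)$ consists of ordered products of fundamental variables along a fixed root order, and the common triangular basis $\can(\rsd_i)$ is characterized by bar-invariance and unitriangularity with respect to this standard basis. Applying the basis-extension part of Theorem \ref{thm:start-fish-inf} to the chain $(\bClAlg(\rsd_i), \can(\rsd_i))_{i\in\N}$ then yields a colimit based cluster algebra $(\bClAlg(\sd_\infty), \can(\sd_\infty))$ whose basis is unitriangular with respect to a limiting standard basis built from the fundamental variables of $\sd_\infty$.

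Second, by the identification (established in earlier sections of the paper, building on \cite{hernandez2013cluster,kashiwara2021monoidal,jang2023quantization}) of $\bClAlg(\sd_\infty)$ with $\mathfrak{K}_q$, the fundamental variables of $\sd_\infty$ correspond to the KR-polynomials. Since the KL basis of $\mathfrak{K}_q$ is uniquely characterized by bar-invariance and unitriangularity with respect to its standard basis, and these are precisely the two defining properties of the common triangular basis, the basis $\can(\sd_\infty)$ must coincide with the KL basis of $\mathfrak{K}_q$ under this isomorphism. The conclusion then follows: fundamental variables lie in $\can(\sd_\infty)$ by construction, so the KR-polynomials lie in the KL basis.

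The hard part will be matching the two standard bases: the cluster-theoretic standard basis of $\bClAlg(\sd_\infty)$, consisting of ordered monomials in fundamental variables for a prescribed root order, versus the standard basis used in \cite{jang2023quantization} to define the KL basis of $\mathfrak{K}_q$. I expect this to reduce to a combinatorial verification using the explicit description of $\sd_\infty$ in terms of a doubly infinite reduced word and the corresponding PBW-type factorization of simple modules as ordered tensor products of KR-modules; once the fundamental variables and their standard monomial orderings are shown to agree on both sides, the unitriangularity valid at each finite rank transfers to the infinite rank limit, which gives the theorem.
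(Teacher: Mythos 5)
Your overall strategy is the paper's: identify $\mathfrak{K}_{q}$ with an infinite rank cluster algebra obtained by chain extension, match the standard bases, and invoke the characterization of the common triangular basis as the Kazhdan--Lusztig basis with respect to the standard basis (Theorem \ref{thm:dBS_PBW}) to conclude that the KR-polynomials lie in the KL basis. However, the step you defer as "the hard part" is exactly where the content of the paper's proof lies, and one identification you assert is not quite right.

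First, the KR-polynomials $F_{q}(w)$ are not the fundamental variables but the interval variables $W_{[\mybinom as,\mybinom ad]}$ (fundamental variables correspond only to fundamental modules); the argument survives because interval variables are still cluster variables, hence lie in the common triangular basis, but the matching has to be stated for all intervals. Second, this matching is not a routine combinatorial verification: in the paper it is Lemma \ref{lem:interval-KR}, proved by showing that the KR-polynomials satisfy the $T$-systems of \cite[Theorem 6.9]{jang2023quantization} and hence are quantum cluster variables of $\bClAlg(\usd)$, that the interval variables satisfy the same $T$-systems (\ref{eq:T-systems-Coxeter}) starting from the same initial cluster variables, and that two pointed quantum cluster variables agree once they agree at the classical level. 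This in turn requires the precise identification of quantum seeds carried out in Section \ref{subsec:Cluster-algebras-identify-Kq}: the relevant algebra for $\mathfrak{K}_{q}$ is $\bClAlg(\usd_{\infty})=\bClAlg(\ssd_{\infty})$, built from the doubly infinite signed word with the bipartite Coxeter choice $\uc=(\uc^{+},\uc^{-})$, not the one-sided $\sd_{\infty}$ (which corresponds to $\mathfrak{K}_{q,\xi}$ and $\cC^{-}$), and its quantization matrix coming from $\ucN$ must be matched with $\Lambda(\usd)$ of \cite[Theorem 9.4]{jang2023quantization} so that the initial cluster variables are identified with the initial KR-polynomials. Finally, the paper's standard basis differs from that of \cite{jang2023quantization} by a bar involution, a wrinkle your uniqueness argument for the KL basis must absorb before the two bases can be declared equal.
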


\begin{Thm}\label{thm:intro-A-equal-U-inf-rank}

We have $\bClAlg(\sd_{\infty})=\bUpClAlg(\sd_{\infty})$ and $\bClAlg(\usd_{\infty})=\bUpClAlg(\usd_{\infty})$.

\end{Thm}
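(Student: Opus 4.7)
The plan is to bootstrap the infinite-rank $A = U$ equality from the finite-rank equality $\bClAlg(\rsd_{i}) = \bUpClAlg(\rsd_{i})$ available at each stage of the defining chain, using the extension theorem for based cluster algebras together with the common triangular basis as the compatibility device.

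First, I would invoke the finite-rank baseline: for each double Bott-Samelson seed $\rsd_{i}$ we already have $\bClAlg(\rsd_{i}) = \bUpClAlg(\rsd_{i})$, carrying the common triangular basis $\can(\rsd_{i})$. Hence the pair $(\bClAlg(\rsd_{i}), \can(\rsd_{i}))$ is simultaneously a based ordinary cluster algebra and a based upper cluster algebra. Next, I would verify that these bases are compatible along the chain, i.e.\ that $\can(\rsd_{i}) = \can(\rsd_{i+1}) \cap \bClAlg(\rsd_{i})$. This should follow from the leading-term (triangularity) characterization of $\can(\rsd)$ together with the nested construction of the seeds $\rsd_{i}$, since a triangular basis restricts to a triangular basis on a compatible subalgebra generated by a sub-seed.

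With compatibility in hand, I would apply the extension theorem for $\bClAlg$ to the chain $(\bClAlg(\rsd_{i}), \can(\rsd_{i}))$ to obtain a based cluster algebra $(\bClAlg(\sd_{\infty}), \base)$ whose basis $\base$ is the colimit $\bigcup_{i} \can(\rsd_{i})$. Separately, after checking the additional hypotheses alluded to in the introduction, I would apply the analogous extension statement for $\bUpClAlg$ to the same chain viewed as a chain of based upper cluster algebras, producing $(\bUpClAlg(\sd_{\infty}), \base')$. Because the two input chains coincide as based algebras stage by stage, the colimit bases satisfy $\base = \base'$. Since $\bClAlg(\sd_{\infty}) \subseteq \bUpClAlg(\sd_{\infty})$ and both sides are $\kk$-free modules with the same basis, they must coincide. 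The identical argument applied to the chain defining $\usd_{\infty}$ (and hence $\ssd_{\infty}$, given the equality $\bClAlg(\ssd_{\infty}) = \bClAlg(\usd_{\infty})$ recalled in the introduction) yields $\bClAlg(\usd_{\infty}) = \bUpClAlg(\usd_{\infty})$.

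The main obstacle is the upper-cluster-algebra extension step: verifying the additional hypotheses for these specific chains and establishing that $\bUpClAlg(\sd_{\infty})$ agrees with $\colim \bUpClAlg(\rsd_{i})$. The general source of difficulty is that an element of $\bUpClAlg(\sd_{\infty})$ is required to be Laurent in each of infinitely many seeds simultaneously, and such compatibility across mutations at infinity can in principle accommodate elements not visible at any finite stage. The basis-compatibility argument is designed precisely to rule out such spurious elements, so success of the plan hinges on controlling the intersection of $\base$ with each individual seed Laurent ring at the infinite-rank level and on showing that this basis already spans the whole upper cluster algebra.
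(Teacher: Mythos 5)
Your skeleton (finite-rank $\bClAlg=\bUpClAlg$ plus a colimit argument) matches the paper's strategy, but the decisive step is missing. The paper's proof of this theorem does not use the bases at all: it combines the easy identification $\bClAlg(\sd_{\infty})=\cup_{i}\bClAlg(\sd_{i})$ with the nontrivial identification $\bUpClAlg(\sd_{\infty})=\cup_{i}\bUpClAlg(\sd_{i})$ (Lemma \ref{lem:extend-bUpClAlg}, under Assumption \ref{assumption:finite-full-rk}), and the latter rests on Lemma \ref{lem:bLP-local}, namely $\cF(\sd_{i})\cap\bLP(\sd_{\infty})=\bLP(\sd_{i})$, proved by a concrete commutative-algebra argument (reduced fractions, $q$-commutation, and primality of the cluster variables $x_{k}$ in the polynomial rings $R_{j}$). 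This is exactly the step you flag as ``the main obstacle'' and then leave unproved: you propose to control it by a basis-compatibility argument, but that argument is circular at the crucial point. Knowing that $\base=\cup_{i}\can(\rsd_{i})$ is a $\kk$-basis of $\bClAlg(\sd_{\infty})=\cup_{i}\bClAlg_{i}$ says nothing about an arbitrary element of $\bUpClAlg(\sd_{\infty})$ unless you already know that $\base$ spans $\bUpClAlg(\sd_{\infty})$ --- which is equivalent to the statement being proved. Your parenthetical appeal to ``the analogous extension statement for $\bUpClAlg$'' presupposes precisely Lemma \ref{lem:extend-bUpClAlg}; if you are allowed to cite it, the theorem follows immediately as in Corollary \ref{cor:extend-bA-equal-bU}, but then the content of your proof is that lemma, and you have not supplied it.

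A related difficulty with the basis route: the standard tool for showing that a pointed set is a basis of an upper cluster algebra (Theorem \ref{thm:tropical-basis}) requires injective-reachability, which forces $I_{\ufv}$ to be finite and is therefore unavailable for $\sd_{\infty}$ and $\usd_{\infty}$. So at infinite rank there is no ready-made statement that the colimit of the common triangular bases spans $\bUpClAlg(\sd_{\infty})$; the only access to $\bUpClAlg(\sd_{\infty})$ in the paper is through the Laurent-ring intersection Lemma \ref{lem:bLP-local} and the resulting colimit description. To repair your proof you would need to prove that lemma (or an equivalent): given $z\in\bUpClAlg(\sd_{\infty})$, one has $z\in\bLP(\sd_{i})$ for some $i$, and for every mutation sequence $\seq$ on $\sd_{i}$ one must show $z\in\cF(\seq\sd_{i})\cap\bLP(\seq\sd_{\infty})=\bLP(\seq\sd_{i})$, using coprimality of numerator and denominator and the primeness of the unfrozen cluster variables; the finite-rank full-rank hypothesis (Assumption \ref{assumption:finite-full-rk}), which holds for the seeds $\rsd(\ubi)$ since $p^{*}$ is injective, must also be invoked. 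Your remaining steps (finite-rank $A=U$ from Theorem \ref{thm:dBS_PBW}, transfer to $\ssd_{i}$ and $\usd_{i}$ via the permutation mutation sequences, and the easy colimit for $\bClAlg$) are fine, and the basis compatibility $\can(\rsd_{i})=\can(\rsd_{i+1})\cap\bClAlg(\rsd_{i})$ is indeed available, but it is not needed for this theorem.
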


\begin{Rem}

It is a fundamental yet largely open question to determine when we
have $\bClAlg=\bUpClAlg$ and $\clAlg=\upClAlg$, see \cite{ishibashi2023u}
for a list of known cases and \cite{qin2023analogs}\cite{oya2025note}
for more recent results. Theorem \ref{thm:intro-A-equal-U-inf-rank}
appears to make progress for the first time in the context of infinite
rank (quantum) cluster algebras.

\end{Rem}

Note that the virtual quantum Grothendieck ring $\mathfrak{K}_{q}$
admits a braid group action \cite{jang2023braid} \cite{kashiwara2024braid}.
We then show the main result of this paper.

\begin{Thm}[{Theorem \ref{thm:braid-formula-fundamental}}]\label{thm:intro-braid-fundamental}

When the Cartan matrix $C$ is of finite type, the fundamental variables
of $\bUpClAlg(\rsd)$ associated with signed words can be computed
via the braid group action.

\end{Thm}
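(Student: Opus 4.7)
The plan is to reduce the computation of the fundamental variables of the finite rank algebra $\bUpClAlg(\rsd)$ to a computation inside the infinite rank cluster algebra $\bClAlg(\usd_\infty)$, which by Theorem~\ref{thm:intro-A-equal-U-inf-rank} coincides with its upper version $\bUpClAlg(\usd_\infty)$, and where the braid group action from \cite{jang2023braid, kashiwara2024braid} is available on the virtual quantum Grothendieck ring $\Kq$.

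First I would realize $(\bClAlg(\rsd),\can(\rsd))$ as a based subalgebra of the infinite rank based cluster algebra $(\bClAlg(\usd_\infty),\can(\usd_\infty))$ via the chain construction of the previous sections (using the compatibility $\can(\rsd_i)=\can(\rsd_{i+1})\cap\bClAlg(\rsd_i)$). Under this embedding, each fundamental variable of $\rsd$ sits inside $\can(\usd_\infty)$ as a specific distinguished element. Combining this with the identification of $\bClAlg(\usd_\infty)$ with a subalgebra of $\Kq$, and invoking Theorem~\ref{thm:intro-KR-poly-is-KL-basis} (which places the KR-polynomials inside the Kazhdan--Lusztig basis), these images are identified with Kirillov--Reshetikhin type elements of $\Kq$, on which the braid operators of \cite{jang2023braid, kashiwara2024braid} act explicitly.

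Next I would exploit the finite type hypothesis. In finite type, the Weyl group admits a longest element $w_{0}$, and the word $\ow$ underlying $\rsd$ can be completed to a doubly infinite sequence compatible with the chain. Starting from a finite \emph{seed} of fundamental variables, for instance those attached to simple roots at the lowest level, I would iteratively apply the braid operators $T_i$ on $\Kq$ to traverse $\ow$ and thus reach every fundamental variable appearing in $\bUpClAlg(\rsd)$. Since the action of $T_i$ on elements of $\Kq$ is available in closed form in the cited references, this would produce explicit formulas for each fundamental variable. The strategy is directly modelled on Lusztig's construction: the dual PBW root vectors of $\Uq(\frn)$ in finite type are obtained from the simple root vectors by iterating $T_i$'s along a reduced expression of $w_{0}$.

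The main obstacle is to verify that the braid operators preserve the \emph{fundamental} subclass of $\can$, rather than merely sending common triangular basis elements to arbitrary common triangular basis elements. Concretely, one must check that $T_i$ takes a fundamental variable to a fundamental variable, up to a monomial in the frozen variables, and that the resulting element actually lands inside the finite-rank subalgebra $\bUpClAlg(\rsd)\subset\bClAlg(\usd_\infty)$ when the braid word is chosen compatibly with the window defining $\rsd$. A closely related issue is reconciling conventions between the extended basis $\can(\usd_\infty)$ produced by the chain construction and the Kazhdan--Lusztig basis of $\Kq$ used in \cite{jang2023braid, kashiwara2024braid}; once this identification of bases is in place, the known braid formulas on $\Kq$ descend to explicit formulas for the fundamental variables of $\bUpClAlg(\rsd)$, completing the proof.
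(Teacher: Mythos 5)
There is a genuine gap. Your overall framing --- embed the finite rank algebra into the infinite rank one where the braid action of \cite{jang2023braid,kashiwara2024braid} lives, and then iterate the operators $T_{i}$ along the word as in Lusztig's PBW construction --- is indeed the right shape of the argument, but you stop exactly at the point where the real work happens. The step ``apply $T_{\eta_{1}}$ and pass to the truncated word $\ueta_{[2,l(\ueta)]}$'' forces a change of Coxeter word and height function ($\uc\mapsto\mu_{\eta_{1}}\uc$, $\xi\mapsto\mu_{\eta_{1}}\xi$), hence a change of presentation $\tTheta^{\xi}\rightsquigarrow\tTheta^{\mu_{\eta_{1}}\xi}$ of $\hAq$ inside $\bClAlg_{\infty}$. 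To close the induction one must know that the two resulting identifications of $\bClAlg(\rsd(\uc^{2mh}))$ and $\bClAlg(\rsd((\uc')^{2mh}))$ with $\hAq_{\kk}^{[-4m+1,0]}$ are compatible with the permutation mutation sequences relating the two seeds, i.e.\ that different (reduced/Coxeter) words induce the \emph{same} quantum cluster structure on $\qO[N_{-}]$ and on its shifted copies. This is precisely Theorem \ref{thm:identical-cluster-q-qp} and its consequence Theorem \ref{thm:canonical-cluster-structure}, which the paper deduces at the quantum level from the classical result of Shen--Weng \cite[Theorem 1.1]{shen2021cluster} by a degree/pointedness argument. Your proposal neither identifies this ingredient nor offers a substitute; ``reconciling conventions between the bases'' is named as an issue but not resolved, and without it the induction does not go through.

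Moreover, the obstacle you single out --- showing that each $T_{i}$ sends a fundamental variable to a fundamental variable up to a frozen monomial --- is not the property one needs, and it is false as stated: in Example \ref{eg:A2-braid-fundamental-variables} one has $T'_{2}W_{1}=W_{[1,3]}$, an interval (initial cluster) variable that is not fundamental. The correct statement, and what the paper proves, is about full compositions applied to Serre generators, $\kappa W_{k}(\rsd(\ueta))=T_{\eta_{1}}\cdots T_{\eta_{k-1}}y_{\eta_{k},0}$, established by induction on $l(\ueta)$: one peels off the first letter by a left reflection and subword embedding (Lemma \ref{lem:calibration-word}, Lemma \ref{lem:embed-interval-variables}), uses the intertwining diagram (\ref{eq:sink-braid-action}) for $T_{a}$ at a sink, and invokes the word-independence above to replace the Coxeter word at will. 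Your appeals to Theorem \ref{thm:intro-KR-poly-is-KL-basis} and to the KR-polynomials are not needed for this argument and do not supply the missing compatibility.
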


In our proof of Theorem \ref{thm:intro-braid-fundamental}, we need
the non-trivial fact that different reduced words provide the same
cluster structure on the quantum unipotent subgroup $\qO[N_{-}]$
(Theorem \ref{thm:identical-cluster-q-qp}). We give a short proof
of this fact by lifting the result at the classical level by Shen--Weng
\cite[Theorem 1.1]{shen2021cluster} to the quantum level. After completing
this paper, the author learned that this fact was already proved by
Fujita--Hernandez--Oh--Oya \cite[Proposition 3.3]{fujita2023isomorphisms}
in type $ADE$ and by Lee--Oh \cite[Proposition 4.5]{lee2024quantum}
in all types, by different methods.

\begin{Rem}
Goncharov and Shen \cite[Section 13]{goncharov2019quantum} have systematically introduced and studied a braid group action on a large class of finite rank (quantum) cluster algebras from higher Teichm\"{u}ller spaces. For cluster algebras arising from double Bott-Samelson cells, we believe that their construction coincides with the braid group action used in this paper, possibly after minor modification. In addition, the braid group action on Grassmannians was introduced by Fraser \cite{fraser2020braid}. 

It is also worth noting that the fundamental variables of cluster algebras from double Bruhat cells coincide with the Reeb chords of positive Legendrian links, see \cite{gao2020augmentations}.

The extensions of cluster algebras from double Bott--Samelson cells are also closely related to the Cauchon--Goodearl--Letzter (CGL) extensions \cite{GY13} \cite{goodearl2018cluster}.
\end{Rem}

Theorem \ref{thm:intro-braid-fundamental} implies that $\bUpClAlg(\rsd)$
is isomorphic to the algebra $\hat{\mathcal{A}}_{\mathfrak{g}}(\beta)$,
which was introduced by Oh--Park \cite{oh2024pbw} when the author
was preparing this paper. Thus, we obtain the following:

\begin{Thm}\label{thm:intro-OH-conj}

The conjectures in \cite{oh2024pbw} are true: $\hat{\mathcal{A}}_{\mathfrak{g}}(\beta)$
is a cluster algebra and has categorification in type $ADE$.

\end{Thm}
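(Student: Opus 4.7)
The plan is to deduce the theorem directly from Theorem \ref{thm:intro-braid-fundamental} by identifying $\hat{\mathcal{A}}_{\mathfrak{g}}(\beta)$ with the upper cluster algebra $\bUpClAlg(\rsd)$ of the double Bott-Samelson cell associated with the reduced word encoded by $\beta$, and then transporting the cluster structure and the monoidal categorification already produced in the paper across this identification.

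First I would match presentations. Oh's algebra $\hat{\mathcal{A}}_{\mathfrak{g}}(\beta)$ is defined in \cite{oh2024pbw} as an algebra generated by elements obtained by applying a sequence of braid group operators to certain initial data, subject to the relations forced by those operators. Theorem \ref{thm:intro-braid-fundamental} asserts that, in finite type, the fundamental variables of $\bUpClAlg(\rsd)$ are computed by exactly the same braid-group formula, which immediately yields a natural algebra homomorphism $\Phi\colon \hat{\mathcal{A}}_{\mathfrak{g}}(\beta)\to \bUpClAlg(\rsd)$. Surjectivity follows because the fundamental variables generate $\bUpClAlg(\rsd)$: by \cite{qin2023analogs} the standard basis consists of ordered monomials in these variables. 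For injectivity I would compare PBW-type bases on both sides, the ordered monomials in Oh's generators forming a basis of $\hat{\mathcal{A}}_{\mathfrak{g}}(\beta)$ by construction and mapping bijectively to the standard basis of $\bUpClAlg(\rsd)$. Since $\bUpClAlg(\rsd)$ is itself a cluster algebra (in fact $\bClAlg(\rsd) = \bUpClAlg(\rsd)$ by \cite{qin2023analogs}), so is $\hat{\mathcal{A}}_{\mathfrak{g}}(\beta)$.

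For the categorification claim in type $ADE$, I would use the fact, established in the paper via the chains $(\bClAlg(\rsd_i), \can(\rsd_i))_{i\in \N}$ and their extensions to the infinite-rank algebras $\bClAlg(\ssd_\infty) = \bClAlg(\usd_\infty)$, that the cluster algebras of double Bott--Samelson cells embed as sub-cluster-algebras into infinite-rank cluster algebras categorified by the Hernandez-Leclerc category $\cC^-$ or $\cC_\Z$ of finite-dimensional modules over the quantum affine algebra, equivalently by the virtual quantum Grothendieck ring $\mathfrak{K}_q$. Restricting to the finite monoidal subcategory whose Grothendieck ring corresponds to $\bUpClAlg(\rsd)$ produces a monoidal categorification of the latter, and transporting along the isomorphism $\Phi$ yields the desired categorification of $\hat{\mathcal{A}}_{\mathfrak{g}}(\beta)$.

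The main obstacle is verifying injectivity of $\Phi$, i.e., that the relations in Oh's presentation already cut out the correct algebra and no further relations appear in $\bUpClAlg(\rsd)$. This is controlled by the fact that the braid-group expression for the fundamental variables is actually well-defined, which in turn rests on the non-trivial Theorem \ref{thm:identical-cluster-q-qp} that different reduced words produce the same cluster structure on $\qO[N_-]$. Once this input is granted, the match between the two presentations reduces to comparing bases indexed by PBW-type data, which is routine bookkeeping.
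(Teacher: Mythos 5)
Your proposal follows essentially the same route as the paper: Theorem \ref{thm:braid-formula-fundamental} identifies the fundamental variables with Oh's braid-group (PBW) generators, giving $\bClAlg(\rsd(\ueta))\otimes\K\simeq\hat{\mathcal{A}}_{\mathfrak{g}}(\beta_{\ueta})$, whence the cluster structure (via $\bClAlg(\rsd)=\bUpClAlg(\rsd)$ and the standard basis) and the type $ADE$ categorification via the embeddings into the infinite-rank algebras categorified by quantum affine algebra modules. One simplification: since $\hat{\mathcal{A}}_{\mathfrak{g}}(\beta)$ is by definition a subalgebra of $\hAq$ generated by the PBW vectors, the isomorphism is just the restriction of $\kappa=(\tTheta^{\txi})^{-1}$ to $\bClAlg(\rsd(\ueta))\otimes\K$, so your injectivity step (matching PBW-type bases against a presentation) is automatic and no relation-checking is required.
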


We believe the following conjecture is true, where we should use the
braid group action for arbitrary types by Kashiwara--Kim--Oh--Park
\cite{kashiwara2024braid}.

\begin{Conj}

Theorem \ref{thm:intro-braid-fundamental} holds for arbitrary generalized
Cartan matrices.

\end{Conj}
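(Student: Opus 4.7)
The plan is to mirror the proof of Theorem \ref{thm:intro-braid-fundamental} from finite type to the general Kac-Moody setting, replacing every finite-type ingredient by its arbitrary-Cartan counterpart. First, for an arbitrary generalized Cartan matrix $C$, I would construct the analog of the infinite rank seeds $\sd_{\infty}$, $\ssd_{\infty}$, $\usd_{\infty}$ by taking the colimit of the double Bott-Samelson seeds $\rsd$ indexed by longer and longer reduced words. The extension machinery for based cluster algebras developed in the body of this paper, applied to a chain $(\bClAlg(\rsd_i), \can(\rsd_i))_{i \in \N}$, should produce an infinite rank based cluster algebra whose fundamental variables are the colimits of the finite-rank fundamental variables.

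Next, I would install the braid group action of \cite{kashiwara2024braid} on this infinite rank algebra and show that it preserves the cluster structure in the sense that it sends clusters to clusters (equivalently, acts as seed automorphisms after an appropriate change of seed). The target identity to establish is that, for a reduced word $\bi = (i_1, i_2, \ldots)$, the $k$-th fundamental variable of $\rsd$ associated with $\bi$ is obtained by applying $T_{i_1} T_{i_2} \cdots T_{i_{k-1}}$ to a ``simple'' fundamental variable labelled by $i_k$, where $T_i$ denotes the $i$-th braid generator. One then verifies this inductively on $k$ using the mutation relations built into $\rsd$ together with the defining braid relations, mimicking the finite-type argument step by step.

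The main obstacle is that in the general Kac-Moody setting one lacks the virtual quantum Grothendieck ring $\Kq$ and the representation theory of (shifted) quantum affine algebras that anchor the finite-type proof; in particular, the identification of fundamental variables with distinguished elements detected by the Kazhdan-Lusztig basis (via Theorem \ref{thm:intro-KR-poly-is-KL-basis}) must be replaced by a purely combinatorial or algebraic argument intrinsic to the seeds $\rsd$. A related obstacle is that reduced-word independence of the cluster structure on the quantum unipotent coordinate ring, which in the present paper is deduced from the classical result of \cite{shen2021cluster} via Theorem \ref{thm:identical-cluster-q-qp}, must be re-established for arbitrary type, since \cite{shen2021cluster} is restricted to the finite-type case. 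Finally, one likely needs to extend $\bClAlg = \bUpClAlg$ (Theorem \ref{thm:intro-A-equal-U-inf-rank}) to the new infinite rank algebras so that one can move freely between the lower and upper cluster algebras while transporting fundamental variables along braid operators.
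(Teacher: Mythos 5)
The statement you are addressing is stated in the paper as a \emph{conjecture}: the author explicitly leaves it open, remarking only that the braid group action of \cite{kashiwara2024braid} is available in arbitrary type. So there is no proof in the paper to compare against, and your text does not close the gap either: it is a strategy outline whose decisive steps are exactly the ones you yourself flag as unresolved. Concretely, the finite-type proof of Theorem \ref{thm:braid-formula-fundamental} rests on two pillars that have no stated analog in general Kac--Moody type. First, every word $\ueta$ is embedded into a word for $\Delta^{4m}$ with $\Delta=\beta_{\ugamma}$ a braid lift of the longest element $w_{0}$; the ambient seeds $\sd_{2mh}$, the involution $\nu$, the Coxeter number $h$, the translation $\tau$ and the bijection $\phi^{\xi}:J_{\Z}(\xi)\simeq\Phi^{+}\times\Z$ all presuppose finite type, and without $w_{0}$ there is no such ambient seed in which to run the induction. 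Second, the braid operators $T_{a}$ act on $\hAq$, and they are transported to the cluster algebra only through the presentation isomorphisms $\tTheta^{\xi}:\hAq\simeq\bClAlg_{\infty}\otimes\K$ together with the sink-mutation compatibility (\ref{eq:sink-braid-action}); constructing a cluster-algebra realization of the bosonic extension of \cite{kashiwara2024braid} beyond finite type is precisely the missing idea, and saying it ``must be replaced by a purely combinatorial or algebraic argument'' names the problem without solving it. Simply taking colimits of $(\bClAlg(\rsd_{i}),\can(\rsd_{i}))$ along longer reduced words does not by itself produce either ingredient.

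One smaller inaccuracy: the double Bott--Samelson cells of \cite{shen2021cluster} are defined for arbitrary generalized Cartan matrices (Kac--Peterson groups), so that reference is not the obstruction you describe. What is genuinely finite-type in the paper is Theorem \ref{thm:identical-cluster-q-qp}, which concerns reduced words of $w_{0}$ and the full quantum unipotent subgroup $\qO[N_{-}]$; in general type one would have to formulate and prove an analog for $\qO[N(w)]$ with $w$ an arbitrary Weyl group element, which is plausible via \cite{shen2021cluster} but is again only asserted, not carried out, in your proposal.
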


Finally, let $\bClAlg(\dsd)$ denote the cluster algebras associated
with decorated double Bott--Samelson cells \cite{shen2021cluster},
whose initial seeds are still associated with signed words (Section
\ref{subsec:Seeds-associated-with-signed}). In type $ADE$, by extending
$(\bClAlg(\dsd_{i}))_{i\in\N}$ to $\bClAlg(\dsd_{\infty})$, we recover
the cluster algebra $\bClAlg(\sd^{\GHL})$ arising from representations
of shifted quantum affine algebras \cite{geiss2024representations}.
Similarly, we can extend $(\bUpClAlg(\dsd_{i}),\can_{i})_{i\in\N}$
to $(\bUpClAlg(\dsd_{\infty}),\can_{\infty})$

\begin{Thm}[{Proposition \ref{prop:identify-seed-GHL}, Proposition \ref{prop:quantize-seed-GHL}, Theorem \ref{thm:basis-seed-GHL}}]\label{thm:intro-shifted-q-aff-cluster}

The infinite rank cluster algebra $\bClAlg(\dsd_{\infty})$ is identical
with $\bClAlg(\sd^{\GHL})$. It has a natural quantization, which
is extended from the quantization of the seed associated with a signed
word for the double Bruhat cell $\C[G^{w_{0},w_{0}}]$ in \cite{BerensteinZelevinsky05}.
Moreover, $\bUpClAlg(\dsd_{\infty})$ has the common triangular basis
$\can_{\infty}$, whose structure constants are non-negative. 

\end{Thm}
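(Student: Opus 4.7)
The plan is to address the three parts of the theorem separately, in each case reducing to finite rank statements already available from \cite{qin2023analogs}, \cite{shen2021cluster}, and \cite{BerensteinZelevinsky05}, and invoking the extension framework for chains of based (quantum) cluster algebras established earlier in the paper.

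For the identification $\bClAlg(\dsd_\infty) = \bClAlg(\sd^{\GHL})$, I would first give an explicit description of the colimit seed $\dsd_\infty$ as a quiver built from the chain of decorated Bott--Samelson seeds $\dsd_i$ in type $ADE$, with vertices indexed by level/color pairs extracted from infinite reduced lifts of $w_0$. I would then compare this quiver and its frozen/unfrozen partition with those of the GHL seed $\sd^{\GHL}$ from \cite{geiss2024representations}: both are determined by the underlying Cartan matrix together with a height function, so the matching reduces to a local, vertex-by-vertex check of the arrows. The combinatorial isomorphism of the two quivers then yields the identification of the associated cluster algebras.

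For the quantization, I would start with the quantization matrix from \cite{BerensteinZelevinsky05} attached to the double Bruhat seed of $\C[G^{w_0,w_0}]$ along a signed word. The subword corresponding to each $\dsd_i$ produces a quantization matrix $\Lambda(\dsd_i)$, and the nontrivial check is that these are compatible with the chain inclusions $\dsd_i \hookrightarrow \dsd_{i+1}$. Once this is verified, the colimit $\Lambda_\infty$ is well defined, and the quantum seed compatibility condition with the exchange matrix of $\dsd_\infty$ is local (only finitely many rows of the exchange matrix are involved at a time), hence inherited from each finite rank stage. For the common triangular basis, I would apply the extension theorem for chains of based cluster algebras proved earlier: each $(\bUpClAlg(\dsd_i),\can(\dsd_i))$ is a based cluster algebra by \cite{qin2023analogs}, and the essential input is the nesting $\can(\dsd_i) = \can(\dsd_{i+1}) \cap \bUpClAlg(\dsd_i)$, which would follow from compatibility of the common triangular bases under the natural subseed embeddings. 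Non-negativity of the structure constants of $\can_\infty$ then transfers from each finite rank, since any three-term product of basis elements already lives in some $\bUpClAlg(\dsd_i)$.

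The main obstacle is the seed identification in the first part: the Bott--Samelson quiver and the GHL quiver arise from rather different frameworks (combinatorics of reduced words versus representation theory of shifted quantum affine algebras), and the bookkeeping required to match their vertex sets and arrow multiplicities is delicate. To manage this, I would factor the comparison through the classical cluster structure on $\qO[N_-]$, exploiting the coherence statement Theorem \ref{thm:identical-cluster-q-qp} in order to work with a fixed convenient reduced word at each finite rank stage before passing to the colimit.
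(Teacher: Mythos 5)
Your first and third parts track the paper's argument: the identification $\bClAlg(\dsd_{\infty})=\bClAlg(\sd^{\GHL})$ is indeed just a comparison of $\tB(\dsd_{\infty})$ with the $B$-matrix of $\sd^{\GHL}$ for the specific shuffle $\ubi=(\eta_{1},-\eta_{1},\ldots,\eta_{l},-\eta_{l})$ with $\ueta$ $\uc$-adapted (Proposition \ref{prop:identify-seed-GHL}), and the common triangular basis with non-negative structure constants is obtained exactly as you say, from the finite-rank result of \cite{qin2023analogs} applied to each $\bUpClAlg(\dsd_{s})$ together with $\bUpClAlg(\dsd_{\infty})=\cup_{s}\bUpClAlg(\dsd_{s})$ (Lemma \ref{lem:extend-bUpClAlg}); the nesting $\can(\dsd_{s})=\can(\dsd_{s+1})\cap\bUpClAlg(\dsd_{s})$ for good sub seeds is a citable result of \cite{qin2023analogs}, not something left to prove. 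Your suggestion to route the seed identification through $\qO[N_{-}]$ and Theorem \ref{thm:identical-cluster-q-qp} is unnecessary for this part (that theorem is used for the braid-group application, not here), but it is not an error.

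The quantization part, however, has a genuine gap, and the direction of your argument is reversed. In the chain $\dsd_{0}\subset\dsd_{1}\subset\cdots$, the double Bruhat seed is the \emph{smallest} member: $\dsd_{0}=\dsd(\ubi)$ is the seed for $\C[G^{w_{0},w_{0}}]$, and only there does \cite{BerensteinZelevinsky05} provide a quantization matrix $\Lambda^{(0)}$. For $s\geq1$ the words $\ubi^{(s)}=(\uc^{s},\ubi,\nu(\uc)^{s})$ are not reduced (their positive and negative parts have length $l(w_{0})+s|J|$), so $\dsd_{s}$ is a double Bott--Samelson seed to which the Berenstein--Zelevinsky construction does not apply, and there is no larger BZ-quantized seed from which "the subword corresponding to each $\dsd_{i}$" could inherit a quantization by restriction. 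The actual task is the opposite one: to \emph{extend} $\Lambda^{(0)}$ from the good sub seed $\dsd_{s}$ to the bigger seed $\dsd_{s+1}$ at each step, and the existence and uniqueness of such an extension is not automatic --- it is exactly the content of Lemma \ref{lem:extend-quantization} (requiring $B_{31}=0$, $B_{32}$ of full rank, $|I_{2}|=|I_{3}|$, and connectedness), whose hypotheses must be checked for the pair $(\dsd_{s},\dsd_{s+1})$ by an explicit computation of the columns of $B_{32}$ (Lemma \ref{lem:dsd-extend-condition}). Your proposal contains no substitute for this step; the "compatibility with the chain inclusions" you flag as the nontrivial check is in fact a consequence of the uniqueness of the extension, whereas the existence of any compatible $\Lambda(\dsd_{s})$ for $s\geq1$ is the point that needs proof. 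Without it, the quantization clause of the theorem, and hence the quantum statement about $\bUpClAlg(\dsd_{\infty})$, is not established.
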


While completing this paper, the author learned from David Hernandez
that Francesca Paganelli is preparing a work \cite{paganelli2025quantum},
in which she uses a different approach to the quantization of cluster
algebras $\bClAlg(\sd^{\GHL})$ arising from shifted quantum affine
algebras for simply-laced types. After the first version of this paper
was posted on the arXiv, another proof of Theorem \ref{thm:intro-OH-conj}
was obtained independently by Kashiwara--Kim--Oh--Park \cite{KKOP25}
using a different method.

\subsection{Contents}

In Section \ref{sec:Preliminaries-on-cluster}, we review basics of
cluster algebras.

In Section \ref{sec:Reviews-on-cluster-words}, we review cluster
algebras associated with signed words, i.e., arising from double Bott--Samelson
cells, following \cite{shen2021cluster}\cite{qin2023analogs}.

In Section \ref{sec:Based-cluster-algebras-infinite-ranks}, we present
a fundamental yet powerful method to extend finite rank based (quantum)
cluster algebras to infinite rank ones.

In Section \ref{sec:Infinite-rank-cluster-q-aff}, we extend finite
rank cluster algebras arising from representations of quantum affine
algebras to recover some infinite rank cluster algebras. Then we discuss
various applications, proving Theorem \ref{thm:intro-KR-poly-is-KL-basis}
and Theorem \ref{thm:intro-A-equal-U-inf-rank}.

In Section \ref{sec:Braid-group-action}, we compute the fundamental
variables via braid group actions, proving Theorem \ref{thm:intro-braid-fundamental}
and Theorem \ref{thm:intro-OH-conj}.

In Section \ref{sec:Cluster-algebras-from-shifted}, we extend cluster
algebras from double Bruhat cells to recover infinite rank cluster
algebras arising from representations of shifted quantum affine algebras,
proving Theorem \ref{thm:intro-shifted-q-aff-cluster}.

In Section \ref{sec:Cluster-algebras-from-dBS}, we briefly review
the decorated double Bott--Samelson cells following \cite{shen2021cluster}.

In Section \ref{sec:Skew-symmetric-bilinear-forms}, we briefly review
the skew-symmetric bilinear forms used for quantization of cluster
algebras arising from quantum affine algebras.

\subsection{Convention}

We will work with $\kk=\Z$ at the classical level and $\kk=\Z[q^{\pm\Hf}]$
at the quantum level, where $q^{\Hf}$ is a formal quantum parameter.
We understand $q^{\Hf}=1$ at the classical level. The set of non-negative
elements in $\kk$ is defined to be $\N$ or $\N[q^{\pm\Hf}]$, respectively.

We will denote $z\sim z'$ if $z=q^{\alpha}z'$ for some $\alpha\in\Q$.
In this case, we say $z$ and $z'$ $q$-commute.

All vectors will be column vectors unless otherwise specified.

Assume we are given a set $I=I_{\ufv}\sqcup I_{\fv}$. If $\sigma$
is a permutation on $I_{\ufv}$, we extend $\sigma$ to a permutation
on $I$, still denoted $\sigma$, such that it acts trivially on $I_{\fv}$.

Assume we are given any $\Q$-matrix $\tB=(b_{ij})_{i\in I,j\in I_{\ufv}}$
such that either $b_{ij}=b_{ji}=0$ or $b_{ij}b_{ji}<0$, $\forall i,j$.
We can associate with $\tB$ a (not necessarily unique) weighted oriented
graph $Q$, called a valued quiver: its set of vertices is $I$; there
is an arrow from $i$ to $j$ with weight $(b_{ij},-b_{ji})$ if $b_{ij}>0$;
the arrows between $i,j\in I_{\fv}$ could be chosen arbitrarily.
When we draw the quiver, a collection of weight $(w_{s},-w_{s}')$
arrows is equivalent to a weight $(\sum_{s}w_{s},-\sum_{s}w_{s}')$
arrow. We will use solid arrows for denoting weight $(1,1)$ arrows
and dashed arrows for weight $(\Hf,\Hf)$ arrows. We often depict
$I_{\ufv}$ by circular nodes and $I_{\fv}$ by rectangular nodes.

We use $\tB^{T}$ to denote the transpose of $\tB$. If $I'\subset I$
and $I'_{\ufv}\subset I_{\ufv}$, we might use $B_{I',I'_{\ufv}}$
to denote the submatrix $(b_{ij})_{i\in I',j\in I'_{\ufv}}$.

\section*{Acknowledgments}

The author would like to thank David Hernandez, Sira Gratz, Jianrong
Li, and Linhui Shen for helpful discussions. He also thanks Ryo Fujita for drawing
his attention to the works \cite{fujita2023isomorphisms} \cite{lee2024quantum},
which contain earlier proofs of Theorem \ref{thm:identical-cluster-q-qp}
by different methods.

\section{Preliminaries on cluster algebras\label{sec:Preliminaries-on-cluster}}

\subsection{Based cluster algebras\label{subsec:Based-cluster-algebras}}

\subsubsection*{Seeds}

Let $I$ denote a given index set with a partition $I=I_{\ufv}\sqcup I_{\fv}$
into its unfrozen and frozen part, respectively. Choose symmetrizers
$d_{i}\in\N_{>0}$, $i\in I$.

For our purpose, we only consider countable sets $I$. See \cite[Remark 2.2 and Remark 3.18]{gratz2015cluster}
for further discussion on the countability.

Let there be given $b_{ij}\in\Q$ for $i,j\in I$, such that $b_{ij}d_{j}=-b_{ji}d_{i}$.
Denote $\tB:=(b_{ik})_{i\in I,k\in I_{\ufv}}$ and $B:=\tB_{I_{\ufv}\times I_{\ufv}}$,
which are assumed to be $\Z$-matrices. We further assume that $\tB$
is locally finite, i.e., for any $j$, only finite many $b_{ij}$
and finitely many $b_{jk}$ are non-zero.

\begin{Lem}\label{lem:locally-finite-product}

Let there be given an $I_{1}\times I_{2}$-matrix $U$ and an $I_{2}\times I_{3}$
matrix $V$, such that $U$ is locally finite or $V$ is locally finite,
then $UV$ is well-defined. If both $U,V$ are locally finite, $UV$
is locally finite as well.

\end{Lem}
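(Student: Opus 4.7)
The plan is to unpack the definition of ``locally finite'' given just above the statement (for each index $j$, both the $j$-th row and the $j$-th column have only finitely many non-zero entries) and verify the two assertions by elementary bookkeeping on finite supports.

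First I would establish well-definedness of the product. Writing the $(i,k)$-entry formally as $(UV)_{ik} = \sum_{j \in I_2} U_{ij} V_{jk}$, this expression makes sense as soon as the summand is non-zero for only finitely many $j$. If $U$ is locally finite, then the set $S_i := \{j \in I_2 : U_{ij} \neq 0\}$ is finite by the row-finiteness condition on $U$, and the sum collapses to the finite sum $\sum_{j \in S_i} U_{ij} V_{jk}$. Symmetrically, if $V$ is locally finite, then $T_k := \{j \in I_2 : V_{jk} \neq 0\}$ is finite by the column-finiteness condition on $V$, again forcing the sum to be finite.

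For the second assertion, assuming that both $U$ and $V$ are locally finite, I would verify the two finiteness conditions on $UV$ separately. Fix $i \in I_1$: a non-vanishing entry $(UV)_{ik}$ requires some $j$ with both $U_{ij} \neq 0$ and $V_{jk} \neq 0$, so the support of the $i$-th row of $UV$ is contained in $\bigcup_{j \in S_i} \{k \in I_3 : V_{jk} \neq 0\}$, a finite union (since $S_i$ is finite) of finite sets (by row-finiteness of $V$), hence finite. Symmetrically, for fixed $k \in I_3$, the $k$-th column of $UV$ is supported in $\bigcup_{j \in T_k} \{i \in I_1 : U_{ij} \neq 0\}$, which is again a finite union of finite sets. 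Hence $UV$ is locally finite.

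The argument presents no substantive obstacle; the only point requiring care is to track at each step which direction of finiteness (row versus column) is being invoked, but the two sides enter perfectly symmetrically between $U$ and $V$, so the verification is essentially a matter of careful bookkeeping.
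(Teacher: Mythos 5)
Your proof is correct and follows essentially the same argument as the paper: reduce the sum defining $(UV)_{ik}$ to a finite sum using row-finiteness of $U$ or column-finiteness of $V$, and then bound the support of each row (resp. column) of $UV$ by a finite union of finite sets. Your write-up is just a slightly more explicit version of the paper's proof, including the symmetric column case that the paper dispatches with ``we can similarly show''.
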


\begin{proof}

When $U$ or $V$ is locally finite, for any $(i,k)\in I_{1}\times I_{3}$,
we have $(UV)_{ik}=\sum_{j\in I_{2}}U_{ij}V_{jk}$, which is a finite
sum and thus well-defined. Next, assume both $U$ and $V$ are locally
finite. Then, for any given $i$, $J:=\{j|U_{ij}\neq0\}$ is finite.
Correspondingly, $K:=\{k|j\in J,V_{jk}\neq0\}$ is finite. Therefore,
only finitely many $(UV)_{ik}$ are non-zero. We can similarly show
that, for any given $k$, only finitely many $(UV)_{ik}$ are non-zero.

\end{proof}

Define $\cone:=\oplus_{i\in I}\Z f_{i}$ and $\yCone:=\oplus_{k\in I_{\ufv}}\Z e_{k}$,
where $f_{i}$, $e_{k}$ are understood as the unit vectors. The elements
of $\cone$ is denote by $m=(m_{i})_{i\in I}=\sum m_{i}f_{i}$ where
$m_{i}\in\Z$ and those in $\yCone$ by $n=(n_{k})_{k\in I_{\ufv}}=\sum n_{k}e_{k}$,
$k\in\Z$. Introduce the linear map $p^{*}:\yCone\rightarrow\cone$
such that $p^{*}(n):=\tB n:=\sum_{k\in I_{\ufv}}n_{k}(\sum_{i\in I}b_{ik}f_{i})$.

A compatible Poisson structure is a $\Z$-valued skew-symmetric bilinear
form $\lambda$ on $\cone$ such that $\lambda(f_{i},p^{*}e_{k})=\delta_{ik}\diag_{k}$
for some $\diag_{k}\in\N_{>0}$. If such a $\lambda$ is given, we
define the quantization matrix $\Lambda=(\Lambda_{ij})_{i,j\in I}:=(\lambda(f_{i},f_{j}))_{i,j\in I}$.
The pair $(\tB,\Lambda)$ is called compatible following \cite{BerensteinZelevinsky05}.
Note that such a $\lambda$ might not exist. If $\lambda$ exists,
$p^{*}$ must be injective.

When $|I|<\infty$, we say $\sd$ is of full rank if $\tB$ is of
full rank or, equivalently, $p^{*}$ is injective. In this case, a
compatible Poisson structure $\lambda$ must exist, see \cite{GekhtmanShapiroVainshtein03,GekhtmanShapiroVainshtein05}.

\begin{Def}

The collection $\sd:=(I,I_{\ufv},(d_{i})_{i\in I},\tB,(x_{i})_{i\in I})$
is called a seed, where $x_{i}$ are indeterminates called $x$-variables
or cluster variables. It is further called a quantum seed if we choose
a compatible Poisson structure $\lambda$ for $\sd$.

\end{Def}

The cardinality of $|I|$ is called the rank of $\sd$. The symbols
$I,I_{\ufv},d_{i}$ of a seed $\sd$ will often be omitted for simplicity.

\begin{Def}

We call $x^{m}$ a cluster monomial of $\sd$ if $m\in\oplus_{i\in I}\N f_{i}$
and a localized cluster monomial of $\sd$ if $m\in(\oplus_{k\in I_{\ufv}}\N f_{k})\oplus(\oplus_{j\in I_{\fv}}\Z f_{j})$.

\end{Def}

Take $\kk=\Z$ or $\Z[q^{\pm\Hf}]$, where $q^{\Hf}$ is a formal
quantum parameter. Let $\LP$ denote the Laurent polynomial ring $\kk[\cone]:=\kk[x_{i}^{\pm}]_{i\in I}$,
where we identify $x^{f_{i}}=x_{i}$ and use $\cdot$ to denote its
commutative product. We further introduce the twisted product $*$
for $\LP$:
\begin{align*}
x^{m}*x^{h} & :=q^{\Hf\lambda(m,h)}x^{m+h},\ \forall m,h\in\cone.
\end{align*}
By the multiplication of $\LP$, we mean $*$ unless otherwise specified. 

\begin{Lem}\label{lem:Ore}

$\LP$ satisfies the (left) Ore condition: for any $0\neq a,b\in\LP$,
we have $a\LP\cap b\LP\neq0$.

\end{Lem}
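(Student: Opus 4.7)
The plan is to reduce to the finite-rank case, where the quantum torus is a Noetherian domain and the Ore condition is classical.

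First I would observe that any non-zero element of $\LP$ is a finite $\kk$-linear combination of Laurent monomials $x^{m}$, $m\in\cone$. Hence there exists a finite subset $J\subset I$ supporting the monomials appearing in both $a$ and $b$, and setting $\LP_{J}:=\kk[x_{j}^{\pm}]_{j\in J}$ we have $a,b\in\LP_{J}$. Since $x^{m}*x^{h}=q^{\Hf\lambda(m,h)}x^{m+h}$ remains supported on $J$ whenever $m$ and $h$ are, $\LP_{J}$ is closed under $*$ and is therefore a subring of $(\LP,*)$.

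Next I would argue that $\LP_{J}$ is a Noetherian domain. Fixing an ordering $j_{1},\dots,j_{r}$ of $J$, one builds $\LP_{J}$ as an iterated skew Laurent polynomial extension of $\kk$ by the generators $x_{j_{a}}^{\pm}$, using the $q$-commutation relations $x_{j_{a}}*x_{j_{b}}=q^{\lambda(f_{j_{a}},f_{j_{b}})}x_{j_{b}}*x_{j_{a}}$ among monomials, which makes sense because $\lambda$ is $\Z$-valued on $\cone$. Since $\kk$ (either $\Z$ or $\Z[q^{\pm\Hf}]$) is a commutative Noetherian domain, each skew Laurent step preserves both properties, so $\LP_{J}$ inherits them.

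Finally I would invoke the classical fact that Ore holds in any Noetherian domain: the ascending chain of right ideals $\sum_{k=0}^{n}a^{k}b\LP_{J}$ stabilizes, producing a non-trivial relation $a^{N+1}b=\sum_{k=0}^{N}a^{k}br_{k}$ with some $r_{k}$ non-zero; picking the minimal index $m$ with $r_{m}\neq0$ and cancelling $a^{m}$ on the left using the domain property, one obtains $0\neq br_{m}\in a\LP_{J}\cap b\LP_{J}$, which a fortiori lies in $a\LP\cap b\LP$. The only step requiring any thought is the reduction to the finite rank subring $\LP_{J}$, which is the essential use of local finiteness of supports in $\LP$; once that is in hand, the rest is entirely standard.
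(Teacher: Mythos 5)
Your proof is correct and follows essentially the same route as the paper: both reduce to the finite-rank quantum torus $\LP_{J}=\kk[x_{j}^{\pm}]_{j\in J}$ containing $a$ and $b$, which is the only step where the infinite-rank structure (finite supports) matters. The only difference is that where the paper simply cites Berenstein--Zelevinsky for the Ore property of a finite-rank quantum torus, you unwind that citation by checking that $\LP_{J}$ is a Noetherian domain (iterated skew Laurent extension over $\Z$ or $\Z[q^{\pm\Hf}]$) and running the standard Goldie-type ascending chain argument, which is a valid and self-contained substitute.
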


\begin{proof}

For any $0\neq a,b\in\LP$, we can find a subalgebra $\LP':=\kk[x_{i_{1}}^{\pm},\ldots,x_{i_{l}}^{\pm}]$
of $\LP$, such that $a,b\in\LP'$. By \cite{BerensteinZelevinsky05},
we have $a\LP'\cap b\LP'\neq0$.

\end{proof}

By Lemma \ref{lem:Ore}, we can construct the skew-field of fractions
of $\LP$, denoted by $\cF$, see \cite{BerensteinZelevinsky05}.

If $\kk=\Z[q^{\pm\Hf}]$, we introduce the bar-involution $\overline{(\ )}$
on $\LP$, which is the $\Z$-linear map such that $\overline{q^{\alpha}x^{m}}=q^{-\alpha}x^{m}$.

We also introduce $\bLP:=\kk[x_{i}^{\pm}]_{i\in I_{\ufv}}[x_{j}]_{j\in I_{\fv}}$,
which should not be misunderstood as the bar involution acting on
$\LP$. Define $\frGroup$ to be multiplicative group generated by
$x^{h}$, $h\in I_{\fv}$. 

In general, we will use the symbol $(\sd)$ to explicitly remind that
the data is associated with the seed $\sd$.

For any permutation $\sigma$ of $I$ such that $\sigma I_{\ufv}=I_{\ufv}$,
$\sigma I_{\fv}=I_{\fv}$, we have the permuted seed $\sigma\sd$
obtained from $\sd$ by relabeling the vertices via $\sigma$: $x_{\sigma i}(\sigma\sd):=x_{i}$,
$d_{\sigma i}(\sigma\sigma):=d_{i}$, $b_{\sigma i,\sigma j}(\sigma\sd):=b_{ij}$,
$\Lambda_{\sigma i,\sigma j}(\sigma\sd):=\Lambda_{ij}$, etc.

\subsubsection*{Mutations}

Let $\sd$ denote a given seed. Following \cite{fomin2002cluster}
\cite{BerensteinZelevinsky05}, for any $k\in I_{\ufv}$, we have
an operation $\mu_{k}$, called a mutation, such that $\mu_{k}$ produces
a seed $\sd':=\mu_{k}\sd=(I',I'_{\ufv},(d'_{i})_{i\in I},\tB',(x'_{i})_{i\in I})$
from $\sd$ as below. Denote $[\ ]_{+}=\max(\ ,0\}$. For any sign
$\varepsilon\in\pm1$, we introduce an $I\times I$-matrix $\tE_{\varepsilon}$
and an $I_{\ufv}\times I_{\ufv}$-matrix $F_{\varepsilon}$ such that
\begin{align*}
(\tE_{\varepsilon})_{ij} & =\begin{cases}
\delta_{ij} & i\neq k,j\neq k\\
-1 & i=j=k\\{}
[-\varepsilon b_{ik}]_{+} & i\neq k,j=k
\end{cases}\\
(F_{\varepsilon})_{ij} & =\begin{cases}
\delta_{ij} & i\neq k,j\neq k\\
-1 & i=j=k\\{}
[\varepsilon b_{kj}]_{+} & i=k,j\neq k
\end{cases}
\end{align*}

Note that $\tE_{\varepsilon}$ and $F_{\varepsilon}$ are locally-finite.
By Lemma \ref{lem:locally-finite-product}, we can define $\tB':=\tE_{\varepsilon}\tB F_{\varepsilon}$.
If $\sd$ has a quantization matrix $\Lambda$, the quantization matrix
$\Lambda'$ for $\tB'$ is given by $\Lambda'=(\tE_{\varepsilon})^{T}\Lambda\tE_{\varepsilon}$.
Note that $\tB'$ and $\Lambda'$ are well-defined, compatible, and
$\tB'$ is locally finite.

We further connect the cluster variables $x_{i}$ and $x_{i}'$ by
a $\kk$-algebra isomorphism $\mu_{k}^{*}:\cF'\simeq\cF$, such that
\begin{align*}
\mu_{k}^{*}x_{i}'= & \begin{cases}
x_{i} & i\neq k\\
x_{k}^{-1}\cdot(x^{\sum_{j\in I}[-b_{jk}]_{+}f_{j}}+x^{\sum_{i\in I}[b_{ik}]_{+}f_{i}}) & i=k
\end{cases}.
\end{align*}
We will often identify $\cF'$ and $\cF$ via $\mu_{k}^{*}$ and then
omit the symbol $\mu_{k}^{*}$ for simplicity.

Note that $\mu_{k}$ is an involution and does not depend on the choice
of $\varepsilon$. In addition, $x_{j}$, where $j\in I_{\fv}$, are
preserved by mutations. They are called the frozen variables.

In general, for any finite sequence $\uk=(k_{1},k_{2},\ldots,k_{l})$
of letters in $I_{\ufv}$, we denote the mutation sequence $\seq:=\seq_{\uk}:=\mu_{k_{l}}\cdots\mu_{k_{2}}\mu_{k_{1}}$
(read from right to left). For any permutation $\sigma$ of $I$ such
that $\sigma I_{\ufv}=I_{\ufv}$, let $\seq^{\sigma}$ denote the
composition $\sigma\seq$, called a permutation mutation sequence.

For any given initial seed $\sd_{0}$, let $\Delta^{+}:=\Delta_{\sd_{0}}^{+}$
denote the set of seeds obtained from $\sd_{0}$ by any mutation sequences.
We could use $\Delta^{+,\sigma}:=\Delta_{\sd_{0}}^{+,\sigma}$ to
denote the seeds obtained from $\sd_{0}$ by any permutation mutation
sequences.

\subsubsection*{Cluster algebras}

Let there be given an initial seed $\sd_{0}$. 

\begin{Def}

The (partially compactified) cluster algebra $\bClAlg=\bClAlg(\sd_{0})$
is defined to be $\kk$-algebra generated by all the cluster variables
$x_{i}(\sd)$, where $i\in I$, $\sd\in\Delta^{+}$. We define the
(localized) cluster algebra $\clAlg$ to be its localization at the
frozen variables $x_{j}$, $j\in I$.

We define the (partially compactified) upper cluster algebra $\bUpClAlg=\bUpClAlg(\sd_{0})$
as $\cap_{\sd\in\Delta^{+}}\bLP(\sd)$, where we identify different
skew-fields of fractions $\cF(\sd)$ via mutations. The (localized)
upper cluster algebra $\upClAlg$ is its localization at the frozen
variables or, equivalently, $\upClAlg=\cap_{\sd\in\Delta^{+}}\LP(\sd)$.

\end{Def}

By \cite{fomin2002cluster} \cite{BerensteinZelevinsky05}, for any
cluster variable $z$ and any mutation sequence $\seq$, we have $z\in\LP(\seq\sd)$.
Therefore, we have $\clAlg\subset\upClAlg$.

\begin{Thm}[{Starfish Lemma \cite{BerensteinFominZelevinsky05}\cite{BerensteinZelevinsky05}}]\label{thm:star-fish}

If $I$ is finite and $p^{*}$ is injective, $\upClAlg=\LP(\sd)\cap(\cap_{k\in I_{\ufv}(\sd)}\LP(\mu_{k}\sd))$.

\end{Thm}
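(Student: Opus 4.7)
The plan is to establish the two inclusions separately. The containment $\upClAlg \subset \LP(\sd) \cap \bigl(\bigcap_{k \in I_\ufv} \LP(\mu_k \sd)\bigr)$ is immediate from the definition $\upClAlg = \bigcap_{\sd' \in \Delta^+} \LP(\sd')$, since the right-hand side is the intersection over the subcollection $\{\sd\} \cup \{\mu_k \sd : k \in I_\ufv\}$ of $\Delta^+$. The substance lies in the reverse inclusion, which is the classical ``Starfish Lemma'' of \cite{BerensteinFominZelevinsky05} at the classical level and of \cite{BerensteinZelevinsky05} at the quantum level; I would follow their strategy.

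For the non-trivial direction, I would induct on the length $l$ of a mutation sequence $\seq = \mu_{k_l} \cdots \mu_{k_1}$, showing that any $z \in \LP(\sd) \cap \bigl(\bigcap_k \LP(\mu_k \sd)\bigr)$ belongs to $\LP(\seq \sd)$. The cases $l \leq 1$ are the hypothesis. The heart of the matter is the $l = 2$ step: for any two distinct $j, k \in I_\ufv$, one must show $\LP(\sd) \cap \LP(\mu_k \sd) \cap \LP(\mu_j \sd) \subset \LP(\mu_j \mu_k \sd)$. The inductive step then applies this two-step lemma at each intermediate seed $\mu_{k_i} \cdots \mu_{k_1} \sd$, which is legitimate because mutations preserve the full-rank (equivalently, injectivity of $p^*$) property of $\tB$.

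The key algebraic input is pairwise coprimality of the exchange binomials $P_k := x^{\sum_i [b_{ik}]_+ f_i} + x^{\sum_i [-b_{ik}]_+ f_i}$ for distinct $k \in I_\ufv$, viewed inside the polynomial ring $\kk[x_i]_{i \in I}$. The injectivity of $p^*$ enters precisely here: distinct columns of $\tB$ produce non-proportional exponent vectors, and a Newton-polytope argument in the UFD $\kk[x_i]_{i \in I}$ then precludes a common irreducible factor of $P_j$ and $P_k$. Granted this coprimality, any $z \in \LP(\sd) \cap \LP(\mu_k \sd)$ can be written, by means of the exchange relation $x_k x_k' = P_k$, as $P_k^{-N}$ times a polynomial in $\{x_i\}_{i \neq k} \cup \{x_k\}$; the additional membership $z \in \LP(\mu_j \sd)$, combined with coprimality of $P_j$ and $P_k$, forces the analogous $P_j$-denominator to clear simultaneously, placing $z$ in $\LP(\mu_j \mu_k \sd)$.

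The main obstacle is technical rather than conceptual: in the quantum case $\kk = \Z[q^{\pm\Hf}]$ with the twisted product $*$, the coprimality and denominator-clearing arguments take place in a non-commutative Ore domain rather than a UFD. The resolution, already central in \cite{BerensteinZelevinsky05}, is that the compatibility of $(\tB, \Lambda)$ forces the exchange binomials $P_k$ to $q$-commute with every Laurent monomial $x^m$, so that multiplication by $P_k^{\pm 1}$ behaves at the level of monomial supports exactly as in the commutative setting, up to explicit powers of $q^{\Hf}$. After careful bookkeeping of these $q$-powers, the quantum Starfish Lemma reduces to its classical counterpart, and the induction outlined above closes the argument.
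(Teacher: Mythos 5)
The paper gives no proof of this statement—it is quoted from \cite{BerensteinFominZelevinsky05} and \cite{BerensteinZelevinsky05}—and your sketch reproduces exactly the strategy of those references: the trivial inclusion from the definition of $\upClAlg$, and the hard inclusion by propagating membership in the upper bound $\LP(\sd')\cap\bigl(\bigcap_{k}\LP(\mu_{k}\sd')\bigr)$ along mutation paths, with pairwise coprimality of the exchange binomials supplied (and preserved under mutation) by injectivity of $p^{*}$, and the quantum case reduced to the commutative one via $q$-commutation in the Ore domain as in \cite{BerensteinZelevinsky05}. The only phrasing to tighten is the inductive statement: what must be carried from each intermediate seed to the next is membership in its \emph{entire} upper bound (the Laurent rings of all its neighbours), not merely in the Laurent rings along the chosen path, which is precisely what applying your two-step lemma for every direction $j$ at each intermediate seed delivers.
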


\subsubsection*{Degrees and pointedness}

Choose and fix a seed $\sd$. Assume $p^{*}$ is injective for the
moment. We introduce $\yCone^{\oplus}:=\oplus_{k\in I_{\ufv}}\N e_{k}$
and $\yCone^{+}:=\oplus_{k\in I_{\ufv}}\N e_{k}\backslash\{0\}$. 

\begin{Def}

$\forall m,h\in\cone(\sd)$, we say $m$ dominates $h$, denoted $h\preceq_{\sd}m$,
if $h\in m+p^{*}\yCone^{\oplus}$.

\end{Def}

Let $\kk[\yCone]$ denote the $\kk$-subalgebra of $\LP$ spanned
by $y^{n}:=x^{p^{*}(n)}$. We denote $y_{k}:=y^{e_{k}}$. 

Then $\kk[\yCone^{\oplus}]$ is a subalgebra of $\kk[\yCone]$. Since
$p^{*}$ is injective, $\kk[\yCone^{+}]$ is a maximal ideal of $\kk[\yCone^{\oplus}]$.
Let $\widehat{\kk[\yCone^{\oplus}]}$ denote the completion of $\kk[\yCone]$
with respect to $\kk[\yCone^{+}]$. We then introduce the following
rings of formal Laurent series:
\begin{align*}
\hLP & :=\LP\otimes_{\kk[\yCone^{\oplus}]}\widehat{\kk[\yCone^{\oplus}]}\\
\widehat{\kk[\yCone]} & :=\kk[\yCone]\otimes_{\kk[\yCone^{\oplus}]}\widehat{\kk[\yCone^{\oplus}]}
\end{align*}

\begin{Def}

An element $z\in\hLP$ is said to have degree $m$ for some $m\in\cone$
if $z=x^{m}\cdot\sum_{n\in\yCone^{\oplus}}c_{n}y^{n}$, $c_{0}\neq0$,
$c_{n}\in\kk$. We denote $\deg^{\sd}z:=m$. Note that $\deg^{\sd}x^{m}\cdot y^{n}=m+p^{*}n\preceq_{\sd}m$.

$z$ is further said to be $m$-pointed if $c_{0}=1$. In this case,
we further define the normalization of $q^{\alpha}z$, $\alpha\in\Q$,
to be $[q^{\alpha}z]:=[q^{\alpha}z]^{\sd}:=z$. 

\end{Def}

By \cite{FominZelevinsky07}\cite{gross2018canonical}\cite{Tran09},
all cluster variables are pointed in $\LP(\sd)$.

\begin{Def}

We say $\sd$ is injective-reachable if there exists a mutation sequence
$\Sigma$ and a permutation $\sigma$ of $I_{\ufv}$, such that, $\forall k\in I_{\ufv}$,
$x_{\sigma k}(\Sigma\sd)$ are $(-f_{k}+p_{k})$-pointed in $\LP(\sd)$
for some $p_{k}\in\Z^{I_{\fv}}$. We denote $\Sigma\sd$ by $\sd[1]$
in this case. 

\end{Def}

The mutation sequence $\Sigma$ is called a green to red sequence
in \cite{keller2011cluster}. We observe that, if $\sd$ is injective-reachable,
$I_{\ufv}$ must be a finite set. Moreover, in this case, all seeds
in $\Delta^{+}$ are injective-reachable, see \cite{qin2017triangular}\cite{muller2015existence}.

\subsubsection*{Tropical points}

For any $k\in I_{\ufv}$, $\sd'=\mu_{k}\sd$, we have the tropical
mutation $\phi_{\sd',\sd}$ from $\cone$ to $\cone'$.\footnote{The notion of cluster tropicalization originates from the work of Fock and Goncharov in their formulation of the Duality Conjecture \cite{FockGoncharov09}.} It is a piecewise
linear map such that, $\forall m=(m_{i})_{i}=\sum m_{i}f_{i}$, its
image $m'=(m'_{i})_{i}=\sum m'_{i}f_{i}$ is given by
\begin{align*}
m'_{i}= & \begin{cases}
-m_{k} & i=k\\
m_{i}+[b_{ik}]_{+}[m_{k}]_{+}+[-b_{ik}]_{+}[-m_{k}]_{+} & i\neq k
\end{cases}.
\end{align*}

In general, for any $\sd'=\seq\sd$, where $\seq$ denotes any mutation
sequence, let $\phi_{\sd',\sd}:\cone\rightarrow\cone'$ denote the
composition of tropical mutations along $\seq$. It only depends on
$\sd',\sd$, see \cite{gross2013birational}.

Let $[m]$ denote the equivalent class of $m$ in $\sqcup_{\sd'\in\Delta^{+}}\cone(\sd')$
under the equivalence relation induced by the identifications $\phi_{\sd',\sd}$,
called the tropical point represented by $m$. Let $\tropSet$ denote
the set of all tropical points.

An element $z\in\upClAlg(\sd)$ is called $[m]$-pointed for $m\in\cone(\sd)$
if it is $\phi_{\sd',\sd}m$-pointed in $\LP(\sd')$ for any $\sd'=\seq\sd$.

\begin{Def}

Let $\Theta$ denote any subset of $\cone(\sd)$ or $\tropSet$. A
subset $\cZ\subset\LP(\sd)$ is called $\Theta$-pointed if it takes
the form $\{\theta_{p}|p\in\Theta\}$ such that $\theta_{p}$ are
$p$-pointed.

\end{Def}

\begin{Thm}[\cite{qin2019bases}]\label{thm:tropical-basis}

Assume that $\sd$ is injective-reachable. If $\cZ$ is $\tropSet$-pointed,
it is a $\kk$-basis of $\upClAlg$.

\end{Thm}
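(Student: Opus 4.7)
My plan is to establish linear independence and spanning separately, both hinging on the dominance partial order $\preceq_\sd$ and the pointedness structure of elements of $\cZ$.

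For linear independence, suppose a finite linear relation $\sum_{[m] \in T} c_{[m]} \theta_{[m]} = 0$ holds with $c_{[m]} \neq 0$. Since tropical mutations are bijections, distinct tropical points in $T$ give distinct representatives in $\cone(\sd)$. Choose $m_0$ that is $\preceq_\sd$-maximal among these representatives. By $m_0$-pointedness, $\theta_{[m_0]}$ contains $x^{m_0}$ with coefficient $1$. For any other $[m] \in T$, the monomials of $\theta_{[m]}$ have degrees $\preceq_\sd m$, so if $x^{m_0}$ appeared then $m_0 \preceq_\sd m$, forcing $m_0 \prec_\sd m$ and contradicting maximality. Extracting the coefficient of $x^{m_0}$ therefore forces $c_{[m_0]} = 0$, a contradiction.

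For spanning I would proceed by iterative peeling. Given $z \in \upClAlg$, regard it as a finite Laurent polynomial in $\sd$, pick a $\preceq_\sd$-maximal monomial $x^{m^*}$ in its support with coefficient $c^* \in \kk$, and replace $z$ by $z - c^* \theta_{[m^*]}$. Since $\theta_{[m^*]} \in \upClAlg$ (being pointed in every cluster chart, hence Laurent there), the remainder remains in $\upClAlg$ and the monomial $x^{m^*}$ is cancelled. Iterating should yield a $\kk$-linear expansion of $z$ in $\cZ$.

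The hard part is termination: the tail of $\theta_{[m^*]}$ may introduce new monomials of degrees $\prec_\sd m^*$, so the support could grow and naive $\preceq_\sd$-descent is not obviously well-founded. To control this I would invoke the injective-reachability hypothesis and use the companion seed $\sd[1] = \Sigma\sd$. Every element of $\upClAlg$ lies in $\LP(\sd[1])$ as well, and the cluster variables $x_{\sigma k}(\sd[1])$ are $(-f_k + p_k)$-pointed in $\LP(\sd)$, which dualizes the dominance order and yields a lower bound on the $\sd$-degrees that can occur in any $\upClAlg$-element with bounded $\sd[1]$-degree. Combining the $\preceq_\sd$-upper bound coming from the current leading term of the remainder with this dual lower bound confines all $\sd$-degrees encountered during the iteration to a finite interval in $\cone(\sd)$; within this finite set, strict $\preceq_\sd$-descent of the running maximal degree terminates after finitely many steps, producing the desired finite expansion and completing the proof that $\cZ$ is a $\kk$-basis of $\upClAlg$.
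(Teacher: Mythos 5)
Your linear-independence argument is fine and is the standard leading-term extraction with respect to $\preceq_{\sd}$. Note, however, that the present paper does not prove this theorem at all: it is quoted from \cite{qin2019bases}, where the proof rests on nontrivial support (bidegree) bounds for elements of $\upClAlg$ under injective-reachability, so the only part of your proposal that needs real scrutiny is exactly the part you flag as "the hard part".

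There the proposal has a genuine gap. Your termination argument asserts that injective-reachability "yields a lower bound on the $\sd$-degrees that can occur in any $\upClAlg$-element with bounded $\sd[1]$-degree" and that this confines the whole peeling process to a finite dominance interval. No justification is given, and this assertion is essentially the main technical content of the cited theorem, not a formal consequence of the pointedness of the cluster variables of $\sd[1]$: one must actually prove a support/codegree bound for arbitrary elements of $\upClAlg$ (this is where injective-reachability enters in \cite{qin2019bases}), and in general a $\tropSet$-pointed set need not span $\upClAlg$ without such a bound, so the step cannot be waved through. Moreover, even granting such a lemma, your iteration is not controlled as stated: subtracting $c^{*}\theta_{[m^{*}]}$ changes the Laurent expansion in $\sd[1]$, and since the tropical transfer map $\phi_{\sd[1],\sd}$ is only piecewise linear and not order-preserving, a $\preceq_{\sd}$-upper bound on the degrees $m^{*}$ you peel does not automatically give a $\preceq_{\sd[1]}$-upper bound on the $\sd[1]$-degrees $\phi_{\sd[1],\sd}(m^{*})$ of the subtracted terms; hence the hypothesis "bounded $\sd[1]$-degree" is not known to persist along the iteration. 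Without these two ingredients the descent could a priori continue indefinitely, yielding only a decomposition in the completion $\hLP$ rather than a finite $\kk$-linear combination, which is precisely the issue the actual proof in \cite{qin2019bases} is designed to resolve.
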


\subsubsection*{Orders of vanishing and optimized seeds}

$\forall z\in\cF=\cF(\sd)$, we can write its reduced form $Z=x_{j}^{\nu_{j}(z)}*P*Q^{-1}$,
where $\nu_{j}(z)\in\Z$ and $P,Q\in\kk[x_{i}]_{i\in I}$ are not
divisible by $x_{j}$. 

\begin{Def}\label{def:order-vanishing}

$\nu_{j}(z)$ is called the order of vanishing of $z$ at $x_{j}=0$. 

\end{Def}

Note that the map $\nu_{j}:z\mapsto\nu_{j}(z)$ is a valuation on
$\cF(\sd)$. Moreover, it is independent of the choice of $\sd\in\Delta^{+}$,
see \cite[Lemma 2.12]{qin2023freezing}.

Observe that we have $\bLP=\{z\in\LP|\nu_{j}(z)\geq0,\forall j\in I_{\fv}\}$
and thus $\bUpClAlg=\{z\in\upClAlg|\nu_{j}(z)\geq0,\forall j\in I_{\fv}\}$.
However, we only know $\bClAlg\subset\{z\in\clAlg|\nu_{j}(z)\geq0,\forall j\in I_{\fv}\}$
in general. So $\clAlg\subset\upClAlg$ and Theorem \ref{thm:star-fish}
imply the following.

\begin{Cor}\label{cor:star-fish-bar}

We have $\bClAlg\subset\bUpClAlg$. Moreover, when $I$ is finite
and $p^{*}$ is injective, we have $\bUpClAlg=\bLP(\sd)\cap(\cap_{k\in I_{\ufv}(\sd)}\bLP(\mu_{k}\sd))$.

\end{Cor}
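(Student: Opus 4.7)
The plan is to reduce both assertions to the valuation-theoretic description $\bUpClAlg = \{z \in \upClAlg \mid \nu_j(z) \geq 0 \text{ for all } j \in I_{\fv}\}$ recorded just before the statement, combined with the already-proved inclusion $\clAlg \subset \upClAlg$ and Theorem \ref{thm:star-fish}. The crucial structural fact I will lean on is that the valuation $\nu_j$ is defined on the ambient skew-field $\cF$ and is independent of the choice of seed in $\Delta^+$ (Definition \ref{def:order-vanishing} and the subsequent remark).

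For the first inclusion $\bClAlg \subset \bUpClAlg$, I will observe that the paper has already recorded $\bClAlg \subset \{z \in \clAlg \mid \nu_j(z) \geq 0, \forall j \in I_{\fv}\}$ and $\clAlg \subset \upClAlg$. Intersecting these gives $\bClAlg \subset \{z \in \upClAlg \mid \nu_j(z) \geq 0, \forall j \in I_{\fv}\} = \bUpClAlg$, so no new ingredient is required here.

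For the equality, under the hypotheses that $I$ is finite and $p^*$ is injective, I will intersect Theorem \ref{thm:star-fish} with the nonnegativity conditions $\nu_j \geq 0$ for $j \in I_{\fv}$. Writing $\cV := \bigcap_{j \in I_{\fv}} \{z \in \cF \mid \nu_j(z) \geq 0\}$, I have $\bLP(\sd) = \LP(\sd) \cap \cV$ and $\bLP(\mu_k \sd) = \LP(\mu_k \sd) \cap \cV$ by definition, since $\nu_j$ does not depend on which Laurent polynomial ring we view an element in. Then
\begin{align*}
\bUpClAlg &= \upClAlg \cap \cV = \Bigl(\LP(\sd) \cap \bigcap_{k \in I_{\ufv}} \LP(\mu_k \sd)\Bigr) \cap \cV \\
 &= \bigl(\LP(\sd) \cap \cV\bigr) \cap \bigcap_{k \in I_{\ufv}} \bigl(\LP(\mu_k \sd) \cap \cV\bigr) = \bLP(\sd) \cap \bigcap_{k \in I_{\ufv}} \bLP(\mu_k \sd),
\end{align*}
which is the claim.

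The main point to verify carefully is that the rearrangement of intersections in the displayed computation is legitimate, and this is precisely what the seed-independence of $\nu_j$ provides; no extra argument beyond that fact and Theorem \ref{thm:star-fish} is needed. There is no real obstacle, only bookkeeping: the statement is essentially a formal corollary of the localized star-fish theorem once one encodes $\bLP$ via the valuations $\nu_j$.
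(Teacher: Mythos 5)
Your proposal is correct and follows essentially the same route as the paper: the paper likewise deduces the corollary from the description $\bUpClAlg=\{z\in\upClAlg\mid\nu_{j}(z)\geq0,\ \forall j\in I_{\fv}\}$, the observed inclusion $\bClAlg\subset\{z\in\clAlg\mid\nu_{j}(z)\geq0\}$ together with $\clAlg\subset\upClAlg$, and Theorem \ref{thm:star-fish} intersected with the valuation conditions, using the seed-independence of $\nu_{j}$. No discrepancies to report.
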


\begin{Def}

Let $j$ denote any frozen vertex of $\sd$. It is said to be optimized
in $\sd$ if $b_{jk}\geq0$, $\forall k\in I_{\ufv}$. It is called
non-essential if $B_{j,I_{\ufv}}=0$, and essential otherwise.

\end{Def}

We say $j\in I_{\fv}$ can be optimized if it is optimized in some
seed $\sd_{j}\in\Delta^{+}$, and $\sd$ can be optimized if all of
its frozen vertices can be optimized.

Assume that $p^{*}$ is injective and $j\in I_{\fv}$ are optimized
in $\sd_{j}$, respectively. Then, for any $m^{(j)}$-pointed element
$z\in\LP(\sd_{j})$, $m^{(j)}\in\cone(\sd_{j})$, we have $\nu_{j}(z)=(m^{(j)})_{j}$.
Therefore, for any $[m]$-pointed element $z\in\upClAlg$ where $m\in\cone(\sd)$,
we have $\nu_{j}(z)=(\phi_{\sd_{j},\sd}m)_{j}$. We deduce that $z\in\bUpClAlg$
if and only if $(\phi_{\sd_{j},\sd}m)_{j}\geq0$, $\forall j\in I_{\fv}$.

\begin{Prop}[{\cite[Proposition 2.15]{qin2023freezing}}]

Let $\cZ$ be a $\tropSet$-pointed $\kk$-basis of $\upClAlg$. If
$\sd$ can be optimized, $\cZ\cap\bUpClAlg$ is a basis of $\bUpClAlg$.

\end{Prop}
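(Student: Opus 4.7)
The plan is to show that $\cZ \cap \bUpClAlg$ spans $\bUpClAlg$; linear independence is automatic since $\cZ$ is a basis of $\upClAlg$. For any $z \in \bUpClAlg$ with its unique expansion $z = \sum_p c_p \theta_p$ (a finite sum, $c_p \in \kk$), I would aim to show that $c_p \neq 0$ forces $\theta_p \in \bUpClAlg$. By the observation just before the statement, this reduces to proving that for every $j \in I_{\fv}$, $c_p \neq 0$ implies $(\phi_{\sd_j, \sd} p)_j \geq 0$.

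Fix $j \in I_{\fv}$ and pass to the optimized seed $\sd_j$. Since $\theta_p \in \upClAlg \subset \LP(\sd_j)$ is a Laurent polynomial and is $\phi_{\sd_j, \sd} p$-pointed, it admits a finite expansion
\begin{equation*}
\theta_p \;=\; x^{\phi_{\sd_j, \sd} p} \Bigl(1 + \sum_{n \in \yCone^{+}} c^{(p)}_n \, y^n \Bigr).
\end{equation*}
The optimization condition $b_{jk}(\sd_j) \geq 0$ forces $(p^{*}(n))_j \geq 0$ for every $n \in \yCone^{\oplus}$, so each monomial of $\theta_p$ has $x_j$-exponent at least $(\phi_{\sd_j, \sd} p)_j$; in particular, $\nu_j(\theta_p) = (\phi_{\sd_j, \sd} p)_j$.

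I would then argue by contradiction. Suppose some $p$ with $c_p \neq 0$ had $(\phi_{\sd_j, \sd} p)_j < 0$; let $M$ be the minimum of these values over the finite support of $z$, and let $P_M$ be the (finite) set of $p$ attaining $M$. The $x_j^M$-coefficient of $z$ in $\LP(\sd_j)$ receives contributions only from pairs $(p, n)$ with $p \in P_M$ and $(p^{*}(n))_j = 0$, since any $p$ with $(\phi_{\sd_j, \sd} p)_j > M$ yields monomials of strictly larger $x_j$-exponent. The key step is to pick $p_0 \in P_M$ so that $\phi_{\sd_j, \sd} p_0$ is maximal in $\{\phi_{\sd_j, \sd} p : p \in P_M\}$ under the dominance order $\preceq_{\sd_j}$. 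The $n = 0$ summand of $\theta_{p_0}$ contributes the monomial $c_{p_0}\, x^{\phi_{\sd_j, \sd} p_0 - M f_j}$; any other contributing pair $(p', n')$ producing the same monomial would require $\phi_{\sd_j, \sd} p' + p^{*}(n') = \phi_{\sd_j, \sd} p_0$, i.e.\ $\phi_{\sd_j, \sd} p_0 \preceq_{\sd_j} \phi_{\sd_j, \sd} p'$. Maximality of $p_0$ excludes strict dominance, and bijectivity of $\phi_{\sd_j, \sd}$ together with injectivity of $p^{*}$ then force $(p', n') = (p_0, 0)$. Hence the $x_j^M$-coefficient of $z$ is nonzero, forcing $\nu_j(z) \leq M < 0$ and contradicting $z \in \bUpClAlg$. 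Running this argument at each $j \in I_{\fv}$ yields the claim.

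The hard part is this uncancellability step---isolating a leading monomial in the $x_j^M$-coefficient that no other basis element can destroy. It rests on the existence of a dominance-maximal element in the finite set $P_M$, together with the injectivity of $p^{*}$ and the bijectivity of $\phi_{\sd_j, \sd}$, both of which are part of the standing setup.
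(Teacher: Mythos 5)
Your argument is correct. The paper itself imports this statement from \cite[Proposition 2.15]{qin2023freezing} without reproducing a proof, so there is nothing to diverge from; what you wrote is exactly the expected argument, using the reduction the paper records just before the statement (for $j$ optimized in $\sd_j$, a pointed element has $\nu_j=(\deg^{\sd_j})_j$, and $z\in\bUpClAlg$ iff these coordinates are nonnegative) together with the standard uncancellability step: among the finitely many degrees $\phi_{\sd_j,\sd}p$ in the support with minimal $x_j$-coordinate, a $\preceq_{\sd_j}$-maximal one contributes a leading monomial that no other $\theta_{p'}$ can cancel, by injectivity of $p^*$ and the fact that distinct tropical points have distinct representatives in $\sd_j$.
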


\subsubsection*{Common triangular bases}

Assume $I_{\ufv}$ is finite and $\sd$ is injective-reachable for
the moment. Let $\alg$ denote $\clAlg$ or $\upClAlg$. Let $\can$
denote a given $\kk$-basis of $\alg$.

\begin{Def}

The basis $\can$ is called the triangular basis of $\alg$ with respect
to $\sd$ if the following conditions hold:

\begin{enumerate}

\item (Pointedness) It takes the form $\can=\{\can_{m}|m\in\cone(\sd)\}$
such that $\can_{m}$ are $m$-pointed.

\item (Bar-invariance) We have $\overline{\can_{m}}=\can_{m}$.

\item (Cluster compatibility) $\can$ contains the cluster monomials
in $\sd$ and $\sd[1]$.

\item (Triangularity) $\forall x_{i}(\sd)$, $\can_{m}$, we have
the following decomposition
\begin{align}
[x_{i}(\sd)*\can_{m}]^{\sd}= & \can_{m+f_{i}}+\sum_{m'\prec_{\sd}m}b_{m'}\can_{m'},\ \text{for some }b_{m'}\in\mm:=q^{-\Hf}\Z[q^{-\Hf}].\label{eq:triangular-decomposition}
\end{align}

\end{enumerate}

\end{Def}

A decomposition taking the form on the right hand side of (\ref{eq:triangular-decomposition})
is called a $(\prec_{\sd},\mm)$-unitriangular decomposition \cite{qin2017triangular}. 

Assume that $\can$ is the triangular basis. Then $\can_{m}$ is determined
by the normalized ordered products of localized cluster monomials
of the form $[x(\sd)^{m}*x(\sd[1])^{m'}]^{\sd}$, where $m_{k}m_{k'}=0$,
$\forall k\in I_{\ufv}$, via a Kazhdan-Lusztig type algorithm, see
\cite{qin2017triangular}. In particular, the triangular basis is
unique if it exists.

Note that $\can$ is closed under the $\frGroup$ commutative product:
\begin{align*}
\forall p\in\frGroup,\  & p\cdot\can_{m}\in\can.
\end{align*}

\begin{Def}

If $\can$ is the triangular basis of $\alg$ with respect to all
of its seed, it is called the common triangular basis.

\end{Def}

Assume $\can$ is the triangular basis with respect to $\sd$. By
\cite{qin2020dual}, it is the common triangular basis if and only
if it contains all cluster monomials. In this case, it is further
$\tropSet$-pointed. Moreover, Theorem \ref{thm:tropical-basis} implies
that it is a basis of $\upClAlg$, i.e., we must have $\alg=\upClAlg$.

Let $\alg$ denote $\clAlg$ or $\upClAlg$, and $\overline{\alg}$
denote $\bClAlg$ or $\bUpClAlg$ respectively. Assume that $\can$
is the triangular basis (resp. common triangular basis) of $\alg$.
If $\can\cap\overline{\alg}$ is a basis of $\overline{\alg}$, we
call it the triangular basis (resp. common triangular basis) of $\overline{\alg}$.

\subsubsection*{Based cluster algebras}

Let $\alg$ denote $\bClAlg$, $\clAlg$, $\bUpClAlg$, or $\upClAlg$.
Let there be given a $\kk$-basis $\base$ for $\alg$. We recall
based cluster algebra introduced in \cite{qin2023analogs}.

\begin{Def}

The pair $(\alg,\base)$ is called a based cluster algebra if the
following conditions hold:

\begin{enumerate}

\item $\base$ contains all cluster monomials.

\item $\forall j\in I_{\fv}$, we have $x_{j}\cdot\base\subset\base$.

\item Any $b\in\base$ is contained in $x^{m}\cdot\kk[\yCone]$ for
some $m\in\cone$.

\item At the quantum level, we further require $\overline{b}=b$,
$\forall b\in\base$.

\end{enumerate}

\end{Def}

Let $\F$ be a given field. Let $\cT$ be a $\F$-linear tensor category
in the sense of \cite[Section A.1]{kang2018symmetric}. Then its object
have finite lengths, and its tensor functor $(\ )\otimes(\ )$ is
an exact bifunctor. Let $[X]$ denote the isoclass of an object $X\in\cT$.
Then its Grothendieck ring $K_{0}(\cT)$ is unital and associative,
whose multiplication is induced from the tensor product. Note that
we have $[X\otimes Y]=\sum_{S}c_{XY}^{S}[S]$, $\forall X,Y\in\cT$,
where $S$ appearing are simple objects and $c_{XY}^{S}\in\N$.

When we work with classical cluster algebras, we assume $K_{0}(\cT)$
is a commutative $\Z$-algebra and denote $K=K_{0}(\cT)$.

When we work with quantum cluster algebras such that $\kk=\Z[q^{\pm\Hf}]$,
we make one of the following assumptions.

\begin{enumerate}

\item Assume that $K_{0}(\cT)$ is a $\Z[q^{\pm}]$-algebra, such
that $[q^{\pm}S]$ is the isoclass of a simple object whenever $[S]$
is. Define $K=K_{0}(\cT)\otimes\kk$.

\item Assume that $K_{0}(\cT)$ is a commutative $\Z$-algebra. Moreover,
assume that we can associate to $K_{0}(\cT)\otimes_{\Z}\Z[q^{\pm\Hf}]$
a $q$-twisted multiplication $*$ such that $[X]*[Y]=\sum_{S}c(q^{\Hf})_{XY}^{S}[S]$
with $c(q^{\Hf})_{XY}^{S}\in\N[q^{\pm\Hf}]$, $c(1)_{XY}^{S}=c_{XY}^{S}$.
Denote $K=K_{0}(\cT)$.

\end{enumerate}

In either case, $K$ is called the deformed Grothendieck ring associated
with $\cT$.

\begin{Def}\label{def:categorification}

We say $\cT$ categorifies $\alg$ if there is a $\kk$-algebra isomorphism
$\kappa:\alg\simeq K$, such that, for any cluster monomial $z$,
$\kappa z\sim[S]$ for some simple object $S$.

We say $\cT$ categories a based cluster algebra $(\alg,\base)$ if
$\cT$ categorifies $\alg$ and, $\forall b\in\base$, $\kappa b\sim[S]$
for some simple object $S$.

\end{Def}

\subsection{Cluster embeddings and freezing\label{subsec:Sub-seeds}}

\subsubsection*{Cluster embeddings}

Let $\sd$ and $\sd'$ denote two seeds.

\begin{Def}[\cite{qin2023analogs}]

A cluster embedding $\iota$ from $\sd$ to $\sd'$ is an embedding
$\iota:I'\rightarrow I$ such that $\iota I'_{\ufv}\subset I$, $d'_{i}=d_{\iota i}$,
and $b'_{i,k}=b_{\iota i,\iota k}$, $\forall$$i\in I'$, $k\in I'_{\ufv}$.
When $\sd$ and $\sd'$ are quantum seeds, we further require $\Lambda'_{ij}=\Lambda_{\iota i,\iota j}$,
$\forall i,j\in I$.

\end{Def}

Let $\iota$ be a cluster embedding from $\sd$ to $\sd'$. Take any
mutation sequence $\seq=\mu_{k_{l}}\cdots\mu_{k_{1}}$ on $I_{\ufv}'$
and denote $\iota\seq:=\mu_{\iota k_{l}}\cdots\mu_{\iota k_{1}}$.
Then $\iota$ is also a cluster embedding from the classical seed
$(\iota\seq)\sd'$ to $\seq\sd$.

\begin{Def}

$\sd'$ is a good sub seed of $\sd$ via the cluster embedding $\iota$
if $\tB_{(I\backslash\iota I')\times\iota I_{\ufv}'}=0$.

\end{Def}

Note that a cluster embedding $\iota$ induces an inclusion $\iota:\cF(\sd')\hookrightarrow\cF(\sd)$
such that $\iota(x'_{i})=x_{\iota i}$, $\forall i\in I$. When $\sd'$
is a good sub seed of $\sd$ via $\iota$, $\iota$ is a cluster embedding
from the (quantum) seed $(\iota\seq)\sd'$ to $\seq\sd$, and $\seq\sd'$
is also a good sub seed of $(\iota\seq)\sd$ via $\iota$. Moreover,
we have $\iota(x_{i}(\seq\sd'))=x_{\iota i}((\iota\seq)\sd)$ in $\cF(\sd)$.

\begin{Prop}[{\cite{qin2023analogs}}]\label{prop:inclusion-good-sub-cl-alg}

Assume $\sd'$ is a good sub seed of $\sd$. We have $\iota(\bClAlg(\sd'))\subset\bClAlg(\sd)$
and $\iota(\bLP(\seq\sd'))\subset\bLP(\seq\sd)$ for any mutation
sequence $\seq$. Moreover, if $|I(\sd)|<\infty$ and $\sd$ is of
full rank, we have $\iota(\bUpClAlg(\sd'))\subset\bUpClAlg(\sd)$.

\end{Prop}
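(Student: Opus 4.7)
The plan is to reduce each of the three inclusions to the key identity $\iota(x_{i}(\seq\sd'))=x_{\iota i}((\iota\seq)\sd)$ recorded just before the statement, combined with the starfish-type characterization of $\bUpClAlg$ from Corollary~\ref{cor:star-fish-bar}. That identity is itself a consequence of the good sub seed hypothesis $\tB_{(I\backslash\iota I')\times\iota I'_{\ufv}}=0$, which guarantees that mutations along $\iota\seq$ never touch vertices outside $\iota I'$, so that the variables indexed by $\iota I'$ transform in parallel with those of $\sd'$; at the quantum level $\iota$ also preserves the quantization matrix, so all calculations respect the twisted product.

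For $\iota(\bClAlg(\sd'))\subset\bClAlg(\sd)$ I would observe that $\bClAlg(\sd')$ is generated as a $\kk$-algebra by the cluster variables $x_{i}(\seq'\sd')$ over all mutation sequences $\seq'$ on $I'_{\ufv}$; the key identity sends each such generator to $x_{\iota i}((\iota\seq')\sd)$, a cluster variable of the seed $(\iota\seq')\sd\in\Delta^{+}_{\sd}$, which therefore lies in $\bClAlg(\sd)$. For $\iota(\bLP(\seq\sd'))\subset\bLP(\seq\sd)$ it suffices to test the ring generators of $\bLP(\seq\sd')$: if $k\in I'_{\ufv}$ then $\iota(x_{k}(\seq\sd'))=x_{\iota k}((\iota\seq)\sd)$ is an unfrozen cluster variable in $(\iota\seq)\sd$ and hence invertible in $\bLP((\iota\seq)\sd)$; if $j\in I'_{\fv}$ the image $x_{\iota j}((\iota\seq)\sd)$ is a cluster variable of a seed in $\Delta^{+}_{\sd}$, so by Corollary~\ref{cor:star-fish-bar} it sits in $\bClAlg(\sd)\subset\bUpClAlg(\sd)\subset\bLP((\iota\seq)\sd)$.

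For the final inclusion, under the assumptions $|I|<\infty$ and $\sd$ of full rank, I would invoke Corollary~\ref{cor:star-fish-bar} to write $\bUpClAlg(\sd)=\bLP(\sd)\cap\bigcap_{k\in I_{\ufv}}\bLP(\mu_{k}\sd)$ and check membership factor by factor. The second inclusion with empty $\seq$ already gives $\iota(\bUpClAlg(\sd'))\subset\iota(\bLP(\sd'))\subset\bLP(\sd)$; for $k=\iota k'\in\iota I'_{\ufv}$ the same inclusion with $\seq=\mu_{k'}$ places the image in $\bLP(\mu_{k}\sd)$. The case I expect to be the main obstacle is $k\in I_{\ufv}\backslash\iota I'_{\ufv}$, where no direct appeal to the second inclusion is available; here I would exploit that every element of $\iota(\bLP(\sd'))$ is a Laurent expression only in the variables $\{x_{i}(\sd)\}_{i\in\iota I'}$ and in particular does not involve $x_{k}(\sd)$, while $\mu_{k}$ preserves every $x_{i}(\sd)$ with $i\neq k$ together with its frozen/unfrozen type, so any such element automatically sits in $\bLP(\mu_{k}\sd)$. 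Collecting the three cases against Corollary~\ref{cor:star-fish-bar} yields $\iota(\bUpClAlg(\sd'))\subset\bUpClAlg(\sd)$.
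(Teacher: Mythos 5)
Your reduction of the first two inclusions to the identity $\iota(x_{i}(\seq\sd'))=x_{\iota i}((\iota\seq)\sd)$ is fine, and your starfish strategy for the third inclusion is the right shape (the paper itself only quotes this proposition from \cite{qin2023analogs}, so there is no in-paper proof to compare against). The genuine gap is in your last case. For $k\in I_{\ufv}\setminus\iota I'_{\ufv}$ you assert that elements of $\iota(\bLP(\sd'))$ ``do not involve $x_{k}(\sd)$''. That would force $k\notin\iota I'$, i.e.\ that frozen vertices of $\sd'$ map to frozen vertices of $\sd$. But a cluster embedding only requires $\iota I'_{\ufv}\subset I_{\ufv}$: a frozen vertex of $\sd'$ may land on an unfrozen vertex of $\sd$, and then $k\in\iota I'_{\fv}\cap I_{\ufv}$ is a legitimate index for which $\iota(\bLP(\sd'))$ does contain (non-negative) powers of $x_{k}(\sd)$. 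This is not a fringe case: the freezing construction $\sd'=\frz_{F}\sd$ with the identity embedding, which is exactly how good sub seeds arise throughout this paper, has every $k\in F\subset I_{\ufv}$ of this type. So your stated justification for $\iota(\bLP(\sd'))\subset\bLP(\mu_{k}\sd)$ fails precisely in the situation where the proposition is most used.

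The conclusion survives, but you need a different argument for that case. Since $\iota(\bLP(\sd'))$ is generated over $\kk$ by $x_{\iota i}(\sd)^{\pm1}$, $i\in I'_{\ufv}$, and $x_{\iota j}(\sd)$, $j\in I'_{\fv}$, it suffices to put these generators into $\bLP(\mu_{k}\sd)$. Generators with index $\neq k$ are untouched by $\mu_{k}$, so the only issue is $x_{k}(\sd)$ itself, which occurs only with non-negative exponents. Apply the mutation formula at $k$ (mutation is an involution): up to powers of $q^{\Hf}$,
\begin{align*}
x_{k}(\sd)\;=\;x_{k}(\mu_{k}\sd)^{-1}*\Bigl(q^{\alpha}x^{\sum_{j\neq k}[-b'_{jk}]_{+}f_{j}}+q^{\beta}x^{\sum_{i\neq k}[b'_{ik}]_{+}f_{i}}\Bigr),
\end{align*}
where the two monomials involve only the variables $x_{i}(\mu_{k}\sd)=x_{i}(\sd)$, $i\neq k$, with non-negative exponents, and $x_{k}(\mu_{k}\sd)^{-1}\in\bLP(\mu_{k}\sd)$ because $k\in I_{\ufv}$. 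Hence $x_{k}(\sd)\in\bLP(\mu_{k}\sd)$, and so $\iota(\bLP(\sd'))\subset\bLP(\mu_{k}\sd)$ for every $k\in I_{\ufv}\setminus\iota I'_{\ufv}$; combined with your treatment of $k\in\iota I'_{\ufv}$ and of $\bLP(\sd)$, Corollary \ref{cor:star-fish-bar} then gives $\iota(\bUpClAlg(\sd'))\subset\bUpClAlg(\sd)$ as you intended.
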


\begin{Eg}

Any permutation $\sigma$ on $I$, such that $\sigma I_{\ufv}=I_{\ufv}$,
is a cluster embedding from $\sd$ to $\sigma\sd$.

\end{Eg}

\subsubsection*{Freezing}

Let $\sd$ be a given seed. Choose any subset $F$ of $I$. By freezing
the vertices in $F$, we obtain a new seed $\frz_{F}\sd$, such that
$I_{\ufv}(\frz_{F}\sd)=I_{\ufv}\backslash F$.

Note that $\frz_{F}\sd$ is a good sub seed of $\sd$ via the cluster
embedding $\iota:I\rightarrow I$, which is the identity map.

\section{Reviews on cluster algebras associated with signed words\label{sec:Reviews-on-cluster-words}}

\subsection{Seeds associated with signed words\label{subsec:Seeds-associated-with-signed}}

\subsubsection*{Signed words}

Let $J$ denote a finite subset of $\Z_{>0}$. Let $\ubi$ denote
a finite sequence $(\bi_{1},\ldots,\bi_{l})$ of letters in $\pm J$,
called a signed word, where $l\in\N$. Denote its length by $l(\ubi):=l$.
For any $k\in[1,l]$, define its successor and predecessor to be 
\begin{align*}
k[1] & :=\min(\{k'\in[k+1,l],|\bi_{k'}|=|\bi_{k}|\}\cup\{+\infty\}),\\
k[-1] & :=\max(\{k'\in[1,k-1]\},|\bi_{k'}|=|\bi_{k}|\}\cup\{-\infty\}).
\end{align*}
We then define $k[d\pm1]:=k[d][\pm1]$ inductively when $k[d]\in\Z$,
$d\in\Z$. 

For $a\in J$, we define the orders 
\begin{align*}
O^{\ubi}([j,k];a) & :=|\{s\in[j,k]\ |\ |\bi_{s}|=a\},\\
O^{\ubi}(a) & :=O^{\ubi}([1,l];a),\\
o_{+}^{\ubi}(k) & :=O^{\ubi}([k+1,l];|\bi_{k}|),\\
o_{-}^{\ubi}(k) & :=O^{\ubi}([1,k-1];|\bi_{k}|).
\end{align*}
Denote $k^{\max}:=k[o_{+}^{\ubi}(k)]$ and $k^{\min}:=k[o_{-}^{\ubi}(k)]$. 

We denote $\ubi_{[j,k]}:=(\bi_{j},\ldots,\bi_{k})$, $-\ubi:=(-\bi_{1},\ldots,-\bi_{l})$,
$\ubi\op:=(\bi_{l},\ldots,\bi_{1})$, and the support $\supp\ubi:=\{a\in J|O^{\ubi}(a)>0\}$.
For any $s\in\N$, we let $\ubi^{s}$ denote the signed word $(\ubi,\ubi,\ldots,\ubi)$
where $\ubi$ appears $s$ times.

We further introduce the vertex set $I(\ubi):=\{\binom{a}{d}^{\ubi}|a\in\supp\ubi,d\in[0,O^{\ubi}(a)-1]\}$.
We will identify it with $[1,l]$ via the isomorphism $[1,l]\simeq I(\ubi)$
such that $k$ is identified with $\binom{|\bi_{k}|}{o_{-}^{\ubi}(k)}^{\ubi}$.
Then the natural order on $[1,l]$ induces the order $<_{\ubi}$ on
$I(\ubi)$. Define the sign $\varepsilon_{k}:=\varepsilon_{\binom{|\bi_{k}|}{o_{-}^{\ubi}(k)}^{\ubi}}:=\sign(\bi_{k})$.

Note that, when $\ubi'$ is another signed word with $O^{\ubi}(a)=O^{\ubi'}(a)$,
$\forall a\in J$, we can naturally identify $\binom{a}{d}^{\ubi}$
with $\binom{a}{d}^{\ubi'}$. However, $<_{\ubi}$ and $<_{\ubi'}$
are different in general.

We often omit the superscript $\ubi$ when the context is clear. And
we will use the symbols $k$ and $\binom{a}{d}$ interchangeably.

\subsubsection*{Positive braids}

We choose and fix a generalized Cartan matrix $C=(C_{ab})_{a,b\in J}$,
i.e., it satisfies $C_{aa}=2$, $C_{ab}\leq0$ for $a\neq b$, and
there exist $\symm_{a}\in\N_{>0}$ for $a\in J$ such that $\symm_{a}C_{ab}=\symm_{b}C_{ba}$.
Let $\symm$ denote the least common multiplier of $\{\symm_{a},a\in J\}$
and denote $\symm_{a}^{\vee}:=\frac{\symm}{\symm_{a}}$. 

The monoid of positive braids, denoted $\Br^{+}$, is generated by
$\sigma_{a}$, $\forall a\in J$, such that 
\begin{eqnarray*}
\sigma_{a}\sigma_{b} & =\sigma_{b}\sigma_{a} & \text{if }C_{ab}C_{ba}=0\\
\sigma_{a}\sigma_{b}\sigma_{a} & =\sigma_{b}\sigma_{a}\sigma_{b} & \text{if }C_{ab}C_{ba}=1\\
(\sigma_{a}\sigma_{b})^{2} & =(\sigma_{b}\sigma_{a})^{2} & \text{if }C_{ab}C_{ba}=2\\
(\sigma_{a}\sigma_{b})^{3} & =(\sigma_{b}\sigma_{a})^{3} & \text{if }C_{ab}C_{ba}=3
\end{eqnarray*}
Let $\Br$ denote the group associated with $\Br^{+}$ by adjoining
the inverses $\sigma_{a}^{-1}$, $\forall a\in J$. Let $e$ denote
the identity element. For any word $\ueta=(\eta_{1},\ldots,\eta_{l})$
of the letters of $J$, we define $\beta_{\ueta}:=\sigma_{\eta_{1}}\cdots\sigma_{\eta_{l}}\in\Br^{+}$.
Denote $l(\beta_{\ueta}):=l(\ueta)=l$.

The Weyl group $W$ is the quotient of $\Br$ by the relations $\sigma_{a}^{2}=e$.
The image of $\beta\in\Br$ will be denoted by $[\beta]$ in $W$.
We denote $w_{\ueta}:=[\beta_{\ueta}]$. The length $l(w)$ of $w\in W$
is defined to be $\min\{l(\ueta)\ |\text{ any word \ensuremath{\ueta} satisfies }w_{\ueta}=w\}$. 

\subsubsection*{Seeds associated with signed words}

Let there be given any signed word $\ubi$. Following \cite[Section 3.1]{shen2021cluster}, we associate with $\ubi$ a seed $\sd=\rsd(\ubi)$, which was constructed via the amalgamation procedure
introduced by Fock and Goncharov \cite{FockGoncharov06}. Let us define the seed as follows. Denote  $I=I(\ubi)\simeq[1,l]$,
$I_{\fv}=\{\binom{a}{O_{+}(a)-1}|a\in\supp\ubi\}\simeq\{k^{\max}|k\in[1,l]\}$,
$d_{\binom{a}{d}}:=\symm_{a}^{\vee}$. As in \cite[(6.1)]{qin2023analogs},
define $\tB=(b_{jk})_{j\in I,k\in I_{\ufv}}$ such that
\begin{align}
b_{jk} & =\begin{cases}
\varepsilon_{k} & k=j[1]\\
-\varepsilon_{j} & j=k[1]\\
\varepsilon_{k}C_{|\bi_{j}|,|\bi_{k}|} & \varepsilon_{j[1]}=\varepsilon_{k},\ j<k<j[1]<k[1]\\
\varepsilon_{k}C_{|\bi_{j}|,|\bi_{k}|} & \varepsilon_{k}=-\varepsilon_{k[1]},\ j<k<k[1]<j[1]\\
-\varepsilon_{j}C_{|\bi_{j}|,|\bi_{k}|} & \varepsilon_{k[1]}=\varepsilon_{j},\ k<j<k[1]<j[1]\\
-\varepsilon_{j}C_{|\bi_{j}|,|\bi_{k}|} & \varepsilon_{j}=-\varepsilon_{j[1]},\ k<j<j[1]<k[1]\\
0 & \text{otherwise}
\end{cases}.\label{eq:dBS_B_matrix}
\end{align}
Note that $p^{*}$ is injective \cite{qin2023analogs}. And we can
associate a compatible Poisson structure with $\rsd(\ubi)$.

Next, assume that $\ubi_{[1,|J|]}$ is a Coxeter word, i.e., $O([1,|J|];a)=1$,
$\forall a\in J$. Let us denote $\ubi'=\ubi_{[|J|+1,l]}$. Let $F$
denote $\{\binom{a}{0}\in I(\ubi)|a\in\supp\ubi\}$. We define the
seed $\dsd(\ubi'):=\frz_{F}(\rsd(\ubi))$. It only depends on $\ubi'$.
We denote the elements $\binom{a}{0}^{\ubi}\in F$ by $\binom{a}{-1}^{\ubi'}$.
Then $I(\dsd(\ubi'))$ is given 
\begin{align*}
\ddI(\ubi') & :=\{\binom{a}{-1}^{\ubi'}|a\in J\}\sqcup I(\ubi').
\end{align*}
For any choice of Coxeter word $\uc$, we could extend $\phi:I(\ubi')\simeq[1,l(\ubi')]$
to $\ddI(\ubi')\simeq[1-|J|,l(\ubi')]$ such that $\binom{c_{k}}{-1}^{\ubi'}$
are identified with $k-|J|$.

For $\sd=\rsd(\ubi)$ or $\dsd(\ubi)$, we denote $\binom{a}{d}^{\ubi}\in I(\sd)$
by $\binom{a}{d}^{\sd}$ as well. 

\begin{Eg}\label{eg:SL3-w0-w0}

Take $C=\left(\begin{array}{cc}
2 & -1\\
-1 & 2
\end{array}\right)$. The longest Weyl group element $w_{0}$ has a reduced word $s_{1}s_{2}s_{1}$.
Choose the signed word $\ubi=(1,-1,2,-2,1,-1)$. A quiver for $\dsd(\ubi)$
is depicted in Figure \ref{fig:G-u-v}, where we choose the Coxeter
word $\uc=(1,2)$ and identify $\ddI(\ubi)\simeq[-1,l(\ubi)]$. It
is known that $\dsd(\ubi)$ is a seed for the cluster structure on
the (quantized) coordinate ring of the double Bruhat cell $SL_{3}^{w_{0},w_{0}}$.

\end{Eg}

\begin{figure}[h]

\caption{A quiver for $\dsd(1,-1,2,-2,1,-1)$ }
\label{fig:G-u-v}

\begin{tikzpicture}  [node distance=48pt,on grid,>={Stealth[length=4pt,round]},bend angle=45, inner sep=0pt,      pre/.style={<-,shorten <=1pt,>={Stealth[round]},semithick},    post/.style={->,shorten >=1pt,>={Stealth[round]},semithick},  unfrozen/.style= {circle,inner sep=1pt,minimum size=1pt,draw=black!100,fill=red!50},  frozen/.style={rectangle,inner sep=1pt,minimum size=12pt,draw=black!75,fill=cyan!50},   point/.style= {circle,inner sep=0pt, outer sep=1.5pt,minimum size=1.5pt,draw=black!100,fill=black!100},   boundary/.style={-,draw=cyan},   internal/.style={-,draw=red},    every label/.style= {black}]        
\node[frozen] (q-1)  at (3.5,0.5) {-1}; \node[frozen] (q-2)  at (3,-0.5) {0}; \node[unfrozen] (q1) at (2.5,0.5) {1}; \node[unfrozen] (q2) at (1.5,0.5) {2}; \node[unfrozen] (q3) at (1,-0.5) {3}; \node[frozen] (q4) at (0,-0.5) {4}; \node[unfrozen] (q5) at (-0.5,0.5) {5}; \node[frozen] (q6) at (-1.5,0.5) {6}; \draw[->,teal]  (q-1) edge (q1); \draw[->,teal]  (q2) edge (q1); \draw[->,teal]  (q2) edge (q5); \draw[->,teal]  (q6) edge (q5); \draw[->,teal]  (q5) edge (q4); \draw[->,teal]  (q4) edge (q3); \draw[->,teal]   (q-2) edge (q3); \draw[->,teal]  (q3) edge (q2); \draw[->,teal]   (q1) edge (q-2); \draw[->,dotted,teal]  (q-2) edge (q-1); \draw[->,dotted,teal]  (q4) edge (q6); \end{tikzpicture}

\end{figure}

\subsection{Operations on signed words\label{subsec:Operations-on-signed-words}}

Let $\ubi$ denote a signed word. It is a shuffle of $-\uzeta$ and
$\ueta$, where $\uzeta$ and $\ueta$ are words in $J$. Denote $\uxi:=(\uzeta\op,\ueta)$. 

Denote $\rsd=\rsd(\ubi)$. Following \cite[Section 2.3, Proposition 3.7 ]{shen2021cluster},
we introduce the following operations on $\ubi$, which will produce
new signed words $\ubi'$ and new seeds $\rsd':=\rsd(\ubi')$.

\begin{enumerate}

\item (Left reflection) change $\ubi$ to $\ubi':=(-\bi_{1},\ubi_{[2,l]})$
. In this case, $\rsd'=\rsd$.

\item (Flips) Assume that $\ubi_{[j,j+1]}=(\varepsilon a,-\varepsilon b)$,
where $\varepsilon\in\{\pm1\}$, $a,b\in J$. Change $\ubi$ to $\ubi':=(\ubi_{[1,j-1]},-\varepsilon b,\varepsilon a,\ubi_{[j+2,l]})$.
In this case, $\rsd'=\rsd$ if $a\neq b$ and $\rsd'=\mu_{j}\rsd$
if $a=b$.

\item (Braid moves) Assume $\ubi_{[j,k]}$ and $\ugamma$ are two
words in $J$ such that $\beta_{\ubi_{[j,k]}}=\beta_{\ugamma}$ for
some word $\ugamma$. Change $\ubi$ to $\ubi':=(\ubi_{[1,j-1]},\ueta',\ubi_{[k+1,l]})$.
In this case, there is a sequence of mutations $\seq_{\ubi',\ubi}$
acting on $U:=\{r\in[j,k]|r[1]\leq k\}$ and a permutation $\sigma$
on $[j,k]$, such that $\sigma U=U$ and $\rsd'=\sigma\seq_{\ubi',\ubi}\rsd$.
We use $\seq^{\sigma}:=\seq_{\ubi',\ubi}^{\sigma}$ to denote $\sigma\seq_{\ubi',\ubi}$.

\end{enumerate}

We observe that, for any shuffle $\ubi'$ of $-\uzeta,\ueta$, $\rsd(\ubi')$
can be obtained from $\rsd(\ubi)$ by flips; the seed $\rsd(\uxi)$
can be obtained from $\rsd(\ubi)$ by left reflections and flips.

Next, let us discuss the mutation sequence associated with composition
of operations. Let $\ubi^{(i)}$, $i\in[1,3]$, denote three signed
words. Let $\sd^{(i)}$ denote $\rsd(\ubi^{(i)})$ or $\dsd(\ubi^{(i)})$.
When $\sd^{(i)}=\rsd(\ubi^{(i)})$, we assume $\ubi^{(i)}$ are connected
by left reflections, flips, and braid moves; when $\sd^{(i)}=\dsd(\ubi^{(i)})$,
we assume $\ubi^{(i)}$ are connected by flips and braid moves. Let
$\seq_{\ubi^{(j)},\ubi^{(i)}}^{\sigma}$ denote any chosen permutation
mutation sequences associated with the above operations, such that
$\sd^{(j)}=\seq_{\ubj^{(j)},\ubi^{(i)}}^{\sigma}\sd^{(i)}$. Denote
$\clAlg^{(i)}:=\clAlg(\sd^{(i)})$. We are interested in the following
diagram

\begin{align}
\begin{array}{ccc}
\clAlg^{(3)} & \overset{(\seq_{\ubi^{(3)},\ubi^{(1)}}^{\sigma})^{*}}{\xrightarrow{\sim}} & \clAlg^{(1)}\\
\simeqd(\seq_{\ubi^{(3)},\ubi^{(2)}}^{\sigma})^{*} &  & \parallel\\
\clAlg^{(2)} & \overset{(\seq_{\ubi^{(2)},\ubi^{(1)}}^{\sigma})^{*}}{\xrightarrow{\sim}} & \clAlg^{(1)}
\end{array}.\label{eq:mutation-different-words}
\end{align}

At the quantum level, choose any quantization matrix $\Lambda^{(1)}$
for $\sd^{(1)}$. Let $\phi_{\ubj^{(i)},\ubi^{(j)}}^{\sigma}$ denote
the permutation and tropical mutation sequence associated with $\seq_{\ubj^{(j)},\ubi^{(i)}}^{\sigma}$,
such that $\phi_{\ubj^{(i)},\ubi^{(j)}}^{\sigma}f_{k}=\deg^{\sd^{(i)}}(\seq_{\ubj^{(j)},\ubi^{(i)}}^{\sigma})^{*}x_{k}(\sd^{(j)})$.
Choose the quantization for $\clAlg^{(i+1)}$ such that it is induced
from that of $\clAlg^{(i)}$ by $\phi_{\ubj^{(i+1)},\ubi^{(i)}}^{\sigma}$:
\begin{align*}
\Lambda_{k,h}^{(i+1)} & :=\Lambda^{(i)}(\phi_{\ubi^{(i)},\ubi^{(i+1)}}^{\sigma}f_{k},\phi_{\ubi^{(i)},\ubi^{(i+1)}}^{\sigma}f_{h}).
\end{align*}
Then $(\seq_{\ubi^{(i+1)},\ubi^{(i)}}^{\sigma})^{*}$ are algebra
homomorphisms. We will see $(\seq_{\ubi^{(3)},\ubi^{(1)}}^{\sigma})^{*}$
is an algebra homomorphism in the proof of Lemma \ref{lem:words-mutation-seq-connect-seeds}.

\begin{Lem}\label{lem:words-mutation-seq-connect-seeds}

Diagram (\ref{eq:mutation-different-words}) is commutative.

\end{Lem}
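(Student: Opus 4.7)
The plan is to verify commutativity by reducing to a rigidity statement: the cluster mutation isomorphism induced by a permutation mutation sequence depends only on the labeled source and target seeds, not on the specific sequence used to connect them. Both $(\seq_{\ubi^{(3)},\ubi^{(1)}}^{\sigma})^{*}$ and $(\seq_{\ubi^{(2)},\ubi^{(1)}}^{\sigma})^{*} \circ (\seq_{\ubi^{(3)},\ubi^{(2)}}^{\sigma})^{*}$ are $\kk$-algebra isomorphisms $\cF(\sd^{(3)}) \to \cF(\sd^{(1)})$ arising from permutation mutation sequences that carry the labeled seed $\sd^{(1)}$ to the same labeled seed $\sd^{(3)}$, so it suffices to check that they agree on the generating set $\{x_k(\sd^{(3)}) \mid k \in I(\sd^{(3)})\}$.

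At the classical level, iterated application of the exchange relations expresses $(\seq)^{*} x_k(\sd^{(3)})$ as a rational function in $x(\sd^{(1)})$, which by the Laurent phenomenon \cite{BerensteinFominZelevinsky05} is in fact a Laurent polynomial. Since $x_k(\sd^{(3)})$ is a cluster variable of a labeled seed that is the common target of both mutation sequences, and since the image of a cluster variable in each cluster admits a unique Laurent expression (intrinsically attached to the element in the underlying cluster algebra, not to the sequence used to reach it), both compositions send $x_k(\sd^{(3)})$ to the same Laurent polynomial in $x(\sd^{(1)})$. This yields agreement on the generators and hence equality of the two maps as ring homomorphisms.

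At the quantum level, we additionally need the quantization matrix on $\sd^{(3)}$ to coincide along both paths, so that both compositions are homomorphisms of the same quantum structure. By the convention fixed just before the lemma, $\Lambda^{(i+1)}$ is the pullback of $\Lambda^{(i)}$ along the tropical map $\phi_{\ubi^{(i)},\ubi^{(i+1)}}^{\sigma}$; since the composition of tropical mutations along any permutation mutation sequence depends only on the labeled source and target seeds \cite{gross2013birational}, both paths from $\sd^{(1)}$ to $\sd^{(3)}$ induce the same tropical map and hence the same quantization matrix on $\sd^{(3)}$. The quantum Laurent phenomenon \cite{BerensteinZelevinsky05} then ensures a unique quantum Laurent expansion of $x_k(\sd^{(3)})$ in the quantum cluster of $\sd^{(1)}$, so the argument of the classical case carries over verbatim. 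The main point is precisely this rigidity of the (quantum) cluster isomorphism under the choice of mutation sequence; in our situation it is guaranteed by the injectivity of $p^{*}$ for both $\rsd(\ubi)$ and $\dsd(\ubi)$, which rules out any redundancy in the cluster structure.
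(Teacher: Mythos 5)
Your argument has a genuine gap at its central step: the ``rigidity'' you invoke --- that the isomorphism induced by a permutation mutation sequence depends only on the labeled source and target seeds --- is either circular or false, depending on what you mean by the target seed. If the target seed is taken to include its cluster variables as concrete elements of $\cF(\sd^{(1)})$, then knowing that the two compositions have ``the common target'' is precisely the content of the lemma, so you are assuming what is to be proved. If instead the target means only the combinatorial data (the labeled exchange matrix), the rigidity claim is false in general: already for the Kronecker seed with $b_{12}=2$, the permutation mutation sequence $\sigma\mu_{1}$ with $\sigma=(1\,2)$ returns the original exchange matrix but a different cluster, so two permutation mutation sequences ending at combinatorially identical seeds need not yield the same cluster variables, hence not the same map on the ambient skew field. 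The appeal to uniqueness of Laurent expansions does not repair this, because a Laurent expansion is attached to an element of $\cF(\sd^{(1)})$, and whether the two compositions produce the same element is exactly the question; likewise injectivity of $p^{*}$ only guarantees that pointed elements are determined by their degrees, and you give no independent argument that the degrees of the two images coincide. The same objection applies to your quantum step: the statement that composed tropical mutations depend only on the endpoints presupposes that both paths end at the same seed of $\Delta^{+}$, which is again the point at issue.

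The missing ingredient is exactly the nontrivial input the paper uses: a realization of all three seeds inside one fixed object. At the classical level the proof invokes the isomorphisms $\kappa^{(i)}:\clAlg^{(i)}\otimes\C\simeq\C[X_{\beta_{\ueta}}^{\beta_{\uzeta}}]$ with the coordinate ring of the (decorated) double Bott--Samelson cell and Shen--Weng's theorem that these isomorphisms intertwine each $(\seq_{\ubi^{(j)},\ubi^{(i)}}^{\sigma})^{*}$, see (\ref{eq:ddBS-unique-cluster}) and (\ref{eq:dBS-unique-cluster}); this geometric rigidity is what forces the two paths in (\ref{eq:mutation-different-words}) to agree on cluster variables classically. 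Only after that does the quantum argument proceed as you envisage: the two candidate images are quantum cluster variables of $\clAlg^{(1)}$ whose degrees agree by the classical identity, and full rank plus pointedness then yield equality. A proof along your lines would require an independent verification of the classical coincidence of the two mutation paths, which is not a formal consequence of the mutation axioms alone.
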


\begin{proof}

First work at the classical level $\kk=\Z$. Then $\sd^{(i)}$ can
be realized as seeds of the same cluster structure on the coordinate
ring of double Bott--Samelson cells. More precisely, we have algebra
isomorphisms $\kappa^{(i)}:\clAlg^{(i)}\otimes\C\simeq\C[X_{\beta_{\ueta}}^{\beta_{\uzeta}}]$,
where $X$ denotes the double Bott--Samelson cell $\dBS$ or $\ddBS$,
such that $\kappa^{(j)}=\kappa^{(i)}(\seq_{\ubj^{(j)},\ubi^{(i)}}^{\sigma})^{*}$,
see Section \ref{sec:Cluster-algebras-from-dBS}, (\ref{eq:ddBS-unique-cluster}),
and (\ref{eq:dBS-unique-cluster}). We deduce that (\ref{eq:mutation-different-words})
is commutative.

Next, work at the quantum level $\kk=\Z[q^{\pm\Hf}]$. We have $\phi_{\ubj^{(1)},\ubi^{(2)}}^{\sigma}\phi_{\ubj^{(2)},\ubi^{(3)}}^{\sigma}=\phi_{\ubj^{(1)},\ubi^{(3)}}^{\sigma}$
by the result at the classical level. Then $\Lambda^{(3)}$ is induced
from $\Lambda^{(1)}$ via $\phi_{\ubi^{(1)},\ubi^{(3)}}^{\sigma}$.
Thus $(\seq_{\ubi^{(3)},\ubi^{(1)}}^{\sigma})^{*}$ is an algebra
homomorphism. 

Now, we take any cluster variables $x_{k}(\sd^{(3)})$ of $\sd^{(3)}$.
Note that its images $(\seq_{\ubi^{(3)},\ubi^{(1)}}^{\sigma})^{*}(x_{k}(\sd^{(3)}))$
and $(\seq_{\ubi^{(2)},\ubi^{(1)}}^{\sigma})^{*}(\seq_{\ubi^{(3)},\ubi^{(2)}}^{\sigma})^{*}(x_{k}(\sd^{(3)}))$
are quantum cluster variables of $\clAlg^{(1)}$. They have the same
degree by the result at the classical level. Therefore, they are the
same quantum cluster variable. The desired claim follows.

\end{proof}

\subsection{Interval variables and $T$-systems}

Let $\ueta$ denote a word of letters from $J$ and denote $\rsd:=\rsd(\ueta)$.
Then the seed $\rsd$ is injective-reachable, see \cite[Proposition 8.7]{qin2023freezing}
or \cite{shen2021cluster}. Moreover, $\rsd$ has a distinguished
green to red sequence $\Sigma$ defined as below, which was introduced in \cite[Section 4]{shen2021cluster}.

For $j\leq k\in[1,l]\simeq I(\ueta)$ such that $\eta_{j}=\eta_{k}$,
denote $\seq_{[j,k]}:=\mu_{k}\cdots\mu_{j[1]}\mu_{j}$. Define $\Sigma_{k}:=\seq_{[k^{\min},k^{\min}[o_{+}(k)-1]]}$,
where $\Sigma_{k^{\max}}$ are understood as the identity. Then $\Sigma$
is defined as $\Sigma:=\Sigma_{l}\cdots\Sigma_{2}\Sigma_{1}$.

The cluster variables of the seeds $\rsd'$ appearing along the mutation
sequence $\Sigma$ starting from $\rsd$ are called the interval variables
of $\clAlg(\rsd)$, and the cluster variables of the form $x_{k^{\min}}(\rsd')$
are called the fundamental variables of $\clAlg(\rsd)$. More precisely,
we parameterize them as follows.

For any $r\in[0,l]$, $a\in J$, we define $r_{a}:=O([1,r];a)$ and
$\rsd\{r\}:=\Sigma_{r}\cdots\Sigma_{1}\rsd$, where $\rsd\{0\}:=\rsd$.
Then we define the interval variables $W_{[\binom{a}{r_{a}},\binom{a}{r_{a}+d}]}(\rsd):=x_{\binom{a}{d}}(\rsd\{r\})$
for any $0\leq d<O(a)-r_{a}$. Equivalently, for any $j=j^{\min}\in I$,
$0\leq d<O(\eta_{j})-r_{\eta_{j}}$, we define the interval variable
$W_{[j[r_{\eta_{j}}],j[r_{\eta_{j}}+d]]}(\rsd):=x_{j[d]}(\rsd\{r\})$.
The fundamental variables are defined as $W_{\binom{a}{d}}:=W_{[\binom{a}{d},\binom{a}{d}]}$,
$\forall\binom{a}{d}\in I(\ueta)$, or, equivalently, $W_{k}:=W_{[k,k]}$,
$\forall k\in[1,l]$.

Denote $\beta_{[j,k]}:=\deg^{\rsd}W_{[j,k]}$ and $\beta_{k}:=\deg^{\rsd}W_{k}$.
By \cite[Lemma 8.4]{qin2023freezing}, we have $\beta_{[j,k]}=f_{k}-f_{j[-1]}$,
where we understand $f_{\pm\infty}=0$.

\begin{Eg}\label{eg:A2-interval-variable}

Take $C=\left(\begin{array}{cc}
2 & -1\\
-1 & 2
\end{array}\right)$ and $\uc=(1,2)$. Consider $\rsd(\uc^{3})$. In Figure \ref{fig:A2-fundamental-variables},
we draw the corresponding fundamental variables $W_{\binom{a}{d}}$
on the vertices $\binom{a}{d}\in I(\uc^{3})\simeq[1,6]$.
\begin{figure}[h]
\caption{Fundamental variables for $\rsd(1,2,1,2,1,2)$}
\label{fig:A2-fundamental-variables}

\subfloat[]{\begin{tikzpicture}  [scale=2,node distance=48pt,on grid,>={Stealth[round]},bend angle=45,      pre/.style={<-,shorten <=1pt,>={Stealth[round]},semithick},    post/.style={->,shorten >=1pt,>={Stealth[round]},semithick},  unfrozen/.style= {circle,inner sep=1pt,minimum size=12pt,draw=black!100,fill=red!100},  frozen/.style={rectangle,inner sep=1pt,minimum size=12pt,draw=black!75,fill=cyan!100},   point/.style= {circle,inner sep=1pt,minimum size=5pt,draw=black!100,fill=black!100},   boundary/.style={-,draw=cyan},   internal/.style={-,draw=red},    every label/.style= {black}] \node (v1) at (0,1) {$W_{\binom{1}{0}}$}; \node (v2) at (-0.5,0) {$W_{\binom{2}{0}}$}; \node (v3) at (-1,1) {$W_{\binom{1}{1}}$}; \node (v4) at (-1.5,0) {$W_{\binom{2}{1}}$}; \node (v5) at (-2,1) {$W_{\binom{1}{2}}$}; \node (v6) at (-2.5,0) {$W_{\binom{2}{2}}$};
\end{tikzpicture}}\hfill{}\subfloat[]{\begin{tikzpicture}  [scale=2,node distance=48pt,on grid,>={Stealth[round]},bend angle=45,      pre/.style={<-,shorten <=1pt,>={Stealth[round]},semithick},    post/.style={->,shorten >=1pt,>={Stealth[round]},semithick},  unfrozen/.style= {circle,inner sep=1pt,minimum size=12pt,draw=black!100,fill=red!100},  frozen/.style={rectangle,inner sep=1pt,minimum size=12pt,draw=black!75,fill=cyan!100},   point/.style= {circle,inner sep=1pt,minimum size=5pt,draw=black!100,fill=black!100},   boundary/.style={-,draw=cyan},   internal/.style={-,draw=red},    every label/.style= {black}] \node (v1) at (0,1) {$W_{1}$}; \node (v2) at (-0.5,0) {$W_{2}$}; \node (v3) at (-1,1) {$W_{3}$}; \node (v4) at (-1.5,0) {$W_{4}$}; \node (v5) at (-2,1) {$W_{5}$}; \node (v6) at (-2.5,0) {$W_{6}$};
\end{tikzpicture}}
\end{figure}

Identify $I(\uc^{3})$ with $[1,6]$. We have $\Sigma=\mu_{2}\mu_{1}(\mu_{4}\mu_{2})(\mu_{3}\mu_{1})=\Sigma_{4}\Sigma_{3}\Sigma_{2}\Sigma_{1}$,
where $\Sigma_{5},\Sigma_{6}$ are trivial. The interval variables,
including the fundamental variables, are $W_{[1,j]}=x_{j}$ for $j\in\{1,3,5\}$,
$W_{[2,k]}=x_{k}$ for $k\in\{2,4,6\}$, and 
\begin{align*}
W_{3} & =x_{1}^{-1}\cdot x_{3}\cdot(1+y_{1})=x_{1}^{-1}\cdot x_{3}+x_{1}^{-1}\cdot x_{2}\\
W_{4} & =x_{2}^{-1}\cdot x_{4}(1+y_{2}+y_{1}\cdot y_{2})=x_{2}^{-1}\cdot x_{4}+x_{1}^{-1}\cdot x_{2}^{-1}\cdot x_{3}+x_{1}^{-1}\\
W_{5} & =x_{3}^{-1}\cdot x_{5}\cdot(1+y_{3}+y_{2}\cdot y_{3})=x_{3}^{-1}\cdot x_{5}+x_{2}^{-1}\cdot x_{3}^{-1}\cdot x_{1}\cdot x_{4}+x_{2}^{-1}\\
W_{6} & =x_{4}^{-1}\cdot x_{6}\cdot(1+y_{4}+y_{3}\cdot y_{4})=x_{4}^{-1}\cdot x_{6}+x_{3}^{-1}\cdot x_{4}^{-1}\cdot x_{2}\cdot x_{5}+x_{3}^{-1}\cdot x_{1}\\
W_{[3,5]} & =x_{1}^{-1}\cdot x_{5}\cdot(1+y_{1}+y_{1}\cdot y_{3})=x_{1}^{-1}\cdot x_{5}+x_{1}^{-1}\cdot x_{3}^{-1}\cdot x_{2}\cdot x_{5}+x_{3}^{-1}\cdot x_{4}\\
W_{[4,6]} & =x_{2}^{-1}\cdot x_{6}\cdot(1+y_{2}+y_{1}\cdot y_{2}+y_{2}\cdot y_{4}+y_{1}\cdot y_{2}\cdot y_{4}+y_{1}\cdot y_{2}\cdot y_{3}\cdot y_{4}).
\end{align*}

\end{Eg}

\begin{Prop}[{\cite[Proposition 8.6]{qin2023analogs}}]\label{prop:T-systems}

$\forall r\in[0,l-1]$, denote $a=\eta_{r+1}$. The interval variables
satisfy the following equations, called the $T$-systems:
\begin{align}
W_{[\binom{a}{r_{a}},\binom{a}{r_{a}+s}]}*W_{[\binom{a}{r_{a}+1},\binom{a}{r_{a}+s+1}]}= & q^{\alpha}[W_{[\binom{a}{r_{a}+1},\binom{a}{r_{a}+s}]}*W_{[\binom{a}{r_{a}},\binom{a}{r_{a}+s+1}]}]\label{eq:T-systems}\\
 & +q^{\alpha'}[\prod_{\binom{b}{r_{b}+d}}W_{[\binom{b}{r_{b}},\binom{b}{r_{b}+d}]}^{-C_{ba}}],\nonumber 
\end{align}
where $\binom{b}{r_{b}+d}$ appearing satisfy $b\neq a$, $\binom{b}{r_{b}+d}<\binom{a}{r_{a}+s+1}<\binom{b}{r_{b}+d+1}$,
$\alpha=\Hf\lambda(\beta_{[\binom{a}{r_{a}},\binom{a}{r_{a}+s}]},\beta_{[\binom{a}{r_{a}+1},\binom{a}{r_{a}+s+1}]})$,
$\alpha'=\Hf\lambda(\beta_{[\binom{a}{r_{a}},\binom{a}{r_{a}+s}]},-\sum_{\binom{b}{r_{b}+d}}C_{ba}\beta_{[\binom{b}{r_{b}},\binom{b}{r_{b}+d}]})$,
and $\alpha>\alpha'$.

\end{Prop}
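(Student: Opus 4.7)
The plan is to realize each $T$-system relation as a quantum cluster exchange relation at a specific step of the mutation sequence $\Sigma$. Fix $r$ and $s$ with $a = \eta_{r+1}$, and set
\begin{align*}
\rsd\{r\}(s) := \mu_{\binom{a}{s-1}} \cdots \mu_{\binom{a}{1}} \mu_{\binom{a}{0}} \rsd\{r\},
\end{align*}
an intermediate seed partway through $\Sigma_{r+1}$. Unpacking the definitions and using that subsequent mutations in $\Sigma_{r+1}$ never revisit a position $\binom{a}{d}$ once mutated, one checks that in $\rsd\{r\}(s)$ the cluster variable at $\binom{a}{s}$ is $W_{[\binom{a}{r_a}, \binom{a}{r_a + s}]}$, at $\binom{a}{s-1}$ it is $W_{[\binom{a}{r_a + 1}, \binom{a}{r_a + s}]}$, at $\binom{a}{s+1}$ it is $W_{[\binom{a}{r_a}, \binom{a}{r_a + s + 1}]}$, and at each $\binom{b}{d}$ with $b \neq a$ it remains $W_{[\binom{b}{r_b}, \binom{b}{d}]}$. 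Moreover, $\mu_{\binom{a}{s}} \rsd\{r\}(s)$ has $W_{[\binom{a}{r_a + 1}, \binom{a}{r_a + s + 1}]}$ at $\binom{a}{s}$, so the left-hand side of (\ref{eq:T-systems}) is the $x_{\binom{a}{s}} * x_{\binom{a}{s}}'$ exchange product computed inside $\rsd\{r\}(s)$.

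The key technical step is computing the $B$-matrix of $\rsd\{r\}(s)$ at vertex $\binom{a}{s}$, by iterating the mutation rule $\tB' = \tE_\varepsilon \tB F_\varepsilon$ applied to the $B$-matrix of $\rsd\{r\}$ (which itself arises from (\ref{eq:dBS_B_matrix}) via $\Sigma_1 \cdots \Sigma_r$). I expect the outcome to be that the only nonzero entries $b_{j, \binom{a}{s}}$ occur at $j \in \{\binom{a}{s-1}, \binom{a}{s+1}\}$, both of the same sign with absolute value $1$, and at $j = \binom{b}{d}$ for each $b \neq a$ satisfying $\binom{b}{d} < \binom{a}{r_a + s + 1} < \binom{b}{d+1}$ in $\ueta$, with the opposite sign and absolute value $|C_{ba}|$. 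Substituting the cluster variables identified above into the quantum exchange relation at $\binom{a}{s}$ then reproduces (\ref{eq:T-systems}). The exponents $\alpha, \alpha'$ fall out of the compatible Poisson form $\lambda$ applied to the degrees $\beta_{[\binom{a}{r_a}, \binom{a}{r_a + d}]} = f_{\binom{a}{r_a + d}} - f_{\binom{a}{r_a - 1}}$ recorded earlier; the inequality $\alpha > \alpha'$ follows from
\begin{align*}
\alpha - \alpha' = \tfrac{1}{2} \lambda\bigl(\beta_{[\binom{a}{r_a}, \binom{a}{r_a + s}]},\, p^* e_{\binom{a}{s}}\bigr)
\end{align*}
reducing, modulo the above $B$-matrix identification, to a positive multiple of $\diag_{\binom{a}{s}}$ by the compatibility condition $\lambda(f_i, p^* e_k) = \delta_{ik} \diag_k$.

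The main obstacle is the iterated $B$-matrix computation above. A direct induction on $s$ using the multi-case formula (\ref{eq:dBS_B_matrix}) is in principle routine but intricate: one must verify that the sign conditions in (\ref{eq:dBS_B_matrix}) interact with each mutation step so as to annihilate all entries except those in the claimed list, with the correct signs and multiplicities. An alternative that should reduce the bookkeeping is to first reduce to $r = 0$ via left reflections and flips from Section \ref{subsec:Operations-on-signed-words}, which convert the prefix of $\ueta$ into frozen boundary data, then reduce further to $s = 0$ by replacing $\ueta$ with a suitable tail $(\eta_{k+1}, \ldots, \eta_l)$; after these two reductions only the very first exchange relation along $\Sigma$ needs direct verification, and the general case follows by transporting via Lemma \ref{lem:words-mutation-seq-connect-seeds}.
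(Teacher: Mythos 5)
The paper itself offers no argument for this proposition: it is imported verbatim from \cite[Proposition 8.6]{qin2023analogs}, so the only comparison available is with the expected proof there. Your framework is indeed the natural one --- realize (\ref{eq:T-systems}) as the quantum exchange relation at the vertex $\binom{a}{s}$ of the intermediate seed $\rsd\{r\}(s)=\mu_{\binom{a}{s-1}}\cdots\mu_{\binom{a}{0}}\rsd\{r\}$ --- and your identification of the cluster variables at $\binom{a}{s-1}$, $\binom{a}{s}$, $\binom{a}{s+1}$, and of the new variable produced by $\mu_{\binom{a}{s}}$, is correct (modulo an index slip: the variable sitting at vertex $\binom{b}{d}$, $b\neq a$, is $W_{[\binom{b}{r_b},\binom{b}{r_b+d}]}$, and the sandwich condition must be imposed on the positions $\binom{b}{r_b+d}<\binom{a}{r_a+s+1}<\binom{b}{r_b+d+1}$, not on $\binom{b}{d}$; as literally written your condition is wrong whenever $r_b>0$).

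The genuine gap is that the entire content of the statement is the shape of the column of $\tB(\rsd\{r\}(s))$ at $\binom{a}{s}$, and you only record what you ``expect'' it to be. Everything else hangs on that column: which variables enter each exchange monomial, the exponents $-C_{ba}$, and even the inequality $\alpha>\alpha'$, since $\alpha-\alpha'$ equals $\pm\Hf\diag_{\binom{a}{s}}$ with the sign determined by which of the two monomials is the $[b_{ik}]_+$-one, i.e.\ by the unverified sign pattern; your formula for $\alpha-\alpha'$ also uses $p^*$ of the wrong seed unless you transport $\lambda$ to $\rsd\{r\}(s)$. Worse, the shortcut you offer to avoid the induction fails at its second step: the subword embeddings of Lemma \ref{lem:calibration-word}/\ref{lem:embed-interval-variables} shift the starting index of an interval but preserve its length, so passing to a tail $(\eta_{k+1},\dots,\eta_l)$ can reduce $r$ to $0$ but can never convert the length-$s$ relation into the $s=0$ one; the T-systems for different $s$ are exchange relations at different vertices and are not images of one another under these embeddings, so ``only the very first exchange relation along $\Sigma$'' is not enough. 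Thus either the iterated $B$-matrix computation from (\ref{eq:dBS_B_matrix}) must actually be carried out (this is precisely the nontrivial part of the cited proof), or a different reduction must be found; as it stands the proposal is a correct plan, not a proof.
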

In the classical case $q=1$, the $T$-systems \eqref{eq:T-systems} were proved in \cite[Proposition 3.25(1)]{shen2021cluster}.

Equivalently, $\forall1\leq j\leq j[s]<l$, the $T$-systems can be
written as 
\begin{align}
W_{[j,j[s]]}*W_{[j[1],j[s+1]]} & =q^{\alpha}[W_{[j[1],j[s]]}*W_{[j,j[s+1]]}]+q^{\alpha'}[\prod W_{[i,i[d]]}^{-C_{\eta_{i},\eta_{j}}}],\label{eq:T-system-other}
\end{align}
where $[i,i[d]]$ appearing satisfy $i=i^{\min}[O([1,j-1];\eta_{i})]$,
$i[d]<j[s+1]<i[d+1]$.

\begin{Eg}

Let $\uc$ denote a Coxeter word and choose $\ueta=\uc^{2i}$. For
any $k\in[1,|J|]$, $r\in[0,2i-1]$, $s\in[0,2i-r-2]$, denote $a=c_{k}$
and $\binom{a}{r}=k+r|J|$ under the identification $I(\uc^{2i})\simeq[1,2i\cdot|J|]$.
Then the $T$-system takes the form
\begin{align}
W_{[\binom{a}{r},\binom{a}{r+s}]}*W_{[\binom{a}{r+1},\binom{a}{r+s+1}]}= & q^{\alpha}[W_{[\binom{a}{r+1},\binom{a}{r+s}]}*W_{[\binom{a}{r},\binom{a}{r+s+1}]}]\label{eq:T-systems-Coxeter}\\
 & +q^{\alpha'}[\prod_{h\in[1,k-1]}W_{[\binom{c_{h}}{r+1},\binom{c_{h}}{r+1+s}]}^{-C_{c_{h},a}}*\prod_{h\in[k+1,|J|]}W_{[\binom{c_{h}}{r},\binom{c_{h}}{r+s}]}^{-C_{c_{h},a}}],\nonumber 
\end{align}
where $\alpha,\alpha'\in\Q$. Equivalently, we can write 
\begin{align}
W_{[k[r],k[r+s]]}*W_{[k[1],k[s+1]]}= & q^{\alpha}[W_{[k[r+1],k[r+s]]}*W_{[k[r],k[r+s+1]]}]\label{eq:T-system-other-Coxeter}\\
 & +q^{\alpha'}[\prod_{h\in[1,k-1]}W_{[h[r+1],h[r+1+s]]}^{-C_{c_{h},a}}*\prod_{h\in[k+1,|J|]}W_{[h[r],h[r+s]]}^{-C_{c_{h},a}}].\nonumber 
\end{align}

\end{Eg}

\subsection{Standard bases and Kazhdan-Lusztig bases}

For any $w=(w_{1},\ldots,w_{l})\in\N^{l}$, we define the standard
monomial $\stdMod(w):=[W_{1}^{w_{1}}*\cdots*W_{l}^{w_{l}}]^{\rsd}$.
Let $<_{\lex}$ denote the lexicographical order on $\N^{l}$ and
$<_{\rev}$ the reverse lexicographical order, i.e., $w<_{\rev}u$
if and only if $w\op<_{\lex}u\op$.

\begin{Thm}[{\cite{qin2023freezing}}]\label{thm:dBS_PBW}

(1) The set $\stdMod:=\{\stdMod(w)|w\in\N^{[1,l]}\}$ is a $\kk$-basis
of $\bUpClAlg(\rsd)$, called the standard basis. Particularly, we
have $\bClAlg(\rsd)=\bUpClAlg(\rsd)$.

(2) The standard monomials satisfy the analog of the Levendorskii--Soibelman
straightening law:
\begin{align}
W_{k}W_{j}-q^{\lambda(\deg W_{k},\deg W_{j})}W_{j}W_{k} & \in\sum_{w\in\N^{[j+1,k-1]}}\kk W(w),\ \forall j\leq k.\label{eq:LS-law}
\end{align}

(3) Let $\{\can(w)|w\in\N^{l}]$ denote the Kazhdan-Lusztig basis
associated with $\stdMod$ sorted by $<_{\rev}$, i.e., $\can(w)$
satisfies 
\begin{align*}
\overline{\can(w)} & =\can(w),\\
\can(w) & =M(w)+\sum_{w'<_{\rev}w}b_{w'}M(w'),\ b_{w'}\in q^{-\Hf}\Z[q^{\Hf}].
\end{align*}
Then $\can(w)$ equals the common triangular basis element $\can_{\sum_{k=1}^{l}w_{k}\beta_{k}}$
of for $\bUpClAlg(\rsd)$. Moreover, the statement still holds if
when we replace $<_{\rev}$ by $<_{\lex}$.

\end{Thm}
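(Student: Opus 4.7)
The plan is to prove the three parts together, building everything from the degree behavior of the fundamental variables and the $T$-systems of Proposition \ref{prop:T-systems}. Since $\beta_k = \deg^{\rsd} W_k = f_k - f_{k[-1]}$ (with $f_{\pm\infty}=0$), the degree map $w \mapsto \sum_k w_k \beta_k$ is injective on $\N^{[1,l]}$: at any terminal vertex $k$ with $k[1]=+\infty$ one reads $w_k$ directly from the coefficient of $f_k$, then subtracts this contribution and iterates down the predecessor chain. Because each $\stdMod(w) = [W_1^{w_1}*\cdots *W_l^{w_l}]^{\rsd}$ is $\sum_k w_k\beta_k$-pointed by construction of the twisted product, this yields $\kk$-linear independence of $\stdMod$ in $\LP(\rsd)$ at once.

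For part (2), I would establish the Levendorskii--Soibelman law (\ref{eq:LS-law}) by induction on $k-j$ using the $T$-systems. Each $T$-system rewrites a product $W_{[j,j[s]]}*W_{[j[1],j[s+1]]}$ as a normalized sum of monomials in interval variables whose indices lie strictly between $j$ and $j[s+1]$; expanding each interval variable as a normalized product of fundamentals via further $T$-systems and an induction on interval length then reduces $W_k * W_j$ with $k>j$ to a $\kk$-linear combination of $\stdMod(w)$ supported on $[j+1,k-1]$. The $q$-twist $\lambda(\deg W_k, \deg W_j)$ appears automatically from reordering the twisted product.

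For part (1), the LS-law shows that the $\kk$-span of $\stdMod$ is a subalgebra of $\LP(\rsd)$ containing each fundamental, hence each interval variable, and therefore each cluster variable of every seed $\rsd\{r\}$ appearing along the distinguished green-to-red sequence $\Sigma$. Since $\rsd$ is injective-reachable via $\Sigma$, I would invoke Theorem \ref{thm:tropical-basis}: it suffices to verify that the image of the degree map is all of $\tropSet$ under its representation in $\cone(\rsd)$, which follows from tracking the iterated tropical mutations along $\Sigma$ applied to the simplex $\{f_k\}$. This gives $\stdMod$ as a $\kk$-basis of $\upClAlg(\rsd)$. Each frozen vertex $k^{\max}$ is optimized in the terminal seed of $\Sigma$, so by \cite[Proposition 2.15]{qin2023freezing} the standard monomials (all of which lie in $\bLP$ since their degrees are nonnegative on $I_{\fv}$) form a basis of $\bUpClAlg(\rsd)$. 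Since each $\stdMod(w)$ is a polynomial expression in the $W_k$'s, it lies in $\bClAlg(\rsd)$, and we conclude $\bClAlg(\rsd) = \bUpClAlg(\rsd)$.

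For part (3), bar-invariance of the $W_k$'s together with the LS-law yields a standard Kazhdan--Lusztig sharpening that produces the unique bar-invariant elements $\can(w) = \stdMod(w) + \sum_{w'<_{\rev}w} b_{w'}(q)\stdMod(w')$ with $b_{w'}\in q^{-\Hf}\Z[q^{\Hf}]$. To identify $\can(w)$ with the common triangular basis element $\can_{\sum_k w_k\beta_k}$ I would verify the four axioms: pointedness and bar-invariance are built in; the cluster monomials of $\rsd$ and of $\rsd[1]=\Sigma\rsd$ appear among $\stdMod$ because their cluster variables are (interval) fundamentals; and the dominance triangularity (\ref{eq:triangular-decomposition}) follows from the claim that $w'<_{\rev}w$ implies $\sum_k w'_k\beta_k \prec_{\rsd} \sum_k w_k\beta_k$, which is a direct consequence of the explicit form $\beta_k = f_k - f_{k[-1]}$. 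Replacing $<_{\rev}$ by $<_{\lex}$ yields the same basis by the symmetric argument using the reverse word $\ueta\op$, together with uniqueness of the common triangular basis. The main obstacle is the tropical identification in part (1): matching the degree cone $\{\sum_k w_k\beta_k : w\in\N^l\}$ with the full tropical point set $\tropSet$ requires an explicit combinatorial computation of the iterated tropical mutations along $\Sigma$, which is the heart of the argument.
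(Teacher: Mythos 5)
Your plan has genuine gaps, the most serious being the centerpiece of part (1). The degrees of the standard monomials do not exhaust $\tropSet$: since $\beta_k=f_k-f_{k[-1]}$, a vector $m\in\cone(\rsd)$ equals $\sum_k w_k\beta_k$ for some $w\in\N^{[1,l]}$ if and only if every tail sum $\sum_{d\geq 0}m_{j[d]}$ along a letter chain is non-negative, so for instance $-f_j$ is never attained. Hence $\stdMod$ is not $\tropSet$-pointed, Theorem \ref{thm:tropical-basis} does not apply to it, and the intermediate conclusion that $\stdMod$ is a $\kk$-basis of the localized algebra $\upClAlg(\rsd)$ is simply false: every element of $\Span_\kk\stdMod$ has $\nu_j\geq 0$ at each frozen $j$ (the $W_k$ are cluster variables and $\nu_j$ is a valuation), while $x_j^{-1}\in\upClAlg(\rsd)$. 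The ``main obstacle'' you flag --- matching the degree cone with all of $\tropSet$ --- is therefore not an obstacle to be overcome but an impossibility, and with it the appeal to \cite[Proposition 2.15]{qin2023freezing} loses its hypothesis (there is no $\tropSet$-pointed basis in hand to intersect with $\bUpClAlg$). Spanning of $\bUpClAlg(\rsd)$ is the real content of (1); it requires a different mechanism, e.g.\ taking the common triangular basis of $\upClAlg(\rsd)$ (whose existence for dBS seeds is prior input), cutting it down to $\bUpClAlg$ via optimized seeds, identifying the surviving tropical points with the cone $\{\sum_k w_k\beta_k\}$, and proving a unitriangular transition to $\stdMod$ --- which is essentially how \cite{qin2023freezing} proceeds; note the paper itself only quotes the theorem from that reference, so there is no internal proof to lean on.

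Parts (2) and (3) also have unjustified steps. The $T$-systems only constrain products of interval variables attached to a single letter (together with their specific correction terms); no visible induction on them produces the commutation relation between $W_k$ and $W_j$ for distinct letters, and as written your reduction of $W_k*W_j$ itself (rather than the commutator) to monomials supported on $[j+1,k-1]$ is not even the correct statement, since the leading term $q^{\lambda(\deg W_k,\deg W_j)}W_jW_k$ must survive. In (3), the asserted implication ``$w'<_{\rev}w$ implies $\sum_k w'_k\beta_k\prec_{\rsd}\sum_k w_k\beta_k$'' is false: for $\ueta=(a,a)$ one has $\beta_1=f_1$, $\beta_2=f_2-f_1$, $p^*e_1=-f_2$, and $w'=(1,0)<_{\rev}w=(0,1)$, yet $f_1\notin(f_2-f_1)+\N(-f_2)$; the dominance control actually comes from the degree structure of the straightening relations, not from the lexicographic order. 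Finally, cluster monomials of $\rsd$ are not elements of $\stdMod$: in Examples \ref{eg:A2-interval-variable} and \ref{eg:different-presentation} one has $W_{[1,3]}=x_3$ while $[W_1*W_3]=q^{\Hf}x_3+q^{-\Hf}x_2$, so the cluster-compatibility axiom cannot be verified by locating cluster monomials among standard monomials; they lie among the $\can(w)$, and establishing exactly that is the content of (3) rather than an input to it.
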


\subsection{Cluster embeddings associated with subwords\label{subsec:Cluster-embeddings-subwords}}

Let $\ubi$ denote a signed word in $\pm J$. Choose $1\leq j\leq k\leq l$.
Define the new word $\ubi'=(\bi'_{1},\ldots,\bi'_{k-j+1})$ to be
$\ubi_{[j,k]}=(\bi_{j},\bi_{j+1},\ldots,\bi_{k})$. Denote $\dsd':=\dsd(\ubi')$,
$\ddI':=I(\dsd')$, $I'_{\ufv}:=I_{\ufv}(\dsd')$, and $I':=I(\rsd(\ubi'))$.

\begin{Def}\label{def:calibration-words}

Define the embedding $\iota_{\ubi,\ubi'}:\ddI'\hookrightarrow\ddI$,
such that $\iota\binom{a}{d}^{\ubi'}=\binom{a}{d+O([1,j-1];a)}^{\ubi}$\@.

\end{Def}

Abbreviate $\iota_{\ubi,\ubi'}$ by $\iota$. Note that $\iota(I'_{\ufv})\subset I_{\ufv}$,
$\iota(I')\subset I$. Under the identification $I'\simeq[1,k-j+1]$
and $I\simeq[1,l]$, we have $\iota(s)=s+j-1$, $\forall s\in[1,k-j+1]$.
In particular, $s<s'$ if and only if $\iota(s)<\iota(s')$ for $s,s'\in I'$.

\begin{Lem}[{\cite[Lemma 6.6]{qin2023analogs}}]\label{lem:calibration-word}

(1) $\iota$ is a cluster embedding from $\dsd(\ubi')$ to $\dsd(\ubi)$,
such that $\dsd(\ubi')$ is a good sub seed of $\dsd(\ubi)$ via $\iota$.

(2) $\iota$ restricts to a cluster embedding from the classical seed
$\rsd(\ubi')$ to $\rsd(\ubi)$, still denoted $\iota$. If $\ubi'=\ubi_{[1,k]}$,
$\rsd(\ubi')$ is further a good sub seed of $\rsd(\ubi)$ via $\iota$.

\end{Lem}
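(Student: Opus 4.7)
The plan is to verify the defining conditions of a cluster embedding and of a good sub seed by direct combinatorial inspection using the explicit formula (\ref{eq:dBS_B_matrix}) for the $B$-matrix. The central observation is that since $\ubi'$ is a contiguous subword of $\ubi$, the successor operation commutes with $\iota$ on unfrozen vertices: whenever $k\in I'_{\ufv}$, we have $\iota(k[1]_{\ubi'}) = (\iota k)[1]_{\ubi}$, because no occurrence of $|\bi'_k|$ can lie strictly between them in $\ubi$. Moreover, the symmetrizers $d_i$, signs $\varepsilon_k$, and Cartan entries $C_{|\bi_i|,|\bi_k|}$ depend only on letters, which are preserved by taking a subword. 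Write $j_0, k_0$ for the endpoints of the subword, so that under the identifications $\iota$ sends positions $[1, k_0-j_0+1]$ to $[j_0, k_0]$.

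For the cluster embedding part of (1), I would first observe that $\iota I'_{\ufv}\subset I_{\ufv}$, because a vertex $\binom{a}{d}^{\ubi'}$ being unfrozen requires a later occurrence of $a$ in $\ubi'$, which provides the same in $\ubi$. Then $b'_{ik}=b_{\iota i,\iota k}$ for $i\in I'$, $k\in I'_{\ufv}$ reduces to a branch-by-branch comparison of (\ref{eq:dBS_B_matrix}). The branches are controlled by the signs and relative order of $\{i, k, i[1], k[1]\}$, which are preserved by $\iota$ via the successor-commutation observation. The subtle case is when $i$ is frozen in $I'$ so that $i[1]_{\ubi'} = +\infty$ while $(\iota i)[1]_{\ubi}$ may exist at a position strictly larger than $k_0$; then this position lies past $(\iota k)[1]_{\ubi}\leq k_0$, and the inequalities in (\ref{eq:dBS_B_matrix}) still select the same branch on both sides.

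For the good sub seed property of (1), I would fix $k\in I'_{\ufv}$ and $i\in\ddI\setminus\iota\ddI'$ and show $b_{i,\iota k}=0$ by ruling out each of the six nonzero branches of (\ref{eq:dBS_B_matrix}). The key constraint is that $\iota k$ and $(\iota k)[1]_{\ubi}$ both lie in $\iota\ddI'$, hence as positions inside $[j_0, k_0]$, so $i$ cannot lie strictly between them. I would split into three cases: $i$ a Coxeter-prefix vertex $\binom{a}{-1}^{\ubi}$ not in the image, $i\in[1, j_0-1]$, and $i\in[k_0+1, l]$. In the first two cases, the condition $i\notin\iota\ddI'$ forces $i[1]_{\ubi}\leq j_0-1$, so both $i$ and $i[1]$ lie strictly before $\iota k$ and $(\iota k)[1]$; in the last case $i > k_0$, so both $i$ and $i[1]$ lie strictly past them. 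Each situation manifestly kills all six branches.

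Part (2) follows by restriction: $\rsd(\ubi')$ is obtained from $\dsd(\ubi')$ by deleting the Coxeter-prefix frozen vertices, so the cluster embedding statement is an immediate consequence of part (1). For the good sub seed claim under the additional hypothesis $\ubi' = \ubi_{[1, k]}$, we have $j_0 = 1$, so only the case $i\in[k_0+1, l]$ arises and the argument of the previous paragraph applies. The main obstacle throughout is the bookkeeping in the branch analysis of (\ref{eq:dBS_B_matrix}), especially the interplay between vertices frozen in $\ubi'$ whose letter reappears in $\ubi\setminus\ubi'$; but the successor-commutation observation tames it by pinning down where $(\iota i)[1]_{\ubi}$ and $(\iota k)[1]_{\ubi}$ can lie relative to $[j_0, k_0]$.
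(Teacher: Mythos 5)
The paper itself offers no proof of this lemma---it is quoted verbatim from \cite[Lemma 6.6]{qin2023analogs}---so your direct branch-by-branch verification of (\ref{eq:dBS_B_matrix}) is a legitimate self-contained route, and its classical content is essentially correct. The successor-commutation observation, the treatment of rows $i$ with $i[1]_{\ubi'}=+\infty$ whose genuine successor in $\ubi$ lands strictly beyond the right endpoint $k_{0}$ of the subword and hence beyond $(\iota k)[1]\leq k_{0}$, and the three-case vanishing argument for $i\notin\iota\ddI'$ (using that such $i$ have $i[1]\leq j_{0}-1$ in the first two cases and $i>k_{0}$ in the third) are exactly the right points; they also explain correctly why the good-sub-seed claim for $\rsd$ requires $\ubi'=\ubi_{[1,k]}$.

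There is, however, a gap in part (1) as stated. The lemma says ``classical seed'' only in part (2); part (1) concerns $\dsd(\ubi')$ and $\dsd(\ubi)$ together with their quantum structure, and by the paper's definition a cluster embedding between quantum seeds must in addition satisfy $\Lambda'_{i i'}=\Lambda_{\iota i,\iota i'}$ for all $i,i'\in\ddI'$. Your proposal never mentions the quantization matrices, so it establishes only the classical half of (1); the compatibility of $\Lambda$ is a separate verification (for the specific quantization carried by the $\dsd$ seeds in the cited reference) that cannot be read off from the $B$-matrix branches. A smaller inaccuracy: the blanket claim that signs are preserved by $\iota$ fails for the Coxeter-prefix vertices $\binom{a}{-1}^{\ubi'}$ when $a$ occurs in $\ubi_{[1,j_{0}-1]}$, since their image is the last occurrence of $a$ before the subword, whose letter may carry sign $-1$ while $\varepsilon_{\binom{a}{-1}}=+1$. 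This does not derail the computation---for such a row $i$ one always has $\iota i<\iota k$ and $\iota i\neq(\iota k)[1]$, so every branch of (\ref{eq:dBS_B_matrix}) whose condition or value involves the sign of the row vertex is vacuous, and the successor of $\iota i$ is still the first occurrence of $a$ at or after position $j_{0}$, so successor-commutation persists when $a\in\supp\ubi'$---but the prefix rows should be treated explicitly rather than subsumed under ``signs are preserved.''
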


Let $\ueta$ denote a word in $J$.

\begin{Lem}[{\cite[Lemma 8.5]{qin2023analogs}}]\label{lem:embed-interval-variables}

The inclusion $\iota_{[j,k]}:\bClAlg(\rsd(\ueta_{[j,k]}))\hookrightarrow\bClAlg(\rsd(\ueta_{[j,l]},-\ueta_{[1,j-1]}\op))=\bClAlg(\rsd(\ueta))$
sends the interval variables $W_{[\binom{a}{d}^{\ueta_{[j,k]}},\binom{a}{d'}^{\ueta_{[j,k]}}]}(\rsd(\ueta_{[j,k]}))$
to $W_{[\binom{a}{d+O([1,j-1];a])},\binom{a}{d'+O([1,j-1];a])}]}(\rsd(\ueta))$.

\end{Lem}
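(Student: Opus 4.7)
My plan is to factor the claimed inclusion through a good sub seed construction and then match images via pointedness or $T$-systems.

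Set $\ubi := (\ueta_{[j,l]}, -\ueta_{[1,j-1]}\op)$ and $n_a := O([1,j-1]; a)$. First, I would justify the equality $\bClAlg(\rsd(\ubi)) = \bClAlg(\rsd(\ueta))$ appearing in the statement. Starting from $\ueta$, each letter $\ueta_i$ (for $i = 1, \ldots, j-1$) can be moved to the tail with a sign change by one left reflection followed by a sequence of flips; by Section \ref{subsec:Operations-on-signed-words}, each such operation is trivial or a single mutation (possibly with a permutation), and Lemma \ref{lem:words-mutation-seq-connect-seeds} assembles the composite into a permutation mutation sequence between $\rsd(\ueta)$ and $\rsd(\ubi)$, inducing the desired identification of cluster algebras. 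Since $\ubi_{[1, k-j+1]} = \ueta_{[j,k]}$, Lemma \ref{lem:calibration-word}(2) then realizes $\rsd(\ueta_{[j,k]})$ as a good sub seed of $\rsd(\ubi)$ via $\iota_{\ubi, \ueta_{[j,k]}}$, and Proposition \ref{prop:inclusion-good-sub-cl-alg} yields $\bClAlg(\rsd(\ueta_{[j,k]})) \hookrightarrow \bClAlg(\rsd(\ubi)) = \bClAlg(\rsd(\ueta))$, which we identify with $\iota_{[j,k]}$.

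To identify the images of the interval variables, I would use pointedness: a cluster variable is determined by its degree vector at any fixed reference seed, and both the cluster embedding and the identifying isomorphism above send cluster variables to cluster variables. Under the good sub seed embedding, degree vectors transform by $f_s \mapsto f_{\iota_{\ubi, \ueta_{[j,k]}}(s)}$; by Definition \ref{def:calibration-words} (applied with starting index $1$) this sends $\binom{a}{d}^{\ueta_{[j,k]}}$ to $\binom{a}{d}^\ubi$. Under the identification $\rsd(\ubi) \simeq \rsd(\ueta)$, degree vectors are further transported by the associated tropical mutations. Using the formula $\beta_{[j',k']} = f_{k'} - f_{j'[-1]}$ recalled in Section \ref{sec:Reviews-on-cluster-words}, one then checks that the resulting degree at $\rsd(\ueta)$ equals $\beta_{[\binom{a}{d+n_a}, \binom{a}{d'+n_a}]}^\ueta = f_{\binom{a}{d'+n_a}^\ueta} - f_{\binom{a}{(d+n_a)[-1]}^\ueta}$, which gives the desired identification.

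A cleaner alternative is induction on the interval length $d' - d$ via the $T$-systems of Proposition \ref{prop:T-systems}. Since $\iota_{[j,k]}$ preserves the quantum $*$-product, it transports the $T$-systems of $\rsd(\ueta_{[j,k]})$ to relations on the images; after the uniform index shift by $n_a$, these coincide with the corresponding $T$-systems of $\rsd(\ueta)$, so the general case reduces to the base case $d = d'$ of fundamental variables. I expect the main obstacle to be this base case: one must verify that $\iota_{[j,k]}(W_{\binom{a}{d}}^{\ueta_{[j,k]}}) = W_{\binom{a}{d+n_a}}^\ueta$ by tracking how the defining mutation sequence $\Sigma^{\ueta_{[j,k]}}$ lifts through the good sub seed embedding and the left-reflection/flip identification to the appropriate portion of $\Sigma^\ueta$.
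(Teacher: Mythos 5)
Note first that the paper offers no internal proof of this statement; it is imported verbatim from \cite[Lemma 8.5]{qin2023analogs}, so your argument has to stand on its own. Your construction of the inclusion is fine and is essentially forced by the framework: writing $\ubi=(\ueta_{[j,l]},-\ueta_{[1,j-1]}\op)$, the identification $\bClAlg(\rsd(\ubi))=\bClAlg(\rsd(\ueta))$ comes from left reflections and flips (Section \ref{subsec:Operations-on-signed-words}, Lemma \ref{lem:words-mutation-seq-connect-seeds}), and since $\ueta_{[j,k]}=\ubi_{[1,k-j+1]}$, Lemma \ref{lem:calibration-word}(2) and Proposition \ref{prop:inclusion-good-sub-cl-alg} give the good-sub-seed embedding. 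The problem is that the actual content of the lemma — that the specific cluster variables correspond under the specific index shift by $O([1,j-1];a)$ — is deferred in both of your routes. In Route A, the sentence ``one then checks that the resulting degree at $\rsd(\ueta)$ equals $\beta_{[\binom{a}{d+n_a},\binom{a}{d'+n_a}]}$'' is the entire difficulty: the embedding gives you the degree of the image only at $\rsd(\ubi)$, namely $f_{\binom{a}{d'}^{\ubi}}-f_{\binom{a}{d}^{\ubi}[-1]}$, and carrying it to $\rsd(\ueta)$ requires the piecewise-linear tropical maps along the left-reflection/flip mutation sequence (which, as in the proof of Lemma \ref{lem:interval-for-seeds}, is a composition of pieces of green-to-red sequences, with the further bookkeeping subtlety that the mutation vertices must be tracked through the $\binom{a}{d}$-labels as the word changes). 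The formula $\beta_{[j',k']}=f_{k'}-f_{j'[-1]}$ only gives degrees at the seed of a positive word and cannot simply be applied after a nonlinear transport, so this step is a genuine computation, not a routine check.

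Route B has the same issue in a different guise. The $T$-system induction is plausible, but it cannot run naively on the interval length $d'-d$: the exchange term $\prod W_{[i,i[d]]}^{-C_{\eta_i,\eta_j}}$ may involve arbitrarily long intervals in the other letters, so the induction must be organized differently (e.g.\ on the position of the right endpoint), and one must also verify that the combinatorial data ($r_b$, the interleaving conditions) of the subword $T$-systems match those of $\ueta$ after the shift. More importantly, the base case you flag is not only the fundamental variables: under your embedding the intervals $W_{[\binom{a}{0},\binom{a}{d}]}(\rsd(\ueta_{[j,k]}))$ are just the initial cluster variables of $\rsd(\ubi)$, and identifying these with $W_{[\binom{a}{n_a},\binom{a}{d+n_a}]}(\rsd(\ueta))$ is again exactly the unproved tracking of $\Sigma^{\ueta}$ through the flip sequence. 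So the proposal is a sensible plan with the right ingredients, but the step that constitutes the lemma is not established. A secondary caveat: Lemma \ref{lem:calibration-word}(2) only provides a \emph{classical} cluster embedding between $\rsd$-seeds, so your assertion that $\iota_{[j,k]}$ preserves the quantum $*$-product needs a separate compatibility of quantization matrices, of the kind the paper checks by hand in Section \ref{subsec:Limits-of-signed}.
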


\subsection{Cluster structures from quantum affine algebras\label{subsec:Cluster-structures-quantum-affine}}

\global\long\def\mybinom#1#2{\genfrac{\langle}{\rangle}{0pt}{}{#1}{#2}}%

Choose a Coxeter word $\uc=(c_{1},\ldots,c_{|J|})$. It determines
a skew-symmetric $J\times J$-matrix $B^{\Delta}$ with $B_{c_{j},c_{k}}^{\Delta}:=\sign(C_{c_{j},c_{k}})\in\{0,-1\}$
for $j<k$. A letter $c_{k}$ is called a sink if $C_{c_{k},c_{j}}\geq0$,
$\forall j<k$; it is called a source if $C_{c_{k},c_{j}}\geq0$,
$\forall j>k$. Let $J^{\pm}:=J^{\pm}(\uc)$ denote the set of sink
letters and the set of source letters, respectively. 

Following \cite{KimuraQin14}, define $\W:=\{\mybinom ad|a\in J,d\in\Z\}$.
We use $e_{\mybinom ad}$ to denote the $\mybinom ad$-th unit vector
of $\N^{\W}$.

For any sink letter $a\in J^{+}$, we define a new Coxeter word $\mu_{a}\uc:=(\uc\backslash\{a\},a)$,
where $\mu_{a}$ is called a mutation. Choose any $\uc$-adapted word
$\ugamma=(\gamma_{i})$, i.e., $\gamma_{i}$ is a sink of $\mu_{\gamma_{i-1}}\cdots\mu_{\gamma_{1}}\uc$,
$\forall i$. Define $\hbeta_{k}:=\hbeta_{k}(\ugamma):=e_{\mybinom{\gamma_{k}}{o_{-}(\gamma_{k})}}$.

Define $\epsilon_{c_{j},c_{k}}:=1$ if $j<k$, $\epsilon_{c_{j},c_{j}}=0$,
and $\epsilon_{c_{j},c_{k}}:=-1$ if $j>k$. Assume that we can choose
a $\Z$-valued function $\xi$ on $J$, called a height function,
such that $\xi_{c_{j}}=\xi_{c_{k}}+\epsilon_{c_{j},c_{k}}$ when $C_{c_{j},c_{k}}<0$
and $j<k$. Note that such a $\xi$ exists if the quiver associated
with $B^{\Delta}$ is a tree. Define $J_{\Z}(\xi):=\{(a,p)|a\in J,p\in\xi_{a}+2\Z\}$,
which will be identified with $\W$ such that $(a,\xi_{a}-2d)$ equals
$\mybinom ad$. Then we can identify $\N^{\W}$ with $\N^{J_{\Z}(\xi)}$
and $e_{\mybinom ad}$ with $e_{(a,\xi_{a}-2d)}$.

\begin{Eg}\label{eg:A2-121212}

Continue Example \ref{eg:A2-interval-variable}, where $C=\left(\begin{array}{cc}
2 & -1\\
-1 & 2
\end{array}\right)$, $\uc=(1,2)$, and $\ugamma=\uc^{3}=(1,2,1,2,1,2)$. We draw the
quiver for $\rsd(\ugamma)$ in Figure \ref{fig:quiver-aff}, where
the elements of $I(\rsd(\ugamma))$ are also denoted as $k\in[1,6]$,
$\mybinom ad\in\W$, or $(a,\xi_{a}-2d)\in J_{\Z}(\xi)$. The set
of sink letters is $J^{+}=\{1\}$, $\xi_{2}=\xi_{1}-1$, and $\mu_{1}\uc=(2,1)$.
Note that our quiver is opposite to that of \cite{HernandezLeclerc09}.

\begin{figure}[h]
\caption{The quiver for $\rsd(1,2,1,2,1,2)$}
\label{fig:quiver-aff}\subfloat[]{

\begin{tikzpicture}  [node distance=48pt,on grid,>={Stealth[round]},bend angle=45,      pre/.style={<-,shorten <=1pt,>={Stealth[round]},semithick},    post/.style={->,shorten >=1pt,>={Stealth[round]},semithick},  unfrozen/.style= {circle,inner sep=1pt,minimum size=12pt,draw=black!100,fill=red!100},  frozen/.style={rectangle,inner sep=1pt,minimum size=12pt,draw=black!75,fill=cyan!100},   point/.style= {circle,inner sep=1pt,minimum size=5pt,draw=black!100,fill=black!100},   boundary/.style={-,draw=cyan},   internal/.style={-,draw=red},    every label/.style= {black}] \node[unfrozen] (v1) at (0,1) {1}; \node[unfrozen] (v2) at (-0.5,0) {2}; \node[unfrozen] (v3) at (-1,1) {3}; \node[unfrozen] (v4) at (-1.5,0) {4}; \node[frozen] (v5) at (-2,1) {5}; \node[frozen] (v6) at (-2.5,0) {6}; \draw[<-]  (v1) edge (v2); \draw[<-]  (v2) edge (v3); \draw[<-]  (v3) edge (v4); \draw[<-]  (v4) edge (v5); \draw[<-,dashed]  (v5) edge (v6); \draw[<-]  (v5) edge (v3); \draw[<-]  (v3) edge (v1); \draw[<-]  (v6) edge (v4); \draw[<-]  (v4) edge (v2); \end{tikzpicture}}\subfloat[]{

\begin{tikzpicture}  [scale=1.2,node distance=48pt,on grid,>={Stealth[round]},bend angle=45,      pre/.style={<-,shorten <=1pt,>={Stealth[round]},semithick},    post/.style={->,shorten >=1pt,>={Stealth[round]},semithick},  unfrozen/.style= {circle,inner sep=1pt,minimum size=12pt,draw=black!100,fill=red!100},  frozen/.style={rectangle,inner sep=1pt,minimum size=12pt,draw=black!75,fill=cyan!100},   point/.style= {circle,inner sep=1pt,minimum size=5pt,draw=black!100,fill=black!100},   boundary/.style={-,draw=cyan},   internal/.style={-,draw=red},    every label/.style= {black}] \node (v1) at (0,1) {$\mybinom{1}{0}$}; \node (v2) at (-0.5,0) {$\mybinom{2}{0}$}; \node (v3) at (-1,1) {$\mybinom{1}{1}$}; \node (v4) at (-1.5,0) {$\mybinom{2}{1}$}; \node (v5) at (-2,1) {$\mybinom{1}{2}$}; \node (v6) at (-2.5,0) {$\mybinom{2}{2}$}; \draw[<-]  (v1) edge (v2); \draw[<-]  (v2) edge (v3); \draw[<-]  (v3) edge (v4); \draw[<-]  (v4) edge (v5); \draw[<-,dashed]  (v5) edge (v6); \draw[<-]  (v5) edge (v3); \draw[<-]  (v3) edge (v1); \draw[<-]  (v6) edge (v4); \draw[<-]  (v4) edge (v2); \end{tikzpicture}}\subfloat[]{ \begin{tikzpicture}  [scale=2,node distance=48pt,on grid,>={Stealth[round]},bend angle=45,      pre/.style={<-,shorten <=1pt,>={Stealth[round]},semithick},    post/.style={->,shorten >=1pt,>={Stealth[round]},semithick},  unfrozen/.style= {circle,inner sep=1pt,minimum size=12pt,draw=black!100,fill=red!100},  frozen/.style={rectangle,inner sep=1pt,minimum size=12pt,draw=black!75,fill=cyan!100},   point/.style= {circle,inner sep=1pt,minimum size=5pt,draw=black!100,fill=black!100},   boundary/.style={-,draw=cyan},   internal/.style={-,draw=red},    every label/.style= {black}] \node (v1) at (0,0.5) {$(1,\xi_1)$}; \node (v2) at (-0.5,0) {$(2,\xi_2)$}; \node (v3) at (-1,0.5) {$(1,\xi_1-2)$}; \node (v4) at (-1.5,0) {$(2,\xi_2-2)$}; \node (v5) at (-2,0.5) {$(1,\xi_1-4)$}; \node (v6) at (-2.5,0) {$(2,\xi_2-4)$}; \draw[<-]  (v1) edge (v2); \draw[<-]  (v2) edge (v3); \draw[<-]  (v3) edge (v4); \draw[<-]  (v4) edge (v5); \draw[<-,dashed]  (v5) edge (v6); \draw[<-]  (v5) edge (v3); \draw[<-]  (v3) edge (v1); \draw[<-]  (v6) edge (v4); \draw[<-]  (v4) edge (v2); \end{tikzpicture}}
\end{figure}

\end{Eg}

Assume $C$ to be of type $ADE$ from now on. Let $\qAff$ denote
the associated quantum affine algebra, where $\varepsilon\in\C^{\times}$
is not a root of unity. $\forall a\in J,\varepsilon'\in\C^{\times}$,
it has a finite dimensional simple module $L_{a,\varepsilon'}$ called
a fundamental module. We denote $L(e_{\mybinom ad}):=L_{a,\varepsilon^{\xi_{a}-2d}}$.
The composition factors of the tensor products of $L(e_{\mybinom ad})$
could be parameterized by $L(w)$, for $w\in\oplus_{\mybinom ad\in\W}\N e_{\mybinom ad}$.
See \cite{Nakajima01}\cite{Nakajima04}\cite{HernandezLeclerc09}
for more details.

Let $\cC_{\Z}(\xi)$ denote the monoidal category consisting of the
finite dimensional modules of $\qAff$ which have the composition
factors $L(w)$, $w\in\oplus_{\mybinom ad\in\W}\N e_{\mybinom ad}$.
Define $\cC_{\ugamma}(\xi)$ to be the monoidal subcategory of $\cC_{\Z}(\xi)$
whose objects have the composition factors $L(w)$, for $w\in\oplus_{k}\N\hbeta_{k}$.
For any $j\leq k$ such that $\gamma_{j}=\gamma_{k}$, $L(\hbeta_{[j,k]})$
are called the Kirillov--Reshetikhin modules, where we denote $\hbeta_{[j,k]}:=\hbeta_{j}+\hbeta_{j[1]}+\cdots+\hbeta_{k}$.
Its deformed Grothendieck ring, which is a $\kk$-algebra $K$, can
be constructed from graded quiver varieties \cite{Nakajima04}\cite{VaragnoloVasserot03}\cite[Sections 7, 8.4]{qin2017triangular}. 

By \cite[Theorem 8.4.3]{qin2017triangular}, there is a $\kk$-algebra
isomorphism $\kappa:\bClAlg(\rsd(\ugamma))\simeq K$, such that $\kappa(x_{i}(\rsd(\ugamma)))=[L(\hbeta_{[i^{\min},i]})]$,
where the quantization of $K$ and $\rsd(\ugamma)$ are chosen as
in \cite[Section 7.3]{qin2017triangular}. Recall that we have $\beta_{[j,k]}:=\deg^{\rsd(\ugamma)}W_{[j,k]}=f_{k}-f_{j[-1]}$.

\begin{Thm}[{\cite[Theorem 1.2.1(II)]{qin2017triangular}}]\label{thm:basis-for-HL}

The cluster algebra $\bClAlg(\rsd(\ugamma))$ has the common triangular
basis $\can$. Moreover, $\cC_{\ugamma}(\xi)$ categorifies $(\bClAlg(\rsd(\ugamma)),\can)$
such that $\kappa\can_{m}=[L(w)]$, where $w=\sum_{i=1}^{l}w_{i}\hbeta_{i}$
and $m=(m_{i})_{i\in[1,l]}$ satisfies $\sum_{i}m_{i}\beta_{[i^{\min},i]}=\sum_{i}w_{i}\beta_{i}$. 

\end{Thm}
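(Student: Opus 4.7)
The plan is to reduce the theorem to two ingredients: (i) the existence and Kazhdan–Lusztig characterization of the common triangular basis $\can$, which is already furnished by Theorem \ref{thm:dBS_PBW}, and (ii) the identification, under the algebra isomorphism $\kappa$, of the standard monomials $\stdMod(w)$ with the classes of standard modules of $\cC_{\ugamma}(\xi)$. Once these two are in place, the bijection $\kappa\can_m = [L(w)]$ will follow from uniqueness of Kazhdan–Lusztig type bases.

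First, I would invoke Theorem \ref{thm:dBS_PBW} to obtain the standard basis $\stdMod = \{\stdMod(w)\}_{w\in\N^{[1,l]}}$ and the common triangular basis $\can = \{\can(w)\}$ of $\bClAlg(\rsd(\ugamma)) = \bUpClAlg(\rsd(\ugamma))$, with $\can(w) = \stdMod(w) + \sum_{w'<_{\rev}w} b_{w'}\stdMod(w')$ satisfying $\overline{\can(w)}=\can(w)$. Next, on the representation-theoretic side, the finite-dimensional simples $[L(w)]$, $w\in\oplus_k \N\hbeta_k$, form a $\kk$-basis of $K=K_0(\cC_{\ugamma}(\xi))$, and from Nakajima's work on graded quiver varieties, they are the Kazhdan–Lusztig basis with respect to the standard basis $\{[M(w)]\}$ built from ordered tensor products of fundamental KR-modules $[L(\hbeta_i)]$: they are bar-invariant and triangular with lower-order corrections in the appropriate (reverse lexicographic) order.

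The core step is to check that $\kappa$ sends $\stdMod(w)$ to $[M(w)]$, up to a unit in $\kk$. By Theorem 8.4.3 of \cite{qin2017triangular}, $\kappa$ sends the cluster variable $x_i(\rsd(\ugamma))$ to the KR-module class $[L(\hbeta_{[i^{\min},i]})]$. Applying this to the mutated seeds $\rsd\{r\}$ along the green-to-red sequence $\Sigma$ (whose cluster variables are exactly the interval variables $W_{[j,k]}$), I would show that $\kappa(W_{[j,k]}) \sim [L(\hbeta_{[j,k]})]$: both sides satisfy the same $T$-system recursion (Proposition \ref{prop:T-systems} on one side and the well-known KR $T$-system on the other), initialized at fundamental variables/fundamental KR-modules, so induction on the length of the interval gives the identification. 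In particular, $\kappa(W_i)\sim[L(\hbeta_i)]$, and the Levendorskii–Soibelman straightening law (\ref{eq:LS-law}) matches the analogous relation in $K$ expressing $[L(\hbeta_k)]*[L(\hbeta_j)]$ modulo lower-order standard modules; therefore $\kappa\stdMod(w)\sim[M(w)]$. The degree bookkeeping $\sum m_i\beta_{[i^{\min},i]} = \sum w_i\beta_i$ is forced by $\deg^{\rsd}W_i = \beta_i$ and $\deg^{\rsd}\kappa^{-1}[L(\hbeta_{[i^{\min},i]})] = \beta_{[i^{\min},i]}$.

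With $\kappa\stdMod(w)\sim[M(w)]$ established, both $\kappa\can(w)$ and $[L(w)]$ are bar-invariant elements of $K$ expanding in the standard basis $\{[M(w')]\}$ as $[M(w)]$ plus strictly lower terms in $\mm$-coefficients under the same triangular order. By uniqueness of the Kazhdan–Lusztig basis, $\kappa\can(w) = [L(w)]$, after which re-parametrizing $w\leftrightarrow m$ via the degree identity yields $\kappa\can_m=[L(w)]$. The main obstacle I anticipate is the compatibility of conventions: matching the bar-involutions, the $q$-commutation factors $q^{\alpha}$ in the $T$-systems of Proposition \ref{prop:T-systems} with their representation-theoretic counterparts, and confirming that the ordering $<_{\rev}$ (equivalently $<_{\lex}$) used in Theorem \ref{thm:dBS_PBW} is compatible with the ordering under which $[L(w)]$ is Kazhdan–Lusztig triangular over $[M(w)]$. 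Once the quantization conventions of \cite[Section 7.3]{qin2017triangular} are aligned, the argument closes.
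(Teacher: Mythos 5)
The paper does not actually prove this statement: it is imported verbatim from \cite[Theorem 1.2.1(II)]{qin2017triangular}, so there is no internal proof to compare against, and your reconstruction takes a genuinely different route from the cited original. In \cite{qin2017triangular} the common triangular basis is not available beforehand; the substance of that proof is to verify directly that the basis of simple classes $\{[L(w)]\}$ satisfies the triangular-basis axioms (pointedness, bar-invariance, compatibility with the cluster monomials of $\sd$ and $\sd[1]$, and the $(\prec_{\sd},\mm)$-triangularity of $x_i*\can_m$), by an induction over seeds using $T$-systems, the dominance order and positivity coming from graded quiver varieties. You instead take the existence and KL-characterization of $\can$ from Theorem \ref{thm:dBS_PBW} and reduce the theorem to matching two Kazhdan--Lusztig bases over matched standard bases. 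The skeleton is sound: initialize with $\kappa(x_i(\rsd(\ugamma)))=\kappa W_{[i^{\min},i]}=[L(\hbeta_{[i^{\min},i]})]$, propagate along $\Sigma$ via the $T$-systems (\ref{eq:T-systems}) to get $\kappa W_{[j,k]}\sim[L(\hbeta_{[j,k]})]$ and in particular $\kappa W_i\sim[L(\hbeta_i)]$, deduce $\kappa\stdMod(w)\sim[M(w)]$, and conclude by uniqueness of bar-invariant unitriangular bases.

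Two of your steps carry the real weight and are only flagged, not carried out. First, the uniqueness argument requires both $\kappa\can(w)$ and $[L(w)]$ to be unitriangular over $\{[M(w')]\}$ with respect to one and the same order with off-diagonal coefficients in $\mm$: Nakajima's theorem gives triangularity in the dominance order (transported by degrees, i.e.\ $\sum_i w'_i\beta_i\prec_{\rsd(\ugamma)}\sum_i w_i\beta_i$), whereas Theorem \ref{thm:dBS_PBW} uses $<_{\rev}$; you must prove that the former implies the latter (or exhibit a common refinement), since otherwise the two KL characterizations need not single out the same element. Second, the ``compatibility of conventions'' is exactly where the quantization choice of \cite[Section 7.3]{qin2017triangular} enters: the bar involutions must correspond under $\kappa$, the powers $q^{\alpha},q^{\alpha'}$ in (\ref{eq:T-systems}) must agree with those of the quantum $T$-system in $K$, and $[M(w)]$ must be the normalized product of the $[L(\hbeta_i)]$ taken in the order dictated by $\ugamma$; without these checks the inductive identification could fail by more than a power of $q$. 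Finally, if you intend this as a logically independent proof, you should verify that the proof of Theorem \ref{thm:dBS_PBW} for the (non-reduced, finite-type) words $\ugamma$ does not itself rely on \cite[Theorem 1.2.1(II)]{qin2017triangular}, otherwise the argument is circular rather than an alternative derivation.
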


\section{Based cluster algebras of infinite rank\label{sec:Based-cluster-algebras-infinite-ranks}}

In this section, we consider cluster algebras of infinite rank which
appear as colimits. See \cite{gratz2015cluster} for a general discussion.
We will see that results for the finite rank case might be extended
to the infinite rank case. 

\subsection{Extension of cluster algebras to infinite rank\label{subsec:Extension-to-infinite}}

Assume that we have a chain of seeds $(\sd_{i})_{i\in\N}$ such that
$\sd_{i}$ is a good seed of $\sd_{i+1}$ via a cluster embedding
$\iota_{i}$ (Section \ref{subsec:Sub-seeds}) and $I_{\infty}$ is
the colimit of $\cdots I(\sd_{i})\xhookrightarrow{\iota_{i}}I(\sd_{i+1})\xhookrightarrow{\iota_{i+1}}I(\sd_{i+2})\cdots$.
We will omit $\iota_{i}$ and view $I(\sd_{i})$ as subsets of $I_{\infty}$.
Then we have inclusions:
\begin{align*}
\bClAlg_{0} & \subset\bClAlg_{1}\subset\cdots
\end{align*}
where $\bClAlg_{i}:=\bClAlg(\sd_{i})$ and $x_{j}(\sd_{i})$ are identified
with $x_{j}(\sd_{i+1})$. Define the seed $\sd_{\infty}$, also denoted
$\cup_{i}\sd_{i}$, such that $I(\sd_{\infty})=I_{\infty}$, $\tB(\sd_{\infty})_{j,k}=\tB(\sd_{i})_{j,k}$,and
$x_{j}(\sd_{\infty})=x_{j}(\sd_{i})$ for any $j,k\in I(\sd_{i})\subset I_{\infty}$.
In the quantum case, we assume the above inclusions hold and choose
the quantization matrix for $\Lambda(\sd_{\infty})$ such that $\Lambda(\sd_{\infty})_{j,k}=\Lambda(\sd_{i})_{j,k}$
for $j,k\in I(\sd_{i})$. 

Recall that $\bLP(\sd_{\infty})=\cup\bLP(\sd_{i})$. We define its
subalgebra $\bClAlg_{\infty}:=\cup_{i}\bClAlg_{i}$

\begin{Lem}

The cluster algebra $\bClAlg(\sd_{\infty})$ coincides with $\bClAlg_{\infty}=\cup_{i}\bClAlg_{i}$
as a subalgebra of $\bLP(\sd_{\infty})$.

\end{Lem}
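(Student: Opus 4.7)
The plan is to prove the two inclusions $\bClAlg_{\infty} \subset \bClAlg(\sd_{\infty})$ and $\bClAlg(\sd_{\infty}) \subset \bClAlg_{\infty}$ separately, with the second one being the substantive direction. The key observation driving everything is that any mutation sequence is, by definition, finite, and hence only touches finitely many vertices of $I_{\infty}$; all of those vertices therefore lie in some finite-rank $I(\sd_{i})$.

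For $\bClAlg_{\infty} \subset \bClAlg(\sd_{\infty})$, I would first verify by an easy induction on $i$ that $\sd_{i}$ is a good sub seed of $\sd_{j}$ for every $j \geq i$ (the good sub seed relation is transitive: if $b_{jk} = 0$ whenever $k \in \iota I_{\ufv}'$ and $j \notin \iota I'$, then composing two such embeddings still gives a vanishing submatrix because both steps force vanishing on the relevant rows). Passing to the colimit, $\sd_{i}$ is a good sub seed of $\sd_{\infty}$ under the inclusion $I(\sd_{i}) \hookrightarrow I_{\infty}$. Proposition \ref{prop:inclusion-good-sub-cl-alg} then yields $\bClAlg_{i} = \bClAlg(\sd_{i}) \subset \bClAlg(\sd_{\infty})$, and taking the union over $i$ gives the desired inclusion.

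For the reverse inclusion, take any cluster variable $z = x_{j}(\seq \sd_{\infty})$ of $\bClAlg(\sd_{\infty})$, where $\seq = \mu_{k_{l}} \cdots \mu_{k_{1}}$ is a finite mutation sequence and $j \in I_{\infty}$. Pick $i$ large enough so that $j, k_{1}, \ldots, k_{l}$ all lie in $I(\sd_{i})$. By the good sub seed property of $\sd_{i} \subset \sd_{\infty}$, the mutation $\mu_{k_{1}}$ at a vertex $k_{1} \in I_{\ufv}(\sd_{i})$ only involves rows $r$ with $b_{r,k_{1}} \neq 0$, which all lie in $I(\sd_{i})$; so the exchange relation on $\sd_{\infty}$ coincides with the exchange relation on $\sd_{i}$, and $\mu_{k_{1}} \sd_{i}$ is again a good sub seed of $\mu_{k_{1}} \sd_{\infty}$ (this is already implicit in the good-sub-seed formalism of Section \ref{subsec:Sub-seeds}). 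Iterating, $\seq \sd_{i}$ is a good sub seed of $\seq \sd_{\infty}$, and $x_{j}(\seq \sd_{\infty}) = \iota_{i,\infty}(x_{j}(\seq \sd_{i})) \in \bClAlg_{i} \subset \bClAlg_{\infty}$. Since cluster variables generate $\bClAlg(\sd_{\infty})$ as a $\kk$-algebra, we conclude $\bClAlg(\sd_{\infty}) \subset \bClAlg_{\infty}$.

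The only real point that needs care is the transitivity and colimit-stability of the good sub seed property, together with the compatibility of mutations with this property; everything else is formal from the finiteness of mutation sequences. At the quantum level no extra argument is needed because the quantization matrices $\Lambda(\sd_{i})$ are compatible by construction and restrict correctly under $\iota_{i}$, so the isomorphism $\mu_{k}^{*}$ is the same whether computed in $\sd_{i}$ or in $\sd_{\infty}$.
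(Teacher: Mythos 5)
Your proof is correct and follows essentially the same route as the paper: both inclusions reduce to the good sub seed property (cluster variables of each $\sd_{i}$ are cluster variables of $\sd_{\infty}$, and conversely any element of $\bClAlg(\sd_{\infty})$ is a polynomial in finitely many cluster variables, each produced by a finite mutation sequence, hence already lies in some $\bClAlg_{i}$), with your extra checks (transitivity of the good sub seed relation and its stability under mutation) being exactly the facts the paper imports from Section \ref{subsec:Sub-seeds}. The only cosmetic point is that you should choose $i$ so that the mutation vertices $k_{1},\ldots,k_{l}$ lie in $I_{\ufv}(\sd_{i})$ rather than merely in $I(\sd_{i})$ (a vertex frozen in $\sd_{i}$ may only become unfrozen later), which is harmless since the unfrozen sets are nested and exhaust $I_{\ufv}(\sd_{\infty})$.
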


\begin{proof}

Note that every cluster variable of $\bClAlg(\sd_{i})$ is a cluster
variable of $\bClAlg_{\infty}$. Therefore, we have $\bClAlg_{\infty}\subset\bClAlg(\sd_{\infty})$. 

Conversely, take any element $z$ of $\bClAlg(\sd_{\infty})$. It
is a polynomial of the cluster variables in $\seq\sd_{\infty}$ for
some mutation sequence $\seq$. These cluster variables are also cluster
variables of $\seq\sd_{i}$ for some $\sd_{i}$. Therefore, $z$ is
also contained in $\bClAlg(\sd_{i})$. We deduce that $\bClAlg(\sd_{\infty})\subset\bClAlg_{\infty}$.

\end{proof}

If each $\bClAlg_{i}$ has a $\kk$-basis $\base_{i}$ such that the
above inclusions restrict to $\base_{i}\subset\base_{i+1}$, define
the colimit $\base_{\infty}:=\cup\base_{i}$. It is a $\kk$-basis
of $\bClAlg_{\infty}$. In this case, if $\base_{i+1}$ is the common
triangular basis, then $\base_{i}=\base_{i+1}\cap\bClAlg_{i}$ is
the common triangular basis as well, see \cite[Theorem 3.16, Corollary 3.18]{qin2023analogs}.
We call $\base_{\infty}$ the common triangular basis for $\bClAlg_{\infty}$
if all $\base_{i}$ are common triangular bases.

Next, assume there is a monoidal category $\cT$ with monoidal full
subcategories $(\cT_{i})_{i\in\N}$, such that $\cT_{i}\subset\cT_{i+1}$
and $\cT=\cup_{i}\cT_{i}$. Assume the associated deformed Grothendieck
rings satisfy $K_{i}\subset K_{i+1}$ and choose $K:=\cup K_{i}$
to be the deformed Grothendieck ring for $\cT$. Further assume that
each $\cT_{i}$ categorifies $\bClAlg_{i}$ (resp. categories $(\bClAlg_{i},\base_{i})$
under the condition $\base_{i}=\base_{i+1}\cap\bClAlg_{i}$), such
that we have the commutative diagram $\begin{array}{ccc}
\bClAlg_{i+1} & \simeq & K_{i+1}\\
\cup &  & \cup\\
\bClAlg_{i} & \simeq & K_{i}
\end{array}$. Then $\cT$ categorifies $\bClAlg_{\infty}$ (resp. $(\bClAlg_{\infty},\base_{\infty})$)
in the sense of Definition \ref{def:categorification}.

\subsection{Extension of upper cluster algebras to infinite rank}

In this subsection, we make the following assumption:

\begin{Assumption}\label{assumption:finite-full-rk}

Assume that $|I(\sd_{i})<\infty|$ and $\sd_{i}$ are of full rank
for all $i$.

\end{Assumption}

By Proposition \ref{prop:inclusion-good-sub-cl-alg}, we have a chain
of inclusions of upper cluster algebras $\cdots\subset\bUpClAlg(\sd_{i})\subset\bUpClAlg(\sd_{i+1})\subset\cdots$.
Denote the colimit $\bUpClAlg_{\infty}:=\cup_{i}\bUpClAlg(\sd_{i})$.

\begin{Lem}\label{lem:bLP-local}

For any seed $\sd_{i}$, we have $\cF(\sd_{i})\cap\bLP(\sd_{\infty})=\bLP(\sd_{i})$.

\end{Lem}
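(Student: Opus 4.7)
I would begin by noting that the inclusion $\bLP(\sd_i) \subset \cF(\sd_i) \cap \bLP(\sd_\infty)$ is immediate from the definitions: every element of $\bLP(\sd_i)$ lies in its field of fractions $\cF(\sd_i)$, and it also sits inside $\bLP(\sd_\infty)$ because the cluster embedding $\iota_i$ sends generators of $\bLP(\sd_i)$ to generators of $\bLP(\sd_\infty)$ of the same frozen/unfrozen type. The core of the argument is therefore the reverse inclusion, which I plan to obtain by first establishing the cleaner intermediate equality
\begin{equation*}
\cF(\sd_i) \cap \LP(\sd_\infty) \;=\; \LP(\sd_i)
\end{equation*}
inside $\cF(\sd_\infty)$, and then intersecting both sides with $\bLP(\sd_\infty)$.

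For this intermediate equality the non-trivial direction is $\subset$. Set $N := \bigoplus_{k \in I_\infty \setminus I(\sd_i)} \Z f_k$, so that the $\kk$-basis $\{x^m : m \in \cone(\sd_\infty)\}$ of $\LP(\sd_\infty)$ yields, for every $z \in \LP(\sd_\infty)$, a unique normal-form expansion $z = \sum_{n \in N} z_n * x^n$ with $z_n \in \LP(\sd_i)$ and only finitely many non-zero. Given such a $z$ lying in $\cF(\sd_i)$, I apply the Ore condition (Lemma \ref{lem:Ore}) inside $\LP(\sd_i)$ to write $z = Q^{-1} * P$ with $P, Q \in \LP(\sd_i)$ and $Q \neq 0$. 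Multiplying on the left by $Q$ produces $P = Q * z = \sum_{n \in N} (Q * z_n) * x^n$; since $P \in \LP(\sd_i)$ corresponds to the $n = 0$ component only, comparing coefficients in the basis forces $Q * z_n = 0$ for every $n \neq 0$, and since $\LP(\sd_i)$ is a domain with $Q \neq 0$ this yields $z_n = 0$. Hence $z = z_0 \in \LP(\sd_i)$.

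To finish, any $z \in \cF(\sd_i) \cap \bLP(\sd_\infty)$ already belongs to $\LP(\sd_i)$ by the above. Requiring additionally that $z \in \bLP(\sd_\infty)$ means the exponents of $z$ in the frozen variables of $\sd_\infty$ are non-negative; combined with the compatibility of the frozen structures under $\iota_i$, this reduces to the statement that $z$ has non-negative exponents in $x_j$ for every $j \in I_\fv(\sd_i)$, which is exactly the defining condition of $\bLP(\sd_i)$.

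The main technical point I anticipate is justifying the normal-form decomposition at the quantum level: since $x^n$ only $q$-commutes with $\LP(\sd_i)$-monomials up to a scalar depending on the monomial, the regrouping $Q * (z_n * x^n) = (Q z_n) * x^n$ needs to be checked via the associativity of the twisted product $*$ together with the fact that the relevant $q$-scalar absorbs cleanly into $Q z_n \in \LP(\sd_i)$. Once this basis structure is in place, the remainder of the argument is a purely formal coefficient comparison in a free $\kk$-module.
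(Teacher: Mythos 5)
Your opening reduction and its first half are fine, and in fact the intermediate equality $\cF(\sd_{i})\cap\LP(\sd_{\infty})=\LP(\sd_{i})$ is proved by a mechanism genuinely different from the paper's: you use that $\LP(\sd_{\infty})=\bigoplus_{n}\LP(\sd_{i})*x^{n}$, with $n$ ranging over exponent vectors supported on $I_{\infty}\setminus I(\sd_{i})$, is free as a left $\LP(\sd_{i})$-module, clear the denominator by the Ore condition, and compare components; the technical point you worry about is just associativity of $*$ (each summand $\LP(\sd_{i})*x^{n}$ is a left $\LP(\sd_{i})$-submodule), and the conclusion follows since $\LP(\sd_{i})$ is a domain. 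The paper instead picks $j\geq i$ with $z\in\bLP(\sd_{j})$, takes the reduced form $z=P'*(Q')^{-1}$ there, and runs a divisibility induction using that the cluster variables are prime elements of $\kk[x_{h'}]_{h'\in I(\sd_{j})}$; your route avoids reduced forms and primality altogether and is arguably cleaner for this torus-level statement.

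The genuine gap is in your final paragraph. To pass from ``$z\in\LP(\sd_{i})$ has non-negative exponents along $I_{\fv}(\sd_{\infty})$'' to ``non-negative exponents along $I_{\fv}(\sd_{i})$'' you invoke a ``compatibility of the frozen structures under $\iota_{i}$'' that the definitions do not provide: a cluster embedding (and a good sub seed) only requires unfrozen vertices to map to unfrozen ones, and in the paper's applications frozen vertices of $\sd_{i}$ really do become unfrozen further along the chain (e.g.\ the frozen vertices $\binom{a}{i-1}$ of $\rsd(\uc^{i})$ are unfrozen in $\rsd(\uc^{i+1})$, hence in the limit seed); the same misconception appears, harmlessly, in your justification of the easy inclusion. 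If some $j\in I_{\fv}(\sd_{i})$ is unfrozen in $\sd_{\infty}$, then membership in $\bLP(\sd_{\infty})$ imposes no condition on the exponent of $x_{j}$ --- indeed $x_{j}^{-1}$ then lies in $\cF(\sd_{i})\cap\bLP(\sd_{\infty})$ but not in $\bLP(\sd_{i})$ --- so the frozen-direction step is exactly where the content of the lemma sits and cannot be treated as bookkeeping. Note that the paper's own proof handles this point through its opening normalization that $z=Q^{-1}*P$ with the denominator $Q$ a polynomial in the unfrozen variables $x_{k}$, $k\in I_{\ufv}(\sd_{i})$, which is what forces the reduced denominator $Q'$ to avoid the frozen variables of $\sd_{i}$; in effect the argument runs under the reading that frozen vertices of $\sd_{i}$ stay frozen along the chain. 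Under that extra hypothesis your argument closes as well, but as written your last step is unjustified, and it is the one place where real work (or an explicit hypothesis on the chain) is required.
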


\begin{proof}

Take any $z\in\cF(\sd_{i})\cap\bLP(\sd_{\infty})$. It takes the form
$Q^{-1}*P\in\cF(\sd_{i})$ for $Q\in\kk[x_{k}]_{k\in I_{\ufv}(\sd_{i})}$,
$P\in R_{i}:=\kk[x_{h}]_{h\in I(\sd_{i})}$. On the other hand, since
$z\in\bLP(\sd_{\infty})$, we can find $\sd_{j}$ with $j\geq i$,
such that $z\in\bLP(\sd_{j})$. Write the reduced form $z=P'*(Q')^{-1}$
for $Q'$ a monomial in $x_{k'}$, where $k'\in I_{\ufv}(\sd_{j})$,
and $P'\in R_{j}:=\kk[x_{h'}]_{h'\in I(\sd_{j})}$, such that $Q'$
and $P'$ do not have common divisors. Note that $R_{i}\subset R_{j}$.

Since the cluster variables $x_{h'}$ $q$-commute, we have $Q*P'=P*Q'=Q'*\tP$
in $R_{j}$ for some $\tP\in R_{i}$. Note that $x_{k'}$, where $k'\in I_{\ufv}(\sd_{j})$,
are prime elements in $R_{j}$ in the sense that $R_{j}/(x_{k'})$
are domains. Recall that $Q'$ and $P'$ are coprime. Inductively,
we deduce that $Q$ is divisible by $Q'$, i.e., we can write $Q=Q'*Q''$
in $R_{j}$. It follows that $Q'$ is a monomial of $x_{k}$, $k\in I_{\ufv}(\sd_{i})$. 

Now we have $Q''*P'=\tP$. Since $\tP\in R_{i}$, we must have $Q'',P'\in R_{i}$
as well. Therefore, $z=(Q')^{-1}*P'$ belongs to $\bLP(\sd_{i})$.

\end{proof}

\begin{Lem}\label{lem:extend-bUpClAlg}

We have $\bUpClAlg(\sd_{\infty})=\bUpClAlg_{\infty}$.

\end{Lem}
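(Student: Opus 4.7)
The plan is to establish the two inclusions $\bUpClAlg_{\infty}\subseteq\bUpClAlg(\sd_{\infty})$ and $\bUpClAlg(\sd_{\infty})\subseteq\bUpClAlg_{\infty}$ separately. Both directions hinge on the fact that any mutation sequence on the colimit seed $\sd_{\infty}$ involves only finitely many vertices, so it factors through some $\sd_{i}$; the extra input for the nontrivial direction is Lemma~\ref{lem:bLP-local}.

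For the forward inclusion, take $z\in\bUpClAlg(\sd_{i})$. Given any mutation sequence $\seq$ on $I_{\ufv}(\sd_{\infty})=\cup_{j}I_{\ufv}(\sd_{j})$, its (finitely many) indices lie in $I_{\ufv}(\sd_{j})$ for some $j\geq i$. Iterating Proposition~\ref{prop:inclusion-good-sub-cl-alg} along the chain $\sd_{i}\subset\sd_{i+1}\subset\cdots\subset\sd_{j}$ (permitted by Assumption~\ref{assumption:finite-full-rk}) gives $z\in\bUpClAlg(\sd_{j})\subset\bLP(\seq\sd_{j})$. Since $\seq\sd_{j}$ is a good sub seed of $\seq\sd_{\infty}$ via $\iota_{j}$, the natural inclusion $\bLP(\seq\sd_{j})\hookrightarrow\bLP(\seq\sd_{\infty})$ puts $z$ in $\bLP(\seq\sd_{\infty})$. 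Intersecting over $\seq$ yields $z\in\bUpClAlg(\sd_{\infty})$.

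For the reverse inclusion, let $z\in\bUpClAlg(\sd_{\infty})\subset\bLP(\sd_{\infty})=\cup_{i}\bLP(\sd_{i})$, and pick $i_{0}$ with $z\in\bLP(\sd_{i_{0}})\subset\cF(\sd_{i_{0}})$. For any mutation sequence $\seq'$ on $I_{\ufv}(\sd_{i_{0}})$, I view $\seq'$ simultaneously as acting on each $\sd_{j}$ with $j\geq i_{0}$ and on $\sd_{\infty}$. The key observation is that the good sub seed condition $\tB_{(I(\sd_{j+1})\setminus I(\sd_{j}))\times I_{\ufv}(\sd_{j})}=0$ is preserved under any mutation at $k\in I_{\ufv}(\sd_{j})$: the mutation formula for the entries indexed by $i\notin I(\sd_{j})$ simplifies because $\tB_{ik}=0$, and one checks directly that the mutated entries $b'_{ij}$ with $j\in I_{\ufv}(\sd_{j})$ remain zero. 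Thus $\seq'\sd_{i_{0}}\subset\seq'\sd_{i_{0}+1}\subset\cdots$ is still a chain of good sub seeds with colimit $\seq'\sd_{\infty}$. Applying Lemma~\ref{lem:bLP-local} to this chain gives $\cF(\seq'\sd_{i_{0}})\cap\bLP(\seq'\sd_{\infty})=\bLP(\seq'\sd_{i_{0}})$. Since $z\in\cF(\sd_{i_{0}})=\cF(\seq'\sd_{i_{0}})$ and $z\in\bLP(\seq'\sd_{\infty})$, we conclude $z\in\bLP(\seq'\sd_{i_{0}})$. Intersecting over all $\seq'$ gives $z\in\bUpClAlg(\sd_{i_{0}})\subset\bUpClAlg_{\infty}$.

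The main bookkeeping obstacle is verifying that the chain structure (and the good sub seed relations) is preserved under mutation, which reduces Lemma~\ref{lem:bLP-local} to an applicable form after mutating. Everything else is a direct unfolding of the definitions of $\bUpClAlg$ and of the colimit seed $\sd_{\infty}$, together with Assumption~\ref{assumption:finite-full-rk} which makes Proposition~\ref{prop:inclusion-good-sub-cl-alg} applicable at each finite stage.
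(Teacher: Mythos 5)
Your proof is correct and follows essentially the same route as the paper's: the forward inclusion by reducing any mutation sequence on $\sd_{\infty}$ to a finite stage and using the good-sub-seed inclusions, and the reverse inclusion by applying Lemma~\ref{lem:bLP-local} to the mutated chain. Your extra verification that the good sub seed relation persists under mutation is a point the paper already records in Section~\ref{subsec:Sub-seeds}, so it is sound but not a new ingredient.
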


\begin{proof}

(i) Take any $i\in\N$ and work in the skew-fields of fractions $\cF(\sd_{i})\subset\cF(\sd_{\infty})$.
We claim that $\bUpClAlg(\sd_{i})\subset\bUpClAlg(\sd_{\infty})$.
To see this, take any mutation sequence $\seq$ on $\sd_{i}$. Then
we have $\bLP(\seq\sd_{i})\subset\bLP(\seq\sd_{\infty})$. The claim
follows.

As a consequence, we deduce that $\bUpClAlg_{\infty}\subset\bUpClAlg(\sd_{\infty})$.

(ii) Conversely, Take any $z\in\bUpClAlg(\sd_{\infty})$. We can find
$i$ such that $z\in\bLP(\sd_{i})$.

Take any mutation sequence $\seq$ on $\sd_{i}$. We have $z\in\cF(\seq\sd_{i})$.
On the other hand, $z\in\bUpClAlg(\sd_{\infty})\subset\bLP(\seq\sd_{\infty})$.
By Lemma \ref{lem:bLP-local}, we have $z\in\bLP(\seq\sd_{i})$. Since
$\seq$ is arbitrary, we have $z\in\bUpClAlg(\sd_{i})$.

\end{proof}

\begin{Cor}\label{cor:extend-bA-equal-bU}

Under Assumption \ref{assumption:finite-full-rk}, if $\bClAlg_{i}=\bUpClAlg_{i}$
for $i$ large enough, we have $\bClAlg(\sd_{\infty})=\bUpClAlg(\sd_{\infty})$.

\end{Cor}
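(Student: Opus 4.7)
The plan is to combine the two structural identifications already established for the colimit with the hypothesis in a direct way. Specifically, the preceding lemma shows that $\bClAlg(\sd_\infty)=\bClAlg_\infty=\cup_i\bClAlg_i$, and Lemma \ref{lem:extend-bUpClAlg} shows that $\bUpClAlg(\sd_\infty)=\bUpClAlg_\infty=\cup_i\bUpClAlg_i$. So the statement reduces to checking equality of these two ascending unions as subalgebras of $\bLP(\sd_\infty)$.

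First I would fix $N\in\N$ such that $\bClAlg_i=\bUpClAlg_i$ for every $i\geq N$, as guaranteed by the hypothesis. Since both chains are ascending, truncating the union at $N$ does not change it: $\cup_i\bClAlg_i=\cup_{i\geq N}\bClAlg_i$ and similarly $\cup_i\bUpClAlg_i=\cup_{i\geq N}\bUpClAlg_i$. Term-by-term equality for $i\geq N$ then immediately gives equality of the colimits. Combined with the two identifications above, this yields $\bClAlg(\sd_\infty)=\bUpClAlg(\sd_\infty)$.

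There is essentially no serious obstacle at this stage: the real work has been done in establishing $\bClAlg(\sd_\infty)=\cup_i\bClAlg_i$ and $\bUpClAlg(\sd_\infty)=\cup_i\bUpClAlg_i$. The one sanity check I would make explicit is that Assumption \ref{assumption:finite-full-rk} is exactly what is needed to invoke Lemma \ref{lem:extend-bUpClAlg} (via Proposition \ref{prop:inclusion-good-sub-cl-alg}, which requires finiteness and full rank to produce the chain of inclusions $\bUpClAlg(\sd_i)\subset\bUpClAlg(\sd_{i+1})$), while no additional hypothesis is needed for the analogous statement at the level of $\bClAlg$. Thus the hypothesis $\bClAlg_i=\bUpClAlg_i$ for $i$ large is used only to identify the tails of the two chains, and the colimit identification does the rest.
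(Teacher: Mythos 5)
Your argument is correct and is exactly the paper's (implicit) reasoning: the corollary follows by combining the identification $\bClAlg(\sd_{\infty})=\cup_{i}\bClAlg_{i}$ with Lemma \ref{lem:extend-bUpClAlg}, and then noting that the two ascending chains have equal tails once $\bClAlg_{i}=\bUpClAlg_{i}$ for $i$ large. Nothing further is needed.
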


We extend Theorem \ref{thm:star-fish} to infinite rank.

\begin{Thm}\label{thm:start-fish-inf}

Under Assumption \ref{assumption:finite-full-rk}, we have $\bUpClAlg(\sd_{\infty})=\bLP(\sd_{\infty})\cap(\cap_{k\in I_{\ufv}(\sd_{\infty})}\bLP(\mu_{k}\sd_{\infty}))$.

\end{Thm}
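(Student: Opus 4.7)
The plan is to mimic Corollary \ref{cor:star-fish-bar} at each finite stage and then pass to the colimit via Lemma \ref{lem:extend-bUpClAlg}. The inclusion $\bUpClAlg(\sd_{\infty}) \subset \bLP(\sd_{\infty})\cap(\cap_{k\in I_{\ufv}(\sd_{\infty})}\bLP(\mu_{k}\sd_{\infty}))$ is immediate from the definition of the upper cluster algebra as an intersection over all reachable seeds, so the content lies in the reverse inclusion.

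Take any $z$ in the right-hand side. Since any element of $\bLP(\sd_{\infty})$ involves only finitely many variables, we have $\bLP(\sd_{\infty}) = \cup_{i} \bLP(\sd_{i})$, and hence we can choose an index $i$ with $z \in \bLP(\sd_{i})$. The goal then reduces to proving $z \in \bUpClAlg(\sd_{i})$: once this holds, Lemma \ref{lem:extend-bUpClAlg} gives $z \in \bUpClAlg(\sd_{\infty})$ at no extra cost. Under Assumption \ref{assumption:finite-full-rk}, Corollary \ref{cor:star-fish-bar} applies to the finite full-rank seed $\sd_{i}$, so it suffices to check that $z \in \bLP(\mu_{k}\sd_{i})$ for every $k \in I_{\ufv}(\sd_{i})$.

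For this I plan to reuse Lemma \ref{lem:bLP-local} on the mutated chain. Fix $k \in I_{\ufv}(\sd_{i})$. I would first verify that mutating at $k$ term by term yields another chain of good sub-seeds $\mu_{k}\sd_{i} \subset \mu_{k}\sd_{i+1} \subset \cdots$ whose colimit is $\mu_{k}\sd_{\infty}$. This reduces to the matrix-mutation formula: the vanishing condition $\tB(\sd_{j})_{(I(\sd_j) \setminus \iota I(\sd_{i'}))\times \iota I_{\ufv}(\sd_{i'})} = 0$ defining a good sub-seed is preserved when mutating at a vertex $k$ that lies inside $\iota I_{\ufv}(\sd_{i'})$, since the only changed entries pass through the (zero) $k$-th row and column, leaving $b_{i,j} = 0$ intact for $i \notin \iota I(\sd_{i'})$ and $j \in \iota I_{\ufv}(\sd_{i'}) \setminus \{k\}$. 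Applying Lemma \ref{lem:bLP-local} to this new chain gives $\cF(\mu_{k}\sd_{i}) \cap \bLP(\mu_{k}\sd_{\infty}) = \bLP(\mu_{k}\sd_{i})$. Since $\cF(\mu_{k}\sd_{i}) = \cF(\sd_{i})$ contains $z$, and $z \in \bLP(\mu_{k}\sd_{\infty})$ by hypothesis, we conclude $z \in \bLP(\mu_{k}\sd_{i})$, completing the argument.

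The main (minor) obstacle is really the bookkeeping verification in the previous paragraph: one must check that the ``good sub-seed'' property propagates through $\mu_{k}$ for any $k$ in the unfrozen vertex set of the sub-seed, so that Lemma \ref{lem:bLP-local} can legitimately be invoked on the mutated chain. Once that bookkeeping is in place, the remainder is a direct combination of the finite-rank star-fish lemma (Corollary \ref{cor:star-fish-bar}) with the already-established infinite-rank identity $\bUpClAlg(\sd_{\infty}) = \cup_{i} \bUpClAlg(\sd_{i})$ of Lemma \ref{lem:extend-bUpClAlg}.
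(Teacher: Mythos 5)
Your proposal is correct and follows essentially the same route as the paper: reduce to a finite full-rank stage $\sd_{i}$, use Lemma \ref{lem:bLP-local} (applied to the mutated chain) to get $z\in\bLP(\mu_{k}\sd_{i})$ for each $k\in I_{\ufv}(\sd_{i})$, invoke Corollary \ref{cor:star-fish-bar}, and conclude via Lemma \ref{lem:extend-bUpClAlg}. The ``bookkeeping'' you flag --- that mutating at $k\in\iota I_{\ufv}(\sd_{i})$ preserves the good sub-seed property, so the lemma applies to the chain $(\mu_{k}\sd_{j})_{j}$ --- is exactly what the paper relies on implicitly, having recorded it in Section \ref{subsec:Sub-seeds}, and your matrix verification of it is sound.
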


\begin{proof}

Note that we have $\bUpClAlg(\sd_{\infty})\subset\bLP(\sd_{\infty})\cap(\cap_{k\in I_{\ufv}(\sd_{\infty})}\bLP(\mu_{k}\sd_{\infty}))$
by definition.

Conversely, take any $z\in\bLP(\sd_{\infty})\cap(\cap_{k\in I_{\ufv}(\sd_{\infty})}\bLP(\mu_{k}\sd_{\infty}))$.
Note that $z\in\bLP(\sd_{i})$ for some $\sd_{i}$. For any $k\in I_{\ufv}(\sd_{i})$,
we have $z\in\cF(\mu_{k}\sd_{i})$. Since $z$ is also contained in
$\bLP(\mu_{k}\sd_{\infty})$, it belongs to $\bLP(\mu_{k}\sd_{i})$
by Lemma \ref{lem:bLP-local}. Therefore, $z$ is contained in $\bLP(\sd_{i})\cap(\cap_{k\in I_{\ufv}(\sd_{i})}\bLP(\mu_{k}\sd_{i}))$,
which equals $\bUpClAlg_{i}$ under Assumption \ref{assumption:finite-full-rk}
(Corollary \ref{cor:star-fish-bar}). We deduce that $\bLP(\sd_{\infty})\cap(\cap_{k\in I_{\ufv}(\sd_{\infty})}\bLP(\mu_{k}\sd_{\infty}))$
is contained in $\bUpClAlg_{\infty}$, which coincides with $\bUpClAlg(\sd_{\infty})$
by Lemma \ref{lem:extend-bUpClAlg}.

\end{proof}

Categorifications of based upper cluster algebras are defined similar
to those in Section \ref{subsec:Extension-to-infinite}.

\subsection{Quantization}

Let $\sd$ denote a seed and $\sd'$ a good sub seed of $\sd$. Let
$\Lambda'$ denote any quantization matrix for $\sd'$. We have the
following natural and interesting question:
\begin{center}
\emph{When can $\Lambda'$ be extended to a quantization matrix $\Lambda$
for $\sd$?}
\par\end{center}

We will provide an answer for the following very special case, which
will suffice for our purpose in Section \ref{sec:Cluster-algebras-from-shifted}.

Let $\sd$ denote a seed of finite rank. Assume that we have the partition
$I=I_{1}\sqcup I_{2}\sqcup I_{3}$, such that $I_{\ufv}=I_{1}\sqcup I_{2}$,
$I_{\fv}=I_{3}$. Let $D$ denote the diagonal matrix whose diagonal
entries are $d_{j}$, $j\in I$. Using block matrices with respect
to $I_{i}$, we denote $\tB D=\left(\begin{array}{cc}
B_{11} & B_{12}\\
B_{21} & B_{22}\\
B_{31} & B_{32}
\end{array}\right)$, where $B_{ij}$ denotes $(\tB D)_{I_{i}\times I_{j}}$. Note that
$\left(\begin{array}{cc}
B_{11} & B_{12}\\
B_{21} & B_{22}
\end{array}\right)$ is skew-symmetric.

\begin{Def}[{\cite[Definition 2.1.10]{qin2020dual}}]

An $I_{\ufv}\times I_{\ufv}$ matrix $B=(b_{ij})$ is called connected
if, $\forall i,j\in I_{\ufv}$, there exists finitely many $i_{s}\in I_{\ufv}$,
$s\in[0,l]$, such that $i_{0}=i$, $i_{l}=j$, and $b_{i_{s},i_{s+1}}\neq0$,
$\forall s\in[0,l-1]$.

\end{Def}

Assume that $\tB_{I_{\ufv}\times I_{\ufv}}$ and $B_{11}$ are connected.
Assume that $B_{31}=0$. Then the seed $\sd'$ obtained from $\sd$
by freezing $I_{2}$ and then removing the frozen vertices $I_{3}$
is a good sub seed of $\sd$. Note that $\tB(\sd')D_{(I_{1}\sqcup I_{2})\times(I_{1}\sqcup I_{2})}=\left(\begin{array}{c}
B_{11}\\
B_{21}
\end{array}\right)$. Assume that $\sd'$ has a quantization matrix $\Lambda':=\left(\begin{array}{cc}
\Lambda_{11} & \Lambda_{12}\\
\Lambda_{21} & \Lambda_{22}
\end{array}\right)$. Since $B_{11}$ is connected, we have $\Lambda'\left(\begin{array}{c}
B_{11}\\
B_{21}
\end{array}\right)=\alpha\left(\begin{array}{c}
-\Id_{I_{1}}\\
0
\end{array}\right)$ for some $\alpha\in\Q_{>0}$, where $\Id_{I_{1}}$ is the identity
matrix on $I_{1}$, see \cite[Section 2.1]{qin2020dual}.

\begin{Lem}\label{lem:extend-quantization}

Further assume $B_{32}$ to be of full rank and $|I_{2}|=|I_{3}|$.
Then we can uniquely extend $\Lambda'$ to a quantization matrix $\Lambda$
for $\sd$.

\end{Lem}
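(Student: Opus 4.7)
The plan is to determine $\Lambda$ by solving the block form of the compatibility equation $\Lambda\tB = \text{diag}$ and then checking that the resulting matrix is skew-symmetric. Write
\[
\Lambda = \begin{pmatrix} \Lambda_{11} & \Lambda_{12} & \Lambda_{13} \\ \Lambda_{21} & \Lambda_{22} & \Lambda_{23} \\ \Lambda_{31} & \Lambda_{32} & \Lambda_{33}\end{pmatrix}
\]
with respect to $I = I_1\sqcup I_2\sqcup I_3$, where the blocks $\Lambda_{ij}$ with $i,j\in\{1,2\}$ are fixed by $\Lambda'$, and $\Lambda_{31} = -\Lambda_{13}^T$, $\Lambda_{32} = -\Lambda_{23}^T$ by skew-symmetry. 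The genuine unknowns are $\Lambda_{13}$, $\Lambda_{23}$, and a skew-symmetric $\Lambda_{33}$. Since $\tB_{3,1} = 0$, the three compatibility equations on the $I_1$-columns reduce to the identities already guaranteed by $\Lambda'$ together with a single consistency condition $\Lambda_{13}^T\tB_{1,1} + \Lambda_{23}^T\tB_{2,1} = 0$ to be verified. On the $I_2$-columns, the equations read
\begin{align*}
\Lambda_{13}\tB_{3,2} &= -\Lambda_{11}\tB_{1,2} - \Lambda_{12}\tB_{2,2},\\
\Lambda_{23}\tB_{3,2} &= \diag_{I_2} - \Lambda_{21}\tB_{1,2} - \Lambda_{22}\tB_{2,2},\\
\Lambda_{33}\tB_{3,2} &= \Lambda_{13}^T\tB_{1,2} + \Lambda_{23}^T\tB_{2,2}.
\end{align*}

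Because $B_{32}$ (and hence $\tB_{3,2}$) is a square invertible matrix, each of these three equations determines its unknown block uniquely after right multiplication by $\tB_{3,2}^{-1}$. The remaining freedom lies in the diagonal matrix $\diag_{I_2}$ appearing on the right of the second equation. This is pinned down by the hypothesis that $\tB_{I_\ufv\times I_\ufv}$ is connected: together with the connectedness of $B_{11}$, this forces the diagonal of $(\Lambda\tB)_{I_\ufv\times I_\ufv}$ to be a single scalar multiple $\alpha D_{I_\ufv}$, where $\alpha\in\Q_{>0}$ is exactly the scalar already determined by $\Lambda'$. So $\diag_{I_2} = \alpha D_2$ is fixed, yielding uniqueness.

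For existence, I would define $\Lambda_{13},\Lambda_{23},\Lambda_{33}$ by the displayed formulas and then verify (i) the consistency condition $\Lambda_{13}^T\tB_{1,1} + \Lambda_{23}^T\tB_{2,1} = 0$, and (ii) that $\Lambda_{33}$ is skew-symmetric. For (i), after clearing by $\tB_{3,2}^T$, substituting the defining formulas for $\Lambda_{13}^T$ and $\Lambda_{23}^T$, and using the two identities $\Lambda_{11}\tB_{1,1}+\Lambda_{12}\tB_{2,1} = \alpha D_1$ and $\Lambda_{21}\tB_{1,1}+\Lambda_{22}\tB_{2,1} = 0$ that come from compatibility of $\Lambda'$ with $\sd'$, plus the skew-symmetry of $\tB D$ on the $I_\ufv\times I_\ufv$ block, all the terms with $\Lambda_{ij}$ cancel in pairs and what remains reduces to an identity involving $\alpha D_1$ and the skew relation $B_{12} = -B_{21}^T$. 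For (ii), after conjugating by $\tB_{3,2}$, the condition $\tB_{3,2}^T(\Lambda_{33}+\Lambda_{33}^T)\tB_{3,2} = 0$ collapses in the same cancellation to $\diag_{I_2}\tB_{2,2} + \tB_{2,2}^T\diag_{I_2} = 0$, which with $\diag_{I_2} = \alpha D_2$ is exactly the skew-symmetry of $B_{22} = \tB_{2,2} D_2$.

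The main obstacle is the skew-symmetry check in (ii): it is where the particular value $\diag_{I_2} = \alpha D_2$ coming from the connectedness hypothesis on $\tB_{I_\ufv\times I_\ufv}$ is genuinely needed, and where the full block structure (as opposed to just the invertibility of $\tB_{3,2}$) enters. The rest of the argument is a careful but routine block-matrix calculation.
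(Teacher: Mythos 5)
Your architecture is the same as the paper's: put $\Lambda$ in block form with unknowns $\Lambda_{13},\Lambda_{23},\Lambda_{33}$, use that $B_{32}$ is square and invertible to solve the $I_{2}$-column equations uniquely, and then verify the remaining $I_{3}\times I_{1}$ equation by substituting back and using the compatibility of $\Lambda'$ together with skew-symmetry of the unfrozen block. Your inclusion of the explicit skew-symmetry check for $\Lambda_{33}$ is welcome (the paper's printed proof only verifies the fourth equation), and the reduction you describe for that check has the right shape.

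However, there is a concrete error in the normalization, and it is exactly at the point where you say the hypotheses are "genuinely needed". Connectedness does \emph{not} force the diagonal block of $\Lambda\tB$ to be $\alpha D_{I_{\ufv}}$: skew-symmetry of $\Lambda$ makes $\tB^{T}\Lambda\tB$ skew, giving $b_{ki}\diag_{k}=-b_{ik}\diag_{i}$, which combined with $b_{ik}d_{k}=-b_{ki}d_{i}$ yields $\diag_{k}d_{k}=\diag_{i}d_{i}$ along edges; so the forced diagonal is $\alpha D_{I_{\ufv}}^{-1}$, equivalently $\Lambda\,\tB D=\alpha\Id$ on the unfrozen columns (up to the paper's sign convention) --- which is precisely why the paper works with $\tB D$ rather than $\tB$. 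With your value $\diag_{I_{2}}=\alpha D_{2}$, both of your final identities fail as soon as the symmetrizers are unequal: the leftover in check (i) is $\alpha(\tB_{1,2}^{T}D_{1}+D_{2}\tB_{2,1})$, which is nonzero in general (the true relation is $\tB_{1,2}^{T}D_{1}^{-1}+D_{2}^{-1}\tB_{2,1}=0$), and the identity $D_{2}\tB_{2,2}+\tB_{2,2}^{T}D_{2}=0$ in check (ii) is the skew-symmetry of $D_{2}\tB_{2,2}$, not of $B_{22}=\tB_{2,2}D_{2}$, and is likewise false in the non-simply-laced case covered by the lemma. The repair is straightforward: take $\diag_{I_{2}}=\alpha D_{2}^{-1}$ (or, more cleanly, multiply the columns by $D$ and run the whole computation with $\tB D$, as the paper does); then (i) reduces to $\alpha(B_{12}+B_{21}^{T})=0$ and (ii) to the skew-symmetry of $B_{22}$, and the argument closes.
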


\begin{proof}

Denote $\Lambda=\left(\begin{array}{ccc}
\Lambda_{11} & \Lambda_{12} & Z_{13}\\
\Lambda_{21} & \Lambda_{22} & Z_{23}\\
-Z_{13}^{T} & -Z_{23}^{T} & Z_{33}
\end{array}\right)$ for some $I_{i}\times I_{j}$ matrices $Z_{ij}$. Since $\tB_{I_{\ufv}\times I_{\ufv}}$
is connected, $\Lambda$ is a quantization matrix if and only if we
have $\Lambda\tB D=\alpha\left(\begin{array}{cc}
-\Id_{I_{1}} & 0\\
0 & -\Id_{I_{2}}\\
0 & 0
\end{array}\right)$. Combining with $\Lambda'\left(\begin{array}{c}
B_{11}\\
B_{21}
\end{array}\right)=\alpha\left(\begin{array}{c}
-\Id_{I_{1}}\\
0
\end{array}\right)$ and $B_{31}=0$, this is equivalent to the conditions 
\begin{align*}
\left(\begin{array}{cc}
\Lambda_{11} & \Lambda_{12}\end{array}\right)\left(\begin{array}{c}
B_{12}\\
B_{22}
\end{array}\right)+Z_{13}B_{32} & =0\\
\left(\begin{array}{cc}
\Lambda_{21} & \Lambda_{22}\end{array}\right)\left(\begin{array}{c}
B_{12}\\
B_{22}
\end{array}\right)+Z_{23}B_{32} & =-\alpha\Id_{I_{2}}\\
\left(\begin{array}{cc}
-Z_{13}^{T} & -Z_{23}^{T}\end{array}\right)\left(\begin{array}{c}
B_{12}\\
B_{22}
\end{array}\right)+Z_{33}B_{32} & =0\\
\left(\begin{array}{cc}
-Z_{13}^{T} & -Z_{23}^{T}\end{array}\right)\left(\begin{array}{c}
B_{11}\\
B_{21}
\end{array}\right) & =0
\end{align*}

Since $|I_{2}|=|I_{3}|$ and $B_{32}$ is of full rank, the values
of $Z_{ij}$ are uniquely determined by the first three equations.
It remains to verify the last one, which is equivalent to $\left(\begin{array}{cc}
B_{11}^{T} & B_{21}^{T}\end{array}\right)\left(\begin{array}{c}
-Z_{13}\\
-Z_{23}
\end{array}\right)B_{32}=0$. 

Substituting $Z_{ij}B_{32}$ by the first two equations and using
$\left(\begin{array}{cc}
B_{11}^{T} & B_{21}^{T}\end{array}\right)\Lambda'=\left(\begin{array}{cc}
\alpha\Id_{I_{1}} & 0\end{array}\right)$, we can calculate the left hand side as follows: 
\begin{align*}
\left(\begin{array}{cc}
B_{11}^{T} & B_{21}^{T}\end{array}\right)\left(\begin{array}{c}
-Z_{13}\\
-Z_{23}
\end{array}\right)B_{32}= & \left(\begin{array}{cc}
B_{11}^{T} & B_{21}^{T}\end{array}\right)(\left(\begin{array}{cc}
\Lambda_{11} & \Lambda_{12}\\
\Lambda_{21} & \Lambda_{22}
\end{array}\right)\left(\begin{array}{c}
B_{12}\\
B_{22}
\end{array}\right)+\left(\begin{array}{c}
0\\
\alpha\Id_{I_{2}}
\end{array}\right))\\
= & \alpha B_{12}+\alpha B_{21}^{T}\\
= & 0.
\end{align*}

\end{proof}

\section{Applications: infinite rank cluster algebras from quantum affine
algebras\label{sec:Infinite-rank-cluster-q-aff}}

Following the convention in Section \ref{subsec:Cluster-structures-quantum-affine},
we let $C$ denote a $J\times J$ generalized Cartan matrix and choose
a Coxeter word $\uc$. Define the classical seeds $\sd_{i}:=\rsd(\uc^{i})$,
$i\in\N$. We will quantize and extend $\bClAlg(\sd_{i})=\bUpClAlg(\sd_{i})$
to infinite rank. We will also introduce seeds $\ssd_{i}$ and $\usd_{i}$
and extend $\bClAlg(\ssd_{i})=\bClAlg(\usd_{i})=\bUpClAlg(\ssd_{i})=\bUpClAlg(\usd_{i})$
to infinite rank. Note that we still have $\bClAlg=\bUpClAlg$ after
extension by Corollary \ref{cor:extend-bA-equal-bU}, i.e., Theorem
\ref{thm:intro-A-equal-U-inf-rank} is true.

When $C$ is of finite type, we will compare our cluster algebras
and interval variables with the those arising from the quantum virtual
Grothendieck rings $\mathfrak{K}_{q}(\mathfrak{g})$, $\mathfrak{K}_{q,\xi}(\mathfrak{g})$,
and the KR-polynomials in \cite{jang2023quantization}. We will not
go into details with their constructions and instead refer the reader
to \cite{jang2023quantization} for further information.

\subsection{Limits of Coxeter words\label{subsec:Limits-of-Coxeter}}

View $\uc^{i}$ as $(\uc^{i+1})_{[1,|J|\cdot i]}$. Then $\sd_{i}$
is a good sub seed of $\sd_{i+1}$ via the cluster embedding $\iota'$
sending sending $\binom{a}{d}^{\sd_{i}}$ to $\binom{a}{d}^{\sd_{i+1}}$
(Definition \ref{def:calibration-words}). We identify $\binom{a}{d}^{\sd_{i}}$
and $\binom{a}{d}^{\sd_{i+1}}$ with $\mybinom ad\in\W$. Then we
obtain the colimit $I_{\infty}=\{\mybinom ad|a\in J,d\in\N\}$ and
$\sd_{\infty}:=\cup_{i}\sd_{i}$. We make any of the following choices
to quantize this chain of good sub seeds:
\begin{enumerate}
\item When $C$ is symmetric, we can quantize $\sd_{i}$ using the bilinear
form $\cN$ in \cite[(46), Remark 7.3.1]{qin2017triangular}, where
our $\mybinom ad$ is denoted by $(a,-2d)$, see Section \ref{sec:Skew-symmetric-bilinear-forms}. 
\item When $C$ is not of finite type, $\uc^{i}$ is reduced. Following
the convention in \cite{Kimura10}, we can quantize $\sd_{i}:=\rsd(\uc^{i})$
using the bilinear form $N_{\ow}$ in \cite[Section 8.1]{qin2020dual}
such that $\bClAlg(\sd_{i})$ is identified with the quantum unipotent
subgroup $\qO[N(w_{\uc^{i}})]$.
\item When $C$ is of arbitrary finite type, choose any height function
$\xi$. Recall that we can identify our vertex $\mybinom ad\in\W$
with $(a,\xi_{a}-2d)\in J_{\Z}(\xi)$. Let $\ucN$ denote the bilinear
form on $\oplus_{(a,p)\in J_{\Z}(\xi)}\Z e_{(a,p)}$ such that $\ucN(e_{(a,p)},e_{(b,s)})$
is given by \cite[(4.1)]{jang2023quantization}, see Section \ref{sec:Skew-symmetric-bilinear-forms}
for details. We could quantize $\sd_{i}$ by choosing $\Lambda(\sd_{i})_{\mybinom ad,\mybinom bs}:=\ucN(\sum_{j=0}^{d}e_{(a,\xi_{a}-2j)},\sum_{j=0}^{s}e_{(b,\xi_{b}-2j)})$,
see \cite[Theorem 8.1]{jang2023quantization}\cite{kashiwara2023q}\cite{fujita2023isomorphisms}.
\\
Note that our matrix $\tB(\sd_{\infty})$ is identified with the $B$-matrix
$^{\cS}\tB=(b_{u,v})_{u,v\in\cS}$, $\cS=\{(a,\xi_{a}-2d)|d\in\N\}$,
in \cite[(2.14)]{jang2023quantization}. In this way, we could identify
our quantum seed $\sd_{\infty}$ with the quantum seed for the quantum
cluster algebra $\mathfrak{K}_{q,\xi}(\mathfrak{g})$ in \cite[Theorem 8.8]{jang2023quantization},
because they share the same $B$-matrix $\tB(\sd_{\infty})$ and quantization
matrix $\Lambda(\sd_{\infty})$. Then our initial cluster variables
$x_{\mybinom ad}(\sd_{\infty})=W_{[\mybinom a0,\mybinom ad]}(\sd_{i})$
are identified with the initial variables (KR-polynomials) of $\mathfrak{K}_{q,\xi}(\mathfrak{g})$,
denoted $F_{q}(\underline{m}^{(a)}[\xi_{a}-2d,\xi_{a}])$.
\end{enumerate}
So we obtain inclusions of (quantum) cluster algebras $\iota':\bClAlg(\sd_{i})\subset\bClAlg(\sd_{i+1})$,
identifying $W_{[\mybinom ad,\mybinom a{d'}]}(\sd_{i})$ with $W_{[\mybinom ad,\mybinom a{d'}]}(\sd_{i+1})$
(Lemma \ref{lem:embed-interval-variables}). Let $\stdMod_{i}$ and
$\can_{i}$ denote the standard basis and the common triangular basis
of $\bClAlg(\sd_{i})$, then $\stdMod=\cup_{i}\stdMod_{i}$ and $\can=\cup_{i}\can_{i}$
are bases of $\bClAlg(\sd_{\infty})$, called the standard basis and
the common triangular basis, respectively.

\begin{Rem}\label{rem:semi-inf-q-affine-categorification}

When $C$ is of type $ADE$, by \cite{qin2017triangular}, $(\bClAlg(\sd_{i}),\can_{i})$
is categorified by $\cC_{\uc^{i}}(\xi)$, see Section \ref{subsec:Cluster-structures-quantum-affine}.
So $(\bClAlg(\sd_{\infty}),\can_{\infty})$ is categorified by $\cC^{-}(\xi):=\cup_{i}\cC_{\uc^{i}}(\xi)$,
where $\cC^{-}(\xi)$ was introduced in \cite{hernandez2013cluster}.

\end{Rem}

\begin{Eg}\label{eg:Lambda}

Let us continue Example \ref{eg:A2-121212}, where $\tB=\left(\begin{array}{cccc}
0 & -1 & 1 & 0\\
1 & 0 & -1 & 1\\
-1 & 1 & 0 & -1\\
0 & -1 & 1 & 0\\
0 & 0 & -1 & 1\\
0 & 0 & 0 & -1
\end{array}\right)$. For $C(z):=\left(\begin{array}{cc}
z+z^{-1} & -1\\
-1 & z+z^{-1}
\end{array}\right)$, its inverse is $\tC=\left(\begin{array}{cc}
(z-z^{5})\sum_{d\geq0}z^{6d} & (z^{2}-z^{4})\sum_{d\geq0}z^{6d}\\
(z^{2}-z^{4})\sum_{d\geq0}z^{6d} & (z-z^{5})\sum_{d\geq0}z^{6d}
\end{array}\right)=:\sum_{m\in\Z}\tC(m)z^{m}$, where $\tC(m)$ are $\Z$-matrices. Note that $\tC^{T}=\tC$ in
this example.

We could compute the skew-symmetric bilinear form $\cN$ defined in
Section \ref{sec:Skew-symmetric-bilinear-forms} or, equivalently,
the matrix $\cN=\sum_{m\in\Z}\cN(m)z^{m}$ where $\cN(m)$ are $\Z$-matrices.
We have $\cN_{ab}(p-s):=\cN(e_{(a,p)},e_{(b,s)}):=\tC_{ab}(p-1-s)-\tC_{ba}(s-1-p)-\tC_{ab}(p+1-s)+\tC_{ba}(s+1-p)$.
We obtain the matrix $\cN=z\tC-z^{-1}\tC^{T}(z^{-1})-z^{-1}\tC+z\tC^{T}(z^{-1})$,
which satisfies $\cN(z^{-1})^{T}=-\cN$. Explicit calculation shows
that 
\begin{align*}
\cN= & \left(\begin{array}{cc}
-1+z^{2}+z^{4}-z^{6} & -z+2z^{3}-z^{5}\\
-z+2z^{3}-z^{5} & -1+z^{2}+z^{4}-z^{6}
\end{array}\right)\sum_{d\geq0}z^{6d}\\
 & +\left(\begin{array}{cc}
1-z^{-2}-z^{-4}+z^{-6} & z^{-1}-2z^{-3}+z^{-5}\\
z^{-1}-2z^{-3}+z^{-5} & 1-z^{-2}-z^{-4}+z^{-6}
\end{array}\right)\sum_{d\geq0}z^{-6d}.
\end{align*}
Choose $\xi_{1}=0$, $\xi_{2}=-1$, and identify $(a,\xi_{a}-2d)$
with $a+2d\in[1,6]$, where $a\in[1,2]$, $d\in[0,2]$. We obtain
the following matrix
\begin{align*}
(\cN(e_{j},e_{k}))_{j,k\in[1,6]} & =\left(\begin{array}{cccccc}
0 & -1 & 1 & 2 & 1 & -1\\
1 & 0 & -1 & 1 & 2 & 1\\
-1 & 1 & 0 & -1 & 1 & 2\\
-2 & -1 & 1 & 0 & -1 & 1\\
-1 & -2 & -1 & 1 & 0 & -1\\
1 & -1 & -2 & -1 & 1 & 0
\end{array}\right),
\end{align*}
\begin{align*}
(\Lambda_{jk})_{j,k\in[1,6]} & =(\cN(\beta_{[j^{\min},j]},\beta_{[k^{\min},k]}))_{j,k\in[1,6]}=\left(\begin{array}{cccccc}
0 & -1 & 1 & 1 & 2 & 0\\
1 & 0 & 0 & 1 & 2 & 2\\
-1 & 0 & 0 & 1 & 2 & 2\\
-1 & -1 & -1 & 0 & 0 & 2\\
-2 & -2 & -2 & 0 & 0 & 0\\
0 & -2 & -2 & -2 & 0 & 0
\end{array}\right).
\end{align*}
We could verify that $(\Lambda\tB)_{jk}=-2\delta_{j,k}$.

\end{Eg}

\subsection{Limits of signed words\label{subsec:Limits-of-signed}}

Denote $\sd_{2i}:=\rsd(\uc^{2i})$ as before. Denote $\sd'_{2i}:=\rsd(\uc^{2i-1},-\uc\op)$.
When we view $\uc^{2i}$ as the subword $(\uc^{2i+1},-\uc\op)_{[1,2i|J|]}$,
$\sd_{2i}$ becomes a good sub seed of $\sd'_{2i+2}$ via the cluster
embedding $\iota$ sending $j\in[1,2i|J|]$ to $j$. So we have $\sd_{2i}\xhookrightarrow{\iota}\sd_{2i+2}'\xrightarrow{(\Sigma_{|J|}\cdots\Sigma_{1})^{-1}}\sd_{2i+2}$.
Note that the inclusion $\bClAlg(\sd_{2i})\subset\bClAlg(\sd_{2i+2})$
sends $W_{[j,k]}(\sd_{2i})$ to $W_{[j[1],k[1]]}(\sd_{2i+2})$, i.e.,
it is associated with the cluster embedding viewing $\uc^{2i}$ as
the subword $(\uc^{2i+2})_{[|J|+1,(2i+1)|J|]}$ (Lemma \ref{lem:embed-interval-variables}). 

We choose quantization as before. Then it is straightforward to check
that the quantization for $\sd_{2i+2}$ restricts to that of $\sd_{2i}$,
i.e., $\Lambda(\sd_{2i})(\deg^{\sd_{2i}}W_{[j^{\min},j]}(\sd_{2i}),\deg^{\sd_{2i}}W_{[k^{\min},k]}(\sd_{2i}))$
equals $\Lambda(\sd_{2i+2})(\deg^{\sd_{2i+2}}W_{[j^{\min}[1],j[1]]}(\sd_{2i+2}),\deg^{\sd_{2i+2}}W_{[k^{\min}[1],k[1]]}(\sd_{2i+2}))$,
$\forall j,k\in[1,2i|J|]$. From now on, we assume $\bClAlg_{i}:=\bClAlg(\sd_{2i})$
are quantum cluster algebras.

Define $\ubi^{(i)}:=(\uc,-\uc\op)^{i}$ and $\ssd_{i}:=\rsd(\ubi^{(i)})$.
Then we have $\ssd_{i}=\seq\sd_{2i}$ for the mutation sequence $\seq:=\seq_{\ubi^{(i)},\uc^{2i}}$
associated with the left reflections and flips in Section \ref{subsec:Operations-on-signed-words}.
Note that $\ssd_{i+1}=\seq\sd_{2i+2}'$. Therefore, $\ssd_{i}$ is
a good sub seed of $\ssd_{i+1}$ via the same cluster embedding $\iota$.
So we have the following commutative diagram (where the mutation isomorphisms
are omitted):
\begin{align*}
\begin{array}{cccccc}
\iota: & \bClAlg(\sd_{2i}) & \hookrightarrow & \bClAlg(\sd'_{2i+2}) & = & \bClAlg(\sd_{2i+2})\\
 & \parallel &  & \parallel\\
\iota: & \bClAlg(\ssd_{i}) & \hookrightarrow & \bClAlg(\ssd_{i+1})
\end{array}
\end{align*}

For any $1\leq j\leq2i|J|$, define $d(j)$ and $a(j)$ such that
$j-d(j)\cdot|J|\in[1,|J|]$ and $a(j)=(\uc^{i})_{j}$, then $j$ is
identified with $\binom{a(j)}{d(j)}^{\sd_{2i}}\in I(\sd_{2i})$. Note
that $\iota x_{j}(\sd_{2i})=x_{j}(\sd'_{2i+2})$. Using the definitions
of the interval variables, we have the following.

\begin{Lem} \label{lem:interval-for-seeds}

(1) $\forall j\in[1,2i|J|]$, we have $x_{j}(\sd'_{2i})=W_{[j^{\min}[1],j[1]]}(\sd_{2i})$.

(2) For any $j\in[1,2i|J|]$, we have $x_{j}(\ssd_{i})=W_{[j^{\min}[i-d],j^{\min}[i+d]]}(\sd_{2i})$
if $d(j)=2d$ and $x_{j}(\ssd_{i})=W_{[j^{\min}[i-d-1],j^{\min}[i+d]]}(\sd_{2i})$
if $d(j)=2d+1$. 

\end{Lem}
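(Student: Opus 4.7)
The plan is to realize both $\sd'_{2i}$ and $\ssd_{i}$ as explicit (permuted) mutations of $\sd_{2i}$, and then match each resulting cluster variable against the interval variable of $\sd_{2i}$ with the claimed endpoints, using the definition $W_{[\binom{a}{r_{a}},\binom{a}{r_{a}+d}]}(\rsd) = x_{\binom{a}{d}}(\rsd\{r\})$ through the green-to-red sequence $\Sigma=\Sigma_{l}\cdots\Sigma_{1}$ and, when convenient, the T-systems of Proposition \ref{prop:T-systems}.

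For part (1), I would first show that $\sd'_{2i}$ is obtained from $\sd_{2i}$ by the sequence of word operations from Section \ref{subsec:Operations-on-signed-words} (left reflections combined with flips; the $a=b$ flips supplying the actual mutations) that converts the last block $\uc$ of $\uc^{2i}$ into $-\uc^{\op}$. The cumulative mutation sequence coincides, up to a permutation of the last $|J|$ frozen vertices, with the truncated green-to-red piece $\Sigma_{|J|}\cdots\Sigma_{1}$ of $\sd_{2i}$ (the analog for $\sd_{2i}$ of the identity $\sd'_{2i+2}=(\Sigma_{|J|}\cdots\Sigma_{1})^{-1}\sd_{2i+2}$ recorded just before the lemma). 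Under the identification $j\leftrightarrow\binom{a(j)}{d(j)}^{\sd_{2i}}$ given above the lemma, the interval variable definition then delivers $x_{j}(\sd'_{2i})=W_{[j^{\min}[1],j[1]]}(\sd_{2i})$ on unfrozen vertices; the boundary case $j[1]=+\infty$ (i.e.\ $j$ in the last block of $\sd_{2i}$) is handled by the permutation $\sigma$ of frozen vertices, which takes $x_{\binom{a}{2i-1}}(\sd'_{2i})$ back to the matching frozen initial variable of $\sd_{2i}$.

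For part (2), I would proceed by induction on $i$. The base case $i=1$ is precisely part (1), since $\ssd_{1}=\sd'_{2}$. For the inductive step I would use the two identities stated just above the lemma, $\ssd_{i}=\seq\,\sd_{2i}$ and $\ssd_{i+1}=\seq\,\sd'_{2i+2}$, together with the cluster embedding $\iota\colon\sd_{2i}\hookrightarrow\sd'_{2i+2}$ and part (1) applied to $\sd'_{2i+2}$: this expresses $x_{j}(\ssd_{i+1})$ as an interval variable of $\sd_{2i+2}$, whose pullback along $\iota$ is controlled by Lemma \ref{lem:embed-interval-variables} and gives the index shift $[j^{\min}[1],j[1]]\mapsto[j^{\min}[i-d],j^{\min}[i+d]]$ (respectively $[j^{\min}[i-d-1],j^{\min}[i+d]]$). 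The case split on the parity of $d(j)$ corresponds to whether $\binom{a(j)}{d(j)}$ sits in a $\uc$-factor or in a $-\uc^{\op}$-factor of $\ubi^{(i)}=(\uc,-\uc^{\op})^{i}$, which is exactly what moves the left endpoint of the interval by one step.

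The main obstacle is bookkeeping the permutations of position labels produced by the $a\neq b$ flips (which relabel vertices without mutating). Binoms $\binom{a}{d}$ provide a canonical position-free labeling, so the vertex sets of $\ssd_{i}$, $\sd'_{2i}$, and $\sd_{2i}$ coincide; but pinning down which position in $\ssd_{i}$ carries which binom, and which interval endpoints in $\sd_{2i}$ this then selects, is where the notation is easiest to mismanage. Once the permutation is tracked (for instance by verifying the small case $i=1$, $|J|=2$, $\uc=(1,2)$ where $\sd'_{2}=\sigma_{(3\,4)}\mu_{2}\mu_{1}\sd_{2}$, and then inducting using the chain structure of Section \ref{subsec:Limits-of-signed}), the matching with interval variables is immediate from the green-to-red recipe.
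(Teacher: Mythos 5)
Your proposal is correct and follows essentially the same route as the paper: part (1) is the identification $\sd'_{2i}=\Sigma_{c_{|J|}}\cdots\Sigma_{c_{1}}\sd_{2i}$ plus the definition of the interval variables, and part (2) is the same induction on $i$, combining the induction hypothesis with the good-sub-seed commutative diagram and the index-shifting embedding of Lemma \ref{lem:embed-interval-variables}, with the two newest blocks handled by part (1) and by frozenness of the top vertices. The only deviations are cosmetic and harmless: the paper starts the induction at $\ssd_{0}$, and the word operation actually reflects the \emph{first} block of $\uc^{2i}$ and flips it to the end, so the relevant identity is $\sd'_{2i}=\Sigma_{c_{|J|}}\cdots\Sigma_{c_{1}}\sd_{2i}$ (not its inverse), exactly the sequence you name.
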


\begin{proof}

(1) Note that $\sd'_{2i}=\Sigma_{c_{|J|}}\cdots\Sigma_{c_{1}}\sd_{2i}$.
The claim follows from the definition of the interval variables.

(2) The case for $\ssd_{0}$ is trivial. Assume the claim has been
verified for $\ssd_{i-1}$, we will prove it for $\ssd_{i}$. Note
that we have $\ssd_{i}=\seq^{(i)}\sd_{2i}$ and similarly $\ssd_{i-1}=\seq^{(i-1)}\sd_{2i-2}$,
where $\seq^{(i)}$ and $\seq^{(i-1)}$ are associated with flips
and left reflections, see Section \ref{subsec:Operations-on-signed-words}.
In addition, $\seq^{(i)}=\seq^{(i-1)}\Sigma_{c_{|J|}}\cdots\Sigma_{c_{1}}$.
Note that $\seq^{(i-1)}$does not mutate on $j>(2i-2)|J|$. Consider
the previous commutative diagram
\begin{align*}
\begin{array}{cccc}
\iota: & \bClAlg(\sd_{2i-2}) & \hookrightarrow & \bClAlg(\sd'_{2i})\\
 & \parallel &  & \parallel\\
\iota: & \bClAlg(\ssd_{i-1}) & \hookrightarrow & \bClAlg(\ssd_{i})
\end{array}.
\end{align*}
For any $j\leq(2i-2)|J|$, we deduce the desired formula of $x_{j}(\ssd_{i})$
by using the induction hypothesis for $x_{j}(\ssd_{i-1})$ and then
applying the inclusion $\bClAlg(\sd_{2i-2})\subset\bClAlg(\sd_{2i})$
to it. If $(2i-2)|J|<j\leq(2i-1)|J|$, then $x_{j}(\ssd_{i})$ is
the cluster variable $x_{j}(\Sigma_{c_{|J|}}\cdots\Sigma_{c_{1}}\sd_{2i})=W_{[j^{\min}[1],j^{\max}]}(\sd_{2i})$,
which satisfies the desired formula since $d(j)=2(i-1)$ and $j^{\max}=j^{\min}[2i-1]$.
If $j>(2i-1)|J|$, $x_{j}(\ssd_{i})$ is the cluster variable $x_{j}(\sd_{2i})=W_{[j^{\min},j^{\max}]}(\sd_{2i})$,
which satisfies the desired formula since $d(j)=2(i-1)+1$.

\end{proof}

We can view $I(\sd_{2i})=I(\ssd_{i})$ as a subset of $\W=\{\mybinom ad|a\in J,d\in\Z\}$
such that $\binom{a}{d}^{\sd_{2i}}=\binom{a}{d}^{\ssd_{i}}$ is identified
with $\mybinom a{d-i+1}$. Then we have $I(\sd_{2i})=\{\mybinom ad|d\in[-i+1,i]\}$,
and $\iota W_{[\mybinom ad,\mybinom a{d'}]}(\sd_{2i})=W_{[\mybinom ad,\mybinom a{d'}]}(\sd_{2i+2})$,
which are denoted by $W_{[\mybinom ad,\mybinom a{d'}]}$. Then Lemma
\ref{lem:interval-for-seeds}(2) is equivalent to the following.

\begin{Lem}\label{lem:interval-for-ssd}

For any $j\in[1,2i|J|]\simeq I(\ssd_{i})\subset\W$, we have $x_{j}(\ssd_{i})=W_{[\mybinom a{-d+1},\mybinom a{d+1}]}$
if $j=\binom{a}{2d}^{\ssd_{i}}$ and $x_{j}(\ssd_{i})=W_{[\mybinom a{-d},\mybinom a{d+1}]}$
if $j=\binom{a}{2d+1}^{\ssd_{i}}$.

\end{Lem}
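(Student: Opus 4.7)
The plan is to derive Lemma \ref{lem:interval-for-ssd} as a direct reformulation of Lemma \ref{lem:interval-for-seeds}(2) under the identification $I(\sd_{2i}) = I(\ssd_i) \hookrightarrow \W$ given by $\binom{a}{d}^{\sd_{2i}} = \binom{a}{d}^{\ssd_i} \mapsto \mybinom{a}{d-i+1}$, which has already been fixed immediately before the statement. Since the claim is asserted to be equivalent to part (2) of the preceding lemma, no new content needs to be proved; only the translation of indices needs to be verified carefully.

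Concretely, I would first observe that if $j = \binom{a}{2d}^{\ssd_i}$, then $j^{\min} = \binom{a}{0}^{\ssd_i}$, so the successor iterates satisfy $j^{\min}[k] = \binom{a}{k}^{\ssd_i}$. Under the identification with $\W$ this becomes $\mybinom{a}{k-i+1}$. Plugging in $k = i-d$ and $k = i+d$ from the formula in Lemma \ref{lem:interval-for-seeds}(2) yields the endpoints $\mybinom{a}{-d+1}$ and $\mybinom{a}{d+1}$, respectively, giving $x_j(\ssd_i) = W_{[\mybinom{a}{-d+1}, \mybinom{a}{d+1}]}$.

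Next, for $j = \binom{a}{2d+1}^{\ssd_i}$, the same computation with $k = i-d-1$ and $k = i+d$ produces endpoints $\mybinom{a}{-d}$ and $\mybinom{a}{d+1}$, matching the second case of the statement. Finally, I would note that the interval variables $W_{[\mybinom{a}{r},\mybinom{a}{s}]}$ are well-defined as elements of $\bClAlg(\sd_{2i})$ under the chosen identification (and are compatible with the chain of inclusions $\iota$), so the right-hand sides make sense as stated.

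The only routine check is that $d(j) = 2d$ (resp.\ $2d+1$) in the notation of Lemma \ref{lem:interval-for-seeds} matches the meaning of $j = \binom{a}{2d}^{\ssd_i}$ (resp.\ $\binom{a}{2d+1}^{\ssd_i}$) used here; this is immediate from the definition of $\binom{a}{d}^{\ubi}$ at the start of Section \ref{sec:Reviews-on-cluster-words}. No obstacle is expected, as the lemma is essentially a change-of-variables restatement rather than an independent result.
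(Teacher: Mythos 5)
Your proposal is correct and matches the paper exactly: the paper states Lemma \ref{lem:interval-for-ssd} as the restatement of Lemma \ref{lem:interval-for-seeds}(2) under the identification $\binom{a}{d}^{\sd_{2i}}=\binom{a}{d}^{\ssd_{i}}\mapsto\mybinom{a}{d-i+1}$, and your index translation ($j^{\min}[i\mp d]\mapsto\mybinom{a}{\mp d+1}$, etc.) is precisely that verification. No gap; this is the paper's own (implicit) argument made explicit.
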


\begin{Eg}

In Figure \ref{eg:A2-12121212-recenter}, we draw the quiver for $\sd_{4}=\rsd((1,2)^{4})$
and represent its nodes in different ways. Note that $\ssd_{2}=\rsd((1,2,-2,-1,1,2,-2,-1))=\mu_{2}\mu_{1}(\mu_{6}\mu_{4}\mu_{2})(\mu_{5}\mu_{3}\mu_{1})\sd_{4}=\mu_{2}\mu_{1}\Sigma_{2}\Sigma_{1}\sd_{4}$.
In Figure \ref{fig:A2-12(21)12(21)-recenter}, we draw the quiver
with initial cluster variables for $\sd_{4}$ and $\ssd_{2}$.

\end{Eg}

\begin{figure}[h]
\caption{The quiver for $\sd_{4}=\rsd(\uc^{4})$}
\label{eg:A2-12121212-recenter}

\subfloat[Nodes $I(\sd_{4})$]{

 \begin{tikzpicture}  [scale=1.2,node distance=48pt,on grid,>={Stealth[round]},bend angle=45,      pre/.style={<-,shorten <=1pt,>={Stealth[round]},semithick},    post/.style={->,shorten >=1pt,>={Stealth[round]},semithick},  unfrozen/.style= {circle,inner sep=1pt,minimum size=12pt,draw=black!100,fill=red!100},  frozen/.style={rectangle,inner sep=1pt,minimum size=12pt,draw=black!75,fill=cyan!100},   point/.style= {circle,inner sep=1pt,minimum size=5pt,draw=black!100,fill=black!100},   boundary/.style={-,draw=cyan},   internal/.style={-,draw=red},    every label/.style= {black}] \node (v1) at (0,1) {$\binom{1}{0}^{\sd_4}$}; \node (v2) at (-0.5,0) {$\binom{2}{0}^{\sd_4}$}; \node (v3) at (-1,1) {$\binom{1}{1}^{\sd_4}$}; \node (v4) at (-1.5,0) {$\binom{2}{1}^{\sd_4}$}; \node (v5) at (-2,1) {$\binom{1}{2}^{\sd_4}$}; \node (v6) at (-2.5,0) {$\binom{2}{2}^{\sd_4}$}; \node (v8) at (-3,1) {$\binom{1}{3}^{\sd_4}$}; \node (v7) at (-3.5,0) {$\binom{2}{3}^{\sd_4}$}; \draw[<-]  (v1) edge (v2); \draw[<-]  (v2) edge (v3); \draw[<-]  (v3) edge (v4); \draw[<-]  (v4) edge (v5); \draw[<-]  (v5) edge (v6); \draw[<-]  (v5) edge (v3); \draw[<-]  (v3) edge (v1); \draw[<-]  (v6) edge (v4); \draw[<-]  (v4) edge (v2);
\draw[<-]  (v7) edge (v6); \draw[<-]  (v8) edge (v5); \draw[<-]  (v6) edge (v8); \draw[<-,dashed]  (v8) edge (v7); \end{tikzpicture}}$\quad$\subfloat[{Nodes $[1,8]$}]{

 \begin{tikzpicture}  [scale=1.2,node distance=48pt,on grid,>={Stealth[round]},bend angle=45,      pre/.style={<-,shorten <=1pt,>={Stealth[round]},semithick},    post/.style={->,shorten >=1pt,>={Stealth[round]},semithick},  unfrozen/.style= {circle,inner sep=1pt,minimum size=12pt,draw=black!100,fill=red!100},  frozen/.style={rectangle,inner sep=1pt,minimum size=12pt,draw=black!75,fill=cyan!100},   point/.style= {circle,inner sep=1pt,minimum size=5pt,draw=black!100,fill=black!100},   boundary/.style={-,draw=cyan},   internal/.style={-,draw=red},    every label/.style= {black}] \node (v1) at (0,1) {$1$}; \node (v2) at (-0.5,0) {$2$}; \node (v3) at (-1,1) {$3$}; \node (v4) at (-1.5,0) {$4$}; \node (v5) at (-2,1) {$5$}; \node (v6) at (-2.5,0) {$6$}; \node (v8) at (-3,1) {$7$}; \node (v7) at (-3.5,0) {$8$}; \draw[<-]  (v1) edge (v2); \draw[<-]  (v2) edge (v3); \draw[<-]  (v3) edge (v4); \draw[<-]  (v4) edge (v5); \draw[<-]  (v5) edge (v6); \draw[<-]  (v5) edge (v3); \draw[<-]  (v3) edge (v1); \draw[<-]  (v6) edge (v4); \draw[<-]  (v4) edge (v2);
\draw[<-]  (v7) edge (v6); \draw[<-]  (v8) edge (v5); \draw[<-]  (v6) edge (v8); \draw[<-,dashed]  (v8) edge (v7); \end{tikzpicture}}$\quad$\subfloat[Nodes in $\W$]{

\begin{tikzpicture}  [scale=1.2,node distance=48pt,on grid,>={Stealth[round]},bend angle=45,      pre/.style={<-,shorten <=1pt,>={Stealth[round]},semithick},    post/.style={->,shorten >=1pt,>={Stealth[round]},semithick},  unfrozen/.style= {circle,inner sep=1pt,minimum size=12pt,draw=black!100,fill=red!100},  frozen/.style={rectangle,inner sep=1pt,minimum size=12pt,draw=black!75,fill=cyan!100},   point/.style= {circle,inner sep=1pt,minimum size=5pt,draw=black!100,fill=black!100},   boundary/.style={-,draw=cyan},   internal/.style={-,draw=red},    every label/.style= {black}] \node (v1) at (0,1) {$\mybinom{1}{-1}$}; \node (v2) at (-0.5,0) {$\mybinom{2}{-1}$}; \node (v3) at (-1,1) {$\mybinom{1}{0}$}; \node (v4) at (-1.5,0) {$\mybinom{2}{0}$}; \node (v5) at (-2,1) {$\mybinom{1}{1}$}; \node (v6) at (-2.5,0) {$\mybinom{2}{1}$}; \node (v8) at (-3,1) {$\mybinom{1}{2}$}; \node (v7) at (-3.5,0) {$\mybinom{2}{2}$}; \draw[<-]  (v1) edge (v2); \draw[<-]  (v2) edge (v3); \draw[<-]  (v3) edge (v4); \draw[<-]  (v4) edge (v5); \draw[<-]  (v5) edge (v6); \draw[<-]  (v5) edge (v3); \draw[<-]  (v3) edge (v1); \draw[<-]  (v6) edge (v4); \draw[<-]  (v4) edge (v2); \draw[<-]  (v7) edge (v6); \draw[<-]  (v8) edge (v5); \draw[<-]  (v6) edge (v8); \draw[<-,dashed]  (v8) edge (v7); \end{tikzpicture}}
\end{figure}

\begin{figure}[h]
\caption{Quiver and initial cluster variables}
\label{fig:A2-12(21)12(21)-recenter}

\subfloat[$\sd_{4}=\rsd(1,2,1,2,1,2,1,2)$]{

  \begin{tikzpicture}  [scale=2,node distance=48pt,on grid,>={Stealth[round]},bend angle=45,      pre/.style={<-,shorten <=1pt,>={Stealth[round]},semithick},    post/.style={->,shorten >=1pt,>={Stealth[round]},semithick},  unfrozen/.style= {circle,inner sep=1pt,minimum size=12pt,draw=black!100,fill=red!100},  frozen/.style={rectangle,inner sep=1pt,minimum size=12pt,draw=black!75,fill=cyan!100},   point/.style= {circle,inner sep=1pt,minimum size=5pt,draw=black!100,fill=black!100},   boundary/.style={-,draw=cyan},   internal/.style={-,draw=red},    every label/.style= {black}] \node (v1) at (0,1) {$W_{\mybinom{1}{-1}}$}; \node (v2) at (-0.5,0) {$W_{\mybinom{2}{-1}}$}; \node (v3) at (-1,1) {$W_{[\mybinom{1}{-1},\mybinom{1}{0}]}$}; \node (v4) at (-1.5,0) {$W_{[\mybinom{2}{-1},\mybinom{2}{0}]}$}; \node (v5) at (-2,1) {$W_{[\mybinom{1}{-1},\mybinom{1}{1}]}$}; \node (v6) at (-2.5,0) {$W_{[\mybinom{2}{-1},\mybinom{2}{1}]}$}; \node (v8) at (-3,1) {$W_{[\mybinom{1}{-1},\mybinom{1}{2}]}$}; \node (v7) at (-3.5,0) {$W_{[\mybinom{2}{-1},\mybinom{2}{2}]}$}; \draw[<-]  (v1) edge (v2); \draw[<-]  (v2) edge (v3); \draw[<-]  (v3) edge (v4); \draw[<-]  (v4) edge (v5); \draw[<-]  (v5) edge (v6); \draw[<-]  (v5) edge (v3); \draw[<-]  (v3) edge (v1); \draw[<-]  (v6) edge (v4); \draw[<-]  (v4) edge (v2); \draw[<-]  (v7) edge (v6); \draw[<-]  (v8) edge (v5); \draw[<-]  (v6) edge (v8); \draw[<-,dashed]  (v8) edge (v7); \end{tikzpicture}}$\quad$\subfloat[$\ssd_{2}=\rsd(1,2,-2,-1,1,2,-2,-1)$]{

\begin{tikzpicture}  [scale=2,node distance=48pt,on grid,>={Stealth[round]},bend angle=45,      pre/.style={<-,shorten <=1pt,>={Stealth[round]},semithick},    post/.style={->,shorten >=1pt,>={Stealth[round]},semithick},  unfrozen/.style= {circle,inner sep=1pt,minimum size=12pt,draw=black!100,fill=red!100},  frozen/.style={rectangle,inner sep=1pt,minimum size=12pt,draw=black!75,fill=cyan!100},   point/.style= {circle,inner sep=1pt,minimum size=5pt,draw=black!100,fill=black!100},   boundary/.style={-,draw=cyan},   internal/.style={-,draw=red},    every label/.style= {black}] \node (v1) at (-1,1) {$W_{\mybinom{1}{1}}$}; \node (v2) at (-1,0) {$W_{\mybinom{2}{1}}$}; \node (v3) at (-2,1) {$W_{[\mybinom{1}{0},\mybinom{1}{1}]}$}; \node (v4) at (-2,0) {$W_{[\mybinom{2}{0},\mybinom{2}{1}]}$}; \node (v5) at (-3,1) {$W_{[\mybinom{1}{0},\mybinom{1}{2}]}$}; \node (v6) at (-3,0) {$W_{[\mybinom{2}{0},\mybinom{2}{2}]}$}; \node (v8) at (-4,1) {$W_{[\mybinom{1}{-1},\mybinom{1}{2}]}$}; \node (v7) at (-4,0) {$W_{[\mybinom{2}{-1},\mybinom{2}{2}]}$}; \draw[->]  (v2) edge (v1); \draw[->]  (v1) edge (v4); \draw[->]  (v4) edge (v3); \draw[->]  (v6) edge (v5); \draw[->]  (v5) edge (v7); \draw[->,dashed]  (v7) edge (v8); \draw[->]  (v8) edge (v5); \draw[->]  (v3) edge (v5); \draw[->]  (v3) edge (v1); \draw[->]  (v5) edge (v4); \draw[->]  (v4) edge (v6); \draw[->]  (v4) edge (v2); \draw[<-]  (v6) edge (v7);\end{tikzpicture}}
\end{figure}

Since $\ssd_{i}$ are good sub seeds of $\ssd_{i+1}$, we can take
the colimit $\ssd_{\infty}:=\cup_{i}\ssd_{i}$, where $I(\ssd_{\infty})=\{\mybinom ad\in\W|a\in J,d\in\Z\}\simeq\cup_{i}[1,2i]=\N_{>0}$.
Let $\stdMod_{i}$ and $\can_{i}$ denote the standard basis and the
common triangular basis of $\bClAlg_{i}$, then $\stdMod=\cup_{i}\stdMod_{i}$
and $\can=\cup_{i}\can_{i}$ are bases of $\bClAlg_{\infty}:=\bClAlg(\ssd_{\infty})$,
called the standard basis and the common triangular basis, respectively.

\begin{Rem}

When $C$ is of type $ADE$, choose a height function $\xi$. Then
$\bClAlg_{i}$ is categorified by $\cT_{i}:=\cC_{\uc^{2i}}(\xi+2i-2)$,
such that $W_{\mybinom ad}$ corresponds to the fundamental module
$L(a,\xi_{a}-2d)$. It follows that $(\bClAlg_{\infty},\can_{\infty})$
is categorified by $\cC_{\Z}:=\cup_{i}\cC_{\uc^{2i}}(\xi+2i-2)$.
Note that, in all finite types, $\cC_{\Z}$ as introduced in \cite{HernandezLeclerc09}
categorifies a cluster algebra by \cite{kashiwara2021monoidal}.

\end{Rem}

\subsection{Cluster algebras from virtual quantum Grothendieck rings\label{subsec:Cluster-algebras-identify-Kq}}

The authors in \cite{jang2023quantization} showed that the virtual
quantum Grothendieck ring $\mathfrak{K}_{q}$ in \cite{kashiwara2023q}
takes the form $\bClAlg(\usd)$ for some seed $\usd$. We will identify
$\bClAlg(\usd)$ with our $\bClAlg_{\infty}=\bClAlg(\ssd_{\infty})$
introduced in Section \ref{sec:Infinite-rank-cluster-q-aff}. In this
subsection, we will use the bilinear form $\ucN$ for quantization.

Assume $C$ is of finite type. We can choose the Coxeter word $\uc$
of the form $\uc:=(\uc^{+},\uc^{-})$, such that $J=J^{+}\sqcup J^{-}$,
where $J^{\pm}$ denote of the set of sink letters and source letters
respectively, and $\uc^{\pm}$ are words in $J^{\pm}$. This choice
corresponds to a bipartite orientation of the associated Dynkin diagram.
Choose the height function $\xi$ such that $\xi(J^{+})=s$ and $\xi(J^{-})=s-1$,
$s\in\Z$.

As before, we take the inclusion $I(\sd_{2i})\subset\W\simeq J_{\Z}(\xi)=\cup_{a\in J}\{a\}\times(\xi_{a}+2\Z)$,
identifying $\binom{a}{d}^{\ubi}$ with $\mybinom a{d-i+1}\in\W$
and $(a,\xi_{a}-2d+2i-2)\in J_{\Z}(\xi)$. Quantize $\sd_{2i}$ using
$\ucN$ as before.

Introduce $\ubi=(\uc^{+},-(\uc^{-})\op,-(\uc^{+})\op,\uc^{-})$. Denote
$\usd_{i}:=\rsd(\ubi^{i})$. Then $\usd_{i}$ equals $\seq^{([1,j])}\ssd_{i}$,
where $\ssd_{i}=\rsd((\uc,-\uc\op)^{i})$, $\seq^{([1,j])}:=\prod_{j=1}^{i}\seq^{(j)}$,
and $\seq^{(j)}:=\prod_{a\in J^{-}}\mu_{\binom{a}{2j-2}}$ is the
mutation sequence associated with left reflections and flips changing
$(\ubi^{i})_{[2(j-1)|J|+1,2j|J|]}=\ubi$ to $((\uc,-\uc\op)^{i})_{[2(j-1)|J|+1,2j|J|]}=(\uc,-\uc\op)$.
Note that $\seq^{(j)}$ and $\seq^{(j')}$ commute. So we obtain the
commutative diagram via the cluster embedding $\iota=\iota_{[1,2i|J|]}$
of subwords, see Section \ref{subsec:Cluster-embeddings-subwords}:
\begin{align*}
\begin{array}{ccccc}
\ssd_{i} & \xhookrightarrow{\iota} & \ssd_{i+1} & = & \ssd_{i+1}\\
\downarrow\seq^{([1,i])} &  & \downarrow\seq^{([1,i])} &  & \downarrow\seq^{([1,i+1])}\\
\usd_{i} & \xhookrightarrow{\iota} & \rsd(\ubi^{i},\uc,-\uc\op) & \xrightarrow{\seq^{(i+1)}} & \usd_{i+1}
\end{array}
\end{align*}

Recall that $\ssd_{i}$ is a good sub seed of $\ssd_{i+1}$ and $I(\ssd_{i})=I(\usd_{i})$.
Since $b_{jk}(\ssd_{i})=0$, $\forall j\in I(\ssd_{i+1})\backslash I(\ssd_{i})$,
$k\in I_{\ufv}(\ssd_{i})$, and $\seq^{(i+1)}$ only mutates at $I(\ssd_{i+1})\backslash I(\ssd_{i})$,
$\usd_{i}$ is also a good sub seed of $\usd_{i+1}$ via the cluster
embedding $\iota$. Applying $\seq^{([1,i])}$ to the cluster variables
of $\ssd_{i}$, we deduce the following from Lemma \ref{lem:interval-for-ssd}.

\begin{Lem}\label{lem:initial-cluster-usd}

For any $d\in[0,i-1]$, if $a\in J^{-}$, we have $x_{\binom{a}{2d}^{\usd_{i}}}(\usd_{i})=W_{[\mybinom a{-d},\mybinom ad]}$
and $x_{\binom{a}{2d+1}^{\usd_{i}}}(\usd_{i})=W_{[\mybinom a{-d},\mybinom a{d+1}]}$;
and if $a\in J^{+}$, we have $x_{\binom{a}{2d}^{\usd_{i}}}(\usd_{i})=W_{[\mybinom a{-d+1},\mybinom a{d+1}]}$
and $x_{\binom{a}{2d+1}^{\usd_{i}}}(\usd_{i})=W_{[\mybinom a{-d},\mybinom a{d+1}]}$.

\end{Lem}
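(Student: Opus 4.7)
The plan is to read off the cluster variables of $\usd_i = \seq^{([1,i])}\ssd_i$ from those of $\ssd_i$ (given by Lemma \ref{lem:interval-for-ssd}) by tracking the effect of the mutation sequence $\seq^{([1,i])} = \prod_{j=1}^{i}\prod_{a \in J^-}\mu_{\binom{a}{2j-2}}$. A key preliminary observation is that the individual mutations in $\seq^{([1,i])}$ pairwise commute and so can be applied independently: for distinct $a, a' \in J^-$, the bipartite choice $\uc = (\uc^+, \uc^-)$ gives $C_{a,a'} = 0$ and (\ref{eq:dBS_B_matrix}) forces $b_{\binom{a}{\cdot},\binom{a'}{\cdot}}(\ssd_i) = 0$; for the same $a \in J^-$ and distinct indices $d, d'$, the intermediate opposite-sign positions $\binom{a}{2e+1}$ violate each of the six interval conditions in (\ref{eq:dBS_B_matrix}), so $b_{\binom{a}{2d},\binom{a}{2d'}}(\ssd_i) = 0$.

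For any position $j \in I(\usd_i) = I(\ssd_i)$ that is not mutated by $\seq^{([1,i])}$---namely those of the form $\binom{a}{d}$ with $a \in J^+$, and those of the form $\binom{a}{2d+1}$ with $a \in J^-$---the cluster variable of $\usd_i$ at $j$ coincides with that of $\ssd_i$ at $j$. A direct comparison with Lemma \ref{lem:interval-for-ssd} shows that the resulting interval variables match the formulas in the present statement verbatim.

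For the remaining mutated positions $\binom{a}{2d}^{\usd_i}$ with $a \in J^-$, $d \in [0, i-1]$, the old cluster variable in $\ssd_i$ is $W_{[\mybinom a{-d+1}, \mybinom a{d+1}]}$. I would invoke the $T$-system relation (Proposition \ref{prop:T-systems}) in the ambient cluster algebra $\bClAlg(\sd_{2i})$, choosing indices so that the two factors on the left-hand side are $W_{[\mybinom a{-d}, \mybinom a{d}]}$ and $W_{[\mybinom a{-d+1}, \mybinom a{d+1}]}$:
\begin{align*}
W_{[\mybinom a{-d}, \mybinom a{d}]} \ast W_{[\mybinom a{-d+1}, \mybinom a{d+1}]} &= q^{\alpha}\bigl[W_{[\mybinom a{-d+1}, \mybinom a{d}]} \ast W_{[\mybinom a{-d}, \mybinom a{d+1}]}\bigr]\\
&\quad + q^{\alpha'}\bigl[\prod_{b}W_{[\cdot,\cdot]}^{-C_{ba}}\bigr].
\end{align*}
Matching this identity with the mutation exchange relation at $\binom{a}{2d}^{\ssd_i}$ identifies the new cluster variable as $W_{[\mybinom a{-d}, \mybinom a{d}]}$, as claimed.

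The main obstacle is verifying that the product $\prod_b W_{[\cdot,\cdot]}^{-C_{ba}}$ produced by the $T$-system coincides with the outgoing-arrows monomial of $\mu_{\binom{a}{2d}}$ in the quiver of $\ssd_i$. This reduces to bookkeeping between two instances of (\ref{eq:dBS_B_matrix})---one applied to $(\uc,-\uc\op)^i$ giving the quiver of $\ssd_i$, and one applied to $\uc^{2i}$ indexing the interval variables---together with checking that the interval conditions in Proposition \ref{prop:T-systems} select exactly the vertices adjacent to $\binom{a}{2d}$ in the quiver of $\ssd_i$.
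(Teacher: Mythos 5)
Your proposal follows essentially the same route as the paper: the unmutated cluster variables are read off from Lemma \ref{lem:interval-for-ssd}, and the single mutated variable $x_{\binom{a}{2d}}(\usd_{i})$ for $a\in J^{-}$ is identified by comparing the exchange relation computed from the $B$-matrix of $\ssd_{i}$ with the $T$-system (\ref{eq:T-systems-Coxeter}). The coefficient bookkeeping you flag at the end is exactly what the paper resolves by first matching the two relations at the classical level and then upgrading to the quantum equality, using that both candidates are quantum cluster variables.
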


\begin{proof}

Recall that $\usd_{i}=\prod_{a\in J^{-},d\in[0,i-1]}\mu_{\binom{a}{2d}}\ssd_{i}$.
We can deduce the formulae from Lemma \ref{lem:interval-for-ssd}.
More precisely, it remains to compute $x_{\binom{a}{2d}}(\usd_{i})=x_{\binom{a}{2d}}(\mu_{\binom{a}{2d}}\ssd_{i})$
for $a\in J^{-}$. 

We can compute the $B$-matrix for $\ssd_{i}$ explicitly. Then we
deduce the exchange relation $x_{\binom{a}{2d}}(\usd_{i})*x_{\binom{a}{2d}}(\ssd_{i})=q^{\gamma}x_{\binom{a}{2d-1}}(\ssd_{i})*x_{\binom{a}{2d+1}}(\ssd_{i})+q^{\gamma'}\prod_{b\in J^{+}}x_{\binom{b}{2d}}(\ssd_{i})^{|C_{ba}|}$
for some $\gamma,\gamma'\in\Q$. On the other hand, by the $T$-system
(\ref{eq:T-systems-Coxeter}), we have $W_{[\mybinom a{-d},\mybinom ad]}*W_{[\mybinom a{-d+1},\mybinom a{d+1}]}=q^{\alpha}W_{[\mybinom a{-d+1},\mybinom ad]}*W_{[\mybinom a{-d},\mybinom a{d+1}]}+q^{\alpha'}\prod_{b\in J^{+}}W_{[\mybinom b{-d+1},\mybinom b{d+1}]}^{|C_{ba}|}$
for some $\alpha,\alpha'\in\Q$. Combining with Lemma \ref{lem:interval-for-ssd},
we deduce $x_{\binom{a}{2d}}(\usd_{i})=W_{[\mybinom a{-d},\mybinom ad]}$
at the classical level. It follows that $x_{\binom{a}{2d}}(\usd_{i})=W_{[\mybinom a{-d},\mybinom ad]}$
as a quantum cluster variable.

\end{proof}

\begin{Eg}

In type $A_{2}$, take $\uc=(1,2)$, $\usd_{2}=\rsd(1,-2,-1,2,1,-2,-1,2)$.
Denote $I(\usd_{2})=I(\ssd_{2})=I(\sd_{4})\simeq[1,8]$ as usual.
We have $\usd_{2}=\seq^{(2)}\seq^{(1)}\ssd$, where $\seq^{(1)}=\mu_{2}$
and $\seq^{(2)}=\mu_{6}$. Its quiver with initial cluster variables
is depicted in Figure \ref{fig:A2-1(21)21(21)2-recenter}.

\begin{figure}[h]
\caption{Quiver and initial cluster variables for $\usd_{2}$}
\label{fig:A2-1(21)21(21)2-recenter}

\subfloat[Quiver]{\begin{tikzpicture}  [scale=1.5,node distance=48pt,on grid,>={Stealth[round]},bend angle=45,      pre/.style={<-,shorten <=1pt,>={Stealth[round]},semithick},    post/.style={->,shorten >=1pt,>={Stealth[round]},semithick},  unfrozen/.style= {circle,inner sep=1pt,minimum size=12pt,draw=black!100,fill=red!100},  frozen/.style={rectangle,inner sep=1pt,minimum size=12pt,draw=black!75,fill=cyan!100},   point/.style= {circle,inner sep=1pt,minimum size=5pt,draw=black!100,fill=black!100},   boundary/.style={-,draw=cyan},   internal/.style={-,draw=red},    every label/.style= {black}] \node[unfrozen] (v1) at (-1,1) {$1$}; \node[unfrozen] (v2) at (-1,0) {$2$}; \node[unfrozen] (v3) at (-2,1) {$3$}; \node[unfrozen] (v4) at (-2,0) {$4$}; \node[unfrozen] (v5) at (-3,1) {$5$}; \node[unfrozen] (v6) at (-3,0) {$6$}; \node[frozen] (v8) at (-4,1) {$7$}; \node[frozen] (v7) at (-4,0) {$8$};
\draw[->]  (v4) edge (v3); \draw[->]  (v3) edge (v1); \draw[->]  (v1) edge (v2); \draw[->]  (v2) edge (v4); \draw[->]  (v3) edge (v5); \draw[->]  (v5) edge (v6); \draw[->]  (v6) edge (v4); \draw[->]  (v6) edge (v7); \draw[->]  (v8) edge (v5); \draw[->,dashed]  (v7) edge (v8); \end{tikzpicture}}$\quad$\subfloat[Initial cluster variables]{

\begin{tikzpicture}  [scale=2,node distance=48pt,on grid,>={Stealth[round]},bend angle=45,      pre/.style={<-,shorten <=1pt,>={Stealth[round]},semithick},    post/.style={->,shorten >=1pt,>={Stealth[round]},semithick},  unfrozen/.style= {circle,inner sep=1pt,minimum size=12pt,draw=black!100,fill=red!100},  frozen/.style={rectangle,inner sep=1pt,minimum size=12pt,draw=black!75,fill=cyan!100},   point/.style= {circle,inner sep=1pt,minimum size=5pt,draw=black!100,fill=black!100},   boundary/.style={-,draw=cyan},   internal/.style={-,draw=red},    every label/.style= {black}] \node (v1) at (-1,1) {$W_{\mybinom{1}{1}}$}; \node (v2) at (-1,0) {$W_{\mybinom{2}{0}}$}; \node (v3) at (-2,1) {$W_{[\mybinom{1}{0},\mybinom{1}{1}]}$}; \node (v4) at (-2,0) {$W_{[\mybinom{2}{0},\mybinom{2}{1}]}$}; \node (v5) at (-3,1) {$W_{[\mybinom{1}{0},\mybinom{1}{2}]}$}; \node (v6) at (-3,0) {$W_{[\mybinom{2}{-1},\mybinom{2}{1}]}$}; \node (v8) at (-4,1) {$W_{[\mybinom{1}{-1},\mybinom{1}{2}]}$}; \node (v7) at (-4,0) {$W_{[\mybinom{2}{-1},\mybinom{2}{2}]}$};
\draw[->]  (v4) edge (v3); \draw[->]  (v3) edge (v1); \draw[->]  (v1) edge (v2); \draw[->]  (v2) edge (v4); \draw[->]  (v3) edge (v5); \draw[->]  (v5) edge (v6); \draw[->]  (v6) edge (v4); \draw[->]  (v6) edge (v7); \draw[->]  (v8) edge (v5); \draw[->,dashed]  (v7) edge (v8); \end{tikzpicture}}
\end{figure}

\end{Eg}

Denote $\binom{a}{d}^{\usd_{i}}$ by $\binom{a}{d}^{\usd_{\infty}}$.
Then $I(\usd_{\infty})=\cup_{i}I(\usd_{i})=\{\binom{a}{d}^{\usd_{\infty}}|a\in J,d\in\N\}$,
which is identical to $\N_{>0}$ such that $\binom{c_{j}}{d}^{\usd_{\infty}}=j+d|J|$.
Denote $\usd_{\infty}=\cup\usd_{i}$ as in Section \ref{sec:Based-cluster-algebras-infinite-ranks}.
We have $\bClAlg(\usd_{\infty})=\cup_{i}\bClAlg(\usd_{i})=\cup_{i}\bClAlg(\ssd_{i})=\bClAlg(\ssd_{\infty})=:\bClAlg_{\infty}$. 

We could identify our quantum seed $\usd_{\infty}$ with the quantum
seed for $\mathfrak{K}_{q}(\mathfrak{g})$ in \cite{jang2023quantization},
denoted $\usd$. To see this, take any $s\in\Z$, denote $\xi_{a}=s+1$
if $a\in J^{+}$ and $\xi_{0}=s$ if $a\in J^{-}$, and identify $J_{\Z}(\xi)\simeq\W$
as before. Identify our vertex $\binom{a}{d}^{\usd_{\infty}}\in I(\usd_{\infty})$
with $(a,\xi'_{a}-2d)$ in \cite[(9.1)]{jang2023quantization}, where
$\xi'_{a}=s$ if $a\in J^{-}$ and $\xi'_{a}=s-1$ if $a\in J^{+}$.
We observe that our $\tB(\usd_{\infty})$ is identified with $\tB(\usd)$
given in \cite[(9.1)]{jang2023quantization}. Moreover, our $\Lambda(\usd_{\infty})$
is identified with $\Lambda(\usd)$ in \cite[Theorem 9.4]{jang2023quantization}.
Therefore, we could identify $\usd_{\infty}$ with $\usd$, such that
our initial cluster variables in $\usd_{\infty}$ of the form $W_{[\mybinom ar,\mybinom a{r+s}]}$
become identified with the initial cluster variables (KR-polynomials)
in $\usd$ \cite[(9.2)]{jang2023quantization}, denoted $F_{q}(m^{(a)}[\xi_{a}-2r-2d,\xi_{a}-2r])$. 

\begin{Eg}\label{eg:B3-three-seeds}

Let us take $C=\left(\begin{array}{ccc}
2 & -1 & 0\\
-1 & 2 & -1\\
0 & -2 & 2
\end{array}\right)$ and $\uc=(2,1,3)$. Then $\uc^{+}=(2)$, $\uc^{-}=(1,3)$. In Figure
\ref{fig:B3-sd4}, we draw the valued quiver and the initial cluster
variables for $\sd_{4}=\rsd(\uc^{4})$. In Figure \ref{fig:B3-new-seeds},
we draw those for $\ssd_{2}=\rsd((\uc,-\uc\op)^{2})=\rsd((2,1,3,-3,-1,-2)^{2})$,
and $\usd_{2}=\rsd(\ubi^{2})$ where $\ubi=(\uc^{+},-(\uc^{-})\op,-(\uc^{+})\op,\uc^{-})=(2,-3,-1,-2,1,3)$.
The upwards solid double headed arrows have weight $(2,1)$, the downward
solid ones have weight $(1,2)$, and the dashed double headed arrows
have half weights $(1,\Hf)$ and $(\Hf,1)$ respectively. Note that
$\ssd_{2}=\mu_{3}\mu_{2}\mu_{1}\Sigma_{3}\Sigma_{2}\Sigma_{1}\sd_{4}$,
$\usd_{2}=\seq^{(2)}\seq^{(1)}\ssd_{2}$, where $\seq^{(1)}$ mutates
$W_{\mybinom 11}$, $W_{\mybinom 31}$ and $\seq^{(2)}$ mutates $W_{[\mybinom 10,\mybinom 12]}$,
$W_{[\mybinom 30,\mybinom 32]}$. This example for $\usd_{2}$ could
be compared with \cite[Examples 9.1(1) and 9.3(1)]{jang2023quantization}.

\begin{figure}[h]
\caption{Quiver and initial cluster variables in type $B_{3}$, $\sd_{4}=\rsd((213)^{4})$}
\label{fig:B3-sd4}

\subfloat[Quiver for $\sd_{4}$]{ \begin{tikzpicture}  [scale=2,node distance=48pt,on grid,>={Stealth[round]},bend angle=45,      pre/.style={<-,shorten <=1pt,>={Stealth[round]},semithick},    post/.style={->,shorten >=1pt,>={Stealth[round]},semithick},  unfrozen/.style= {circle,inner sep=1pt,minimum size=12pt,draw=black!100,fill=red!100},  frozen/.style={rectangle,inner sep=1pt,minimum size=12pt,draw=black!75,fill=cyan!100},   point/.style= {circle,inner sep=1pt,minimum size=5pt,draw=black!100,fill=black!100},   boundary/.style={-,draw=cyan},   internal/.style={-,draw=red},    every label/.style= {black}]
                        \node[unfrozen] (v1) at (2,-1) {1}; \node[unfrozen] (v9) at (1.5,-0.5) {2}; \node[unfrozen] (v2) at (1.5,-1.5) {3}; \node[unfrozen] (v3) at (1,-1) {4}; \node[unfrozen] (v10) at (0.5,-0.5) {5}; \node[unfrozen] (v4) at (0.5,-1.5) {6}; \node[unfrozen] (v5) at (0,-1) {7}; \node[unfrozen] (v11) at (-0.5,-0.5) {8}; \node[unfrozen] (v6) at (-0.5,-1.5) {9}; \node[frozen] (v7) at (-1,-1) {10}; \node[frozen] (v12) at (-1.5,-0.5) {11}; \node[frozen] (v8) at (-1.5,-1.5) {12};
\draw[<<-,blue]  (v1) edge (v2); \draw[<<-,blue]  (v2) edge (v3); \draw[<<-,blue]  (v3) edge (v4); \draw[<<-,blue]  (v4) edge (v5); \draw[<<-,blue]  (v5) edge (v6); \draw [<<-,blue] (v6) edge (v7); \draw[<<-,dashed,blue]  (v7) edge (v8); \draw[<-]  (v1) edge (v9); \draw  [<-](v9) edge (v3); \draw  [<-](v3) edge (v10); \draw  [<-](v10) edge (v5); \draw  [<-](v5) edge (v11); \draw  [<-](v11) edge (v7); \draw  [<-,dashed](v7) edge (v12);
\draw  [<-](v12) edge (v11); \draw  [<-](v11) edge  (v10); \draw  [<-](v10) edge (v9); \draw  [<-](v7) edge (v5); \draw  [<-](v5) edge (v3); \draw  [<-](v3) edge (v1); \draw  [<-](v8) edge (v6); \draw  [<-](v6) edge (v4); \draw  [<-](v4) edge (v2); \end{tikzpicture}}$\quad$\subfloat[Initial cluster variables for $\sd_{4}$]{

 \begin{tikzpicture}  [scale=2,node distance=48pt,on grid,>={Stealth[round]},bend angle=45,      pre/.style={<-,shorten <=1pt,>={Stealth[round]},semithick},    post/.style={->,shorten >=1pt,>={Stealth[round]},semithick},  unfrozen/.style= {circle,inner sep=1pt,minimum size=12pt,draw=black!100,fill=red!100},  frozen/.style={rectangle,inner sep=1pt,minimum size=12pt,draw=black!75,fill=cyan!100},   point/.style= {circle,inner sep=1pt,minimum size=5pt,draw=black!100,fill=black!100},   boundary/.style={-,draw=cyan},   internal/.style={-,draw=red},    every label/.style= {black}]                         \node (v1) at (2,-1) {$W_{\mybinom{2}{-1}}$}; \node (v9) at (1.5,-0.5) {$W_{\mybinom{1}{-1}}$}; \node (v2) at (1.5,-1.6) {$W_{\mybinom{3}{-1}}$}; \node (v3) at (1,-1) {$W_{[\mybinom{2}{-1},\mybinom{2}{0}]}$}; \node (v10) at (0.5,-0.5) {$W_{[\mybinom{1}{-1},\mybinom{1}{0}]}$}; \node (v4) at (0.5,-1.6) {$W_{[\mybinom{3}{-1},\mybinom{3}{0}]}$}; \node (v5) at (0,-1) {$W_{[\mybinom{2}{-1},\mybinom{2}{1}]}$}; \node(v11) at (-0.5,-0.5) {$W_{[\mybinom{1}{-1},\mybinom{1}{1}]}$}; \node (v6) at (-0.5,-1.6) {$W_{[\mybinom{3}{-1},\mybinom{3}{1}]}$}; \node (v7) at (-1,-1) {$W_{[\mybinom{2}{-1},\mybinom{2}{2}]}$}; \node (v12) at (-1.5,-0.5) {$W_{[\mybinom{1}{-1},\mybinom{1}{2}]}$}; \node (v8) at (-1.5,-1.6) {$W_{[\mybinom{3}{-1},\mybinom{3}{2}]}$}; \draw[<<-,blue]  (v1) edge (v2); \draw[<<-,blue]  (v2) edge (v3); \draw[<<-,blue]  (v3) edge (v4); \draw[<<-,blue]  (v4) edge (v5); \draw[<<-,blue]  (v5) edge (v6); \draw [<<-,blue] (v6) edge (v7); \draw[<<-,dashed,blue]  (v7) edge (v8); \draw[<-]  (v1) edge (v9); \draw  [<-](v9) edge (v3); \draw  [<-](v3) edge (v10); \draw  [<-](v10) edge (v5); \draw  [<-](v5) edge (v11); \draw  [<-](v11) edge (v7); \draw  [<-,dashed](v7) edge (v12); \draw  [<-](v12) edge (v11); \draw  [<-](v11) edge  (v10); \draw  [<-](v10) edge (v9); \draw  [<-](v7) edge (v5); \draw  [<-](v5) edge (v3); \draw  [<-](v3) edge (v1); \draw  [<-](v8) edge (v6); \draw  [<-](v6) edge (v4); \draw  [<-](v4) edge (v2); \end{tikzpicture}}
\end{figure}

\begin{figure}[h]
\caption{Quiver and initial cluster variables for other seeds in type $B_{3}$}
\label{fig:B3-new-seeds}

\subfloat[$\ssd_{2}=\rsd((2,1,3,-3,-1,-2)^{2})$]{  \begin{tikzpicture}  [scale=2,node distance=48pt,on grid,>={Stealth[round]},bend angle=45,      pre/.style={<-,shorten <=1pt,>={Stealth[round]},semithick},    post/.style={->,shorten >=1pt,>={Stealth[round]},semithick},  unfrozen/.style= {circle,inner sep=1pt,minimum size=12pt,draw=black!100,fill=red!100},  frozen/.style={rectangle,inner sep=1pt,minimum size=12pt,draw=black!75,fill=cyan!100},   point/.style= {circle,inner sep=1pt,minimum size=5pt,draw=black!100,fill=black!100},   boundary/.style={-,draw=cyan},   internal/.style={-,draw=red},    every label/.style= {black}]                         \node (v1) at (2,-1) {$W_{\mybinom{2}{1}}$};                         \node (v9) at (2,-0.4) {$W_{\mybinom{1}{1}}$};                          \node (v2) at (2,-1.7) {$W_{\mybinom{3}{1}}$};                          \node (v3) at (1,-1) {$W_{[\mybinom{2}{0},\mybinom{2}{1}]}$};                          \node (v10) at (1,-0.4) {$W_{[\mybinom{1}{0},\mybinom{1}{1}]}$};                         \node (v4) at (1,-1.7) {$W_{[\mybinom{3}{0},\mybinom{3}{1}]}$};                         \node (v5) at (0,-1) {$W_{[\mybinom{2}{0},\mybinom{2}{2}]}$};                         \node(v11) at (0,-0.4) {$W_{[\mybinom{1}{0},\mybinom{1}{2}]}$};                          \node (v6) at (0,-1.7) {$W_{[\mybinom{3}{0},\mybinom{3}{2}]}$};                          \node (v7) at (-1,-1) {$W_{[\mybinom{2}{-1},\mybinom{2}{2}]}$};                         \node (v12) at (-1,-0.4) {$W_{[\mybinom{1}{-1},\mybinom{1}{2}]}$};                          \node (v8) at (-1,-1.7) {$W_{[\mybinom{3}{-1},\mybinom{3}{2}]}$}; \draw[->]  (v9) edge (v1);  \draw[->]  (v1) edge (v10); \draw[->]  (v10) edge (v9);  \draw[->]  (v10) edge (v11);  \draw[->]  (v11) edge (v5);  \draw[->]  (v5) edge (v10); \draw[->]  (v5) edge (v12); \draw[->]  (v12) edge (v11); \draw[->,dashed]  (v12) edge (v7); \draw[->]  (v7) edge (v5);  \draw[->]  (v10) edge (v3); \draw[->]  (v3) edge (v5);  \draw[->]  (v3) edge (v1); \draw[->]  (v4) edge (v2); \draw[->]  (v4) edge (v6);  \draw[->]  (v8) edge (v6); \draw[->>,blue,dashed]  (v8) edge (v7);  \draw[->>,blue]  (v5) edge (v6); \draw[->>,blue]  (v4) edge (v3); \draw[->>,blue]  (v2) edge (v1);  \draw[->>,blue]  (v1) edge (v4);  \draw[->>,blue]  (v5) edge (v4);  \draw[->>,blue]  (v5) edge (v8);  \end{tikzpicture}}$\quad$\subfloat[$\usd_{2}=\rsd((2,-3,-1,-2,1,3)^{2})$]{

 \begin{tikzpicture}  [scale=2,node distance=48pt,on grid,>={Stealth[round]},bend angle=45,      pre/.style={<-,shorten <=1pt,>={Stealth[round]},semithick},    post/.style={->,shorten >=1pt,>={Stealth[round]},semithick},  unfrozen/.style= {circle,inner sep=1pt,minimum size=12pt,draw=black!100,fill=red!100},  frozen/.style={rectangle,inner sep=1pt,minimum size=12pt,draw=black!75,fill=cyan!100},   point/.style= {circle,inner sep=1pt,minimum size=5pt,draw=black!100,fill=black!100},   boundary/.style={-,draw=cyan},   internal/.style={-,draw=red},    every label/.style= {black}]                         \node (v1) at (1.5,-1) {$W_{\mybinom{2}{1}}$}; \node (v9) at (2,-0.4) {$W_{\mybinom{1}{0}}$}; \node (v2) at (2,-1.7) {$W_{\mybinom{3}{0}}$}; \node (v3) at (0.5,-1) {$W_{[\mybinom{2}{0},\mybinom{2}{1}]}$}; \node (v10) at (1,-0.4) {$W_{[\mybinom{1}{0},\mybinom{1}{1}]}$}; \node (v4) at (1,-1.7) {$W_{[\mybinom{3}{0},\mybinom{3}{1}]}$}; \node (v5) at (-0.5,-1) {$W_{[\mybinom{2}{0},\mybinom{2}{2}]}$}; \node(v11) at (0,-0.4) {$W_{[\mybinom{1}{-1},\mybinom{1}{1}]}$}; \node (v6) at (0,-1.7) {$W_{[\mybinom{3}{-1},\mybinom{3}{1}]}$}; \node (v7) at (-1.5,-1) {$W_{[\mybinom{2}{-1},\mybinom{2}{2}]}$}; \node (v12) at (-1,-0.4) {$W_{[\mybinom{1}{-1},\mybinom{1}{2}]}$}; \node (v8) at (-1,-1.7) {$W_{[\mybinom{3}{-1},\mybinom{3}{2}]}$}; \draw[->]  (v1) edge (v9); \draw[->]  (v9) edge (v10); \draw  [->](v10) edge (v3); \draw[->]  (v3) edge (v1); \draw [->] (v5) edge (v11); \draw[->]  (v11) edge (v10); \draw [->] (v3) edge (v5); \draw  [->](v7) edge (v5); \draw  [->](v11) edge (v12); \draw  [->](v2) edge (v4); \draw  [->](v6) edge (v4); \draw  [->](v6) edge (v8); \draw  [->>,blue](v1) edge (v2); \draw  [->>,blue](v4) edge (v3); \draw  [->>,blue](v5) edge (v6); \draw  [->>,dashed](v12) edge (v7); \draw  [->>,blue,dashed](v8) edge (v7); \end{tikzpicture}}
\end{figure}

\end{Eg}

\begin{Lem}\label{lem:interval-KR}

The KR-polynomials $F_{q}(w)$ in \cite{jang2023quantization}, where
$w=e_{(a,\xi_{a}-2d)}+e_{(a,\xi_{a}-2d+2)}+\cdots+e_{(a,\xi_{a}-2s)}$
for $s\leq d\in\Z$, $a\in J$, coincides with $W_{[\mybinom as,\mybinom ad]}$
of $\bClAlg_{\infty}$.

\end{Lem}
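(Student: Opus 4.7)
The strategy is to bootstrap from the agreement on initial cluster variables already established in Section~\ref{subsec:Cluster-algebras-identify-Kq} and then propagate to all intervals via a common quantum $T$-system recursion.

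Recall that we have identified the quantum seed $\usd_\infty$ with the quantum seed $\usd$ of \cite{jang2023quantization} by matching both $\tilde{B}$- and $\Lambda$-matrices. Under this identification, Lemma~\ref{lem:initial-cluster-usd} asserts that our initial cluster variables take the explicit form $W_{[\mybinom{a}{r},\mybinom{a}{r'}]}$ for specific $r,r'$, whereas on the side of \cite{jang2023quantization} they are precisely the initial KR-polynomials $F_q(m^{(a)}[\xi_a - 2r' - 2r,\, \xi_a - 2r])$. In particular, the fundamental variables $W_{\mybinom{a}{d}}$ become identified with the fundamental KR-polynomials $F_q(e_{(a,\xi_a - 2d)})$. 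Hence the claim of the lemma holds on the sub-collection of intervals appearing in the initial cluster of $\usd_\infty$.

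The next step is to observe that both families satisfy a common quantum $T$-system: on our side this is Proposition~\ref{prop:T-systems}, while on the side of $\mathfrak{K}_q(\mathfrak{g})$ the KR-polynomials obey a parallel quantum $T$-system established in \cite{jang2023quantization}. Since our identification $\usd_\infty \cong \usd$ preserves both $\tilde{B}$- and $\Lambda$-matrices, the twisting exponents $q^\alpha,q^{\alpha'}$ appearing on each side agree. One then inducts on the interval length $d-s$: at each step, the $T$-system expresses the unique ``new'' longer interval as a Laurent expression in interval variables of strictly smaller length, each of which is either among the initial data matched above or has been handled at a previous induction step. Solving this recursion in the skew-field of fractions in parallel on both sides, one concludes $F_q(w) = W_{[\mybinom{a}{s},\mybinom{a}{d}]}$ for all $s\le d$.

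The main obstacle is the book-keeping required to turn the $T$-system into a genuine non-degenerate recursion reaching every $W_{[\mybinom{a}{s},\mybinom{a}{d}]}$: one must track carefully how the indexing of KR-polynomials in \cite{jang2023quantization} translates to our convention $\mybinom{a}{d}\leftrightarrow (a,\xi_a - 2d)$, confirm that at each step all factors on the right-hand side of the $T$-system involve intervals that have already been identified, and check that the factor being inverted is non-zero in the skew-field of fractions (which is automatic, since it is a cluster variable and hence lies in the quantum torus of an appropriate seed). Once this is verified, the induction terminates and the lemma follows.
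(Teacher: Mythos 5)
Your overall strategy coincides with the paper's: use the identification of the quantum seed $\usd_{\infty}$ with $\usd$ (matching of $\tB$- and $\Lambda$-matrices, Lemma \ref{lem:initial-cluster-usd}) to match the initial interval variables with the initial KR-polynomials, and then propagate to all intervals $W_{[\mybinom{a}{s},\mybinom{a}{d}]}$ by running the two $T$-system recursions in parallel. The one step that does not stand as written is your quantum-level comparison of the recursions: you claim the twisting exponents $q^{\alpha},q^{\alpha'}$ on the two sides agree ``since the identification preserves $\tB$ and $\Lambda$.'' In Proposition \ref{prop:T-systems} these exponents are values of $\lambda$ on the degrees $\beta_{[\cdot,\cdot]}$ of the interval variables, while the exponents in the KR-polynomial $T$-system of \cite[Theorem 6.9]{jang2023quantization} are attached to the $F_{q}(w)$; knowing that the two sets of exponents (equivalently, the relevant degrees) match is essentially part of what the lemma asserts, so as stated this step is circular, or at least needs an argument. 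The paper sidesteps it by first observing that the $F_{q}(w)$ are quantum cluster variables of $\bClAlg(\usd)$ (\cite[Proposition 9.16]{jang2023quantization}) and the $W_{[\mybinom{a}{s},\mybinom{a}{d}]}$ are quantum cluster variables of $\bClAlg_{\infty}=\bClAlg(\usd)$; since quantum cluster variables are pointed and hence determined by their classical specializations, it suffices to check the identity at the classical level, where the $q$-powers disappear and the two $T$-systems (in the bipartite form (\ref{eq:T-systems-bipartite})) are literally the same recursion from the same initial data. Your induction on interval length goes through verbatim once you insert this reduction (or, alternatively, once you pin down the exponents using bar-invariance and pointedness of both sides), so the fix is minor but necessary.

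One small bookkeeping point: only the centered intervals of Lemma \ref{lem:initial-cluster-usd} occur in the initial cluster of $\usd_{\infty}$, so among the fundamental variables only $W_{\mybinom{a}{0}}$ (for $a\in J^{-}$) and $W_{\mybinom{a}{1}}$ (for $a\in J^{+}$) are matched at the initial stage; your phrase suggesting that all $W_{\mybinom{a}{d}}$ are identified at the outset overstates the base case, though your subsequent restriction to ``intervals appearing in the initial cluster'' and the $T$-system induction make this harmless.
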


\begin{proof}

The KR-polynomials $F_{q}(w)$ satisfy a collection of equations in
\cite[Theorem 6.9]{jang2023quantization}, called the $T$-systems.
Particularly, they are quantum cluster variables of $\bClAlg(\usd)$,
see \cite[Proposition 9.16]{jang2023quantization}. On the other hand,
$W_{[\mybinom as,\mybinom ad]}$ are also a quantum cluster variables
of $\bClAlg_{\infty}=\bClAlg(\usd)$. So it suffices to verify the
claim at the classical level.

Consider $\bClAlg(\uc^{2i})\subset\bClAlg_{\infty}$. Consider the
$T$-systems (\ref{eq:T-systems-Coxeter}) for $\bClAlg(\uc^{2i})$.
For $r\in[0,2i-1]$, denote $m:=r-i+1\in[-i+1,i]$. Identify its nodes
$\binom{b}{r}$ with $\mybinom b{r-i+1}=\mybinom am$ and $(b,\xi_{b}-2m)$.
Choose any $k\in[1,|J|]$ and denote $a=c_{k}$. Note that, for $J=J^{+}\sqcup J^{-}$
(i.e., bipartite orientation), we have $\xi_{c_{h}}=\xi_{a}+1$ if
$C_{c_{h},c_{a}}\neq0$ and $h<k$; $\xi_{c_{h}}=\xi_{a}-1$ if $C_{c_{h},c_{a}}\neq0$
and $h>k$. We obtain the following

\begin{align}
 & W_{[(a,\xi_{a}-2m),(a,\xi_{a}-2m-2s)]}*W_{[(a,\xi_{a}-2m-2),(a,\xi_{a}-2m-2s-2)]}\label{eq:T-systems-bipartite}\\
= & q^{\alpha}[W_{[(a,\xi_{a}-2m-2),(a,\xi_{a}-2m-2s)]}*W_{[(a,\xi_{a}-2m),(a,\xi_{a}-2m-2s-2)]}]\nonumber \\
 & +q^{\alpha'}[\prod_{h\neq k}W_{[(c_{h},\xi_{a}-1-2m),(c_{h},\xi_{a}-1-2m-2s)]}^{-C_{c_{h},a}}].\nonumber 
\end{align}
At the classical level, it is identical with the $T$-system for $KR$-polynomials
in \cite[Theorem 6.9]{jang2023quantization}. Recall that we have
identified our interval variables in $\usd_{\infty}$ and with the
KR-polynomials in $\usd$. And these initial cluster variables determine
$F_{q}(w)$ and $W(w)$ by the same sequence of $T$-systems. So $F_{q}(w)=W(w)$
in general.

\end{proof}

Combining Theorem \ref{thm:dBS_PBW} and Lemma \ref{lem:interval-KR},
we obtain that our common triangular basis $\can$ for $\bClAlg_{\infty}$
coincides with the Kazhdan-Lusztig type basis $\{L(w)\}$ in \cite[Theorem 5.27]{jang2023quantization}
(our standard basis differs from that of \cite{jang2023quantization}
by a bar involution). Then Lemma \ref{lem:interval-KR} implies that
 $F_{q}(w)=L(w)$, confirming Theorem \ref{thm:intro-KR-poly-is-KL-basis}.

\section{Applications: fundamental variables via braid group actions\label{sec:Braid-group-action}}

Assume $C$ is of finite type. Take any word $\ueta$ for $\beta\in\Br^{+}$.
We will briefly recall the braid group action in \cite{jang2023braid}
\cite{kashiwara2024braid}, and then use it to compute the fundamental
variables $W_{k}(\rsd(\ueta))$. 

\subsection{Presentations and braid group actions\label{subsec:Presentations-and-braid}}

Choose a Coxeter word $\uc=(c_{1},\ldots,c_{|J|})$ and an associated
height function $\xi$. Recall that $\tau:=w_{\uc}$ denotes the Weyl
group element associated with $\uc$, also called the Auslander--Reiten
translation. Recall that we have $J_{\Z}(\xi):=\{(a,p)|a\in J,p\in\xi_{a}-2\Z\}$.
Let $\Phi^{+}$ denote the set of positive roots of $C$, whose simple
roots are denoted by $\alpha_{b}$, $b\in J$. Denote $q_{a}:=q^{\symm_{a}}$,
$[k]_{q_{a}}:=\frac{q_{a}^{k}-q_{a}^{-k}}{q_{a}-q_{a}^{-1}}$, and
$\sqbinom{n}{k}_{q_{a}}:=\frac{[n]_{q_{a}}!}{[k]_{q_{a}}![n-k]_{q_{a}}!}$.

We recursively define the bijection $\phi^{\xi}:J_{\Z}(\xi)\simeq\Phi^{+}\times\Z$
following \cite{hernandez2015quantum}, such that:
\begin{itemize}
\item $\phi^{\xi}(c_{k},\xi_{c_{k}})=(s_{c_{1}}s_{c_{2}}\cdots s_{c_{k-1}}\alpha_{c_{k}},0)$,
$\forall k\in[1,|J|]$.
\item If $\phi^{\xi}(a,p)=(\gamma,m)$, we have $\phi^{\xi}(a,m\pm2)=\begin{cases}
(\tau^{\mp1}\gamma,m) & \tau^{\mp}\gamma\in\Phi^{+}\\
(-\tau^{\mp}\gamma,m\pm1) & \text{-\ensuremath{\tau^{\mp}\gamma\in\Phi^{+}}}
\end{cases}$.
\end{itemize}
Indeed, one can verify that $\phi^{\xi}$ and $\tau$ are determined
by $\xi$. 

Let $w_{0}$ denote the longest element of the Weyl group. Then there
exists a permutation $\nu$ on $J$, such that $\nu^{2}=\Id$ and
$s_{a}w_{0}=w_{0}s_{\nu(a)}$, $\forall a\in J$. Then, when $\phi^{\xi}(b,p)=(\alpha_{b'},k)$,
we have $\phi^{\xi}(\nu^{i}(b),p+ih)=(\alpha_{b'},k+i)$, where $h$
is the Coxeter number \cite[(38)]{hernandez2015quantum}. 

\begin{Eg}\label{eg:A2-root-nodes}

Take $C=\left(\begin{array}{cc}
2 & -1\\
-1 & 2
\end{array}\right)$. Choose the Coxeter word $\uc=(1,2)$ We have $\tau=w_{\uc}=s_{1}s_{2}$.
In Figure \ref{fig:A2-root-nodes}, we represent $J_{\Z}(\xi)$ using
$\Phi^{+}\times\Z$.
\begin{figure}[h]
\caption{Representing the nodes}
\label{fig:A2-root-nodes}

\subfloat[Nodes in $J_{\Z}(\xi)$]{

 \begin{tikzpicture}  [scale=2,node distance=48pt,on grid,>={Stealth[round]},bend angle=45,      pre/.style={<-,shorten <=1pt,>={Stealth[round]},semithick},    post/.style={->,shorten >=1pt,>={Stealth[round]},semithick},  unfrozen/.style= {circle,inner sep=1pt,minimum size=12pt,draw=black!100,fill=red!100},  frozen/.style={rectangle,inner sep=1pt,minimum size=12pt,draw=black!75,fill=cyan!100},   point/.style= {circle,inner sep=1pt,minimum size=5pt,draw=black!100,fill=black!100},   boundary/.style={-,draw=cyan},   internal/.style={-,draw=red},    every label/.style= {black}] \node (v1) at (0,1) {$(1,\xi_1)$}; \node (v2) at (-0.5,0) {$(2,\xi_2)$}; \node (v3) at (-1,1) {$(1,\xi_1-2)$}; \node (v4) at (-1.5,0) {$(2,\xi_2-2)$}; \node (v5) at (-2,1) {$(1,\xi_1-4)$}; \node (v6) at (-2.5,0) {$(2,\xi_2-4)$};  \end{tikzpicture}}\hfill{}\subfloat[Nodes in $\Phi^{+}\times\Z$]{ \begin{tikzpicture}  [scale=2,node distance=48pt,on grid,>={Stealth[round]},bend angle=45,      pre/.style={<-,shorten <=1pt,>={Stealth[round]},semithick},    post/.style={->,shorten >=1pt,>={Stealth[round]},semithick},  unfrozen/.style= {circle,inner sep=1pt,minimum size=12pt,draw=black!100,fill=red!100},  frozen/.style={rectangle,inner sep=1pt,minimum size=12pt,draw=black!75,fill=cyan!100},   point/.style= {circle,inner sep=1pt,minimum size=5pt,draw=black!100,fill=black!100},   boundary/.style={-,draw=cyan},   internal/.style={-,draw=red},    every label/.style= {black}] \node (v1) at (0,1) {$(\alpha_1,0)$}; \node (v2) at (-0.5,0) {$(\alpha_1+\alpha_2,0)$}; \node (v3) at (-1,1) {$(\alpha_2,0)$}; \node (v4) at (-1.5,0) {$(\alpha_1,-1)$}; \node (v5) at (-2,1) {$(\alpha_1+\alpha_2,-1)$}; \node (v6) at (-2.5,0) {$(\alpha_2,-1)$};\end{tikzpicture}}
\end{figure}

\end{Eg}

Denote $\K=\Q(q^{\Hf})$. Let $\hAq$ be the $\K$-algebra generated
by $y_{a,k}$ for $(a,k)\in J\times\Z$, called Serre generators,
subject to the following relations (see \cite[Theorem 7.3]{hernandez2015quantum}):
\begin{flalign*}
 & \sum_{r,s\geq0,r+s=1-C_{ab}}(-1)^{s}\left[\begin{array}{c}
1-C_{ab}\\
s
\end{array}\right]_{q_{a}}y_{a,k}^{r}y_{b,k}y_{a,k}^{s}=0,\ \forall a\neq b\\
 & y_{a,k}y_{b,k+1}=q^{-(\alpha_{a},\alpha_{b})}y_{b,k+1}y_{a,k}+(1-q^{-(\alpha_{a,}\alpha_{a})})\delta_{a,b}\\
 & y_{a,k}y_{b,d}=q^{(-1)^{k+d}(\alpha_{a},\alpha_{b})}y_{b,d}y_{a,k},\forall d>k+1
\end{flalign*}

Recall that the quantum Grothendieck ring $\Kq$ in \cite{kashiwara2023q}
is isomorphic to $\bClAlg(\usd)$ in \cite{jang2023quantization}
and we have $\bClAlg(\usd)\simeq\bClAlg_{\infty}$, see Section \ref{subsec:Cluster-algebras-identify-Kq}.
By \cite[Theorem 7.2]{jang2023braid}(or \cite{fujita2022isomorphisms}\cite{hernandez2015quantum}),
we have an isomorphism $\tTheta^{\xi}:\hAq\simeq\Kq\otimes\K\simeq\bClAlg(\usd)\otimes\K\simeq\bClAlg_{\infty}\otimes\K$,
called a presentation, such that $\tTheta^{\xi}(y_{b,k})=\can(e_{(a,p)})$,
where $\phi^{\xi}(a,p)=(\alpha_{b},k)$. Denote $\tz_{b,k}^{\xi}:=\can(e_{(a,p)})$,
called the Serre generators.\footnote{Our $\tz_{a,k}$ corresponds to $x_{a,k}$ in \cite{hernandez2015quantum}.
It differs from $z_{a,k}$ in \cite{hernandez2015quantum} by a scalar
multiple, where $z_{a,k}$ is the isoclass of the stalk complex $S_{a}[k]$
in the derived Hall algebra.} Note that $(\tTheta^{\xi})^{-1}$ sends the common triangular basis
$\can$ for $\bClAlg_{\infty}$ to a basis for $\hAq$, denoted $\hdCan$.
Then $\hAq_{\kk}:=\Span_{\kk}\hdCan$ is a $\kk$-algebra, which could
be viewed as the integral form of $\hAq$.

We can choose another Coxeter word $\uc'$ and height function $\xi'$
such that $J_{\Z}(\xi')=J_{\Z}(\xi)$. Then we obtain new bijections
$\phi^{\xi'}:J_{\Z}(\xi)\rightarrow\Phi^{+}\times\Z$ and $\tTheta^{\xi'}:\hAq_{\kk}\simeq\bClAlg_{\infty}$,
such that $\tTheta^{\xi'}(y_{b,k})=\can(e_{(a',p')})$, where $\phi^{\xi'}(a',p')=(\alpha_{b},k)$.
Denote $\tz_{b,k}^{\xi'}:=\can(e_{(a',p')})$. Define the composition
$\tTheta(\xi',\xi):=\tTheta^{\xi'}(\tTheta^{\xi})^{-1}$. Then it
restricts to a permutation on $\can$ for $\bClAlg_{\infty}$, see
\cite[Theorem 7.3]{jang2023braid}\cite{fujita2022isomorphisms}.
Equivalently, we have $\tTheta^{\xi}(\can)=\hdCan=\tTheta^{\xi'}(\can)$.

\begin{align*}
\begin{array}{ccc}
\bClAlg_{\infty} & \overset{\tTheta(\xi',\xi)}{\xrightarrow{\sim}} & \bClAlg_{\infty}\\
\simequ\tTheta^{\xi} &  & \simequ\tTheta^{\xi'}\\
\hAq_{\kk} & = & \hAq_{\kk}
\end{array}
\end{align*}

Particularly, if $a=c_{j}$ for some sink $j\in J^{+}(\uc)$, we can
choose $\uc':=\mu_{a}\uc:=(\uc\backslash\{a\},a)$, and $\xi':=\mu_{a}\xi$
on $J$ such that $\xi'_{b}=\xi_{b}-2\delta_{b,a}$. Then we have
$\tz_{b,k}^{\xi'}=\can(e_{(\phi^{\xi})^{-1}(s_{a}(\alpha_{b}),k)})$
if $b\neq a$, and $\tz_{a,k+1}^{\xi'}=\tz_{a,k}^{\xi}$, see also
\cite[Section 7.2]{jang2023braid}. Denote $(\tz_{b,k}^{\xi})^{(r)}:=\frac{(\tz_{b,k}^{\xi})^{r}}{[r]_{q_{b}}!}$.
We have the following (see \cite[Proof of Proposition 7.4]{jang2023braid}
for the second case):

\begin{align}
\tz_{b,k}^{\xi'} & =\begin{cases}
\tz_{a,k-1}^{\xi} & a=b\\
\frac{1}{q_{a}^{\frac{-C_{ab}}{2}}(q_{a}^{-1}-q_{a})^{-C_{ab}}}\sum_{r+s=-C_{ab}}(-1)^{r}q_{a}^{r}(\tz_{a,k}^{\xi})^{(s)}\tz_{b,k}^{\xi}(\tz_{a,k}^{\xi})^{(r)} & a\neq b
\end{cases}.\label{eq:braid-T-a-category}
\end{align}

\begin{Eg}\label{eg:different-presentation}

Choose $C=\left(\begin{array}{cc}
2 & -1\\
-1 & 2
\end{array}\right)$ and $\uc=(1,2)$. Among the fundamental variables of $\bClAlg_{\infty}$,
we consider those on the $6$ nodes shown in Figure \ref{fig:A2-root-nodes}.
They are the fundamental variables appearing in $\bClAlg(\rsd(\uc^{3}))$
in Examples \ref{eg:A2-interval-variable} and \ref{eg:A2-121212},
whose quantization is given by Example \ref{eg:Lambda}.

Choose $\uc'=\mu_{1}\uc=(2,1)$, $\xi'=\mu_{1}\xi$. Recall that $\xi'_{b}=\xi_{b}-2\delta_{b,1}$.

\begin{figure}[h]
\caption{}
\label{fig:A2-braid-presentations}\subfloat[Serre generators $\tz_{b,k}$]{\begin{tikzpicture}  [scale=2,node distance=48pt,on grid,>={Stealth[round]},bend angle=45,      pre/.style={<-,shorten <=1pt,>={Stealth[round]},semithick},    post/.style={->,shorten >=1pt,>={Stealth[round]},semithick},  unfrozen/.style= {circle,inner sep=1pt,minimum size=12pt,draw=black!100,fill=red!100},  frozen/.style={rectangle,inner sep=1pt,minimum size=12pt,draw=black!75,fill=cyan!100},   point/.style= {circle,inner sep=1pt,minimum size=5pt,draw=black!100,fill=black!100},   boundary/.style={-,draw=cyan},   internal/.style={-,draw=red},    every label/.style= {black}] \node (v1) at (0,1) {$\tz_{1,0}$}; \node (v2) at (-0.5,0) {$W_{\phi^{-1}(\alpha_1+\alpha_2,0)}$}; \node (v3) at (-1,1) {$\tz_{2,0}$}; \node (v4) at (-1.5,0) {$\tz_{1,-1}$}; \node (v5) at (-2,1) {$W_{\phi^{-1}(\alpha_1+\alpha_2,-1)}$}; \node (v6) at (-2.5,0) {$\tz_{2,-1}$}; \end{tikzpicture}}\hfill{}\subfloat[Serre generators $\tz'_{b,k}$]{\begin{tikzpicture}  [scale=2,node distance=48pt,on grid,>={Stealth[round]},bend angle=45,      pre/.style={<-,shorten <=1pt,>={Stealth[round]},semithick},    post/.style={->,shorten >=1pt,>={Stealth[round]},semithick},  unfrozen/.style= {circle,inner sep=1pt,minimum size=12pt,draw=black!100,fill=red!100},  frozen/.style={rectangle,inner sep=1pt,minimum size=12pt,draw=black!75,fill=cyan!100},   point/.style= {circle,inner sep=1pt,minimum size=5pt,draw=black!100,fill=black!100},   boundary/.style={-,draw=cyan},   internal/.style={-,draw=red},    every label/.style= {black}] \node (v1) at (0,1) {$\tz'_{1,1}$}; \node (v2) at (-0.5,0) {$\tz'_{2,0}$}; \node (v3) at (-1,1) {$W_{(\phi')^{-1}(\alpha_1+\alpha_2,0)}$}; \node (v4) at (-1.5,0) {$\tz'_{1,0}$}; \node (v5) at (-2,1) {$\tz'_{2,-1}$}; \node (v6) at (-2.5,0) {$W_{(\phi')^{-1}(\alpha_1+\alpha_2,-1)}$}; \end{tikzpicture}}
\end{figure}

Denote $\tz_{b,k}:=\tz_{b,k}^{\xi}$ and $\tz'_{b,k}:=\tz_{b,k}^{\xi'}$.
Then we have $W_{1}=\tz_{1,0}$, $W_{3}=\tz_{2,0}$, $W_{4}=\tz_{1,-1}$,
$W_{6}=\tz_{2,-1}$, $W_{1}=\tz'_{1,1}$, $W_{2}=\tz'_{2,0}$, $W_{4}=\tz'_{1,0}$,
$W_{5}=\tz'_{2,-1}$, see Figure \ref{fig:A2-braid-presentations}.

Let us verify some relations among the fundamental variables, which
are necessary for $\tTheta^{\xi}:\hAq\simeq\bClAlg_{\infty}\otimes\K$
to be an algebra homomorphism. It is easy to check the following relations:
\begin{align*}
W_{4}*W_{3} & =qW_{3}*W_{4},\\
W_{6}*X_{1} & =qW_{1}*W_{6}.
\end{align*}
Direct computation shows that $W_{4}*W_{1}=q^{-1}(x_{1}\cdot x_{2}^{-1}\cdot x_{4}+x_{2}^{-1}\cdot x_{3})+1$,
from which we deduce 
\begin{align*}
W_{4}*W_{1} & =q^{-2}W_{1}*W_{4}+(1-q^{-2}).
\end{align*}
We can similarly verify that $W_{6}*W_{3}=q^{-2}W_{3}*W_{6}+(1-q^{-2})$.
By direct computations, we obtain the following relations as well:
\begin{align*}
W_{1}^{2}*W_{3}-[2]_{q}W_{1}*W_{3}*W_{1}+W_{1}*W_{3}^{2} & =0,\\
W_{4}^{2}*W_{6}-[2]_{q}W_{4}*W_{6}*W_{4}+W_{6}*W_{4}^{2} & =0.
\end{align*}

Finally, let us verify (\ref{eq:braid-T-a-category}). Note that $W_{1}*W_{3}=q^{\Hf}x_{3}+q^{-\Hf}x_{2}$.
We deduce that $\tz'_{2,0}=W_{2}=\frac{1}{q^{\Hf}(q^{-1}-q)}(W_{1}*W_{3}-qW_{3}*W_{1})=\frac{1}{q^{\Hf}(q^{-1}-q)}(\tz_{1,0}*\tz_{2,0}-q\tz_{2,0}*\tz_{1,0})$.
In addition, we have  $\tz'_{1,1}=W_{1}=\tz_{1,0}$. 

\end{Eg}

Take any $a\in J$. Define $T_{a}$ to the automorphism on $\hat{\mathcal{A}}_{q}(\mathfrak{n})$
such that\footnote{Our $y_{a,k}$ corresponds to $f_{a,-k}$ in \cite{kashiwara2024braid}.}
\begin{align}
T_{a}(y_{b,k}) & =\begin{cases}
y_{a,k-1} & a=b\\
\frac{1}{q_{a}^{\frac{-C_{ab}}{2}}(q_{a}^{-1}-q_{a})^{-C_{ab}}}\sum_{r+s=-C_{ab}}(-1)^{r}q_{a}^{r}y_{a,k}^{(s)}y_{b,k}y_{a,k}^{(r)} & a\neq b
\end{cases}.\label{eq:braid-T-a}
\end{align}
Then, when $a=c_{j}$ for some $j\in J^{+}(\uc)$, $T_{a}$ is determined
by the following commutative diagram by (\ref{eq:braid-T-a-category}):
\begin{align}
\begin{array}{ccc}
\bClAlg_{\infty}\otimes\K & = & \bClAlg_{\infty}\otimes\K\\
\simequ\tTheta^{\xi} &  & \simequ\tTheta^{\mu_{a}\xi}\\
\hAq & \overset{T_{a}}{\xleftarrow{\sim}} & \hAq
\end{array}\label{eq:sink-braid-action}
\end{align}
Note that, when $a\neq b$, the action of $T_{a}$ on $y_{b,k}$ differ
with the action of $T'_{a,-1}$ on $F_{b}$ in \cite[37.1.3]{Lus:intro}
by a multiple in $\Q(q^{\Hf})$. 

By \cite[Theorem 8.1]{jang2023braid}\cite[Theorem 3.1]{kashiwara2024braid},
the braid group $\Br$ acts on $\hAq$ such that $\sigma_{a}$, $a\in J$,
acts by $T_{a}$.

\begin{Eg}\label{eg:A2-braid-fundamental-variables}

Continue Example \ref{eg:different-presentation}. Denote $T'_{a}:=\tTheta^{\xi}T_{a}(\tTheta^{\xi})^{-1}$
so that we compute in the cluster algebra. Denote $\beta:=\frac{1}{q^{\Hf}(q^{-1}-q)}$.
Recall that $\tTheta^{\xi}y_{1,0}=W_{1}$, $\tTheta^{\xi}y_{2,0}=W_{3}$,
$\tTheta^{\xi}y_{1,-1}=W_{4}$, and $\tTheta^{\xi}y_{2,-1}=W_{6}$.

We have seen $T'_{1}W_{3}=\beta(W_{1}*W_{3}-qW_{3}*W_{1})=W_{2}$,
$T'_{1}W_{1}=W_{4}$. We can also compute $T'_{2}W_{1}=\beta(W_{3}*W_{1}-qW_{1}*W_{3})=x_{3}=W_{[1,3]}$.
Let us make more computations.

\begin{align*}
T'_{1}T'_{2}W_{1} & =\beta T'_{1}(W_{3}*W_{1}-qW_{1}*W_{3})=\beta(W_{2}*W_{4}-qW_{4}*W_{2})=W_{3},\\
T'_{1}T'_{2}T'_{1}W_{3} & =\beta T'_{1}T'_{2}(W_{1}*W_{3}-qW_{3}*W_{1})=\beta T'_{1}(x_{3}*W_{6}-qW_{6}*x_{3})=T'_{1}W_{1}=W_{4},\\
T'_{1}T'_{2}T'_{1}T'_{2}W_{1} & =T'_{1}T'_{2}W_{3}=T'_{1}W_{6}=\beta(W_{4}*W_{6}-qW_{6}*W_{4})=W_{5}.
\end{align*}
Finally, using $T'_{1}T'_{2}T'_{1}=T'_{2}T'_{1}T'_{2}$, we obtain
\begin{align*}
T'_{2}T'_{1}W_{3} & =\beta T'_{2}(W_{1}*W_{3}-qW_{3}*W_{1})=\beta(x_{3}*W_{6}-qW_{6}*x_{3})=W_{1},\\
T'_{1}T'_{2}T'_{1}(T'_{2}T'_{1}W_{3}) & =T'_{2}T'_{1}T'_{2}(W_{1})=T'_{2}W_{3}=W_{6}.
\end{align*}

\end{Eg}

\subsection{Canonical cluster structures\label{subsec:Indentical-cluster-str-for-q-Gp}}

We will often abbreviate $(\rsd(\ubi))$ by $(\ubi)$ for simplicity. 

Recall that $\K=\Q(q^{\Hf})$ and $C$ is assumed to be a $J\times J$
Cartan matrix of finite type. Let $\qO[N_{-}]$ denote the quantum
unipotent subgroup associated with the unipotent radical $N_{-}\subset G$,
where $G$ is the associated connected, simply connected, complex
semisimple algebraic group. It is a $\Q(q)$-algebra. It has the dual
canonical basis $\dCan$. Then $\Span_{\kk}\dCan$ is a $\kk$-algebra,
denoted $\kk[N_{-}]$.

Let $\ugamma$ denote a reduced word for $w_{0}$ and $l:=l(w_{0})$.
We define $\Delta:=\beta_{\ugamma}\in\Br^{+}$.

Denote $\rsd=\rsd(\ugamma)$, $\bClAlg=\bClAlg(\rsd)$. By choosing
an appropriate quantization matrix $\Lambda$ for $\rsd$, we have
a $\kk$-algebra isomorphism $\kappa:\bClAlg\simeq\kk[N_{-}]$, sending
the interval variables $W_{[j,k]}$, $1\leq j\leq k\leq l$, to $q^{h_{[j,k]}}D[j,k]$,
where $D[j,k]\in\dCan$, and $h_{[j,k]}\in\Q$ is chosen such that
$q^{h_{[j,k]}}\kappa^{-1}D[j,k]$ is bar-invariant, see \cite[Section 7.4, Lemma 8.2.1]{qin2020dual}\cite{GeissLeclercSchroeer11}\cite[Theorem 7.3]{goodearl2020integral}. 

Particularly, for each $b\in J$, there is a unique $j(b)=\binom{a(b)}{d(b)}^{\ugamma}$
such that $D[j(b),j(b)]$ has weight $-\alpha_{b}$. Denote $q^{h_{[j(b),j(b)]}}D[j(b),j(b)]$
by $q^{h_{b}}D_{b}$, called the Serre generators. Let $\hAq^{[s,r]}$,
$s\leq r$, denote the $\K$-subalgebra of $\hAq$ generated by $y_{b,m}$,
$b\in J$, $m\in[s,r]$. $\forall s\in\Z$, we have the isomorphism
$\qO[N_{-}]\otimes\K\simeq\hAq^{[s,s]}$ such that $\kappa W_{j(b)}=q^{h_{b}}D_{b}$
is identified with $y_{b,s}$. Note that $\hdCan^{[s,r]}:=\hdCan\cap\hAq^{[s,r]}$
is a $\K$-basis of $\hAq^{[s,r]}$. Define the integral form $\hAq_{\kk}^{[s,r]}:=\Span_{\kk}\hdCan^{[s,r]}$,
which is a $\kk$-algebra.

Let $\ugamma'$ denote another reduced word for $w_{0}$. Use $\rsd'$,
$\Lambda'$, $\bClAlg'$, $\kappa'$, $W'$, $D'[j,k]$, $j'(b)$
to denote the associated construction. Note that we have $D'_{b}=D_{b}$
since they are the unique dual canonical basis element with weight
$-\alpha_{b}$.

Denote $\rsd'':=\seq^{\sigma}\rsd$, where $\seq^{\sigma}$ denotes
the permutation mutation sequence $\seq_{\ugamma',\ugamma}^{\sigma}$
in Section \ref{subsec:Operations-on-signed-words}. Use $\Lambda''$,
$\bClAlg''$, $W''$ to denote the associated construction. At the
classical level, the seed $\rsd'$ equals $\rsd''=\seq^{\sigma}\rsd$.
A priori, the quantization matrix $\Lambda''$ for the quantum seed
$\rsd''$ might be different from $\Lambda'$ for $\rsd'$. We have
an isomorphism $(\seq^{\sigma})^{*}:\bClAlg''\simeq\bClAlg$. Consider
the diagram:
\begin{align*}
\begin{array}{ccc}
\bClAlg'' & \stackrel[\text{change}]{\text{quantization}}{\dashrightarrow} & \bClAlg'\\
\simeqd(\seq^{\sigma})^{*} &  & \simeqd\kappa'\\
\bClAlg & \overset{\kappa}{\xrightarrow{\sim}} & \kk[N_{-}]
\end{array}
\end{align*}

\begin{Thm}\label{thm:identical-cluster-q-qp}

We have $\Lambda''=\Lambda'$, i.e., the quantum seed $\rsd':=\rsd(\ubi')$
equals $\rsd'':=\seq^{\sigma}\rsd$ and $\bClAlg''=\bClAlg'$. Moreover,
$\kappa'=\kappa(\seq^{\sigma})^{*}$ or, equivalently, $W_{j(b)}=(\seq^{\sigma})^{*}W'_{j'(b)}$. 

\end{Thm}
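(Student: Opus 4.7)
The proof strategy is to lift the classical statement of Shen-Zhou to the quantum level, using the rigidity of the dual canonical basis $\dCan$ of $\qO[N_{-}]$. At the classical level, \cite[Theorem 1.1]{shen2021cluster} yields $\tB'' = \tB'$ and the identity $\kappa' = \kappa \circ (\seq^{\sigma})^{*}$ on the commutative coordinate ring $\C[N_{-}]$, so that the classical specialization of $(\seq^{\sigma})^{*} x_{k}(\rsd'')$ coincides with that of $x_{k}(\rsd')$ for every $k \in I$.

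The key step is to verify that, for each initial $k \in I$, the quantum Laurent polynomial $(\seq^{\sigma})^{*} x_{k}(\rsd'') \in \bClAlg$ and the cluster variable $x_{k}(\rsd') \in \bClAlg'$ have the same image in $\qO[N_{-}]$ under $\kappa$ and $\kappa'$ respectively. Both images take the form $q^{\alpha} D$ with $\alpha \in \Q$ and $D \in \dCan$, by the construction $\kappa W_{[j,k]} = q^{h_{[j,k]}} D[j,k]$ and its analogue for $\rsd'$. The classical shadows of these images agree by Shen-Zhou, so the uniqueness characterization of the dual canonical basis (bar-invariant elements with prescribed classical leading term, due to Lusztig-Kashiwara) forces the underlying elements $D$ to coincide. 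The scalar shifts $\alpha$ then match because both sides are bar-invariant in their respective cluster algebras: since mutations intertwine bar involutions, $(\seq^{\sigma})^{*} x_{k}(\rsd'')$ is bar-invariant in $\bClAlg$, and the normalization conditions defining $h_{[j,k]}$ and $h'_{[j',k']}$ impose identical $q$-shifts needed to turn $q^{\alpha} D$ into a bar-invariant element of the cluster algebra. Therefore $\kappa \circ (\seq^{\sigma})^{*} = \kappa'$ on all initial cluster variables, and hence on all of $\bClAlg''$.

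With this identification of algebra maps $\bClAlg'' \to \qO[N_{-}]$ and $\bClAlg' \to \qO[N_{-}]$, the $q$-commutation $x_{i}(\rsd'') * x_{j}(\rsd'') = q^{\Lambda''_{ij}} x_{j}(\rsd'') * x_{i}(\rsd'')$ transported into $\qO[N_{-}]$ must agree with the corresponding relation for $x_{i}(\rsd') * x_{j}(\rsd')$, forcing $\Lambda'' = \Lambda'$. This yields $\rsd'' = \rsd'$ as quantum seeds, $\bClAlg'' = \bClAlg'$, and $\kappa' = \kappa \circ (\seq^{\sigma})^{*}$; specializing to Serre generators recovers $W_{j(b)} = (\seq^{\sigma})^{*} W'_{j'(b)}$. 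The main obstacle is the middle step: carefully tracking the bar-involution compatibility and the normalization scalars $h_{[j,k]}$ across different reduced words, and invoking the uniqueness of $\dCan$ to pin down the match exactly, not merely up to an unspecified $q$-shift.
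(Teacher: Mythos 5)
Your proposal follows essentially the same route as the paper: deduce the classical identity from \cite[Theorem 1.1]{shen2021cluster} (via the double Bruhat cell, with frozen variables evaluated at $1$), then lift to the quantum level using the dictionary of \cite{qin2020dual} and \cite{goodearl2020integral} under which (images of) quantum cluster variables are $q$-power multiples of dual canonical basis elements pinned down by their classical data, and finally read off $\Lambda''=\Lambda'$ from the $q$-commutation relations of the identified initial cluster variables. The only cosmetic difference is in how the $q$-power normalization is fixed: you argue via bar-invariance transported through mutation, while the paper invokes the pointedness/degree parametrization of normalized dual canonical elements from \cite{qin2020dual} — both rest on the same cited results, and you correctly flag this as the delicate step.
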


\begin{proof}

(i) We claim that $(\seq^{\sigma})^{*}W''_{j'(b)}=W_{j(b)}$. Since
$W''_{j'(b)}$ is a cluster variable of $\bClAlg''$, $(\seq^{\sigma})^{*}W''_{j'(b)}$
is a cluster variable of $\bClAlg$. Therefore, it suffices to verify
this claim at the classical level.

For $\kk=\C$, we have $\tilde{\kappa}:\upClAlg(\dsd(\ugamma))\simeq\C[G^{w_{0},e}]$
and $\tilde{\kappa}':\upClAlg(\dsd(\ugamma'))\simeq\C[G^{w_{0},e}]$
by \cite{BerensteinFominZelevinsky05}. Note that $\dsd(\ugamma')=\seq^{\sigma}\dsd(\ugamma)$
and we have the associated isomorphism $(\seq^{\sigma})^{*}:\upClAlg(\dsd(\ugamma'))\simeq\upClAlg(\dsd(\ugamma))$.
The equality $\tilde{\kappa}'=\tilde{\kappa}(\seq^{\sigma})^{*}$
was conjectured in \cite[Remark 2.14]{BerensteinFominZelevinsky05}
and proved in \cite[Theorem 1.1]{shen2021cluster}, see (\ref{eq:ddBS-unique-cluster}).
When we evaluate the frozen variables $x_{j}$ to $1$ for $j\in I(\dsd(\ugamma))\backslash I(\rsd)$,
$\tilde{\kappa}$ and $\tilde{\kappa}'$ restricts to $\kappa$ and
$\kappa'$ respectively. Moreover, $\rsd''=\rsd'$ and thus $W''_{j'(b)}=W'_{j'(b)}$
at the classical level. So we deduce $\kappa'=\kappa(\seq^{\sigma})^{*}$
and thus $W_{j(b)}=(\seq^{\sigma})^{*}W_{j'(b)}$. So the above claim
is true.

(ii) By (i), we obtain an isomorphism $\kappa''=\kappa(\seq^{\sigma})^{*}:\bClAlg''\simeq\kk[N_{-}]$,
sending $W''_{j'(b)}$ to $q^{h_{b}}D_{b}$. We deduce the $\kk$-algebra
isomorphism $(\kappa')^{-1}\kappa'':\bClAlg''\simeq\bClAlg'$, sending
$W''_{j'(b)}$ to $W'_{j'(b)}$. We claim that $\kappa''W''_{[j,k]}=\kappa'W'_{[j,k]}$.
Particularly, for $x_{i}''=W''_{[i^{\min},i]}$ and $x'_{i}=W'_{[i^{\min},i]}$,
we obtain $\kappa''(x_{i}'')=\kappa'x'_{i}$.

We will prove the claim using arguments similar to those in (i). Since
$W''[j,k]$ is a cluster variable of $\bClAlg''$, $(\seq^{\sigma})^{*}W''_{[j,k]}$
is a cluster variable of $\bClAlg$. By \cite{qin2020dual}, $\kappa''W''_{[j,k]}=\kappa((\seq^{\sigma})^{*}W''_{[j,k]})$
equals $q^{h_{0}}D_{0}$ for some dual canonical basis element $D_{0}$,
where $h_{0}$ is chosen such that $q^{h_{0}}\kappa^{-1}D_{0}$ is
bar-invariant in $\bClAlg$. On the other hand, recall that $\kappa'W'_{[j,k]}=q^{h_{[j,k]}}D'_{[j,k]}$.
It remains to show $q^{h_{0}}D_{0}=q^{h_{[j,k]}}D'[j,k]$.

By \cite{qin2020dual}, $\kappa^{-1}(q^{h_{0}}D_{0})$ and $\kappa^{-1}(q^{h_{[j,k]}}D'[j,k])$
are pointed elements in $\bClAlg\subset\LP(\rsd)$, and they are equal
if and only if they have the same degrees. At the classical level,
we have $W''_{[j,k]}=W'_{[j,k]}$ and thus $D_{0}=\kappa(\seq^{\sigma})W''_{[j,k]}=\kappa'W'_{[j,k]}=D'[j,k]$.
Therefore, they have the same degree at the classical level. We deduce
that $q^{h_{0}}D_{0}=q^{h_{[j,k]}}D'[j,k]$. The desired claim follows. 

Finally, recall that $\Lambda'$ is determined by $x'_{j}*x'_{k}=q^{\Lambda'_{j,k}}x'_{k}*x'_{j}$
and similar for $\Lambda''$. Then Claim (ii) implies that $\Lambda'=\Lambda''$.
The the desired statements follow as consequences.

\end{proof}

We often omit the symbol for mutations among different seeds of the
same cluster algebra. In this convention, Theorem \ref{thm:identical-cluster-q-qp}
could be written as $\kappa'=\kappa$ and $W_{j'(b)}=W_{j(b)}$, i.e,
different choices of the reduced words give the same cluster structure
on $\qO[N_{-}]\otimes\K$.

Next, choose any Coxeter word $\uc$. Let $\ugamma$ denote any chosen
$\uc$-adapted word for $\Delta$. Denote $\nu(\ugamma)=(\nu(\gamma_{1}),\ldots,\nu(\gamma_{l}))$.
Then $\uzeta:=(\ugamma,\nu(\ugamma),\ldots,\nu^{4m-1}(\ugamma))$
is a $\uc$-adapted word for $\Delta^{4m}$. Note that $\uc^{h}$
and $(\ugamma,\nu(\ugamma))$ are connected by braid moves $(a,b)\mapsto(b,a)$
where $C_{ab}=0$. So we have $\rsd(\uzeta)=\rsd(\uc^{2mh})=:\sd_{2mh}$.
Identify $I(\sd_{2mh})\simeq[1,l(\uzeta)]$. Recall that, for $\bClAlg(\uzeta)=\bClAlg(\sd_{2mh})=\bClAlg(\ssd_{mh})=\bClAlg(\usd_{mh})\subset\bClAlg_{\infty}=\bClAlg(\usd_{\infty})$
as in Section \ref{subsec:Limits-of-signed} and Section \ref{subsec:Cluster-algebras-identify-Kq},
$\bClAlg(\uzeta)$ contains $W_{\mybinom ad}$, $d\in[-mh+1,mh]$,
where $\mybinom ad\in\W$ are identified with $(a,\xi_{a}-2d)$. Introduce
$\txi=\xi+2(mh-1)$ and denote $\kappa:=\kappa^{\txi}:=(\tTheta^{\txi})^{-1}$.
Then we obtain the isomorphism 
\begin{align*}
\kappa & :\bClAlg(\sd_{2mh})\xrightarrow{\sim}\hAq_{\kk}^{[-4m+1,0]}
\end{align*}
such that $y_{b,-s}=\kappa W_{j(b)[s]}$, $\forall b\in J,s\in[0,4m-1]$.

Choose other $\uc'$ and $\ugamma'$, we obtain the associated data
such as $\uzeta'$, $\sd'_{2mh}$. Denote $\seq^{\sigma}=\seq_{(\uc')^{2mh},(\uc)^{2mh}}^{\sigma}$.
Choose any $s\in[0,-m+1]$. Define 
\begin{align*}
\ualpha & :=(\uzeta_{[s\cdot l(\Delta)+1,4m\cdot l(\Delta)]},-(\uzeta_{[1,s\cdot l(\Delta)]})\op)=(\nu^{s}(\ugamma),\uzeta_{[(s+1)l(\Delta)+1,4m\cdot l(\Delta)]},-(\uzeta_{[1,s\cdot l(\Delta)]})\op)
\end{align*}
Similarly, define $\ualpha'$ from $\uzeta'$. View $\nu^{s}(\ugamma)$
as $\ualpha_{[1,l(\Delta)]}$ and $\nu^{s}(\ugamma')$ as $\ualpha'_{[1,l(\Delta)]}$.
Then we have the following diagrams, where $\iota_{s}$, $\iota_{s}'$
denote the inclusion in Lemma \ref{lem:calibration-word} induced
by cluster embeddings:
\begin{align*}
\begin{array}{ccccccccccc}
W_{j(b)}(\nu^{s}(\ugamma')) & \in & \bClAlg(\nu^{s}(\ugamma')) & \xhookrightarrow{\iota_{s}'} & \bClAlg(\ualpha') & \overset{\seq_{\ualpha',\uzeta'}^{*}}{\xrightarrow{\sim}} & \bClAlg(\uzeta') & = & \bClAlg(\sd'_{2mh}) & \overset{\kappa'}{\xrightarrow{\sim}} & \hAq_{\kk}^{[-4m+1,0]}\\
 &  & \simeqd(\seq_{\ugamma',\ugamma}^{\sigma})^{*} &  & \simeqd(\seq_{\ualpha',\ualpha}^{\sigma})^{*} &  & \simeqd(\seq^{\sigma})^{*} &  & \simeqd(\seq^{\sigma})^{*} &  & \parallel\\
W_{j(b)}(\nu^{s}(\ugamma)) & \in & \bClAlg(\nu^{s}(\ugamma)) & \xhookrightarrow{\iota_{s}} & \bClAlg(\ualpha) & \overset{\seq_{\ualpha,\uzeta}^{*}}{\xrightarrow{\sim}} & \bClAlg(\uzeta) & = & \bClAlg(\sd_{2mh})) & \overset{\kappa}{\xrightarrow{\sim}} & \hAq_{\kk}^{[-4m+1,0]}
\end{array}
\end{align*}
Note that the three leftmost squares commute (Lemma \ref{lem:words-mutation-seq-connect-seeds}
implies the commutativity of the second square from the left). 

Recall that $W_{j(b)[s]}=\seq_{\ualpha,\uzeta}^{*}\iota_{s}W_{j(b)}(\nu^{s}(\ugamma))$
by definition. So $W_{j(b)}(\nu^{s}(\ugamma))$ has the image $y_{b,-s}$
in $\hAq_{\kk}^{[-4m+1,0]}$. Similarly, $W_{j(b)}(\nu^{s}(\ugamma'))$
has the image $y_{b,-s}$. Recall that $y_{b,-s}$ generate $\hAq^{[s,s]}\simeq\qO[N_{-}]\otimes\K$
over $\K$. Particularly, denoting $\phi:=\kappa\seq_{\ualpha,\uzeta}^{*}\iota_{s}$
and $\phi':=\kappa'\seq_{\ualpha',\uzeta'}^{*}\iota'_{s}$, we have
the following diagram: 
\begin{align}
\begin{array}{ccc}
\bClAlg(\nu^{s}(\ugamma')) & \overset{\phi'}{\xrightarrow{\sim}} & \hAq_{\kk}^{[s,s]}\\
\simeqd(\seq_{\ugamma',\ugamma}^{\sigma})^{*} &  & \parallel\\
\bClAlg(\nu^{s}(\ugamma)) & \overset{\phi}{\xrightarrow{\sim}} & \hAq_{\kk}^{[s,s]}
\end{array}\label{eq:uni-cluster-stru-Aq-s}
\end{align}

\begin{Thm}\label{thm:canonical-cluster-structure}

We have $W_{j(b)[s]}(\sd_{2mh})=(\seq^{\sigma})^{*}W{}_{j'(b)[s]}(\sd'_{2mh})$
or, equivalently, $\kappa'=\kappa(\seq^{\sigma})^{*}$.

\end{Thm}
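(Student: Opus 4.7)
The plan is to reduce Theorem \ref{thm:canonical-cluster-structure} via the big commutative diagram displayed just before it to a single assertion: that the fundamental variables $W_{j(b)}(\nu^{s}(\ugamma))$ and $W_{j'(b)}(\nu^{s}(\ugamma'))$ are identified under the cluster isomorphism $(\seq_{\ugamma',\ugamma}^{\sigma})^{*}$. Since $\nu$ is a permutation of $J$ with $s_{a}w_{0}=w_{0}s_{\nu(a)}$, both $\nu^{s}(\ugamma)$ and $\nu^{s}(\ugamma')$ remain reduced words for $w_{0}$ of length $l(w_{0})$, so this last assertion is exactly Theorem \ref{thm:identical-cluster-q-qp} applied to that pair of reduced words.

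To carry out the reduction, I unfold the definitions on both sides, writing $W_{j(b)[s]}(\sd_{2mh})=\seq_{\ualpha,\uzeta}^{*}\iota_{s}W_{j(b)}(\nu^{s}(\ugamma))$ and analogously for the primed side. The commutativity of the three leftmost squares in the big diagram yields
\begin{align*}
\iota_{s}\circ(\seq_{\ugamma',\ugamma}^{\sigma})^{*} & =(\seq_{\ualpha',\ualpha}^{\sigma})^{*}\circ\iota_{s}', \\
\seq_{\ualpha,\uzeta}^{*}\circ(\seq_{\ualpha',\ualpha}^{\sigma})^{*} & =(\seq^{\sigma})^{*}\circ\seq_{\ualpha',\uzeta'}^{*}.
\end{align*}
Composing and substituting reduces the sought identity to
$$\iota_{s}W_{j(b)}(\nu^{s}(\ugamma))=\iota_{s}\bigl((\seq_{\ugamma',\ugamma}^{\sigma})^{*}W_{j'(b)}(\nu^{s}(\ugamma'))\bigr),$$
and injectivity of the cluster embedding $\iota_{s}$ (Lemma \ref{lem:calibration-word}) completes the reduction.

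For the equivalent formulation $\kappa'=\kappa(\seq^{\sigma})^{*}$, I argue that both maps coincide on a generating set. By the identity just proved, $\kappa(\seq^{\sigma})^{*}W_{j'(b)[s]}(\sd'_{2mh})=\kappa W_{j(b)[s]}(\sd_{2mh})=y_{b,-s}$, while $\kappa'W_{j'(b)[s]}(\sd'_{2mh})=y_{b,-s}$ by the construction of $\kappa'$. Since the family $\{y_{b,-s}\}_{b\in J,\,s\in[0,4m-1]}$ generates $\hAq_{\kk}^{[-4m+1,0]}$, its preimages generate $\bClAlg(\sd'_{2mh})$, so the two $\kk$-algebra homomorphisms $\kappa'$ and $\kappa\circ(\seq^{\sigma})^{*}$ must agree. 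The substantive input of the whole argument is Theorem \ref{thm:identical-cluster-q-qp}; the rest is a short diagram chase combined with a compatibility check of Serre generators. In that sense, the main obstacle is already behind us: it is the nontrivial fact that different reduced words induce the same cluster structure on $\qO[N_{-}]\otimes\K$, which itself rests on the classical result \cite[Theorem 1.1]{shen2021cluster}.
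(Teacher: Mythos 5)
Your proposal is correct and follows essentially the same route as the paper: both reduce the statement, via the commutativity of the three leftmost squares of the diagram preceding the theorem, to the identity $W_{j(b)}(\nu^{s}(\ugamma))=(\seq_{\ugamma',\ugamma}^{\sigma})^{*}W_{j'(b)}(\nu^{s}(\ugamma'))$ for the reduced words $\nu^{s}(\ugamma),\nu^{s}(\ugamma')$ of $w_{0}$, which is exactly Theorem \ref{thm:identical-cluster-q-qp}, and then pass between the two formulations using that the elements $y_{b,-s}$ generate $\hAq^{[-4m+1,0]}$ over $\K$. The only cosmetic difference is that the paper phrases the reduction as the commutativity of Diagram (\ref{eq:uni-cluster-stru-Aq-s}) and transfers it to $\kk[N_{-}]$ via the auxiliary isomorphism $\psi$, whereas you chase elements directly and use injectivity of $\iota_{s}$.
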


\begin{proof}

It suffices to show that Diagram (\ref{eq:uni-cluster-stru-Aq-s})
is commutative. We could deduce its commutativity from Theorem \ref{thm:identical-cluster-q-qp}.

More precisely, introduce the isomorphism $\psi:\hAq_{\kk}^{[s,s]}\simeq\kk[N_{-}]$
such that $\psi(y_{b,-s})=q^{h_{b}}D_{b}$. Then we have the diagram
$\begin{array}{ccc}
\bClAlg(\nu^{s}(\ugamma')) & \overset{\psi\phi'}{\xrightarrow{\sim}} & \kk[N_{-}]\\
\simeqd(\seq_{\ugamma',\ugamma}^{\sigma})^{*} &  & \parallel\\
\bClAlg(\nu^{s}(\ugamma)) & \overset{\psi\phi}{\xrightarrow{\sim}} & \kk[N_{-}]
\end{array}$ such that $\psi\phi'(W_{j'(b)}(\nu^{s}(\ugamma'))=q^{h_{b}}D_{b}=\psi\phi(W_{j(b)}(\nu^{s}(\ugamma))$.
Then Theorem \ref{thm:identical-cluster-q-qp} implies that this diagram
is commutative. The commutativity of Diagram (\ref{eq:uni-cluster-stru-Aq-s})
follows as a consequence.

\end{proof}

\subsection{Interval variables via braid group actions\label{subsec:invertal-via-braid}}

By \cite[Lemma 8.16]{qin2023analogs}, for any given word $\ueta$,
we have $\ueta\leq_{R}\Delta^{4m}$ for some $m\in\N$, and we can
choose a word $\uzeta$, such that $\beta_{\ueta}=\Delta^{4m}$ and
$\ueta=\uzeta_{[1,l(\ueta)]}$. Choose any Coxeter word $\uc$. Then
$\uc^{2mh}$ is a word for $\Delta^{4m}$. We denote $\sd_{2mh}=\rsd(\uc^{2mh})$
as before. Recall that we have $\bClAlg(\ueta)\subset\bClAlg(\uzeta)=\bClAlg(\sd_{2mh})\overset{\kappa}{\simeq}\hAq_{\kk}^{[-4m+1,0]}$,
where $\kappa=(\tTheta^{\txi})^{-1}$ such that $\kappa W_{j(b)[s]}(\sd_{2mh})=y_{b,-s}$,
$\forall b\in J,s\in[0,4m-1]$. Note that, by the equality $\bClAlg(\uzeta)=\bClAlg(\sd_{2mh})$,
we identify these two cluster algebras via the isomorphism $(\seq_{\uzeta,\uc^{2mh}}^{\sigma})^{*}$
associated with the permutation mutation sequence $\seq_{\uzeta,\uc^{2mh}}^{\sigma}$.

\begin{Thm}\label{thm:braid-formula-fundamental}

For any $k\in[1,l(\ueta)]$, $W_{k}(\rsd(\ueta))$ in $\bClAlg_{k}(\rsd(\ueta))\subset\bClAlg(\uzeta)=\bClAlg(\sd_{2mh})$
satisfies $\kappa W_{k}(\rsd(\ueta))=(T_{\eta_{1}}\cdots T_{\eta_{k-1}}y_{\eta_{k},0})$.

\end{Thm}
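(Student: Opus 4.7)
The plan is to induct on $k$, using the canonical cluster structure (Theorem \ref{thm:canonical-cluster-structure}) together with the sink-mutation interpretation of $T_a$ in diagram \eqref{eq:sink-braid-action}.

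For the base case $k=1$, the fundamental variable $W_1(\rsd(\ueta))$ embeds into $\bClAlg(\uzeta)=\bClAlg(\sd_{2mh})$ as $W_{j(\eta_1)}(\sd_{2mh})$ (an instance of Theorem \ref{thm:canonical-cluster-structure}, extended from $\Delta$ to $\Delta^{4m}$), which $\kappa$ sends to $y_{\eta_1,0}$ by construction, matching the empty product of braid elements.

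For the inductive step, set $\ueta^{(2)}:=(\eta_2,\ldots,\eta_l)$. Since $\beta_\ueta=\sigma_{\eta_1}\beta_{\ueta^{(2)}}$, we have $\ueta^{(2)}\leq_R\Delta^{4m}$ with the same $m$. Using Theorem \ref{thm:canonical-cluster-structure}, I would choose the Coxeter word $\uc$ so that $\eta_1$ is a sink. For the inductive hypothesis applied to $\ueta^{(2)}$, use the setup with $\uc'':=\mu_{\eta_1}\uc$ and height function $\xi'':=\mu_{\eta_1}\xi$ (so $\txi''=\mu_{\eta_1}\txi$), giving the presentation $\kappa':=(\tTheta^{\mu_{\eta_1}\txi})^{-1}$. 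The argument then combines two observations:

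\textbf{(A)} By Lemma \ref{lem:embed-interval-variables} with $j=2$, the subword embedding $\iota_{[2,l]}\colon\bClAlg(\rsd(\ueta^{(2)}))\hookrightarrow\bClAlg(\rsd(\ueta^{(2)},-\eta_1))=\bClAlg(\rsd(\ueta))$ sends the fundamental variable $W_{k-1}(\rsd(\ueta^{(2)}))=W_{\binom{\eta_k}{o_-^{\ueta^{(2)}}(k-1)}}^{\ueta^{(2)}}$ to $W_{\binom{\eta_k}{o_-^{\ueta^{(2)}}(k-1)+\delta_{\eta_1,\eta_k}}}^{\ueta}=W_{\binom{\eta_k}{o_-^\ueta(k)}}^\ueta=W_k(\rsd(\ueta))$, so these two cluster variables coincide inside $\bClAlg_\infty$.

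\textbf{(B)} Since $\eta_1$ is a sink of $\uc$, diagram \eqref{eq:sink-braid-action} yields $\tTheta^{\mu_{\eta_1}\txi}=\tTheta^{\txi}\circ T_{\eta_1}$, and hence $\kappa'=T_{\eta_1}^{-1}\kappa$.

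Combining (A), (B), and the inductive hypothesis applied to $\ueta^{(2)}$ at position $k-1$,
\begin{align*}
\kappa\,W_k(\rsd(\ueta))=\kappa\,W_{k-1}(\rsd(\ueta^{(2)}))=T_{\eta_1}\bigl(\kappa'\,W_{k-1}(\rsd(\ueta^{(2)}))\bigr)=T_{\eta_1}\cdots T_{\eta_{k-1}}\,y_{\eta_k,0},
\end{align*}
as desired.

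The main obstacle is ensuring that the setup is internally consistent. One must extend Theorem \ref{thm:canonical-cluster-structure} from Coxeter powers $\uc^{2mh}$ and $(\uc')^{2mh}$ to arbitrary reduced words $\uzeta,\uzeta''$ for $\Delta^{4m}$---enough to identify $\kappa$ and $\kappa'$ with presentations defined via the given setup. In addition, one must carefully verify that the identification $\bClAlg(\rsd(\ueta^{(2)},-\eta_1))=\bClAlg(\rsd(\ueta))$ used in Lemma \ref{lem:embed-interval-variables}---obtained by flipping $-\eta_1$ through $\ueta^{(2)}$ followed by a left reflection in the sense of Section \ref{subsec:Operations-on-signed-words}---genuinely matches the fundamental variables at the labeled positions $\binom{a}{d}$, especially when $\eta_1$ appears multiple times in $\ueta^{(2)}$ so that the flip operations induce nontrivial cluster mutations.
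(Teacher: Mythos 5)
Your proposal is correct in outline and follows essentially the same route as the paper: induction that strips off $\eta_{1}$ after choosing a Coxeter word in which $\eta_{1}$ is a sink, identification of $W_{k}(\rsd(\ueta))$ with $W_{k-1}(\rsd(\ueta_{[2,l(\ueta)]}))$ via Lemma \ref{lem:embed-interval-variables}, and conversion of the change of presentation $\kappa\mapsto\kappa'$ into $T_{\eta_{1}}$ via Diagram (\ref{eq:sink-braid-action}); your computations in (A) and (B) match the paper's. The only real divergence is how your acknowledged ``main obstacle'' gets closed: the paper does not extend Theorem \ref{thm:canonical-cluster-structure} to arbitrary reduced words for $\Delta^{4m}$ (it invokes it only for the Coxeter powers $\uc^{2mh}$ and $(\uc')^{2mh}$, to justify that the Coxeter word may be chosen freely). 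Instead, the compatibility of the two embeddings into $\hAq_{\kk}^{[-4m+1,0]}$ is settled by a commutative diagram relating $\bClAlg(\ueta)\subset\bClAlg(\uzeta)=\bClAlg(\uc^{2mh})$ to $\bClAlg(\ueta_{[2,l(\ueta)]})\subset\bClAlg((\uc')^{2mh})$, whose squares commute by Lemma \ref{lem:embed-interval-variables} (tracking fundamental variables through the left reflection and flips turning $\ueta$ into $(\ueta_{[2,l(\ueta)]},-\eta_{1})$, including the case where $\eta_{1}$ recurs and the flips are genuine mutations) and by Lemma \ref{lem:words-mutation-seq-connect-seeds} (which guarantees that the permutation mutation sequences connecting different signed words compose consistently). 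So your skeleton is the paper's proof; what remains is exactly that diagram chase, which is routine once those two lemmas are invoked.
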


We refer the reader to Example \ref{eg:A2-braid-fundamental-variables}
for an example in type $A_{2}$.

\begin{proof}

By Theorem \ref{thm:canonical-cluster-structure}, if we choose a
different Coxeter word $\uc'$, then $\kappa'=\kappa(\seq^{\sigma})^{*}$,
i.e., the associated $\kappa'$ is identified with $\kappa$ via the
mutation $(\seq_{(\uc')^{2mh},(\uc)^{2mh}}^{\sigma})^{*}$, where
$\rsd((\uc')^{2mh})=\seq_{(\uc')^{2mh},(\uc)^{2mh}}^{\sigma}\sd_{2mh}$.
So the statement holds for $\uc$ if and only if it holds for $\uc'$,
and we can make any choice. We prove the statement by induction on
$l(\ueta)$. 

(1) When $l(\ueta)=1$. We can choose $\uc$ such that $c_{1}=\eta_{1}$.
Then $W_{1}(\rsd(\eta_{1}))=x_{1}(\rsd(\eta_{1}))$ coincides with
$x_{1}(\sd_{2mh})$, which is sent to $y_{\eta_{1},0}$ by $\kappa$.

(2) Assume the claim has been prove for length $l(\ueta)-1$. We choose
$\uc$ such that $c_{1}$ equals $\eta_{1}$ and it is a sink. The
case $k=1$ can be proved as in (1). We now assume $k\geq2$.

Take $\uc'=\mu_{\eta_{1}}\uc$ and $\xi':=\mu_{\eta_{1}}\xi$. We
have the following diagrams:
\begin{align*}
\begin{array}{ccccccc}
\bClAlg(\ueta) & \subset & \bClAlg(\uzeta) & \overset{(\seq^{\sigma})^{*}}{\xrightarrow{\sim}} & \bClAlg(\uc^{2mh})\\
\simequ(\seq^{\sigma})^{*} &  & \simequ(\seq^{\sigma})^{*} &  & \simequ(\seq^{\sigma})^{*}\\
\bClAlg(\ueta_{[2,l(\ueta)]},-c_{1}) &  & \bClAlg(\uzeta_{[2,4ml(\Delta)]},-c_{1}) & \overset{(\seq^{\sigma})^{*}}{\xrightarrow{\sim}} & \bClAlg((\uc^{2mh})_{[2,4ml(\Delta)]},-c_{1})\\
\cup &  & \cup &  & \cup\\
\bClAlg(\ueta_{[2,l(\ueta)]}) & \subset & \bClAlg(\uzeta_{[2,4ml(\Delta)]}) & \overset{(\seq^{\sigma})^{*}}{\xrightarrow{\sim}} & \bClAlg((\uc^{2mh})_{[2,4ml(\Delta)]}) & \subset & \bClAlg((\uc')^{2mh})
\end{array}
\end{align*}
where we view $(\uc^{2mh})_{[2,4ml(\Delta)]}$ as the subword $((\uc')^{2mh})_{[1,4ml(\Delta)-1]}$,
all inclusions are induced from cluster embeddings of subwords (Lemma
\ref{lem:calibration-word}), and $\seq^{\sigma}$ denote the (different)
permutation mutation sequences connecting signed words. The diagram
is commutative: By tracking the fundamental variables via Lemma \ref{lem:embed-interval-variables},
we obtain the commutativity of the left most square; The commutativity
of the lower right square is obvious because the two sequences $\seq^{\sigma}$
appearing are the same; The commutativity of the upper right square
is implied by Lemma \ref{lem:words-mutation-seq-connect-seeds}.

By Lemma \ref{lem:embed-interval-variables}, $W_{k}(\ueta)\in\bClAlg(\ueta)\subset\bClAlg(\uc^{2mh})$
is identified with $W_{k-1}(\ueta_{[2,l(\ueta)]})\in\bClAlg(\ueta_{[2,l(\ueta)]})\subset\bClAlg((\uc')^{2mh})$.

By induction hypothesis, we have $\kappa'W_{k-1}(\ueta_{[2,l(\ueta)]})=T_{\eta_{2}}\cdots T_{\eta_{k-1}}(y_{\eta_{k},0})$
, where we denote $\txi'=\mu_{\eta_{1}}\txi$ and $\kappa':=(\tTheta^{\xi'})^{-1}:\bClAlg((\uc')^{2mh})\simeq\hAq_{\kk}^{[-4m+1,0]}$.
By Diagram (\ref{eq:sink-braid-action}), we have $(\kappa')^{-1}(y_{b,s})=\kappa^{-1}T_{\eta_{1}}y_{b,s}$,
for any $b,s$. Therefore, $\kappa W_{k}(\ueta)=\kappa(\kappa')^{-1}\kappa'W_{k-1}(\ueta_{[2,l(\ueta)]})=\kappa(\kappa')^{-1}T_{\eta_{2}}\cdots T_{\eta_{k-1}}y_{\eta_{k},0}=T_{\eta_{1}}T_{\eta_{2}}\cdots T_{\eta_{k-1}}y_{\eta_{k},0}$.

\end{proof}

By Theorem \ref{thm:braid-formula-fundamental}, $\bClAlg(\rsd(\ueta))\otimes\K$
is isomorphic to $\hat{\mathcal{A}}(\beta_{\ueta})$ recently introduced
in \cite{oh2024pbw}. Particularly, Theorem \ref{thm:intro-OH-conj}
is true: $\hat{\mathcal{A}}(\beta_{\ueta})$ is a cluster algebra
and has monoidal categorification.

\section{Applications: Cluster algebras from shifted quantum affine algebras\label{sec:Cluster-algebras-from-shifted}}

In this section, we use our extension approach in Section \ref{sec:Based-cluster-algebras-infinite-ranks}
to realize and quantize the infinite rank cluster algebras $\bClAlg(\sd^{\GHL})$
introduced in \cite{geiss2024representations}, which arise from representations
of shifted quantum affine algebras. We will assume $C$ is a Cartan
matrix of type $ADE$ as in \cite{geiss2024representations}, though
our approach should work in non-simply laced as well.

\subsection{Recovering the seeds}

Choose any Coxeter word $\uc$ and a height function $\xi$ as in
Section \ref{subsec:Cluster-structures-quantum-affine}. Let $\ueta=(\eta_{1},\ldots,\eta_{l})$,
$\uzeta=(\zeta_{1},\ldots,\zeta_{l})$, be any reduced words for the
longest Weyl group element $w_{0}$. 

Let $\ubi$ denote any shuffle of $(\ueta,-\uzeta)$. For $s\in\N$,
define the word $\ubi^{(s)}:=((\uc)^{s},\ubi,(\nu(\uc))^{s})$ and
the seed $\dsd_{s}:=\dsd(\ubi^{(s)})$. Denote $\phi:I(\ubi^{(s)})\simeq[1,l(\ubi^{(s)})]$
as before. We identify $\ddI(\ubi^{(s)})$ with a subset of $\Z$
such that $\binom{a}{d}$ is sent to $\phi\binom{a}{d}-s\cdot|J|$
if $d\geq0$ and $\binom{c_{k}}{-1}$ are sent to $k-(s+1)\cdot|J|$,
$\forall k\in[1,|J|]$.

Note that $l(\ubi^{(s+1)})=(2s+2)\cdot|J|+l(\ubi)$. When we view
$\ubi^{(s)}$ as the subword $(\ubi^{(s+1)})_{[|J|+1,(2s+1)|J|+l(\ubi)]}$,
$\dsd_{s}$ becomes a good sub seed of $\dsd_{s+1}$. Note that the
corresponding cluster embedding $\iota:I(\dsd_{s})\hookrightarrow I(\dsd_{s+1})$
sends $j\in\Z$ to $j$. We have $\cup_{s}I(\dsd_{s})=\Z$. 

Let $\dsd_{\infty}$ denote the colimit of the chain of seeds $(\dsd_{s})_{s\in I}$,
denoted $\dsd_{\infty}=\dsd(\ubi^{(\infty)})$ where $\ubi^{(\infty)}=(\cdots,\uc,\uc,\ubi,\nu(\uc),\nu(\uc),\ldots)$.
If $\ubi'$ is another choice of the signed word, let $(\ubi')^{(s)}$,
$\dsd'_{s}$, and $\dsd'_{\infty}$ denote the corresponding constructions.
Then we have $\dsd'_{s}=\seq_{\ubi',\ubi}^{\sigma}\dsd_{s}$ and $\dsd'_{\infty}=\seq_{\ubi',\ubi}^{\sigma}\dsd_{\infty}$,
where $\seq_{\ubi',\ubi}^{\sigma}$ consists of a mutation sequence
on $U=\{k\in[1,l(\ubi)]|k[1]\leq l\}$ and a permutation $\sigma$
on $[1,l(\ubi)]$ such that $\sigma(U)=U$. So we have the isomorphism
$(\seq_{\ubi',\ubi}^{\sigma})^{*}:\bClAlg(\dsd'_{\infty})\simeq\bClAlg(\dsd_{\infty})$.

\begin{Prop}\label{prop:identify-seed-GHL}

If $\ueta$ is $\uc$-adapted and $\ubi=(\eta_{1},-\eta_{1},\eta_{2},-\eta_{2},\ldots,\eta_{l},-\eta_{l})$,
we can naturally identify $\dsd_{\infty}=\dsd(\ubi^{(\infty)})$ with
the seed $\sd^{\GHL}$ in \cite[Section 3.4]{geiss2024representations}.

\end{Prop}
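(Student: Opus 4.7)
The plan is to construct an explicit bijection $\Phi$ between the vertex set $\ddI(\ubi^{(\infty)})$ and the index set of $\sd^{\GHL}$, and then verify that $\Phi$ intertwines the frozen/unfrozen partitions, the $B$-matrices, and the initial cluster variables. Since a seed is determined up to this data, this will establish the identification.

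First I would set up the bijection. Recall that for a $\uc$-adapted reduced word $\ueta$ for $w_0$, the map $k \mapsto (\eta_k, p_k)$ (with $p_k$ determined by the rule $p_{k[-1]} = p_k + 2$ if $k[-1]$ exists, and starting values fixed by $\xi$) identifies $[1,l]$ with the $\tau$-orbits in $J_{\Z}(\xi)$ corresponding to positive roots. For the signed word $\ubi = (\eta_1, -\eta_1, \ldots, \eta_l, -\eta_l)$, each $\eta_k$ contributes one vertex (the positive occurrence) whose unfrozen/frozen neighbor (the negative occurrence) plays the role of a ``shift by $1$'' partner. For the infinite word $\ubi^{(\infty)} = (\ldots, \uc, \uc, \ubi, \nu(\uc), \nu(\uc), \ldots)$, the periodicity of $\uc$ and its image under $\nu$ precisely realizes the $\Z$-indexed extension of the Auslander-Reiten translation on $J_{\Z}(\xi) \simeq \Phi^+ \times \Z$ described in Section \ref{subsec:Presentations-and-braid}. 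I would define $\Phi$ so that each $\binom{a}{d}^{\ubi^{(\infty)}}$ maps to the corresponding node $(a, p) \in J_{\Z}(\xi)$ (with an appropriate offset to account for the $-\ueta$ block), then pair it with the GHL labeling of the vertices, which is also indexed by $J \times \Z$ with the distinction between the ``positive'' and ``negative'' ($\pm$) copies coming from the sign $\varepsilon_k$ in $\ubi$.

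Second, I would verify that $\Phi$ preserves the frozen/unfrozen structure. In $\dsd_\infty$, after the colimit, the only originally-frozen vertices at finite stage (the $k^{\max}$'s of $\ubi^{(s)}$) get unfrozen at the next stage; thus $I_{\fv}(\dsd_\infty)$ consists only of the vertices $\binom{c_k}{-1}^{\ubi^{(\infty)}}$ inherited from the Coxeter prefix convention, matching the frozen variables of $\sd^{\GHL}$. Once the partitions agree, the main technical work is to verify the $B$-matrix equality. For $\dsd_\infty$ the entries are given by the case-by-case formula \eqref{eq:dBS_B_matrix}, and for $\sd^{\GHL}$ by the explicit prescription in \cite[Section 3.4]{geiss2024representations}. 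Both $B$-matrices are locally finite and translation-invariant in the $\Z$-direction (up to the $\nu$ twist), so it suffices to check the entries on one fundamental domain. Within the $\ubi$-block the sign pattern $(+,-,+,-,\ldots)$ matches exactly the positive/negative decomposition used in GHL's construction; the overlap rules $\varepsilon_{j[1]} = \varepsilon_k$ etc.\ in \eqref{eq:dBS_B_matrix} translate into the adjacency rules of the GHL quiver, and the $\uc$-adapted structure of $\ueta$ ensures the edge signs line up with the bipartite orientation determined by $\xi$.

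Third, I would match initial cluster variables. The initial cluster variables of $\dsd_\infty$ are the $x_i(\dsd_\infty) = x_i(\dsd_s)$ for sufficiently large $s$, which by Lemma \ref{lem:embed-interval-variables} and the description of decorated seeds are interval variables $W_{[\binom{a}{d_1},\binom{a}{d_2}]}$ centered as dictated by the shuffle $\ubi$. These match the KR-polynomials/KR-modules labeling the initial cluster variables of $\sd^{\GHL}$, via the same identification of $\W$ with $J_{\Z}(\xi)$ used in Section \ref{subsec:Cluster-algebras-identify-Kq}.

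The main obstacle will be the bookkeeping in the second step: the explicit case-by-case check that the $B$-matrices agree requires carefully tracking the interaction between the sign pattern $\varepsilon$ of $\ubi$, the periodic extension by $\uc$ and $\nu(\uc)$, and the bipartite structure of the GHL quiver. The $\uc$-adaptedness of $\ueta$ is crucial here, as it guarantees that within $\ubi$ each positive letter immediately precedes its negative counterpart in the AR order, so that the ``jumping over'' cases of \eqref{eq:dBS_B_matrix} reduce to the local adjacency pattern of $\sd^{\GHL}$. Once this verification is completed on the fundamental domain, translation by $\tau$ and $\nu$ extends it to all of $\ddI(\ubi^{(\infty)})$.
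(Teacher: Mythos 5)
Your overall strategy---identify the vertex sets and then check that the $B$-matrices (and trivial symmetrizers, since $C$ is of type $ADE$) coincide---is exactly what the paper does; its proof is essentially the one-line assertion that $\tB(\dsd_{\infty})$ can be identified with the $B$-matrix of $\sd^{\GHL}$, since a seed is formal data and the cluster variables are indeterminates. However, your second step contains a genuine error in the frozen/unfrozen bookkeeping. You claim that $I_{\fv}(\dsd_{\infty})$ consists of the Coxeter-prefix vertices $\binom{c_{k}}{-1}$. In fact, under the embedding $\dsd_{s}\hookrightarrow\dsd_{s+1}$ the prefix vertices $\binom{c_{k}}{-1}^{\ubi^{(s)}}$ are sent to interior occurrences of $\ubi^{(s+1)}$ (neither prefix nor last occurrences), so they become unfrozen at the next stage just like the $k^{\max}$'s; this is exactly what the paper records in the proof of Lemma \ref{lem:dsd-extend-condition}, where $I_{\ufv}(\dsd_{s+1})=I_{\ufv}(\dsd_{s})\sqcup I_{\fv}(\dsd_{s})$ and the new frozen set $I_{3}$ sits on the receding boundary. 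Consequently there is no vertex ``$\binom{c_{k}}{-1}^{\ubi^{(\infty)}}$'' in the colimit at all, $I_{\fv}(\dsd_{\infty})=\emptyset$, and it is precisely this absence of frozen vertices that matches $\sd^{\GHL}$, whose quiver has only mutable vertices (compare Figure \ref{fig:dsd-inf}). With the nonempty frozen set you posit, the identification you set out to verify would fail.

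Two smaller points. First, matching ``initial cluster variables'' with KR-polynomials or interval variables is not needed (and not quite meaningful) for identifying abstract seeds; that comparison belongs to the representation-theoretic isomorphism, not to this proposition, which is purely combinatorial. Second, the claim that both $B$-matrices are ``translation-invariant in the $\Z$-direction up to the $\nu$ twist'' is not correct globally: $\ubi^{(\infty)}=(\ldots,\uc,\uc,\ubi,\nu(\uc),\nu(\uc),\ldots)$ has a distinguished middle $\ubi$-block with $\uc$-periodic and $\nu(\uc)$-periodic tails, so the finite verification consists of the middle block, the two periodic tails, and the junctions between them---which you partly acknowledge, but the reduction ``to one fundamental domain'' as stated is too strong.
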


\begin{proof}

We could identify $\tB(\dsd_{\infty})$ with the $B$-matrix of the
seed $\sd^{\GHL}$. The claim follows.

\end{proof}

\begin{Eg}

We continue Example \ref{eg:SL3-w0-w0}, where $\uc=(1,2)$ and $\ubi=(1,-1,2,-2,1,-1)$.
A quiver for $\dsd_{1}:=\dsd(\ubi^{(1)})$ is shown in Figure \ref{fig:dsd-ubi-1},
where we view $\ddI(\ubi^{(1)})$ as a subset of $\Z$. A quiver for
$\dsd_{\infty}$ is shown in Figure \ref{fig:dsd-inf}, where we identify
$\ddI(\ubi^{(\infty)})$ with $\Z$. Note that our quivers are opposite
to those of \cite{geiss2024representations}.

\end{Eg}

\begin{figure}[h]

\caption{A quiver for $\dsd_{1}=\dsd(1,2,1,-1,2,-2,1,-1,2,1)$}
\label{fig:dsd-ubi-1}

\begin{tikzpicture}  [scale=1,node distance=48pt,on grid,>={Stealth[length=4pt,round]},bend angle=45, inner sep=0pt,      pre/.style={<-,shorten <=1pt,>={Stealth[round]},semithick},    post/.style={->,shorten >=1pt,>={Stealth[round]},semithick},  unfrozen/.style= {circle,inner sep=1pt,minimum size=1pt,draw=black!100,fill=red!50},  frozen/.style={rectangle,inner sep=1pt,minimum size=12pt,draw=black!75,fill=cyan!50},   point/.style= {circle,inner sep=0pt, outer sep=1.5pt,minimum size=1.5pt,draw=black!100,fill=black!100},   boundary/.style={-,draw=cyan},   internal/.style={-,draw=red},    every label/.style= {black}]        
\node[unfrozen] (q-1)  at (3.5,0.5) {-1}; \node[unfrozen] (q-2)  at (3,-0.5) {0}; \node[unfrozen] (q1) at (2.5,0.5) {1}; \node[unfrozen] (q2) at (1.5,0.5) {2}; \node[unfrozen] (q3) at (1,-0.5) {3}; \node[unfrozen] (q4) at (0,-0.5) {4}; \node[unfrozen] (q5) at (-0.5,0.5) {5}; \node[unfrozen] (q6) at (-1.5,0.5) {6}; \draw[->,teal]  (q-1) edge (q1); \draw[->,teal]  (q2) edge (q1); \draw[->,teal]  (q2) edge (q5); \draw[->,teal]  (q6) edge (q5); \draw[->,teal]  (q5) edge (q4); \draw[->,teal]  (q4) edge (q3); \draw[->,teal]   (q-2) edge (q3); \draw[->,teal]  (q3) edge (q2); \draw[->,teal]   (q1) edge (q-2); \draw[->,teal]  (q-2) edge (q-1);
\node[frozen] (q-3) at (5.5,0.5) {-3}; \node[frozen] (q-4) at (5,-0.5) {-2}; \node[frozen](q9) at (-2,-0.5) {7}; \node[frozen] (q10) at (-2.5,0.5) {8}; \draw[->,teal]  (q-1) edge (q-4); \draw [->,teal] (q-4) edge (q-2); \draw[->,teal]  (q-3) edge (q-1); \draw[->,teal]  (q4) edge (q9); \draw[->,teal]  (q9) edge (q6); \draw [->,teal] (q6) edge (q10); \draw[->,dotted,teal]  (q-4) edge (q-3); \draw [->,dotted,teal] (q10) edge (q9); \end{tikzpicture}

\end{figure}

\begin{figure}[h]
\caption{A quiver for $\dsd_{\infty}=\dsd(\ldots,1,2,1,2,1,-1,2,-2,1,-1,2,1,2,1,\ldots)$}
\label{fig:dsd-inf}

 \begin{tikzpicture}  [scale=1,node distance=48pt,on grid,>={Stealth[length=4pt,round]},bend angle=45, inner sep=0pt,      pre/.style={<-,shorten <=1pt,>={Stealth[round]},semithick},    post/.style={->,shorten >=1pt,>={Stealth[round]},semithick},  unfrozen/.style= {circle,inner sep=1pt,minimum size=1pt,draw=black!100,fill=red!50},  frozen/.style={rectangle,inner sep=1pt,minimum size=12pt,draw=black!75,fill=cyan!50},   point/.style= {circle,inner sep=0pt, outer sep=1.5pt,minimum size=1.5pt,draw=black!100,fill=black!100},   boundary/.style={-,draw=cyan},   internal/.style={-,draw=red},    every label/.style= {black}]        
\node[unfrozen] (q-1)  at (3.5,0.5) {-1}; \node[unfrozen] (q-2)  at (3,-0.5) {0}; \node[unfrozen] (q1) at (2.5,0.5) {1}; \node[unfrozen] (q2) at (1.5,0.5) {2}; \node[unfrozen] (q3) at (1,-0.5) {3}; \node[unfrozen] (q4) at (0,-0.5) {4}; \node[unfrozen] (q5) at (-0.5,0.5) {5}; \node[unfrozen] (q6) at (-1.5,0.5) {6}; \draw[->,teal]  (q-1) edge (q1); \draw[->,teal]  (q2) edge (q1); \draw[->,teal]  (q2) edge (q5); \draw[->,teal]  (q6) edge (q5); \draw[->,teal]  (q5) edge (q4); \draw[->,teal]  (q4) edge (q3); \draw[->,teal]   (q-2) edge (q3); \draw[->,teal]  (q3) edge (q2); \draw[->,teal]   (q1) edge (q-2); \draw[->,teal]  (q-2) edge (q-1);
\node[unfrozen] (q-3) at (5.5,0.5) {-3}; \node[unfrozen] (q-4) at (5,-0.5) {-2}; \node[unfrozen](q9) at (-2,-0.5) {7}; \node[unfrozen] (q10) at (-2.5,0.5) {8}; \draw[->,teal]  (q-1) edge (q-4); \draw [->,teal] (q-4) edge (q-2); \draw[->,teal]  (q-3) edge (q-1); \draw[->,teal]  (q4) edge (q9); \draw[->,teal]  (q9) edge (q6); \draw [->,teal] (q6) edge (q10); \draw[->,teal]  (q-4) edge (q-3); \draw [->,teal] (q10) edge (q9); \node[unfrozen] (q-5) at (6,-0.5) {-4}; \node[unfrozen] (q-6) at (6.5,0.5) {-5}; \node at (7,0) {$\ldots$}; \node[unfrozen] (q11) at (-3,-0.5) {9}; \node[unfrozen] (q12) at (-3.5,0.5) {10}; \node at (-4,0) {$\ldots$}; \draw [->,teal]   (q10) edge (q12); \draw [->,teal]   (q12) edge (q11); \draw [->,teal]   (q11) edge (q10); \draw  [->,teal]  (q9) edge (q11); \draw [->,teal]   (q-6) edge (q-3); \draw  [->,teal]  (q-5) edge (q-4); \draw [->,teal]   (q-3) edge (q-5); \draw  [->,teal]  (q-5) edge (q-6); \end{tikzpicture}
\end{figure}

\subsection{Quantization}

\begin{Lem}\label{lem:dsd-extend-condition}

$\dsd_{s}$ and $\dsd_{s+1}$ satisfies the conditions in Lemma \ref{lem:extend-quantization}.

\end{Lem}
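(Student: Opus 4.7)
The plan is to exhibit a partition of $\ddI(\dsd_{s+1})$ satisfying the hypotheses of Lemma~\ref{lem:extend-quantization} and verify each of them using the explicit formula (\ref{eq:dBS_B_matrix}).

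Set $I_3 := I_\fv(\dsd_{s+1})$, which under the identification with $\Z$ consists of the $|J|$ new decorated vertices at positions $\{-(s+2)|J|+1,\ldots,-(s+1)|J|\}$ together with the $|J|$ new rightmost frozen vertices at $\{s|J|+l(\ubi)+1,\ldots,(s+1)|J|+l(\ubi)\}$. Set $I_1 := \iota(I_\ufv(\dsd_s))$ and $I_2 := \iota(I_\fv(\dsd_s))$. Since $\dsd_s$ is already a good sub seed of $\dsd_{s+1}$, we immediately have $I_\ufv(\dsd_{s+1}) = I_1 \sqcup I_2$, $I_\fv(\dsd_{s+1}) = I_3$, $B_{31} = 0$, and $|I_2| = |I_3| = 2|J|$, so four of the hypotheses are free.

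For the connectedness conditions, observe that (\ref{eq:dBS_B_matrix}) produces an arrow $b_{j,j[1]} = \varepsilon_{j[1]}$ between each vertex and the next occurrence of the same letter, together with arrows of weight $\pm C_{a,b}$ between different letters at the same Coxeter layer whenever $C_{a,b}\neq 0$. Since the Dynkin diagram of $C$ is connected and every letter of $J$ appears many times in $\ubi^{(s+1)}$, these arrows make the full quiver connected; removing the boundary vertices $I_3$ leaves the subquiver on $I_\ufv(\dsd_{s+1})$ still connected. The same reasoning applied to $\dsd_s$ yields connectedness of $B_{11}$.

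The heart of the argument is the full rank of $B_{32}$. The locality built into (\ref{eq:dBS_B_matrix}) implies that $b_{jk} = 0$ whenever $j$ lies in the leftmost block of $I_3$ and $k$ in the rightmost block of $I_2$ (or vice versa), since in these cases none of the nonvanishing patterns $k = j[1]$, $j = k[1]$, or the four ordering conditions involving $j[1],k[1]$ can hold. Hence $B_{32}$ is block-diagonal with two $|J|\times|J|$ blocks $B^L$ and $B^R$. Writing $j = c_p$ in the first $\uc$ and $k = c_q$ in the second $\uc$ of $\ubi^{(s+1)}$, so that $j[1] = p+|J|$ and $k[1]$ lies further to the right, and noting that all signs $\varepsilon$ in the left Coxeter prefix are $+1$, a direct case analysis of (\ref{eq:dBS_B_matrix}) yields
\[
(B^L)_{p,q} \;=\; \begin{cases} 1 & p = q,\\ C_{c_p,c_q} & q < p,\\ 0 & q > p.\end{cases}
\]
Thus $B^L$ is lower-triangular with unit diagonal, hence invertible; the symmetric analysis applied to the rightmost blocks shows that $B^R$ is invertible as well, so $B_{32}$ has full rank. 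The main obstacle is precisely this case analysis: although elementary, it requires carefully inspecting each of the six cases in (\ref{eq:dBS_B_matrix}) together with the relative ordering of $j,k,j[1],k[1]$ in $\ubi^{(s+1)}$ to rule out any non-triangular entries.
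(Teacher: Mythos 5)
Your proof is correct and takes essentially the same route as the paper: after observing the easy hypotheses ($I_{\ufv}(\dsd_{s+1})=I_{1}\sqcup I_{2}$, $B_{31}=0$, $|I_{2}|=|I_{3}|=2|J|$, plus the connectedness assumptions, which the paper leaves implicit), full rank of $B_{32}$ is proved by computing its entries from (\ref{eq:dBS_B_matrix}) and exhibiting a two-block triangular structure with $\pm1$ diagonal, which is exactly the paper's explicit column formulas $\col_{v_{k}^{-}[1]}B_{32}=f_{v_{k}^{-}}+\sum_{j>k}C_{c_{j}c_{k}}f_{v_{j}^{-}}$ and $\col_{v_{k}^{+}[-1]}B_{32}=-f_{v_{k}^{+}}-\sum_{j<k}C_{\nu(c_{j}),\nu(c_{k})}f_{v_{j}^{+}}$. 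Two cosmetic caveats: your ``first and second $\uc$'' are blocks of the underlying word $(\uc,\ubi^{(s+1)})$ rather than of $\ubi^{(s+1)}$ itself, and for $s=0$ the right-hand block is not literally symmetric (the columns $v_{k}^{+}[-1]$ then lie inside $\ubi$, where the signs are mixed), though the analogous triangularity with diagonal $-1$ still holds.
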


\begin{proof}

Using the convention in Lemma \ref{lem:extend-quantization}, we denote
$\sd=\dsd_{s+1}$ and $\sd'=\dsd_{s}$. We observe that $I_{1}=I_{\ufv}(\dsd(\ubi^{(s)}))$,
$I_{2}=I_{\fv}(\dsd(\ubi^{(s)}))$, $I_{1}\sqcup I_{2}=I_{\ufv}(\dsd(\ubi^{(s+1)}))$,
and $I_{3}=I_{\fv}(\dsd(\ubi^{(s+1)}))$. Moreover, we have $|I_{2}|=|I_{3}|=2|J|$
and $B_{31}=0$. It remains to check that $B_{32}$ is of full rank.

For any $k\in[1,|J|]$. Denote $v_{k}^{-}:=\binom{c_{k}}{-1}$ and
$v_{k}^{+}:=\nu(c_{k})^{\max}=\binom{\nu(c_{k})}{O([1,l(\ubi^{(s+1)})];\nu(c_{k}))-1}$.
Then $I_{3}=\{v_{k}^{-}|k\in[1,|J|]\}\sqcup\{v_{k}^{+}|k\in[1,|J|]\}$,
$I_{2}=\{v_{k}^{-}[1]|k\in[1,|J|]\}\sqcup\{v_{k}^{+}[-1]|k\in[1,|J|]\}$.
We can compute the column vectors explicitly: $\col_{v_{k}^{-}[1]}B_{32}=f_{v_{k}^{-}}+\sum_{j>k}C_{c_{j}c_{k}}f_{v_{j}^{-}}$
and $\col_{v_{k}^{+}[-1]}B_{32}=-f_{v_{k}^{+}}-\sum_{j<k}C_{\nu(c_{j}),\nu(c_{k})}f_{v_{j}^{+}}$.
It is straightforward to check that these column vectors are linearly
independent, i.e., $B_{32}$ is of full rank.

\end{proof}

Note that $\dsd_{0}=\dsd(\ubi)$ is a seed for the coordinate ring
of the double Bruhat cell $G^{w_{0},w_{0}}$ \cite{BerensteinFominZelevinsky05}.
Moreover, by \cite{BerensteinZelevinsky05}, we can associate a quantization
matrix $\Lambda^{(0)}$ to $\dsd_{0}$. Then we can uniquely extend
it to a quantization matrix $\Lambda^{(s)}$ for any $\dsd_{s}$ by
Lemma \ref{lem:extend-quantization} and Lemma \ref{lem:dsd-extend-condition}.
So we obtain the following.

\begin{Prop}\label{prop:quantize-seed-GHL}

We can uniquely extend the quantization matrix $\Lambda^{(0)}$ for
$\dsd_{0}$ to a quantization matrix $\Lambda_{\infty}$ for $\dsd_{\infty}$.
In particular, we obtain an infinite rank quantum cluster algebra
$\bClAlg(\dsd_{\infty})$.

\end{Prop}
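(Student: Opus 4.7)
The plan is to construct $\Lambda^{(s)}$ inductively for every $s \in \N$ using the key extension result Lemma \ref{lem:extend-quantization}, and then assemble these into a single quantization matrix $\Lambda_\infty$ by passing to the colimit. The inductive hypotheses are already verified: Lemma \ref{lem:dsd-extend-condition} precisely establishes that the pair $(\dsd_s, \dsd_{s+1})$ fits the setup of Lemma \ref{lem:extend-quantization}, with $|I_2| = |I_3| = 2|J|$ and $B_{32}$ of full rank, so the uniqueness-and-existence of the one-step extension is free. The base case $s=0$ is supplied by \cite{BerensteinZelevinsky05}, which provides a quantization matrix $\Lambda^{(0)}$ for the double Bruhat cell seed $\dsd_0 = \dsd(\ubi)$.

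Given $\Lambda^{(s)}$ for $\dsd_s$, I would define $\Lambda^{(s+1)}$ to be the unique extension to $\dsd_{s+1}$ produced by Lemma \ref{lem:extend-quantization}. By construction, $\Lambda^{(s+1)}$ restricted to $I(\dsd_s) \times I(\dsd_s)$ recovers $\Lambda^{(s)}$. Therefore, for any $j,k \in I(\dsd_\infty) = \Z$, choosing any $s$ with $j,k \in I(\dsd_s)$ and setting $(\Lambda_\infty)_{jk} := (\Lambda^{(s)})_{jk}$ produces a well-defined, $s$-independent value. This yields a skew-symmetric $\Z$-valued form on $\cone(\dsd_\infty)$.

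To see that $\Lambda_\infty$ is genuinely a quantization matrix for $\dsd_\infty$, I would check compatibility with $\tB(\dsd_\infty)$ column by column. For any $k \in I_\ufv(\dsd_\infty)$, the column $\col_k \tB(\dsd_\infty)$ is supported on a finite set contained in $I(\dsd_s)$ for all sufficiently large $s$. Hence the product $\Lambda_\infty \cdot \col_k \tB(\dsd_\infty)$ makes sense entrywise, and its value coincides with the corresponding product computed inside the finite-rank seed $\dsd_s$, which equals $-\alpha \cdot e_k$ (with a uniform positive rational $\alpha$ inherited from $\Lambda^{(0)}$) by the compatibility of $\Lambda^{(s)}$ with $\tB(\dsd_s)$. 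Uniqueness of $\Lambda_\infty$ is immediate: any competing extension must, by the uniqueness clause of Lemma \ref{lem:extend-quantization} applied at each step, agree with $\Lambda^{(s)}$ on $I(\dsd_s)$ for every $s$, and therefore coincide with $\Lambda_\infty$ everywhere.

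The genuine obstacle was absorbed upstream, namely verifying the full-rank hypothesis for $B_{32}$ and the connectedness conditions required by Lemma \ref{lem:extend-quantization}; once Lemma \ref{lem:dsd-extend-condition} secures these for every stage of the chain, the infinite-rank statement is a formal colimit argument. The only further bookkeeping is to note that local finiteness of $\tB(\dsd_\infty)$ is what permits the matrix product $\Lambda_\infty \tB(\dsd_\infty)$ to be evaluated column-wise, ensuring the compatibility condition can be tested one $k \in I_\ufv(\dsd_\infty)$ at a time. The concluding assertion that $\bClAlg(\dsd_\infty)$ is an infinite-rank quantum cluster algebra then follows from the general construction of Section \ref{subsec:Extension-to-infinite}, since the compatibility of quantizations along the chain $(\dsd_s)_{s \in \N}$ is precisely what was established.
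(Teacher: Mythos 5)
Your proposal is correct and follows essentially the same route as the paper: the base case $\Lambda^{(0)}$ from \cite{BerensteinZelevinsky05}, the one-step unique extension via Lemma \ref{lem:extend-quantization} with hypotheses checked by Lemma \ref{lem:dsd-extend-condition}, and then passage to the colimit. The extra bookkeeping you supply (well-definedness of $\Lambda_\infty$ on overlaps, column-wise compatibility using local finiteness, uniqueness from one-step uniqueness) is exactly what the paper leaves implicit.
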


Finally, the upper cluster algebras $\bUpClAlg(\dsd_{s})$ has the
common triangular basis whose structure constants are non-negative,
see \cite[Theorem 6.17]{qin2023analogs}. Moreover, Lemma \ref{lem:extend-bUpClAlg}
implies that $\bUpClAlg(\dsd_{\infty})=\cup_{i}\bUpClAlg(\dsd_{s})$.
So we obtain the following result, which is related to the categorification
conjecture for $\bClAlg(\dsd_{\infty})$ \cite[Conjecture 9.16]{geiss2024representations}.

\begin{Thm}\label{thm:basis-seed-GHL}

The quantum cluster algebra $\bUpClAlg(\dsd_{\infty})$ has the common
triangular basis, whose structure constants are non-negative. 

\end{Thm}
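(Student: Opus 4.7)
The plan is to deduce the theorem from the extension framework of Section~\ref{sec:Based-cluster-algebras-infinite-ranks} together with the already-established result at finite rank. By \cite[Theorem 6.17]{qin2023analogs} (used earlier in the proof of the preceding corollary about $\bUpClAlg(\dsd_{\infty})=\cup_s\bUpClAlg(\dsd_s)$), each finite-rank upper cluster algebra $\bUpClAlg(\dsd_s)$ admits a common triangular basis $\can^{(s)}$ whose structure constants lie in $\N[q^{\pm\Hf}]$. Since $\dsd_s$ is a good sub seed of $\dsd_{s+1}$ via a cluster embedding and has finite rank, the hypotheses of the compatibility result \cite[Theorem 3.16, Corollary 3.18]{qin2023analogs} (recalled at the end of Section~\ref{subsec:Extension-to-infinite}) apply, giving
\begin{equation*}
\can^{(s)}=\can^{(s+1)}\cap\bUpClAlg(\dsd_s).
\end{equation*}
Thus the bases form a chain $\can^{(0)}\subset\can^{(1)}\subset\cdots$ under the inclusions of upper cluster algebras.

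Set $\can_\infty:=\cup_{s}\can^{(s)}$. First I would verify that $\can_\infty$ is a $\kk$-basis of $\bUpClAlg(\dsd_\infty)$: spanning is immediate from $\bUpClAlg(\dsd_\infty)=\cup_s\bUpClAlg(\dsd_s)$ (Lemma~\ref{lem:extend-bUpClAlg}), while linear independence follows because any finite relation lies in some $\bUpClAlg(\dsd_s)$, where $\can^{(s)}$ is a basis. Non-negativity of structure constants is then automatic: for any $b_1,b_2\in\can_\infty$ we can find $s$ large enough so that both lie in $\can^{(s)}$, and the expansion of $b_1*b_2$ already takes place inside $\bUpClAlg(\dsd_s)$ with coefficients in $\N[q^{\pm\Hf}]$; these coefficients do not change under the inclusion into $\bUpClAlg(\dsd_\infty)$.

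Next I would check that $\can_\infty$ satisfies the triangular basis property with respect to every seed $\sd'$ of $\bUpClAlg(\dsd_\infty)$, which is the only subtle point. Any seed $\sd'$ of $\bUpClAlg(\dsd_\infty)$ is obtained from $\dsd_\infty$ by a finite mutation sequence $\seq$, and $\seq$ only involves finitely many indices of $I(\dsd_\infty)$. Hence for $s$ sufficiently large, $\seq$ acts on $\dsd_s$ and produces a seed of $\bUpClAlg(\dsd_s)$. Given a cluster variable $x_i(\sd')$ and any $b=\can_m\in\can_\infty$, we may enlarge $s$ further so that both $x_i(\sd')$ and $b$ belong to $\bUpClAlg(\dsd_s)$; the triangular decomposition of $[x_i(\sd')*b]^{\sd'}$ predicted for $\bUpClAlg(\dsd_\infty)$ is then the very decomposition produced by $\can^{(s)}$ as the common triangular basis of $\bUpClAlg(\dsd_s)$, which exists and has the correct form because the dominance order, the $\kk$-lattice $\mm$, and pointedness all restrict from $\LP(\sd'\cap\dsd_s)$ to itself. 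Bar-invariance and containment of cluster monomials transfer similarly.

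The main technical subtlety I anticipate is verifying cluster compatibility for $\sd'[1]$: we need $\can_\infty$ to contain cluster monomials of the shifted seed $\sd'[1]$, which requires that each $\sd'$ in $\Delta^+(\dsd_\infty)$ be injective-reachable within some $\bUpClAlg(\dsd_s)$, so that the injective-reachable mutation sequence stays within finitely many indices. This should follow from the same localization argument, using that $\dsd_s$ is injective-reachable at finite rank and that the green-to-red sequence on $\dsd_s$ extends trivially on the added frozen indices of $\dsd_{s+1}\setminus\dsd_s$; once this is checked, the triangular basis identity at finite rank gives the identity at infinite rank, completing the proof.
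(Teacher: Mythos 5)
Your core argument is essentially the paper's proof: each finite-rank $\bUpClAlg(\dsd_{s})$ carries the common triangular basis with non-negative structure constants by \cite[Theorem 6.17]{qin2023analogs}, these bases are compatible along the good sub seed embeddings, and Lemma \ref{lem:extend-bUpClAlg} lets you take the union $\can_{\infty}=\cup_{s}\can^{(s)}$, with positivity inherited termwise. Your last two paragraphs, however, are not needed and would in fact run into trouble if pursued literally: the paper \emph{defines} the common triangular basis of an infinite-rank (upper) cluster algebra as this colimit when all finite-rank pieces are common triangular bases (end of Section \ref{subsec:Extension-to-infinite}), precisely because the finite-rank definition requires injective-reachability and $I_{\ufv}(\dsd_{\infty})$ is infinite, so $\dsd_{\infty}$ can never be injective-reachable; the verification "with respect to every seed of $\bUpClAlg(\dsd_{\infty})$" you attempt is therefore vacuous under the paper's conventions rather than a step to be checked.
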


\appendix

\section{Double Bott--Samelson cells\label{sec:Cluster-algebras-from-dBS}}

Choose and fix a $J\times J$ generalized Cartan matrix $C$. Take
any words $\uzeta$, $\ueta$ in $J$. Let $\ddBS_{\beta_{\ueta}}^{\beta_{\uzeta}}$
denote the\emph{ decorated double Bott--Samelson cell} (ddBS for
short) in \cite[Section 2]{shen2021cluster}, and let $\dBS_{\beta_{\ueta}}^{\beta_{\uzeta}}$
denote the associated \emph{(half-decorated) double Bott--Samelson
cell} (dBS for short). Choose any shuffle $\ubi$ of $-\uzeta$ and
$\ueta$. Take $\kk=\C$. Then the coordinate ring $\C[\ddBS_{\beta_{\ueta}}^{\beta_{\uzeta}}]$
is isomorphic to $\upClAlg(\dsd(\ubi))$ and $\C[\dBS_{\beta_{\ueta}}^{\beta_{\uzeta}}]$
is isomorphic to $\upClAlg(\rsd(\ubi))$, see \cite[Theorem 1.1]{shen2021cluster}\cite[Section 2.4]{shen2021cluster}.

Let us briefly recall $\ddBS_{\beta_{\ueta}}^{\beta_{\uzeta}}$ for
the reader's convenience. $\dBS_{\beta_{\ueta}}^{\beta_{\uzeta}}$
could be constructed similarly or via the formalism in \cite{casals2022cluster}.

Consider a pair of Kac-Peterson group $G_{\mathrm{sc}}$ and $G_{\mathrm{ad}}$,
where $G_{\mathrm{sc}}$ is a connected, simply connected, complex
semisimple algebraic group when $C$ is of finite type. Take $G=G_{\mathrm{sc}}$
and $B_{\pm}$ its Borel subgroups. For any sign $\varepsilon$, let
$\cB_{\varepsilon}$ denote the flag variety, i.e., the set consisting
of all Borel subgroups conjugate to $B_{\varepsilon}$. Since $G$
acts transitively on $\cB_{\varepsilon}$ by conjugation, such that
the stabilizer of $B_{\varepsilon}$ is $B_{\varepsilon}$, $\cB_{\varepsilon}$
is isomorphic to the cosets:
\begin{align*}
\cB_{\varepsilon} & \simeq G/B_{\varepsilon}\simeq B_{\varepsilon}\backslash G.
\end{align*}
These isomorphisms identify cosets $xB_{\varepsilon}\simeq B_{\varepsilon}x^{-1}$
with the corresponding flags.

Denote the maximal unipotent subgroups $U_{\varepsilon}:=[B_{\varepsilon},B_{\varepsilon}]$
and define the decorated flag varieties 
\begin{align*}
\cA_{+} & :=G/U_{+},\ \cA_{-}:=U_{-}\backslash G,
\end{align*}
whose elements are called decorated flags. Then we have natural projections
$\pi:\cA_{+}\twoheadrightarrow G/B_{+}\simeq\cB_{+}$, $\pi:\cA_{-}\twoheadrightarrow B_{-}\backslash G\simeq\cB_{-}$,
sending decorated flags to flags.

Let $N$ denote the normalizer of the torus $T=B_{+}\cap B_{-}$,
and $W=N/T$ the Weyl group with the identity $e$. Then $G$ has
the Bruhat decomposition $G=\sqcup_{w}B_{+}wB_{+}$ and the Birkhoff
decomposition $G=\sqcup_{w}B_{-}wB_{+}$. For any pair of flags $(xB_{\varepsilon},yB_{\varepsilon})$,
we denote $xB_{\varepsilon}\xar{w}yB_{\varepsilon}$ if $x^{-1}y\in B_{\varepsilon}wB_{\varepsilon}$.
Similarly, for $(xB_{-},yB_{+})$, we denote $xB_{-}\xline{w}yB_{+}$
if $x^{-1}y\in B_{-}wB_{+}$. We will omit the symbol $e$ when $xB_{-}\xline{e}yB_{+}$.

Note that $G$ naturally acts on the configurations of decorated flags
$A^{0}\in\cA_{+}$, $B^{1},\ldots,B^{l(\uzeta)}\in\cB_{+}$, $B_{0},\ldots B_{l(\ueta)-1}\in\cB_{-}$,
$B_{l(\ueta)}\in\cA_{-}$, such that

\begin{align*}
\xymatrix{A^{0}\ar[r]^{s_{j_{1}}}\ar@{-}[d] & B^{1}\ar[r]^{s_{j_{2}}} & B^{2}\ar[r] & \cdots\ar[r] & B^{l(\uzeta)-1}\ar[r]^{s_{k_{l(\uzeta)}}} & B^{l(\uzeta)}\ar@{-}[d]\\
B_{0}\ar[r]^{s_{k_{1}}} & B_{1}\ar[r]^{s_{k_{2}}} & B_{2}\ar[r] & \cdots\ar[r] & B_{l(\ueta)-1}\ar[r]^{s_{k_{l(\ueta)}}} & A_{l(\ueta)}
}
.
\end{align*}
Here, $B_{0}\xline{}A^{0}$ means $B_{0}\xline{}\pi(A^{0})$, and
$A_{l(\ueta)}\xline{}B^{l(\uzeta)}$ means $\pi(A_{l(\ueta)})\xline{}B^{l(\uzeta)}$.
For any $(\uzeta,\ueta)$, the ddBS $\ddBS_{\beta_{\ueta}}^{\beta_{\uzeta}}$
is defined to be the moduli space of $G$-equivalence classes of such
configurations \cite[Definition 2.21]{shen2021cluster}. It does not
depend on the choice of the words $\uzeta,\ueta$ for the positive
braids $\beta_{\uzeta},\beta_{\ueta}$. 

For cluster algebras defined over $\C$, Shen and Weng constructed
explicit algebra isomorphisms $\kappa:\clAlg(\dsd(\ubi))\simeq\C[\ddBS_{\beta_{\ueta}}^{\beta_{\uzeta}}]$
in \cite{shen2021cluster}. Moreover, the isomorphism does not depend
on the choice of $\ubi$ by \cite[Theorem 1.1]{shen2021cluster},
i.e., the following diagram is commutative:
\begin{align}
\begin{array}{ccc}
\clAlg(\dsd(\ubi')) & \overset{\kappa'}{\xrightarrow{\sim}} & \C[\ddBS_{\beta_{\ueta}}^{\beta_{\uzeta}}]\\
\simeqd(\seq_{\ubi',\ubi}^{\sigma})^{*} &  & \parallel\\
\clAlg(\dsd(\ubi)) & \overset{\kappa}{\xrightarrow{\sim}} & \C[\ddBS_{\beta_{\ueta}}^{\beta_{\uzeta}}]
\end{array}.\label{eq:ddBS-unique-cluster}
\end{align}
Replacing $\dsd$ by $\rsd$ and $\ddBS_{\beta_{\ueta}}^{\beta_{\uzeta}}$
by $\dBS_{\beta_{\ueta}}^{\beta_{\uzeta}}$, the above construction
induces algebra isomorphism $\clAlg(\rsd(\ubi))\simeq\C[\dBS_{\beta_{\ueta}}^{\beta_{\uzeta}}]$,
still denoted by $\kappa$. We have have the following commutative
diagram
\begin{align}
\begin{array}{ccc}
\clAlg(\rsd(\ubi')) & \overset{\kappa'}{\xrightarrow{\sim}} & \C[\dBS_{\beta_{\ueta}}^{\beta_{\uzeta}}]\\
\simeqd(\seq_{\ubi',\ubi}^{\sigma})^{*} &  & \parallel\\
\clAlg(\rsd(\ubi)) & \overset{\kappa}{\xrightarrow{\sim}} & \C[\dBS_{\beta_{\ueta}}^{\beta_{\uzeta}}]
\end{array}.\label{eq:dBS-unique-cluster}
\end{align}

\section{Skew-symmetric bilinear forms\label{sec:Skew-symmetric-bilinear-forms}}

We recall the skew-symmetric bilinear forms on $\oplus_{\mybinom ad\in\W}\Z e_{\mybinom ad}$
for the reader's convenience.

We first recall the bilinear form $\cN$ used in \cite[Section 7]{qin2017triangular}.
Consider symmetric generalized Cartan matrix $C$. Choose a Coxeter
word $\uc$. Define the $J\times J$ deformed Cartan matrix $C_{q}$
such that $(C_{q})_{aa}=z+z^{-1}$, $(C_{q})_{c_{j}c_{k}}=C_{c_{j}c_{k}}\cdot z^{-1}$
if $j<k\in J$, and $(C_{q})_{c_{j}c_{k}}=C_{c_{j}c_{k}}\cdot z$
if $j>k\in J$. Let $C_{q}^{-1}$ denote its inverse whose entries
are formal Laurent series in $z$. Denote $C_{q}^{-1}=\sum_{m\in\Z}C_{q}^{-1}(m)z^{m}$
where $C_{q}^{-1}(m)$ are $\Z$-matrices. The bilinear form $\cN$
on $\oplus_{\mybinom ad\in\W}\Z e_{\mybinom ad}$ is given by 
\begin{align*}
\cN(e_{\mybinom ad},e_{\mybinom bh})= & (C_{q}^{-1})_{ab}(-1-2d+2h)-(C_{q}^{-1})_{ba}(-1-2h+2d)\\
 & -(C_{q}^{-1})_{ab}(1-2d+2h)+(C_{q}^{-1})_{ba}(1-2h+2d).
\end{align*}

Further assume $J=J^{+}\sqcup J^{-}$, i.e., $\uc$ produces a bipartite
orientation, we can choose a height function $\xi$, such that $\xi(J^{+})=\xi(J^{-})+1$.
Identify $\mybinom ad\in\W$ with $(a,\xi_{a}-2d)\in J_{\Z}(\xi)$.
Let $D_{q}$ denote the $J\times J$ diagonal matrix such that $(D_{q})_{aa}=z^{\xi_{a}}$.
Define $C(z):=D_{q}C_{q}D_{q}^{-1}$ and $\tC:=C(z)^{-1}=D_{q}C_{q}^{-1}D_{q}^{-1}$.
Then $C(z)_{aa}=z+z^{-1}$ and $C(z)_{ab}=C_{ab}$, $\forall a\neq b\in J$.
Decompose $\tC=\sum_{m\in\Z}\tC(m)z^{m}$. Then we can rewrite
\begin{align}
\cN(e_{(a,p)},e_{(b,s)})= & \tC_{ab}(-1+p-s)-\tC_{ba}(-1-p+s)\label{eq:N-form}\\
 & -\tC_{ab}(1+p-s)+\tC_{ba}(1-p+s).\nonumber 
\end{align}
The bilinear form $\cN$ produces a $J\times J$ matrix $\cN=\sum_{m\in\Z}\cN(m)z^{m}$
such that $\cN_{a,b}(-m)=\cN(e_{(a,0)},e_{(b,m)})$.

Finally, assume $C$ is a symmetrizable Cartan matrix of finite type.
Define the $J\times J$-matrix $\uB$ such that $\uB_{ab}=C_{ab}\cdot\symm_{b}^{-1}$,
which is symmetric. \cite[(4.1)]{jang2023quantization} used the bilinear
form $\ucN$ where $C$ is replaced by $\uB$ in the above construction.
More precisely, let $D$ denote the diagonal matrix whose diagonal
entries are $\symm_{b}$. Define $\uB(z)=C(z)D^{-1}$ and let $\utB$
denote its inverse, which is symmetric. Then we construct the bilinear
form $\ucN$ such that

\begin{align}
\ucN(e_{(a,p)},e_{(b,s)})= & \utB_{ab}(-1+p-s)-\utB_{ab}(-1-p+s)\label{eq:N-form-1}\\
 & -\utB_{ab}(1+p-s)+\utB_{ab}(1-p+s).\nonumber 
\end{align}


\newcommand{\etalchar}[1]{$^{#1}$}
\providecommand{\bysame}{\leavevmode\hbox to3em{\hrulefill}\thinspace}
\providecommand{\MR}{\relax\ifhmode\unskip\space\fi MR }
\providecommand{\MRhref}[2]{%
  \href{http://www.ams.org/mathscinet-getitem?mr=#1}{#2}
}
\providecommand{\href}[2]{#2}


\end{document}